\documentclass[12pt, reqno, a4paper]{amsart}

\usepackage[
  a4paper,
  textwidth=16cm,
  textheight=23cm,
  centering
]{geometry}
\usepackage{mathtools}
\usepackage{amssymb}
\usepackage{mathrsfs}
\usepackage{bm}
\usepackage[T1]{fontenc}
\usepackage{lmodern}
\usepackage{textcomp}
\usepackage[utf8]{inputenc}
\usepackage{xcolor}
\usepackage[shortlabels]{enumitem}
\usepackage{indentfirst}
\usepackage{graphicx}
\usepackage{url}
\usepackage{autobreak}
\usepackage[justification=justified]{caption}
\usepackage[labelformat=empty,subrefformat=parens]{subcaption}
\usepackage{float}
\usepackage{tikz}
\usepackage{tikz-cd}
\usepackage{hyperref}
\usepackage{comment}
\usepackage{array}
\usepackage{placeins}
\renewcommand{\thefootnote}{} 

\theoremstyle{plain} 
\newtheorem{theorem}{Theorem}[section]
\newtheorem{lemma}[theorem]{Lemma}
\newtheorem{corollary}[theorem]{Corollary}
\newtheorem{proposition}[theorem]{Proposition}

\theoremstyle{definition} 
\newtheorem{definition}[theorem]{Definition}
\newtheorem{remark}[theorem]{Remark}
\newtheorem{example}[theorem]{Example}

\newtheorem*{maintheorem1}{Theorem~\ref{thm:abelianization}}
\newtheorem*{maintheorem2}{Theorem~\ref{theorem_finitely_presented}}
\makeatletter
  
  \@addtoreset{equation}{section}
\makeatother

\makeatletter
\newcommand{\relationitem}[1]{%
  \item[\textup{(#1)}]%
  \phantomsection%
  \protected@edef\@currentlabel{#1}%
}
\makeatother

\DeclareRobustCommand{\relref}[1]{%
  \texorpdfstring{%
    \textup{relation~}\hyperref[#1]{\textup{(\ref*{#1})}}%
  }{%
    relation (#1)%
  }%
}

\newcommand{\cR}{\mathcal{R}}

\newcommand{\cT}{\mathcal{T}}

\newcommand{\bZ}{\mathbb{Z}}
\newcommand{\bN}{\mathbb{N}}
\newcommand{\sF}{\mathscr{F}}

\newcommand{\ov}[1]{\overline{#1}}
\newcommand{\Ker}{\operatorname{Ker}}
\allowdisplaybreaks[3] 

\newcommand\cinput[2]{\lower#1pt\hbox{\input{#2}}}

\makeatletter\let\@wraptoccontribs\wraptoccontribs\makeatother
\subjclass[2020]{Primary: 20F05; Secondary: 20F65}

\begin{document}
\keywords{Thompson's groups, cactus groups}

\title{Presentations of the cactus Thompson group}
\author{Yuya Kodama and Akihiro Takano}
\date{}
\renewcommand{\thefootnote}{\arabic{footnote}}  
\setcounter{footnote}{0} 

\begin{abstract}
  We give infinite and finite presentations of the cactus Thompson group.
  This group was introduced as $V_{\rm{mock}}$ by Witzel and Zaremsky, who combined Thompson's group with cactus groups.
\end{abstract}

\maketitle

\section{Introduction}
Thompson's groups $F$, $T$, and $V$ were defined by Richard Thompson in the 1960s.
While one of the original motivations for studying these groups was logic, they continue to be actively studied today due to their known connections to various fields.
One source of this versatility is the existence of several equivalent models for these groups.
In this paper, we use the model based on pairs of finite rooted binary trees.
Each element of $V$ can be represented by a pair of binary trees with the same number of leaves, together with a bijection between their sets of leaves.
The group $T$ is a subgroup of $V$ consisting of elements whose associated bijections are cyclic permutations.
Similarly, the group $F$ is a subgroup of $T$ in which the bijections are restricted to the identity permutation.
In our context, the bijection for an element of $V$ can be visualized as a collection of lines connecting the two sets of leaves.
From this perspective, one is naturally led to consider an ``Artinification'' of $V$.
That is, by replacing leaf permutations with braids, we can construct a new group $BV$.
This group was introduced independently by Brin \cite{brin2007bv, brin2006bv} and Dehornoy \cite{dehornoy2006bv}.

Witzel and Zaremsky \cite{witzel2018cloning} further developed this idea in their framework, introducing the notion of a cloning system, which provides a general framework for constructing generalized Thompson groups.
This concept was inspired directly by Brin's construction of the braided Thompson group $BV$.
In Brin's definition, the symmetric group actions appearing in the tree pair model of $V$ are replaced by braid groups.
The process of expanding trees requires a compatible way of ``duplicating'' strands in the braid.
Cloning systems axiomatize precisely this phenomenon.
Roughly speaking, a cloning system consists of a family of groups $(G_n)$ equipped with a family of cloning maps $(G_n \to G_{n+1})$, which describe how to duplicate strands in a manner analogous to expanding leaves in binary trees.
From such data, one can construct a generalized Thompson group whose elements are represented by tree pairs decorated with elements of the groups $(G_n)$.
This construction recovers the classical Thompson's group $V$ when $(G_n)$ is the family of symmetric groups, and yields braided Thompson groups when $(G_n)$ is the family of braid groups.
In this way, cloning systems provide a unified perspective that encompasses a wide range of groups, including $BV$, within a common framework.
In particular, Witzel and Zaremsky constructed a generalized Thompson group from the cloning system on cactus groups and denoted the resulting group by $V_{\rm{mock}}$.
In this paper, we study this generalized Thompson group arising from the cactus group.

The cactus group first appeared, under the name of the quasibraid group, in the work of Devadoss \cite{devadoss1999mosaic} in the study of the mosaic operad, in connection with the Deligne--Mumford compactification $\overline{M}_{0,n+1}(\mathbb{R})$ of the moduli space of configurations of $n+1$ marked points on stable real algebraic curves of genus zero.
Davis, Januszkiewicz and Scott \cite{davis2003blowup} also studied cactus groups under the name of mock reflection groups. 
Henriques and Kamnitzer \cite{henriques2006crystals} later introduced the term cactus group.
They showed that the category of crystals of a finite-dimensional complex reductive Lie algebra admits the structure of a coboundary category.
Analogously to the role of the braid group in a braided monoidal category, the cactus group acts naturally on tensor powers in a coboundary category.

Much like braid groups, cactus groups admit a convenient diagrammatic interpretation.
Elements of a cactus group can be represented by braid-like diagrams consisting of strands, where the generators correspond to reversing contiguous blocks of strands.
These diagrams can be composed by concatenation, similarly to braid diagrams, providing an intuitive way to visualize group operations.
This diagrammatic viewpoint plays an important role in understanding the structure of cactus groups and their relations, and will be useful in the construction considered in this paper.

Cactus groups have recently attracted attention from the viewpoint of geometric group theory.
Using median geometry, Genevois \cite{genevois2025cactus} proved, among other results, that cactus groups are virtually cocompact special and acylindrically hyperbolic.
These results demonstrate that cactus groups admit rich geometric and group-theoretic structures.

Since both Thompson's groups and cactus groups admit rich geometric structures, it is natural to investigate a generalized Thompson group that combines these two families.
We call this group the \textbf{cactus Thompson group} and denote it by $CV$.
As a first step, we study presentations of this group and properties of its subgroups.

Following \cite{brady2008bv}, we first give an explicit infinite presentation of $CV$; see Theorem~\ref{theorem_presentation}.
Using this presentation, we compute the abelianization of $CV$.
\begin{maintheorem1}
  The abelianization of $CV$ is isomorphic to $\bZ/2\bZ \oplus \bZ/2\bZ$.
\end{maintheorem1}

Many generalized Thompson's groups are known to satisfy strong finiteness properties \cite{witzel2018cloning}.
We also prove that $CV$ is finitely presented.

\begin{maintheorem2}
  The group $CV$ is finitely presented.
\end{maintheorem2}

In contrast to the derivation of the infinite presentation, the algebraic proof of finite presentability given in this paper is considerably more involved.
One source of difficulty is the difference between the standard generators of braid groups and those of cactus groups.
Whereas the standard braid generators involve adjacent strands, a standard generator $s_{i, j}$ of a cactus group simultaneously reverses an arbitrary interval of strands.
As a result, more types of relations must be considered than in the case of the braided Thompson group, making the argument more complicated.
The main part of the proof is therefore to show that all instances of the infinitely many defining relations follow from finitely many bounded-index instances.
We also observe some properties of subgroups of $CV$; see Proposition \ref{prop_PCV_nonfg} and Corollaries \ref{cor_CV_odd_torsion}, \ref{cor_CV_nonsplit}, and Proposition \ref{prop_CV_center}.

This paper is organized as follows:
In Section \ref{section_preliminaries}, we recall the definitions of Thompson's groups and cactus groups.
In Subsection \ref{subsection_def_CV}, we give two (equivalent) definitions of the cactus Thompson group $CV$, one using binary trees and the other using a cloning system.
In Subsection \ref{subsection_subgroups_CV}, we study some basic properties of subgroups of $CV$.
In Section \ref{section_infinitely_presented}, we give an infinite presentation of $CV$ and compute its abelianization.
In Section \ref{section_finitely_presented}, we show that $CV$ is finitely presented by propagating the defining relations from finitely many bounded-index instances.
\section{Preliminaries} \label{section_preliminaries}
In this section, we recall the definitions and properties of Thompson's groups and the cactus group.
\subsection{Thompson's groups}

\subsubsection{Definitions}
We summarize a definition of Thompson's group $V$, and its subgroups $F$ and $T$.
It is known that these groups have several (equivalent) definitions, see \cite{cannon1996introductory} for instance.
In this paper, we use binary trees to define these groups.

A \textbf{rooted binary tree} is either a single vertex or a finite tree in which every vertex has degree three, except the root, which has degree two, and the leaves, which have degree one.
We draw such trees with the root at the top and leaves arranged from left to right along a horizontal line at the bottom with edges directed downward.
A rooted binary tree is \textbf{trivial} if it has only one vertex.
A \textbf{caret} is a rooted binary tree which has three vertices.
Namely, a caret has two leaves, called the \textbf{left child} and \textbf{right child}.
An edge connecting the left (resp.~right) child and the root of a caret is called the \textbf{left}
(resp.~\textbf{right}) \textbf{edge}.
By attaching the root of a caret to a leaf of a rooted binary tree, we obtain a new rooted binary tree.
This operation is called an \textbf{attachment}.
If a rooted binary tree $T$ is obtained from another rooted binary tree $S$ by attaching some carets, then $T$ is called an \textbf{expansion} of $S$.
We number the leaves of a rooted binary tree with $n$ leaves from 1 to $n$ in order from left to right, see Figure \ref{tree_example}.
Such a tree is obtained by iterating attachments $n-1$ times.

\begin{figure}[tbp]
  \begin{center}
    \includegraphics[width=0.7\linewidth]{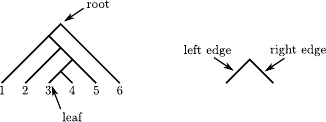}
  \end{center}
  \caption{A rooted binary tree and a caret}
  \label{tree_example}
\end{figure}

A \textbf{tree pair} is a triple $(T_+, \sigma, T_-)$ consisting of two rooted binary trees $T_+$ and $T_-$ with the same number of leaves $n$, and an element $\sigma$ in the symmetric group $S_n$.
The element $\sigma$ permutes the set of leaf numbers $\{1, 2, \ldots, n\}$.
When drawing a tree pair $(T_+, \sigma, T_-)$, we put $T_-$ upside down under $T_+$, and connect the $i$-th leaf of $T_+$ and the $\sigma(i)$-th leaf of $T_-$ with an edge for each $1 \leq i \leq n$.
This diagram is called a \textbf{tree diagram} of $(T_+, \sigma, T_-)$.
Let $\cT_{\text{symm}}$ be the set of all tree pairs.
An \textbf{expansion} is an operation on $\cT_{\text{symm}}$ defined as follows:
let $(T_+, \sigma, T_-)$ be a tree pair with $n$ leaves.
Attaching a caret to the $i$-th leaf of $T_+$ (resp.~$\sigma(i)$-th leaf of $T_-$), we have a rooted binary tree $T'_+$ (resp.~$T'_-$) with $n+1$ leaves.
Define a permutation $\sigma'$ in $S_{n+1}$ by
\begin{align*}
  \sigma'(j) = \begin{cases}
                 \sigma(j)     & (j \leq i, \sigma(j) \leq \sigma(i)), \\
                 \sigma(j)+1   & (j < i, \sigma(j) > \sigma(i)),       \\
                 \sigma(j-1)   & (j > i, \sigma(j-1) < \sigma(i)),     \\
                 \sigma(j-1)+1 & (j > i, \sigma(j-1) \geq \sigma(i)).
               \end{cases}
\end{align*}
Then we obtain a new tree pair $(T'_+, \sigma', T'_-)$ with $n+1$ leaves.
Diagrammatically, the edge connecting the $i$-th leaf of $T_+$ and the $\sigma(i)$-th leaf of $T_-$ is parallelized,  and the left (resp.~right) children of each attached caret  are connected by an edge.
The inverse operation of an expansion is called a \textbf{reduction}.
A tree pair is \textbf{reduced} if it admits no reduction.
Then we define an equivalence relation $\sim$ generated by an expansion and reduction on $\cT_{\text{symm}}$.
Figure \ref{tree_pair_expansion} is an example of an expansion and reduction.

\begin{figure}[tbp]
  \begin{center}
    \includegraphics[width=0.85\linewidth]{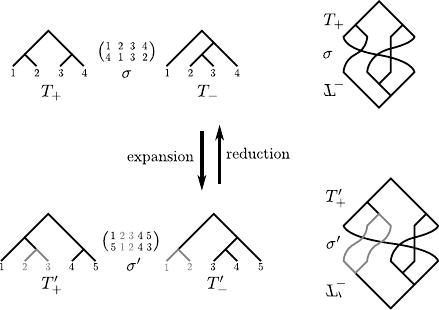}
  \end{center}
  \caption{A tree pair $(T_+, \sigma, T_-)$ (upper left) and its tree diagram (upper right). The rooted binary tree $T'_+$ (resp.~$T'_-$) is obtained by adding a caret to the leaf 2 of $T_+ $(resp.~leaf $1$ of $T_-$). The tree diagram of $(T'_+, \sigma', T'_-)$ is obtained from that of $(T_+, \sigma, T_-)$ by parallelizing the edge connecting the leaf 2 of $T_+$ and the leaf 1 of $T_-$.}
  \label{tree_pair_expansion}
\end{figure}

\textbf{Thompson's group} $V$ is a group $(\cT_{\text{symm}} /{\sim}, \cdot)$ where $\cdot$ is the following product:
let $(S_+, \sigma, S_-)$ and $(T_+, \tau, T_-)$ be in $\cT_{\text{symm}}$.
By expansion or reduction, we obtain tree pairs $(S'_+, \sigma', S'_-)$ and $(T'_+, \tau', T'_-)$ such that $(S_+, \sigma, S_-) \sim (S'_+, \sigma', S'_-)$ and $(T_+, \tau, T_-) \sim (T'_+, \tau', T'_-)$, and satisfying $S'_- = T'_+$.
Then the product of the equivalence classes of $(S_+, \sigma, S_-)$ and $(T_+, \tau, T_-)$ is the equivalence class of $(S'_+, \sigma' \tau', T'_-)$.
The identity element of $V$ is represented by $(T, 1, T)$, where $T$ is any rooted binary tree.
For an equivalence class of $(T_+, \sigma, T_-)$, its inverse element is the equivalence class of $(T_-, \sigma^{-1}, T_+)$.
The group $T$ is a subgroup of $V$ whose elements are equivalence classes of tree pairs of the form $(T_+, c, T_-)$, where $c$ is a cyclic permutation.
The subgroup $F (\subseteq T \subseteq V)$ consists of those elements whose permutations are trivial.
Thus any element of $F$ is simply represented by $(T_+, T_-)$, and is also called a tree pair.
The groups $F$ and $T$ are also called Thompson's groups.

There is a convenient method for computing the product of $V$ by deforming tree diagrams.
Let $(S_+, \sigma, S_-)$ and $(T_+, \tau, T_-)$ be tree pairs.
Put the tree diagram of $(T_+, \tau, T_-)$ immediately under the one of $(S_+, \sigma, S_-)$, and connect the root of $S_-$ and the root of $T_+$ by an edge.
We apply the following deformations
\begin{align*}
  \scalebox{1.5}{\cinput{0}{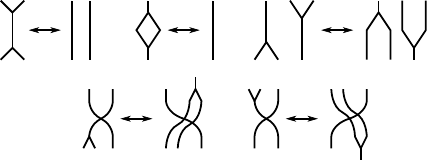}}
\end{align*}
to this diagram and then we obtain a new tree diagram.
In fact this is a tree diagram of the product $(S_+, \sigma, S_-) \cdot (T_+, \tau, T_-)$.
Figure \ref{V_product_diagram} is an example of this method.

\begin{figure}[tbp]
  \begin{center}
    \includegraphics[width=0.9\linewidth]{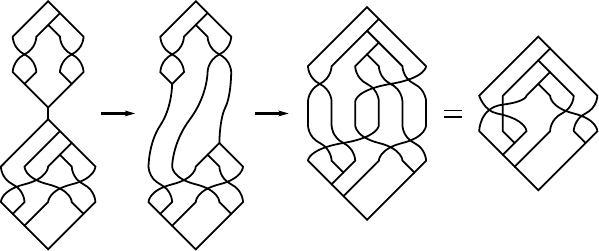}
  \end{center}
  \caption{A computation of the product in $V$}
  \label{V_product_diagram}
\end{figure}

\begin{remark}
  Thompson's group $F$ is understood as a subgroup of the homeomorphism group on the closed unit interval $[0,1]$ such that each element satisfies the following:
  \begin{enumerate}
    \item an orientation-preserving piecewise linear map,
    \item in each linear part, its slope is a power of 2, and
    \item each breakpoint is in $\bZ[1/2] \times \bZ[1/2]$.
  \end{enumerate}
  Also, the group $T$ is regarded as a subgroup of the homeomorphism group on the circle $S^1$ whose elements satisfy the above three conditions.
  The group $V$ can be regarded as the group of right-continuous bijections of $[0,1]$ that are piecewise linear on finitely many dyadic subintervals, with slopes powers of $2$ and discontinuities at dyadic rational points.
  Two rooted binary trees of each element in these groups correspond to the partitions of the domain and codomain, respectively.
  Using binary words, $V$ (and thus $F$ and $T$) can be a subgroup of the homeomorphism group on the Cantor set.
\end{remark}

\subsubsection{Presentations} \label{sec_V_presentation}
We provide a summary of both infinite and finite presentations of V.
In particular, a generating set for $V$ can be obtained by decomposing a given tree pair into a product of three tree pairs.
This method was used in \cite{burillo2009metric} for Thompson's group $T$ and in \cite{brady2008bv} for the braided Thompson group $BV$.

There are well-known presentations of $F$:
\begin{align*}
  F & \cong \langle x_0, x_1, x_2, \dots \mid x_i^{-1} x_j x_i=x_{j+1}\ (i<j) \rangle                     \\
    & \cong \langle x_0, x_1 \mid [x_0x_1^{-1}, x_0^{-1}x_1x_0], [x_0x_1^{-1}, x_0^{-2}x_1x_0^2] \rangle,
\end{align*}
where $[x,y]=xyx^{-1}y^{-1}$, and $x_i$ is a tree pair defined as
\begin{align*}
  x_i = \scalebox{2.5}{\cinput{16}{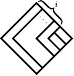}}.
\end{align*}
Moreover any element of $F$ has a unique normal form with respect to the infinite presentation above.
Every element of $F$ admits an expression of the form
\begin{equation*}
  x_{i_1}^{a_1} x_{i_2}^{a_2} \cdots x_{i_{l}}^{a_{l}} x_{j_m}^{-b_m} \cdots x_{j_2}^{-b_2} x_{j_1}^{-b_1},
\end{equation*}
where $0 \leq i_1 < i_2 < \cdots < i_{l}, 0 \leq j_1 <  j_2 < \cdots < j_m$ and $a_1, \ldots a_{l}, b_1, \ldots b_m$ are positive integers.
This expression is sometimes called a seminormal form, and is unique if we require an additional condition:
if both $x_i$ and $x_i^{-1}$ appear in this expression, then either $x_{i+1}$ or $x_{i+1}^{-1}$ (or both) also appears.
A (semi)normal form which has no generators with negative exponents is called a \textbf{positive word}.
Similarly, a \textbf{negative word} is a (semi)normal form containing no generators with positive exponents.
A (semi)normal form of any tree pair $(T_+, T_-)$ is given as follows:
only here, suppose that the leaves of $T_+$ are numbered from left to right beginning with zero.
Then the exponent $a_i$ of the generator $x_i$ in a (semi)normal form of $(T_+, T_-)$ coincides with the number of the left edges of carets on the path from the $i$-th leaf of $T_+$ to the root which do not reach the right-most path of $T_+$.
The exponent $b_i$ of $x_i^{-1}$ is obtained from $T_-$ similarly.
Figure \ref{F_exponent} is an example of this correspondence.

\begin{figure}[tbp]
  \begin{center}
    \includegraphics[width=0.4\linewidth]{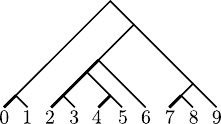}
  \end{center}
  \caption{For a binary rooted tree $T$ pictured above, the tree pair $(T, R_{10})$ is written as the word $x_0 x_2^3 x_4 x_7$. The exponent of each generator coincides with the number of thick edges in the path from the root to the corresponding leaf.}
  \label{F_exponent}
\end{figure}

Let $R_1$ be the trivial rooted binary tree and let $R_2$ be a caret.
For $i \geq 3$, define a rooted binary tree $R_i$ obtained by attaching a caret to the right-most leaf of $R_{i-1}$.
Namely $R_i$ is of the form
\begin{align*}
  R_i = \scalebox{2}{\cinput{12}{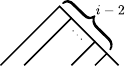}},
\end{align*}
and is called the \textbf{all-right binary tree} of $i$ leaves.
Given a tree pair $(T_+, T_-)$, consider a decomposition
\begin{align*}
  (T_+, T_-) = (T_+, R_n) \cdot (R_n, T_-) = (T_+, R_n) \cdot (T_-, R_n)^{-1},
\end{align*}
where $n$ is the number of leaves of $T_+$.
Then we can see that tree pairs $(T_+, R_n)$ and $(R_n, T_-)$ give the \textit{positive part} and \textit{negative part} of its (semi)normal form, respectively.
A reduced tree pair corresponds to normal form.

The argument above extends to Thompson's group $V$.
We decompose a tree pair $(T_+, \sigma, T_-)$ in $V$ into a product of three elements: the positive part, negative part, and the permutation part, that is,
\begin{align*}
  (T_+, \sigma, T_-) = (T_+, 1, R_n) \cdot (R_n, \sigma, R_n) \cdot (R_n, 1, T_-),
\end{align*}
where $n$ is the number of leaves of $T_+$.
Similarly to $F$, the positive part $(T_+, 1, R_n)$ (resp.~negative part $(R_n, 1, T_-)$) is written as a positive word (resp.~negative word) on the set $X \coloneq \{ x_0, x_1, x_2, \ldots \}$.
In order to represent a permutation part $(R_n, \sigma, R_n)$ by a word on some set, define elements $\sigma_i$ and $\tau_i\ (i \geq 1)$ of $V$ as follows:
\begin{align*}
   & \sigma_i \coloneq (R_{i+2}, s_i, R_{i+2}) = \scalebox{2.3}{\cinput{25}{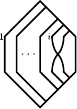}}\ ,
   & \tau_i \coloneq (R_{i+1}, s_i, R_{i+1}) = \scalebox{2.3}{\cinput{22}{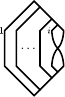}},
\end{align*}
where $s_i = \begin{pmatrix} i& i+1 \end{pmatrix}$ is the standard generator of the symmetric group.
Then the set $\{\sigma_1, \ldots, \sigma_{n-2}, \tau_{n-1}\}$ generates the copy of $S_n$ in $V$.
The elements of this set are called the \textbf{$S_n$ generators}.
Any element of $V$ represented by a tree pair of the form $(R_n, \sigma, R_n)$ is written as a word in the $S_n$ generators.
From the argument above, we have the following:

\begin{proposition} [cf. {\cite[Proposition 2.1]{brady2008bv}}]
  Thompson's group $V$ is generated by the elements $x_i\ (i \geq 0), \sigma_i\ (i \geq 1)$, and $\tau_i\ (i \geq 1)$.
\end{proposition}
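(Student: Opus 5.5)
We take an arbitrary element of $V$, represented by a tree pair $(T_+, \sigma, T_-)$ with $n$ leaves, and use the decomposition
\begin{align*}
  (T_+, \sigma, T_-) = (T_+, 1, R_n) \cdot (R_n, \sigma, R_n) \cdot (R_n, 1, T_-)
\end{align*}
stated above. This identity holds on the nose: the middle tree of each adjacent pair coincides, so the product is computed without any expansion, and the permutations compose to $1\cdot\sigma\cdot 1=\sigma$. It therefore suffices to express each of the three factors in the proposed generators.

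For the outer factors, note that $(T_+,1,R_n)$ and $(R_n,1,T_-)$ lie in the copy of $F$ inside $V$, namely the elements with trivial permutation. The presentation of $F$ recalled above shows that $F$ is generated by the $x_i$; more precisely, the normal form description gives $(T_+,R_n)$ as a positive word in $X$ and $(R_n,T_-)=(T_-,R_n)^{-1}$ as a negative word. So both outer factors are words in the $x_i$.

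For the middle factor, we show that $(R_n,\sigma,R_n)$ is a product of the $S_n$ generators $\sigma_1,\dots,\sigma_{n-2},\tau_{n-1}$. The map $S_n\to V$, $\sigma\mapsto(R_n,\sigma,R_n)$, is a homomorphism, since the trees match and permutations multiply; we must also check the paper's composition convention here. Since $S_n$ is generated by the transpositions $s_1,\dots,s_{n-1}$, it remains to identify the image of each $s_i$. For $i\le n-2$, we expand $\sigma_i=(R_{i+2},s_i,R_{i+2})$ repeatedly at its rightmost leaf, which is fixed by $s_i$. By the expansion formula this yields $(R_n,s_i,R_n)$, because the attached carets turn $R_{i+2}$ into $R_n$ on both sides and the permutation is extended by fixed points. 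For $i=n-1$, we have $\tau_{n-1}=(R_n,s_{n-1},R_n)$ directly.

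The only point requiring real care is this expansion check: attaching a caret at leaf $k=i+2$, where $\sigma(k)=k$ is the last leaf, gives $\sigma'=s_i$ in $S_{k+1}$ by the case analysis. Everything else is bookkeeping, so every element of $V$ is a word in the $x_i,\sigma_i,\tau_i$.
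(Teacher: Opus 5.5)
Your proposal is correct and follows the paper's argument: the same decomposition $(T_+,\sigma,T_-)=(T_+,1,R_n)\cdot(R_n,\sigma,R_n)\cdot(R_n,1,T_-)$, with the outer factors written in the $x_i$ and the middle factor written in the $S_n$ generators $\sigma_1,\dots,\sigma_{n-2},\tau_{n-1}$. Your expansion check that $\sigma_i=(R_n,s_i,R_n)$ for $n\ge i+2$ supplies a detail the paper only asserts. Your worry about the composition convention is harmless, because even if $\sigma\mapsto(R_n,\sigma,R_n)$ were an anti-homomorphism, its image would still be generated by the images of the $s_i$.
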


\begin{proposition}
  Every element $(T_+, \sigma, T_-)$ of $V$ with $n$ leaves is written as a word of the form $w_1 w_2 w_3^{-1}$ where
  \begin{itemize}
    \item $w_1 = x_{i_1}^{a_1} x_{i_2}^{a_2} \cdots x_{i_{l}}^{a_{l}}$ with $0 \leq i_1 < i_2 < \cdots < i_{l}$ and $a_k \geq 1$ for all $1 \leq k \leq l$,
    \item $w_3 = x_{j_1}^{b_1} x_{j_2}^{b_2} \cdots x_{j_m}^{b_m}$ with $0 \leq j_1 < j_2 < \cdots < j_m$ and $b_k \geq 1$ for all $1 \leq k \leq m$,
    \item $w_2$ is a word in the $S_n$ generators.
  \end{itemize}
 \end{proposition}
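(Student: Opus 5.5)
The plan is to split the tree pair into three pieces,
\[
(T_+, \sigma, T_-) = (T_+, 1, R_n) \cdot (R_n, \sigma, R_n) \cdot (R_n, 1, T_-),
\]
and handle each factor separately. This identity holds because $(T_+,1,R_n)\cdot(R_n,\sigma,R_n)$ is computed without any expansion and equals $(T_+,\sigma,R_n)$. Multiplying by $(R_n,1,T_-)$ again requires no expansion and gives $(T_+,\sigma,T_-)$. The third factor is $(T_-,1,R_n)^{-1}$, by the description of inverses in $V$. So it suffices to handle two things: elements $(T,1,R_n)$ of $F$, and elements $(R_n,\sigma,R_n)$.

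For the first kind, I use the normal form theory for $F$ recalled above. The pair $(T,R_n)$ is an element of $F$ whose negative part is empty. Indeed, the exponent $b_i$ counts left edges of $R_n$ not on the right-most path, and every left edge of $R_n$ leads directly to a leaf hanging off the spine, so no such edges are counted. Hence $b_i=0$ for all $i$. The seminormal form of $(T,R_n)$ is therefore a positive word $x_{i_1}^{a_1}\cdots x_{i_l}^{a_l}$ with increasing indices and positive exponents. Applying this to $T=T_+$ gives $w_1$, and applying it to $T=T_-$ gives $w_3$.

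For the middle factor, I map $S_n$ to $V$ by $\sigma\mapsto(R_n,\sigma,R_n)$. This map is a homomorphism, because the product of two such pairs needs no expansion and the permutations multiply. Next I check that the images of the standard transpositions $s_1,\dots,s_{n-1}$ are the $S_n$ generators:
\begin{itemize}
\item For $i\le n-2$, expand $\sigma_i=(R_{i+2},s_i,R_{i+2})$ at its last leaf repeatedly, $n-i-2$ times. Expanding at a leaf fixed by $s_i$ (leaf $i+2$ and its descendants) keeps the permutation equal to $s_i$ in the larger symmetric group. Attaching at the right-most leaf of $R_k$ gives $R_{k+1}$, so this yields $(R_n,s_i,R_n)$.
\item For $i=n-1$, we have $\tau_{n-1}=(R_n,s_{n-1},R_n)$ directly.
\end{itemize}
Since the transpositions $s_1,\dots,s_{n-1}$ generate $S_n$, writing $\sigma$ as a product of them and applying the homomorphism gives the word $w_2$.

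The only step needing care is the expansion check for $\sigma_i$: one must verify that repeated attachment at the right-most leaf of $R_{i+2}$, on both trees, preserves the transposition $s_i$ under the expansion formula for $\sigma'$. This is immediate because the expanded leaf lies to the right of both moved leaves, so the formula returns the transposition $s_i$ in $S_{k+1}$ at each step. Combining the three factors gives $(T_+,\sigma,T_-)=w_1w_2w_3^{-1}$, as claimed.
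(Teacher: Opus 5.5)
Your proposal is correct and follows the paper's own argument. You use the same three-factor decomposition $(T_+,1,R_n)\cdot(R_n,\sigma,R_n)\cdot(R_n,1,T_-)$, read off the outer factors as positive words via the normal form of $F$, and write the middle factor in the $S_n$ generators, while also supplying details the paper leaves implicit (that $(T,R_n)$ has empty negative part, and that expanding $\sigma_i=(R_{i+2},s_i,R_{i+2})$ at the fixed right-most leaf yields $(R_n,s_i,R_n)$).
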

  Moreover, we obtain infinite and finite presentations of $V$ as follows:

  \begin{proposition} [cf. {\cite[Theorem 2.4]{brady2008bv}}]
    Thompson's group $V$ admits the following infinite presentation$:$
    \begin{description}
      \item[Generators]\
            \begin{itemize}
              \item $x_i \quad (i \geq 0);$
              \item $\sigma_i \quad (i \geq 1);$
              \item $\tau_i \quad (i \geq 1)$.
            \end{itemize}
      \item[Relations]\
            \begin{enumerate}
              \relationitem{A} $x_j x_i = x_i x_{j+1} \quad (0 \leq i < j);$
              \relationitem{B0} $\sigma_i^2 = \tau_i^2 = 1 \quad (i \geq 1);$
              \relationitem{B1} $\sigma_i \sigma_j = \sigma_j \sigma_i \quad (j-i \geq 2);$
              \relationitem{B2} $\sigma_i \sigma_{i+1} \sigma_i = \sigma_{i+1} \sigma_i \sigma_{i+1} \quad (i \geq 1);$
              \relationitem{B3} $\sigma_i \tau_j = \tau_j \sigma_i \quad (j-i \geq 2);$
              \relationitem{B4} $\sigma_i \tau_{i+1} \sigma_i = \tau_{i+1} \sigma_i \tau_{i+1} \quad (i \geq 1);$
              \relationitem{C1} $\sigma_i x_j = x_j \sigma_i \quad (i < j);$
              \relationitem{C2} $\sigma_i x_j = x_{i-1} \sigma_{i+1} \sigma_i \quad (i = j);$
              \relationitem{C3} $\sigma_i x_j = x_{j+1} \sigma_i \sigma_{i+1} \quad (i = j+1);$
              \relationitem{C4} $\sigma_i x_j = x_j \sigma_{i+1} \quad (i \geq j+2);$
              \relationitem{D1} $\tau_i x_j = \sigma_i \tau_{i+1} \quad (i=j+1);$
              \relationitem{D2} $\tau_i x_j = x_j \tau_i \quad (i \geq j+2);$
              \relationitem{D3} $\tau_i = x_{i-1} \tau_{i+1} \sigma_i \quad (i \geq 1)$.
            \end{enumerate}
    \end{description}
  \end{proposition}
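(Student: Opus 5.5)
We prove that the listed generators and relations present $V$ by the standard "normal form" argument used for $F$ and, in \cite{brady2008bv}, for $BV$. Let $G$ be the abstract group given by the presentation and $\phi\colon G\to V$ the map sending each generator to the tree pair of the same name.

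\textbf{$\phi$ is a well-defined surjection.} First check that each relation \relref{A}--\relref{D3} holds in $V$. Each is a direct computation with tree diagrams, expanding both sides to a common pair of trees. Relation \relref{A} is the known relation of $F$. Relations \relref{B0}--\relref{B4} are Coxeter relations for $S_n$ realized on $(R_n,\cdot,R_n)$. The $C$ and $D$ relations express how a transposition on all-right trees slides past a caret attachment. Surjectivity follows from the first proposition above.

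\textbf{$\phi$ is injective.} We show that every word in $G$ can be rewritten, using only the relations, into the form $w_1 w_2 w_3^{-1}$ of the second proposition, and that two such forms with the same image are equal in $G$.

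For the rewriting, we push permutation generators to the middle. Relations \relref{C1}--\relref{C4} move $\sigma_i$ rightward past $x_j$, and \relref{D1}, \relref{D2} do the same for $\tau_i$. Their inverses, obtained via \relref{B0}, move permutation generators leftward past $x_j^{-1}$. Relation \relref{D3} lets us replace $\tau_i$ by $x_{i-1}\tau_{i+1}\sigma_i$, raising the index so that every permutation letter lies in a single symmetric group $S_n$ for $n$ large. Relation \relref{A} then sorts the $x$'s into positive words, as in $F$. Concretely, we prove by induction on word length that $G=P\cdot\Sigma\cdot P^{-1}$, where $P$ is the monoid of positive words and $\Sigma$ is the union of the subgroups $\Sigma_n=\langle\sigma_1,\dots,\sigma_{n-2},\tau_{n-1}\rangle$. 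The key step is to show that $\Sigma\cdot P\subseteq P\cdot\Sigma$ and $P^{-1}\cdot P\subseteq P\cdot P^{-1}$; the latter is the $F$ case, with the $\Sigma$ part absorbed afterwards.

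For uniqueness, suppose two forms $w_1w_2w_3^{-1}$ and $w_1'w_2'w_3'^{-1}$ have the same image. We first show in $G$ that each form can be "expanded": inserting a caret corresponds to replacing $w_1 w_2 w_3^{-1}$ by $w_1x_k\,w_2'\,(w_3x_{k'})^{-1}$ via the $C$/$D$ relations and \relref{D3}. Since equivalent tree pairs have a common expansion, we may then assume both forms have the same trees, so $w_1=w_1'$ and $w_3=w_3'$ in $F\subseteq G$. It remains to see that $w_2$ and $w_2'$ represent the same permutation of $S_n$. Relations \relref{B0}--\relref{B4}, after relabeling $\tau_{n-1}$ as the last Coxeter generator, are exactly the Coxeter presentation of $S_n$, so $\Sigma_n\to S_n$ is injective and $w_2=w_2'$ in $G$.

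The main obstacle is this expansion-compatibility step. We must verify that the algebraic operation "append $x_k$ on both sides and conjugate the permutation word" matches the combinatorial formula for $\sigma'$, including the boundary cases where the caret is attached at the last leaf. Those cases are exactly where $\tau$ and \relref{D1}, \relref{D3} intervene. We would handle it by checking the claim on the generators of $\Sigma_n$ only, since both sides are homomorphic in $w_2$, leaving a finite case split governed by $C1$--$C4$ and $D1$--$D3$.
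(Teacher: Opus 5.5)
\textbf{There is a genuine gap in the uniqueness step.} The paper does not reprove this proposition (it cites Brady et al.). However, its proof of Theorem~\ref{theorem_presentation} for $CV$ is the same normal-form argument, and it shows what is missing.

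The form you establish, $G=P\cdot\Sigma\cdot P^{-1}$, imposes no relation between the degree $n$ of the middle word and the sizes of $w_1$ and $w_3$. Yet you then read the trees off $w_1$ and $w_3$ (``same trees, so $w_1=w_1'$ and $w_3=w_3'$''). That is only legitimate when $w_1w_2w_3^{-1}$ literally is the decomposition $(T_+,1,R_n)(R_n,\pi,R_n)(R_n,1,T_-)$, i.e.\ when $n\ge N(w_1)+1$ and $n\ge N(w_3)+1$.

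It already fails for $x_0\tau_1\in P\cdot\Sigma_2$. In $V$ this element sends the leaves $00,01,1$ of the top tree of $x_0$ to $1,00,01$. So its reduced tree pair has that same tree, which is not all-right, at the bottom. Indeed (D1) rewrites it as $x_0\,(\sigma_1\tau_2)\,x_0^{-1}$, with $w_3=x_0$ rather than the empty word.

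What you need is the paper's notion of a block (Definition~\ref{definition_block}): the size condition $n\ge N+1$ built into the normal form. You also need the bookkeeping that keeps this condition invariant while rewriting:
\begin{enumerate}
\item Pushing $x_k$ with $k\le n-2$ leftward through a word in the $S_n$ generators yields at most one $x_{k'}$ with $k'\le n-2$, followed by a word in the $S_{n+1}$ generators (the analogue of Lemma~\ref{lemma_commute}).
\item Raising the degree of the middle word by one costs at most one letter $x_{k'}$, which via (D3) or (D1) you can place on either side (Lemma~\ref{lemma_indexup}).
\item $N(w_1x_k)\le\max(N(w_1)+1,k+2)$, so caret counts grow no faster than $n$ (Lemmas~\ref{lemma_twoblock} and \ref{lemma_oneblock}).
\end{enumerate}
Simply ``raising the index until $n$ is large'' is not enough. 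If the generated $x$'s go to a side whose caret count already exceeds $n$, that count grows in lockstep with $n$.

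\textbf{Once blocks are in place, the step you flag as the main obstacle becomes unnecessary.} You never have to compare two arbitrary forms through common expansions; it suffices to treat a block with trivial image. Such a block represents the tree pair obtained by padding the trees of $w_1$ and $w_3$ along the right spine to $n$ leaves, with the permutation of $w_2$. Every representative of the identity has the form $(T,1,T)$. Hence $w_1\equiv w_3$ letter for letter and $w_2$ is trivial in $S_n$ (Lemma~\ref{lemma_identity}). Your Coxeter observation then gives $w_2=1$ in $G$, so the block is trivial in $G$.

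\textbf{A misprint in the statement affects your first step.} When you verify the relations, you will find that (D2) as printed, $\tau_ix_j=x_j\tau_i$, is false in $V$. Conjugation by $x_j$ with $j\le i-2$ moves the swap one level further down the right spine. The relation should read $\tau_ix_j=x_j\tau_{i+1}$; compare (d2) in the finite presentation and (D5) for $CV$. This corrected form is exactly what your rewriting needs when moving $x_k$, $k\le n-3$, leftward past $\tau_{n-1}$, since the result must be the generator $\tau_n$ of the next symmetric group.
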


  \begin{proposition} [cf. {\cite[Theorem 3.1]{brady2008bv}}]
    Thompson's group $V$ admits a finite presentation with generators $x_0, x_1, \sigma_1, \tau_1$ and relations
  \begin{enumerate}
    \relationitem{a} $x_2 x_0 = x_0 x_3,\ x_3 x_1 = x_1 x_4$,
    \relationitem{b0} $\sigma_1^2 = \tau_1^2 = 1$,
    \relationitem{b1} $\sigma_1 \sigma_3 = \sigma_3 \sigma_1$,
    \relationitem{b2} $\sigma_1 \sigma_2 \sigma_1 = \sigma_2 \sigma_1 \sigma_2$,
    \relationitem{b3} $\sigma_1 \tau_3 = \tau_3 \sigma_1$,
    \relationitem{b4} $\sigma_1 \tau_2 \sigma_1 = \tau_2 \sigma_1 \tau_2$,
    \relationitem{c1} $\sigma_1 x_2 = x_2 \sigma_1,\ \sigma_1 x_3 = x_3 \sigma_1,\ \sigma_2 x_3 = x_3 \sigma_2,\ \sigma_2 x_4 = x_4 \sigma_2$,
    \relationitem{c3} $\sigma_1 x_0 = x_1 \sigma_1 \sigma_2,\ \sigma_2 x_1 = x_2 \sigma_2 \sigma_3$,
    \relationitem{c4} $\sigma_2 x_0 = x_0 \sigma_3,\ \sigma_3 x_1 = x_1 \sigma_4$,
    \relationitem{d1} $\tau_1 x_0 = \sigma_1 \tau_2,\ \tau_2 x_1 = \sigma_2 \tau_3$,
    \relationitem{d2} $\tau_2 x_0 = x_0 \tau_3,\ \tau_3 x_1 = x_1 \tau_4$,
  \end{enumerate}
  where $x_{i+2} = x_i^{-1} x_{i+1} x_i\ (i \geq 0), \sigma_{i+1} = x_{i-1}^{-1} \sigma_i x_i \sigma_i\ (i \geq 1)$, and $\tau_{i+1} = x_{i-1}^{-1} \tau_i \sigma_i\ (i \geq 1)$.
\end{proposition}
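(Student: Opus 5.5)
Since the infinite presentation of $V$ (the preceding proposition, following \cite[Theorem 2.4]{brady2008bv}) is available, I will derive the finite one by Tietze transformations. Let $G$ be the group given by the finite presentation, with the elements $x_i$, $\sigma_i$, $\tau_i$ for larger indices defined recursively by the stated formulas $x_{i+2} = x_i^{-1}x_{i+1}x_i$, $\sigma_{i+1} = x_{i-1}^{-1}\sigma_i x_i \sigma_i$ and $\tau_{i+1} = x_{i-1}^{-1}\tau_i\sigma_i$.

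First I check that these defining formulas are consequences of the infinite relations. The formula for $x_{i+2}$ is (A) with $(i,j)=(i,i+1)$. The formula for $\sigma_{i+1}$ follows from (C2), $\sigma_i x_i = x_{i-1}\sigma_{i+1}\sigma_i$, together with (B0). The formula for $\tau_{i+1}$ is (D3) rewritten. Every finite relation is also an instance of an infinite one. Hence the obvious map $G \to V$ is a well-defined surjection.

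It remains to show that every infinite relation holds in $G$. I will do this by induction on indices, using conjugation by $x$'s as the shift mechanism. For family (A), the standard argument for $F$ applies: the two relations in (a) imply all of (A), by the classical result that $\langle x_0,x_1\rangle$ with these two relations presents $F$. With (A) in hand, conjugation by $x_0$ (or by suitable $x_k$) shifts indices: $x_0^{-1}x_jx_0 = x_{j+1}$ for $j\ge1$. The key lemma is then that relations (c1), (c3), (c4), (d1), (d2) yield shift formulas such as $x_{i-1}^{-1}\sigma_i x_{i-1} = \sigma_{i+1}$-type identities, adjusted through (C4). For instance, (C4) gives $x_j^{-1}\sigma_i x_j = \sigma_{i+1}$ for $i \ge j+2$. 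Using these, I will show that $\sigma_{i+1}$ equals a conjugate of $\sigma_i$ by an element of $F$ that also conjugates $\sigma_{k}\mapsto\sigma_{k+1}$ and $x_k \mapsto x_{k+1}$ for the other indices appearing in a given relation. Each instance of (B1)--(B4), (C1)--(C4) and (D1)--(D2) is then a conjugate of a base instance with bounded indices.

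Concretely, I would proceed in this order:
\begin{enumerate}
\item Prove (C4) for all $i\ge j+2$. The base cases are (c4); I induct on $j$ by conjugating with $x_0$ or $x_1$, using (A) and the recursive definitions.
\item Prove (C1), (C3) and (D2) for all indices similarly, and derive (C2) from the definition of $\sigma_{i+1}$.
\item Prove (D1) and (D3). (D3) holds by definition, and (D1) is shifted from (d1) via (D2) and (C4).
\item Deduce the symmetric-group relations (B0)--(B4) for all indices. The bounded ones, (b0)--(b4), are transported upward by conjugating with $x_0$, using (C4) and (D2) to shift $\sigma_i$ and $\tau_i$ simultaneously. For example, $x_0^{-1}\sigma_i x_0=\sigma_{i+1}$ for $i\ge2$ and $x_0^{-1}\tau_i x_0 = \tau_{i+1}$ for $i\ge2$.
\end{enumerate}

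The main obstacle is the low-index cases that cannot be obtained by conjugation with $x_0$, such as (B0) for $\sigma_2$ or (B2) with $i=1$ involving $\tau_2$. The reason is that $x_0$ does not shift $\sigma_1$ uniformly: $x_0^{-1}\sigma_1x_0$ is not $\sigma_2$, since (C3) intervenes. For these cases I would first try conjugating by $x_1$, which fixes index-$1$ generators by (C1) and shifts the others via (C4)/(D2). If that fails, I would expand the recursive definition $\sigma_2 = x_0^{-1}\sigma_1x_1\sigma_1$ and verify the needed identities by explicit word manipulation, using (b1)--(b4) and (c1)--(c4). This is the same case bookkeeping carried out in \cite[Section 3]{brady2008bv}, and I expect it to be the lengthiest part.
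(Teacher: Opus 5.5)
Your overall architecture is the right one. It is the argument of \cite{brady2008bv}, which the paper only cites for this statement, and the same template the paper uses for $CV$ in Section~\ref{section_finitely_presented}: map $G\to V$, then show every instance of the infinite relations holds for the recursively defined words. The shift lemmas you need also go through. Induct on $i$ through $\sigma_{i+1}=x_{i-1}^{-1}\sigma_ix_i\sigma_i$ and $\tau_{i+1}=x_{i-1}^{-1}\tau_i\sigma_i$, starting from the base cases (c4) and (d2). This gives $x_0^{-1}\sigma_ix_0=\sigma_{i+1}$ and $x_0^{-1}\tau_ix_0=\tau_{i+1}$ for $i\ge2$, and $x_1^{-1}\sigma_ix_1=\sigma_{i+1}$ and $x_1^{-1}\tau_ix_1=\tau_{i+1}$ for $i\ge3$. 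Together with (c1), (c3), (d1) and the recursion for $x_k$, these settle (C1), (C3), (C4), (D1) and (D2).

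The gap is in the B-family, where the content of the finite presentation lies, and there your proposed mechanisms fail.
\begin{itemize}
\item Every relation in (b0)--(b4) contains $\sigma_1$ or $\tau_1$, and $x_0$ does not shift these: (c3) and (d1) give $\sigma_1x_0=x_1\sigma_1\sigma_2$ and $\tau_1x_0=\sigma_1\tau_2$. So none of (b0)--(b4) can be ``transported upward by conjugating with $x_0$''.
\item What must be transported are the index-$2$ instances $\sigma_2^2=\tau_2^2=1$, $[\sigma_2,\sigma_4]=[\sigma_2,\tau_4]=1$, $\sigma_2\sigma_3\sigma_2=\sigma_3\sigma_2\sigma_3$ and $\sigma_2\tau_3\sigma_2=\tau_3\sigma_2\tau_3$. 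None of these is a defining relation of $G$. (Note that (B2) at $i=1$ is (b2) itself, so it is not an obstacle.)
\item Your fallback, conjugating by $x_1$, does not work. (C1) requires $i<j$; for $\sigma_1$ and $x_1$ the applicable relation is (C2), $\sigma_1x_1=x_0\sigma_2\sigma_1$. No relation lets $x_1$ pass $\tau_1$ at all.
\item Uniform index shifts preserve $j-i$. Since (B1) and (B3) have unbounded gap, they are not conjugates of finitely many bounded instances under shifts alone.
\end{itemize}

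Three ingredients are missing.
\begin{itemize}
\item First, $\sigma_2^2=1$. Comparing the definition $\sigma_2=x_0^{-1}\sigma_1x_1\sigma_1$ with (c3) gives $\sigma_2=\sigma_1x_1^{-1}\sigma_1x_0=\sigma_2^{-1}$. Likewise, (d1) and the definition of $\tau_2$ give $\tau_2^2=1$. You need this before deriving (C2) and (D3) ``from the definitions'', since both use $\sigma_i^2=1$; your step~(2) currently relies on (B0) that you only address in step~(4).
\item Second, the gap must be widened using the recursive definitions. By (C1), $\sigma_1$ commutes with $x_{j-1}$ and $x_j$ for $j\ge3$. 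Hence $[\sigma_1,\sigma_j]=1$ implies $[\sigma_1,\sigma_{j+1}]=1$. Likewise, using also $[\sigma_1,\sigma_j]=1$, $[\sigma_1,\tau_j]=1$ implies $[\sigma_1,\tau_{j+1}]=1$. The same works for $\sigma_2$ once $j\ge4$.
\item Third, the index-$2$ instances need explicit derivations. The commuting ones are short. For example, use $\sigma_2=\sigma_1x_1^{-1}\sigma_1x_0$, $[\sigma_1,\sigma_3]=[\sigma_1,\sigma_4]=1$, $\sigma_4x_1^{-1}=x_1^{-1}\sigma_3$ and $\sigma_3x_0=x_0\sigma_4$ to get
\[
\sigma_4\sigma_2=\sigma_1\sigma_4x_1^{-1}\sigma_1x_0=\sigma_1x_1^{-1}\sigma_1\sigma_3x_0=\sigma_1x_1^{-1}\sigma_1x_0\sigma_4=\sigma_2\sigma_4 .
\]
The braid-type instances at $i=2$ are exactly the computations your proposal defers to ``explicit word manipulation'' and to the very reference being proved.
\end{itemize}
As written, the proposal is an outline that stops precisely where the proof has content.
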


\subsection{Cactus groups} \label{sec_cactus}

Let $n \geq 2$ be an integer.
The \textbf{cactus group} $J_n$ of degree $n$ is defined by the generators $s_{i,j}\ (1 \leq i < j \leq n)$ with the following relations:
\begin{itemize}
  \item $s_{i,j}^2 = 1 \quad (1 \leq i < j \leq n)$;
  \item $s_{i,j} s_{k,l} = s_{k,l} s_{i,j} \quad (1 \leq i < j < k < l \leq n)$;
  \item $s_{i,j} s_{k,l} = s_{i+j-l,i+j-k} s_{i,j} \quad (1 \leq i  \leq k < l \leq j \leq n)$.
\end{itemize}

\begin{figure}[tbp]
  \begin{center}
    \includegraphics[width=0.25\linewidth]{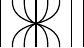}
  \end{center}
  \caption{A generator $s_{2,6}$ of the cactus group $J_8$}
  \label{cactus_example}
\end{figure}

\begin{figure}[tbp]
  \begin{minipage}[b]{0.47\columnwidth}
    \begin{center}
      \includegraphics[width=0.7\linewidth]{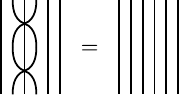}
    \end{center}
    \caption*{$s_{2,4}^2 = 1$}
  \end{minipage}
  \vspace{2em}
  \hfill
  \begin{minipage}[b]{0.47\columnwidth}
    \begin{center}
      \includegraphics[width=0.7\linewidth]{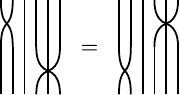}
    \end{center}
    \caption*{$s_{1,2} s_{4,6} = s_{4,6} s_{1,2}$}
  \end{minipage}
  \begin{minipage}[b]{0.47\columnwidth}
    \begin{center}
      \includegraphics[width=0.7\linewidth]{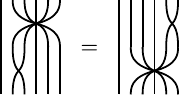}
    \end{center}
    \caption*{$s_{2,6} s_{2,3} = s_{5,6} s_{2,6}$}
  \end{minipage}
  \caption{Relations of the cactus group $J_6$}
  \label{cactus_relation}
\end{figure}

Every generator of the cactus group can be represented by a \textit{braid-like} diagram as follows:
the generator $s_{i,j}$ of $J_n$ is a \textit{braid} on $n$ strands where the strands $i, i+1, \ldots, j$ are gathered at a single point, and then their order is reversed.
Figure \ref{cactus_example} is an example of a generator.
Given two elements $s, t \in J_n$, their product $st$ is represented by a diagram obtained by concatenating a diagram representing $t$ below a diagram representing $s$.
Figure \ref{cactus_relation} consists of diagrams corresponding to the three types of relations of the cactus group.
We set $J_1\coloneqq\{1\}$.

From the description above, we have a natural homomorphism
\begin{align*}
  \pi_n \colon J_n \to S_n;\ s_{i,j} \mapsto \begin{pmatrix}
                                               i & i+1 & \cdots & j-1 & j \\
                                               j & j-1 & \cdots & i+1 & i
                                             \end{pmatrix}.
\end{align*}
It is easily seen that the map $\pi_n$ is surjective.
Its kernel is called the \textbf{pure cactus group} $PJ_n$.
Namely, the following exact sequence holds:
\begin{equation*}
  \begin{tikzcd}
    1 \arrow[r] & PJ_n \arrow[r] & J_n \arrow[r, "\pi_n"] & S_n \arrow[r] & 1.
  \end{tikzcd}
\end{equation*}
By (1) of the proposition below, this exact sequence does not split for $n \geq 3$.

\begin{proposition} [{\cite{bellingeri2024cactus}}] \label{cactus_property}
  The following hold$:$
  \begin{enumerate}
    \item The cactus group has no odd torsion.
    \item The cactus group $J_{2^k}$ has torsion of order $2^k$ for every integer $k \geq 1$.
    \item Any finite subgroup of the cactus group is a $2$-group, that is, its order is a power of 2.
    \item The pure cactus group is torsion-free.
    \item The center of the cactus group $J_n$ $($resp.~pure cactus group $PJ_n$$)$ is trivial for $n>2$ $($resp.~$n>3$$)$.
  \end{enumerate}
\end{proposition}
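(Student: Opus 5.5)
The plan is to derive all five items from one geometric model: the action of $J_n$ on a CAT(0) cube complex, coming from the work of Davis, Januszkiewicz and Scott on $\overline{M}_{0,n+1}(\mathbb{R})$. The space $\overline{M}_{0,n+1}(\mathbb{R})$ admits a cubulation whose universal cover $X_n$ is a CAT(0) cube complex. On $X_n$ the group $J_n$ acts properly and cocompactly, the subgroup $PJ_n = \Ker \pi_n$ acts freely, and $X_n/PJ_n \cong \overline{M}_{0,n+1}(\mathbb{R})$. I will take this model as the basic input. The remaining work is to identify vertex stabilizers and then read off torsion and centre.

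\textbf{Items (4), (3) and (1).} Item (4) is immediate: $PJ_n$ is the fundamental group of a compact nonpositively curved cube complex, which is aspherical and finite-dimensional, so $PJ_n$ is torsion-free.

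For (3), let $H \le J_n$ be finite. By the Bruhat--Tits fixed point theorem, $H$ fixes a point of $X_n$, hence lies in a cell stabilizer and so in a vertex stabilizer. I would then show that vertex stabilizers are conjugates of the subgroups generated by a maximal family of pairwise nested-or-disjoint intervals $\{s_{i,j}\}$, which correspond to the strata/planar trees. Using the third cactus relation, such a subgroup is an iterated wreath-type product of copies of $\bZ/2\bZ$, so its order is a power of $2$. Two checks remain here:
\begin{itemize}
  \item it is generated by involutions indexed by a laminar family;
  \item it maps injectively into $S_n$, which holds because $PJ_n$ is torsion-free by (4).
\end{itemize}
Its image in $S_n$ is then the automorphism group of a planar rooted tree structure on $\{1,\dots,n\}$, which is visibly a $2$-group. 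Item (1) follows from (3), since an element of odd order would generate a finite subgroup whose order is not a power of $2$.

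\textbf{Item (2).} Here I construct an explicit element recursively. Set $c_1 = s_{1,2}$, and $c_k = s_{1,2^k}\, c_{k-1}$, where $c_{k-1}$ is viewed on the strands $1,\dots,2^{k-1}$. For $k=2$ one computes $\pi_4(s_{1,4}s_{1,2}) = (1\,3\,2\,4)$, a $4$-cycle. By induction, $\pi_{2^k}(c_k)$ is a $2^k$-cycle: the reversal $s_{1,2^k}$ interleaves the two halves with the $2^{k-1}$-cycle on the first half. Hence $c_k$ has order at least $2^k$. Moreover $c_k$ lies in the finite subgroup generated by the laminar family $\{s_{1,2^k}, s_{1,2^{k-1}},\dots\}$, which embeds in $S_{2^k}$ by (4). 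So the order of $c_k$ is exactly $2^k$.

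\textbf{Item (5).} I expect this to be the main obstacle. For $n>2$ the centre of $S_n$ is trivial, so any central $z \in J_n$ maps to $1$ and lies in $PJ_n$; by (4), if $z \neq 1$ it has infinite order. I would then use the cube complex. A central infinite-order element acts hyperbolically, and its minimal set is preserved by all of $J_n$, hence by cocompactness it is essentially the whole of $X_n$. This forces $X_n$ to split off a line or a Euclidean factor, which should contradict the irreducibility of $X_n$ for $n>2$. That irreducibility in turn amounts to the non-product structure of the hyperplane crossing graph, i.e.\ of the compatibility pattern of intervals.

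A cleaner route, if it applies, is Genevois' result that $J_n$ is acylindrically hyperbolic. Such a group has a unique maximal finite normal subgroup containing the centre's torsion, and its centre is finite. Combined with torsion-freeness of central elements, this gives $Z(J_n)=1$.

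For $PJ_n$ with $n>3$, I would run the same argument. The cocompact action of the finite-index subgroup $PJ_n$ on the irreducible complex $X_n$ rules out a central hyperbolic element. The threshold $n>3$ is explained by $PJ_3$ being $\bZ$. Verifying irreducibility (equivalently, acylindrical hyperbolicity of the finite-index subgroup $PJ_n$) is the step I expect to be hardest, and I would check it through the hyperplane combinatorics of the interval family.
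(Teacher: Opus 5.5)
The paper gives no proof of its own here; the proposition is quoted from \cite{bellingeri2024cactus}. Judged on its own, your proposal has a genuine gap at the step that carries (3), and through it (1) and the finiteness claim in (2).

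\textbf{The gap.} Stabilizers of $X_n$ are not the subgroups generated by the $s_{i,j}$ over a maximal nested-or-disjoint family of intervals, and such subgroups need not even be finite. For $n=3$ the family $\{[1,3],[1,2]\}$ is maximal. The third relation gives $s_{1,3}s_{1,2}s_{1,3}=s_{2,3}$, after which the presentation of $J_3$ collapses to $\langle s_{1,2},s_{1,3}\mid s_{1,2}^2=s_{1,3}^2=1\rangle\cong\bZ/2\bZ*\bZ/2\bZ$. So $\langle s_{1,3},s_{1,2}\rangle=J_3$ is infinite dihedral. Since $J_3$ embeds in $J_n$ through the injective maps $\iota_m$, every maximal family containing $[1,2]\subset[1,3]$ generates an infinite subgroup of $J_n$. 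Such a subgroup cannot be a stabilizer of a proper action.

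Your two checks do not repair this. Deducing injectivity into $S_n$ from torsion-freeness of $PJ_n$ is only valid for a subgroup already known to be finite, so using it here is circular. And the group you land on, the automorphism group of a planar rooted tree, is trivial, so that identification is also wrong.

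\textbf{How to fix (3) and (1).} Bypass vertex stabilizers entirely. $PJ_n$ acts freely on $X_n$ and transitively on the fibres over $\overline{M}_{0,n+1}(\mathbb{R})$. Hence the stabilizer in $J_n$ of the point $\tilde x$ fixed by $H$ maps isomorphically onto the isotropy group in $S_n$ of its image $x$. That isotropy group is the group of automorphisms of the real stable curve $x$ fixing the $(n+1)$-st marked point.

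On each component, put the special point leading towards that marked point at $\infty$. A real M\"obius map fixing $\infty$ and permuting a finite set of at least two real points is either $t\mapsto t$ or $t\mapsto -t+b$. The kernel of the action on the root component fixes every special point there, so it preserves each branch together with its attaching point. Induction down the dual tree then shows the isotropy group is an iterated extension of groups of order at most $2$, hence a $2$-group. This gives (3), and (1) follows. Concretely, the stabilizer of the barycenter of the face indexed by a planar tree is the group of symmetries of that tree which, at each node, preserve or reverse the order of the children.

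\textbf{Item (2).} Your element $c_k$ is correct, but prove finiteness directly. We have $c_k^2=(s_{1,2^k}c_{k-1}s_{1,2^k})\,c_{k-1}$. The first factor is a word in reversals of subintervals of $[2^{k-1}+1,2^k]$, so it commutes with $c_{k-1}$. By induction $c_k^{2^k}=1$, while $\pi_{2^k}(c_k)$ has order $2^k$.

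\textbf{Item (5).} Citing Genevois's acylindrical hyperbolicity suffices and makes the irreducibility analysis unnecessary. This can only hold for $n\geq 4$, since $J_3$ is virtually cyclic. For $n\geq4$:
\begin{itemize}
  \item acylindrical hyperbolicity passes to the finite-index subgroup $PJ_n$;
  \item acylindrically hyperbolic groups have finite centre;
  \item a finite central subgroup of $J_n$ lies in the torsion-free group $PJ_n$ (because $Z(S_n)=1$), so it is trivial.
\end{itemize}
The case $n=3$ must be done by hand, and the centre of $J_3\cong\bZ/2\bZ*\bZ/2\bZ$ is visibly trivial.
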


\begin{remark}
  The pure cactus group $PJ_n$ is isomorphic to the fundamental group of the Deligne--Mumford compactification of the moduli space of real genus-zero curves with $n+1$ marked points \cite[Theorem~9]{henriques2006crystals}. 
\end{remark}

More algebraic and geometric properties for the (pure) cactus group are investigated in \cite{bellingeri2024cactus} and \cite{genevois2025cactus}, for instance.

\section{Definitions of the cactus Thompson group} \label{section_CactusThompson}
\subsection{Definitions} \label{subsection_def_CV}

\subsubsection{Diagrammatic definition}
A triple $(T_+, s, T_-)$ is called a \textbf{cactus-tree pair} if $T_+$ and $T_-$ are rooted binary trees with the same number of leaves $n$ and $s$ is an element of the cactus group $J_n$.
Let $\cT_{\text{cac}}$ be the set of all cactus-tree pairs.
A \textbf{cactus-tree diagram} of a cactus-tree pair is defined in a similar way to a tree diagram.

Given a cactus-tree pair $(T_+, s, T_-)$, we attach a caret to the $i$-th leaf of $T_+$ (resp.~the $\overline{s}(i)$-th leaf of $T_-$), and obtain a rooted binary tree $T'_+$ (resp.~$T'_-$) with $n+1$ leaves, where $\overline{s} \coloneq \pi_n (s)$.
Suppose $s = s_{j_1,k_1} \cdots s_{j_l,k_l} \in J_n$.
Then we inductively define an element $s' \coloneq s'_1 \cdots s'_l$ in $J_{n+1}$ as follows:
first, $s'_1$ is defined by
\begin{align*}
  s'_1 \coloneq
  \begin{cases}
    s_{j_1,k_1}            & (k_1<i),               \\
    s_{i,i+1}s_{j_1,k_1+1} & (j_1 \leq i \leq k_1), \\
    s_{j_1+1,k_1+1}        & (i < j_1).
  \end{cases}
\end{align*}
If $s'_m\ (1 \leq m < l)$ is defined, then we define $s'_{m+1}$ by
\begin{align*}
  s'_{m+1} \coloneq
  \begin{cases}
    s_{j_{m+1},k_{m+1}}                                                                                                  & (k_{m+1} < \overline{s_{j_1,k_1} \cdots s_{j_m,k_m}}(i)),                 \\
    s_{\overline{s_{j_1,k_1} \cdots s_{j_m,k_m}}(i),\overline{s_{j_1,k_1} \cdots s_{j_m,k_m}}(i)+1}s_{j_{m+1},k_{m+1}+1} & (j_{m+1} \leq \overline{s_{j_1,k_1} \cdots s_{j_m,k_m}}(i) \leq k_{m+1}), \\
    s_{j_{m+1}+1,k_{m+1}+1}                                                                                              & (\overline{s_{j_1,k_1} \cdots s_{j_m,k_m}}(i) < j_{m+1}).
  \end{cases}
\end{align*}
We obtain a new cactus-tree pair $(T'_+, s', T'_-)$, which is called an \textbf{expansion} of $(T_+, s, T_-)$.
Conversely, $(T_+, s, T_-)$ is called a \textbf{reduction} of $(T'_+, s', T'_-)$.
A cactus-tree pair is \textbf{reduced} if it admits no reduction.
They are operations on $\cT_{\text{cac}}$, and define an equivalence relation $\sim$ on it generated by them.
Figure \ref{CV_element_example} is an example of an expansion.

\begin{figure}[tbp]
  \begin{center}
    \includegraphics[width=0.85\linewidth]{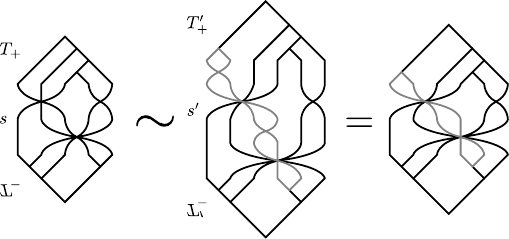}
  \end{center}
  \caption{A cactus-tree pair $(T'_+, s', T'_-)$ is an expansion of $(T_+, s, T_-)$ obtained by adding a caret to a leaf 1 of $T_+$ (resp.~leaf 4 of $T_-$). Since $s = s_{1,3} s_{4,5} s_{2,5}$, we have $s' = s_{1,2} s_{1,4} s_{5,6} s_{3,4} s_{2,6} = s_{1,4} s_{5,6} s_{2,6}$.}
  \label{CV_element_example}
\end{figure}

The \textbf{cactus Thompson group} $CV$ is a group $(\cT_{\text{cac}} /{\sim}, \cdot)$, where a product $\cdot$ is defined similarly to $V$.
Namely, let $(S_+, s, S_-)$ and $(T_+, t, T_-)$ be in $\cT_{\text{cac}}$.
By expansion or reduction, we obtain tree pairs $(S'_+, s', S'_-)$ and $(T'_+, t', T'_-)$ such that $(S_+, s, S_-) \sim (S'_+, s', S'_-)$ and $(T_+, t, T_-) \sim (T'_+, t', T'_-)$, and satisfying $S'_- = T'_+$.
Then the product of the equivalence classes of $(S_+, s, S_-)$ and $(T_+, t, T_-)$ is the equivalence class of $(S'_+, s' t', T'_-)$.
Similarly to $V$, the identity element of $CV$ is represented by $(T, 1, T)$, where $T$ is any rooted binary tree.
For an equivalence class of $(T_+, s, T_-)$, its inverse element is the equivalence class of $(T_-, s^{-1}, T_+)$.

Moreover we can compute the product diagrammatically by applying the deformations
\begin{align*}
  \scalebox{1.5}{\cinput{0}{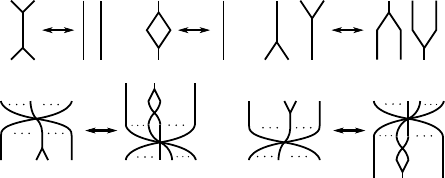}}
\end{align*}
to cactus-tree diagrams.
Figure \ref{CV_product_example} is an example of a diagrammatic computation.

\begin{figure}[tbp]
  \begin{center}
    \includegraphics[width=0.85\linewidth]{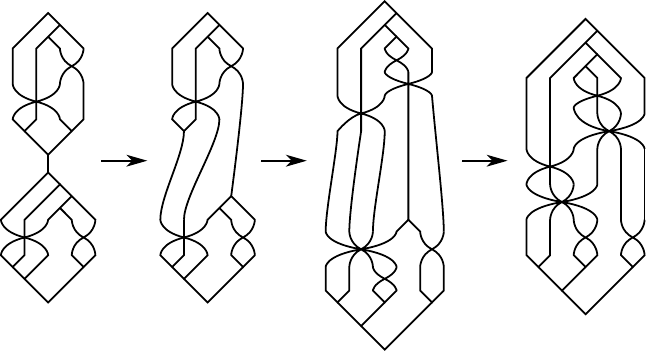}
  \end{center}
  \caption{A computation of the product in $CV$}
  \label{CV_product_example}
\end{figure}

\subsubsection{Definition by the cloning system of Witzel--Zaremsky}
Witzel and Zaremsky \cite{witzel2018cloning} defined the cloning system in order to define generalized Thompson's groups such as the braided Thompson group $BV$.
This is motivated by Brin's definition of $BV$.

Let $(G_n)$ be a direct system of groups with an injective homomorphism $\iota_n \colon G_n \to G_{n+1}$ for each $n \in \bN$, and $G = \varinjlim G_n$ its direct limit.
Then the generalized Thompson group $\cT(G_*)$ is a certain subgroup of the group of right fractions of the Brin--Zappa--Sz\'ep (BZS) product $G \bowtie \sF$, where $\sF$ is the monoid of binary forests.
The BZS product $G \bowtie \sF$ is defined by two actions $G \times \sF \to G$ and $G \times \sF \to \sF$ satisfying certain conditions.
See \cite{witzel2018cloning} for details.

The family of symmetric groups $(S_n)$ forms a direct system where the injective homomorphism $\lambda_n \colon S_n \to S_{n+1}$ is induced by the inclusion $\{ 1, \ldots, n \} \hookrightarrow \{ 1, \ldots, n ,n+1\}$.
The infinite symmetric group $S_{\infty}$ is regarded as the direct limit $\varinjlim S_n$.
Define a map $\varsigma_k^n \colon S_n \to S_{n+1}$ for $n \in \bN$ and $1 \leq k \leq n$ by
\begin{align*}
  ((\sigma)\varsigma_k^n)(i) \coloneq
  \begin{cases}
    \sigma(i)     & (i \leq k, \sigma(i) \leq \sigma(k)), \\
    \sigma(i)+1   & (i < k, \sigma(i) > \sigma(k)),       \\
    \sigma(i-1)   & (i > k, \sigma(i-1) < \sigma(k)),     \\
    \sigma(i-1)+1 & (i > k, \sigma(i-1) \geq \sigma(k)).
  \end{cases}
\end{align*}
Here these maps $\lambda_n$ and $\varsigma_k^n$ are written on the right.
They satisfy the equation
\begin{align*}
  \lambda_n \circ \varsigma_k^{n+1} =\varsigma_k^n \circ \lambda_{n+1}
\end{align*}
for any $n \in \bN$ and $1 \leq k \leq n$.

\begin{definition}
  Let $\rho_n \colon G_n \to S_n$ be a group homomorphism and $\kappa_k^n \colon G_n \to G_{n+1}$ an injective map for each $n \in \bN$ and $1 \leq k \leq n$.
  A \textbf{cloning system} is the quadruple $((G_n), (\iota_n), (\rho_n), (\kappa_k^n))$ satisfying the following conditions:
  \begin{enumerate}
    \item $\rho_{n+1} ((g)\iota_n) = (\rho_n(g)) \iota_n$ for all $n \in \bN$ and $g \in G_n$,
    \item $\iota_n \circ \kappa_k^{n+1} =\kappa_k^n \circ \iota_{n+1}$ for all $n \in \bN$ and $1 \leq k \leq n$,
    \item $(gh)\kappa_k^n = (g)\kappa_{\rho_n(h)(k)}^n (h)\kappa_k^n$ for all $n \in \bN, 1 \leq k \leq n$, and $g,h \in G_n$,
    \item $\kappa_l^n \circ \kappa_k^{n+1} = \kappa_k^n \circ \kappa_{l+1}^{n+1}$ for all $n \in \bN$ and $1 \leq k < l \leq n$,
    \item $\rho_{n+1}((g)\kappa_k^n)(i) = (\rho_n(g))\varsigma_k^n(i)$, for all $n \in \bN, 1 \leq k \leq n, g \in G_n$, and $i \neq k, k+1$,
  \end{enumerate}
  where we write the maps $\iota_n$ and $\kappa_k^n$ on the right.
  The maps $\kappa_k^n \colon G_n \to G_{n+1}$ are called the \textbf{cloning maps} of the cloning system $((G_n), (\iota_n), (\rho_n), (\kappa_k^n))$.
\end{definition}

Given a cloning system $((G_n), (\iota_n), (\rho_n), (\kappa_k^n))$, we can define a BZS product $G \bowtie \sF$, which is an Ore monoid, that is, a cancellative monoid with common right multiples.
In particular it has a group of right fractions, denoted by $\widehat{\cT}(G)$, and is called the \textbf{large generalized Thompson group} of $G$.
Moreover the subset of $\widehat{\cT}(G)$ consisting of \textit{simple} elements forms a subgroup of $\widehat{\cT}(G)$.
This subgroup is called the \textbf{generalized Thompson group} and denoted by $\cT(G_*)$.

\begin{example}
  If $G_n$'s are all trivial, then its generalized Thompson group is isomorphic to $F$.
  The quadruple $((S_n), (\lambda_n), (\operatorname{id}_{S_n}), (\varsigma_k^n))$ is a cloning system, and then the group $\cT(S_*)$ is isomorphic to $V$.
  Moreover, the braided Thompson group $BV$ is regarded as the generalized Thompson group of the braid group.
\end{example}

In Section 9 of \cite{witzel2018cloning}, Witzel and Zaremsky discussed the case of $G_n = J_n$ and $\rho_n = \pi_n \colon J_n \to S_n$.
Define an injective homomorphism $\iota_n \colon J_n \to J_{n+1}$ by $(s_{i,j})\iota_n \coloneq s_{i,j}$.
Also, the maps $\kappa_k^n \colon J_n \to J_{n+1}$ are determined recursively by 
\begin{align*}
  (s_{i,j})\kappa_k^n \coloneq
  \begin{cases}
    s_{i,j}            & (j < k),           \\
    s_{k,k+1}s_{i,j+1} & (i \leq k \leq j), \\
    s_{i+1,j+1}        & (k < i), 
  \end{cases}
\end{align*}
and 
\begin{align*}
  (st)\kappa_k^n \coloneq (s)\kappa_{\pi_n(t)(k)}^n (t)\kappa_k^n, 
\end{align*}
where $s$ is a generator and $t$ is a nonempty word in the generators of $J_n$. 
Then the quadruple $((J_n), (\iota_n), (\pi_n), (\kappa_k^n))$ becomes a cloning system.
This cloning system defines the generalized Thompson group $\cT(J_*)$, and it coincides with the cactus Thompson group $CV$.

\begin{remark} \label{remark_CV_embedding}
  From Observation 3.1 in \cite{witzel2018cloning}, the map $J_n \to CV; s \to (T, s, T)$ is injective, where $T$ is a rooted binary tree with $n$ leaves.
  Namely, $CV$ contains a subgroup isomorphic to $J_n$ for every integer $n \geq 2$, as well as a subgroup isomorphic to $J_\infty\coloneqq\varinjlim J_n$. 
\end{remark}


\subsection{Some subgroups of $CV$} \label{subsection_subgroups_CV}
In this subsection, we define some subgroups of $CV$ and study their algebraic properties.

\subsubsection{The pure cactus Thompson group $PCV$}
There is a surjective homomorphism $\overline{\pi} \colon CV \to V$ induced from $\pi_n \colon J_n \to S_n$, that is, $\overline{\pi}$ is defined by $\overline{\pi}((T_+, s, T_-)) \coloneq (T_+, \pi_n(s), T_-)$ for any cactus-tree pair $(T_+, s, T_-)$ with $n$ leaves.
Its kernel $\ker(\overline{\pi})$ is called the \textbf{pure cactus Thompson group}, denoted by $PCV$.
By definition, a cactus-tree pair $(T_+, s, T_-)$ with $n$ leaves is in $PCV$ if and only if $\pi_n(s) = 1$ (i.e. $s \in PJ_n$) and $T_+ = T_-$ hold.

\begin{proposition} [cf. {\cite[Theorem 4.1]{brady2008bv}}] \label{prop_PCV_nonfg}
  The pure cactus Thompson group $PCV$ is not finitely generated.
\end{proposition}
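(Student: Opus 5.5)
The approach is to write $PCV$ as a directed union of subgroups isomorphic to pure cactus groups and show that this union never stabilizes; then no finitely generated subgroup can be all of $PCV$.

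\emph{Step 1: the directed union.} For a rooted binary tree $T$ with $n$ leaves, let $G_T \coloneqq \{ (T,s,T) : s \in PJ_n \}$. Since $\pi_n(s)=1$, multiplying two such triples needs no expansion, so $G_T$ is a subgroup. By Remark~\ref{remark_CV_embedding} it is isomorphic to $PJ_n$. Let $T'$ be obtained by attaching a caret to leaf $k$ of $T$. Because $\pi_n(s)(k)=k$, expanding $(T,s,T)$ at leaf $k$ gives $(T',(s)\kappa_k^n,T')$. Axiom (5) of a cloning system shows that $(s)\kappa_k^n$ is pure up to a possible transposition of the strands $k,k+1$. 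The factors $s_{k,k+1}$ introduced by the recursive formula appear in pairs along any pure word, so I expect $(s)\kappa_k^n$ to be pure; this is a short check that I would carry out. Hence $G_T \le G_{T'}$ whenever $T'$ expands $T$. Any two trees have a common expansion, so the family $\{G_T\}$ is directed. By the characterisation of $PCV$ given above, every element of $PCV$ lies in some $G_T$, so $PCV=\bigcup_T G_T$. If $PCV$ were finitely generated, its finitely many generators would lie in a single $G_T$, forcing $PCV=G_T$.

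\emph{Step 2: an obstruction.} It remains to find, for each $T$, an element of $PCV$ outside $G_T$. Let $T''$ be obtained from $T$ by attaching a caret to leaf $1$ and then another caret to the new leaf $1$. Thus leaves $1,2,3$ of $T''$ all descend from leaf $1$ of $T$. Let $p \colon PJ_{n+2}\to PJ_3$ be the forgetful homomorphism that keeps only the strands $1,2,3$. Such maps exist on pure cactus groups, for instance via the forgetful maps of the moduli spaces $\overline{M}_{0,m+1}(\mathbb{R})$ together with the remark identifying $PJ_m$ with their fundamental groups; a purely combinatorial definition would be safer and I would look for one. Recall that $PJ_3 \cong \pi_1(\overline{M}_{0,4}(\mathbb{R}))\cong\bZ$.

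The key claim is that for every $s\in PJ_n$, the element $(s)\kappa_1^{n+1}\kappa_1^{n}$ (the double clone landing in $PJ_{n+2}$) is sent to $1$ by $p$. Heuristically, the three strands cloned from one strand move as a rigid parallel bundle. Each generator $s_{i,j}$ whose interval covers the bundle reverses it, and $p$ records this as the corresponding generator of $J_3$. Since $s$ is pure, these reversals pair up and the total is trivial. To make this rigorous I would define $p$ on words and track the bundle through the recursive cloning formula, one generator at a time.

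On the other hand, take $u\in PJ_{n+2}$ to be the image of a generator of $PJ_3\cong\bZ$ under $J_3\hookrightarrow J_{n+2}$ acting on strands $1,2,3$, for example $(s_{1,2}s_{1,3})^2$ or whichever word is pure and nontrivial. Then $p(u)\neq 1$, so $(T'',u,T'')\in G_{T''}\setminus G_T$. Here I use that if $(T'',u,T'')=(T,s,T)$ in $CV$, then $u$ must be the clone of $s$. This holds because both sides carry the same tree $T''$ after expansion, and expansions act injectively by Observation 3.1 of \cite{witzel2018cloning}. This contradicts $PCV=G_T$.

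The main obstacle is Step 2: constructing the forgetful map on $PJ_m$ rigorously and verifying both that it kills doubly cloned pure elements and that $p(u)\ne1$. If the forgetful map proves awkward, the fallback is to replace $p$ by an abelian invariant, namely a homomorphism $PJ_{n+2}\to\bZ$ counting the signed passages through the $(1,3)$-wall. The same bundle argument should then show that this invariant vanishes on $G_T$ but not on $(T'',u,T'')$.
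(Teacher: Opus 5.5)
Your Step 1 matches the paper's proof. The paper also passes to the least common expansion $E$ of the trees of finitely many putative generators and notes that every element of $PCV$ would then have the form $(E,s,E)$. The two arguments diverge at the last step. The paper simply asserts that there are elements of $PCV$ whose minimal representatives have more than $\sum n_i$ leaves, with no witness and no argument, although this existence statement is the real content of the proposition. Your Step 2 proves exactly the fact that is needed, namely $G_T\neq PCV$ for every $T$. It uses only the injectivity of $J_m\to CV$, and no uniqueness of reduced representatives.

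Your key claim checks out. Suppose a generator $s_{i,j}$ covers the current position $k$ of the strand being cloned. Its double clone is $s_{k,k+1}s_{k,k+2}s_{k+1,k+2}s_{i,j+2}$. Restricted to the three bundle strands, this is $s_{1,2}s_{1,3}s_{2,3}s_{1,3}=s_{1,2}s_{1,2}s_{1,3}s_{1,3}=1$ in $J_3$, using $s_{1,3}s_{2,3}=s_{1,2}s_{1,3}$. Non-covering generators restrict trivially. Strand restriction respects the three cactus relations: the third one restricts to $s_{a,b}s_{c,d}=s_{a+b-d,a+b-c}s_{a,b}$ or to a triviality. So $p$ can be defined combinatorially on pure elements, without the moduli spaces. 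In short, your route gives a self-contained proof. The paper's route is shorter but leaves its crucial claim unproved.

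Two corrections are needed.

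First, $(s_{1,2}s_{1,3})^2$ is not pure. The group $J_3=\langle s_{1,2},s_{1,3}\mid s_{1,2}^2=s_{1,3}^2=1\rangle$ is infinite dihedral, and $s_{1,2}s_{1,3}$ maps to a $3$-cycle. Hence $PJ_3\cong\bZ$ is generated by $(s_{1,2}s_{1,3})^3$, and that is the $u$ to use.

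Second, your reason why clones of pure elements are pure is wrong. Covering reversals need not pair up: in $(s_{1,2}s_{1,3})^3$, strand $1$ is covered five times. The conclusion is still true, because cactus cloning keeps the two copies parallel. For example, $s_{k,k+1}s_{i,j+1}$ sends $k\mapsto i+j-k$ and $k+1\mapsto i+j-k+1$. Consequently $\pi_{n+1}((s)\kappa_k^n)=(\pi_n(s))\varsigma_k^n$ holds exactly, not merely away from $k,k+1$.
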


\begin{proof}
  Observe that given two rooted binary trees $S$ and $T$, there is a minimal rooted binary tree containing them, called the \textbf{least common expansion} of $S$ and $T$.
  If they have $m$ and $n$ leaves, respectively, then their least common expansion has fewer than $m+n$ leaves.
  Suppose that $PCV$ is generated by finitely many elements $(T_i, s_i, T_i)$ with $n_i$ leaves $(1 \leq i \leq k)$.
  Then every element of $PCV$ is represented by a cactus-tree pair whose rooted binary tree is a least common expansion of $T_i$'s, and has fewer than $\sum_{i=1}^k n_i$ leaves.
  However, there are elements of $PCV$ whose minimal representatives have more than $\sum_{i=1}^k n_i$ leaves.
  This is a contradiction.
\end{proof}

The next proposition states that every torsion element of $CV$ is represented by a torsion element of some cactus group.
This follows from the same argument as the proof of Proposition 6.1 in \cite{burillo2009metric}.

\begin{proposition} [cf. {\cite[Proposition 6.1]{burillo2009metric}}] \label{CV_torsion}
  Every torsion element of $CV$ is represented by a cactus-tree pair $(T, s, T)$, where $s$ is a torsion element in some $J_n$.
\end{proposition}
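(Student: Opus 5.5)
The plan follows the strategy of Burillo--Cleary--Stein--Taback for $T$: first find a single tree that is mapped to itself by some power of the element, then use that tree to write down a symmetric representative.

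Let $g \in CV$ have finite order $m$, and fix a reduced representative $(T_+, s, T_-)$ with $n$ leaves. Its image $\overline{\pi}(g) = (T_+, \pi_n(s), T_-)$ is a torsion element of $V$. We will use the dynamical description of $V$ from the earlier Remark, where elements act on the Cantor set (or on $[0,1)$) and leaves correspond to dyadic intervals. The leaves of $T_+$ give a partition $\mathcal{P}_+$ of the Cantor set into dyadic cones, and $\overline{\pi}(g)$ maps each cone of $\mathcal{P}_+$ affinely onto a cone of $\mathcal{P}_-$.

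The key step is to build a \emph{$g$-invariant} dyadic partition. Consider the common refinement of the partitions
\[
\overline{\pi}(g)^k(\mathcal{P}_+), \qquad 0 \le k < m.
\]
Since $\overline{\pi}(g)^m = 1$, this refinement is permuted by $\overline{\pi}(g)$. It is again a partition into dyadic cones, because images of cones are cones and intersections of cones are cones or empty. Let $T$ be the rooted binary tree corresponding to it. By construction $T$ is an expansion of $T_+$, and it is also an expansion of $T_- = $ the tree of $\overline{\pi}(g)(\mathcal{P}_+)$. Moreover, $\overline{\pi}(g)$ maps the leaf set of $T$ bijectively onto itself, sending each leaf cone affinely onto a leaf cone.

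Now expand the cactus-tree pair $(T_+, s, T_-)$ along the carets needed to turn $T_+$ into $T$. Expansion of the top tree forces corresponding expansions of the bottom tree, dictated by $\pi_n(s)$, i.e.\ by $\overline{\pi}(g)$. Because the partition is invariant, the bottom tree obtained is exactly $T$. This gives
\[
g = (T, s', T) \quad \text{with } s' \in J_N,
\]
where $N$ is the number of leaves of $T$. The main point to verify is this last claim: the sequence of expansions in $CV$ has the same effect on trees as in $V$. This holds because the tree part of an expansion depends only on $\overline{s}$, by the definition of expansion through $\pi_n$ (axiom (5) of the cloning system).

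Finally, by Remark~\ref{remark_CV_embedding}, the map $J_N \to CV$, $t \mapsto (T, t, T)$, is an injective homomorphism: products of pairs with the same tree need no further expansion. Hence $g^k = (T, s'^k, T)$ for every $k$. Since $g^m = 1$, injectivity gives $s'^m = 1$, so $s'$ is a torsion element of $J_N$, as claimed.

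The expected obstacle is making the invariant-partition step rigorous. Specifically, one must check that $\overline{\pi}(g)$ maps each leaf cone of $T$ onto a leaf cone rather than onto a union of several cones. The argument: each cone $C$ of the refinement is an intersection of cones $\overline{\pi}(g)^k(C_k)$. Applying $\overline{\pi}(g)$ shifts the index $k$ by one, cyclically modulo $m$, so the image of $C$ is again such an intersection. Affinity of $\overline{\pi}(g)$ on $C$ holds because $C$ lies inside a single cone of $\mathcal{P}_+$.
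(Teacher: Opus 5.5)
\textbf{Gap: the atoms of your common refinement are not shown to be cones.} The weak point is exactly the step you flagged, and the reason you give for it is false. Images of cones under $h=\overline{\pi}(g)$ need not be cones. The element $h$ is a prefix replacement only on each cone of $\mathcal{P}_+$. A cone of $\mathcal{P}_-=h(\mathcal{P}_+)$ can be a union of several cones of $\mathcal{P}_+$ that $h$ sends to non-adjacent places, so $h^2(C)=h(h(C))$ with $C\in\mathcal{P}_+$ is in general only a finite union of cones.

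For instance, take the order-$4$ element of $V$ that permutes the cones $00,01,100,101,110,111$ by the prefix replacements $00\mapsto 100\mapsto 110\mapsto 111\mapsto 00$ and $01\mapsto 101\mapsto 01$. It is represented by a tree pair with top leaves $0,100,101,110,111$, and for it $h^2(0)=h(10)=110\cup 01$, which is not a cone. So the partitions $h^k(\mathcal{P}_+)$ with $k\ge 2$ need not consist of cones. Your argument, both in the main text and in the last paragraph where you again call the sets $h^k(C_k)$ cones, therefore does not show that the atoms $\bigcap_k h^k(C_k)$ are cones. That is exactly what you need in order to get a tree $T$ at all.

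\textbf{Repair.} The conclusion is nevertheless true. Since $h$ is a bijection,
\[
\bigcap_{k=0}^{m-1}h^k(C_k)=C_0\cap h\bigl(C_1\cap h\bigl(C_2\cap\cdots\cap h(C_{m-1})\cdots\bigr)\bigr).
\]
By downward induction, each bracket is empty or a cone contained in some cone of $\mathcal{P}_+$, on which $h$ is a prefix replacement. Hence its image is a cone, and intersecting with the next $C_j$ again gives a cone inside a cone of $\mathcal{P}_+$.

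\textbf{A smaller point.} Axiom~(5) says nothing about positions $k,k+1$. To know that iterated expansions in $CV$ produce the same bottom tree as in $V$, you need that the cloned pair is never swapped, i.e.\ that $\pi_{n+1}(s')$ is the full $V$-expansion of $\pi_n(s)$. This holds because of the factor $s_{k,k+1}$ in the cloning maps, or equivalently because $\overline{\pi}$ is well defined.

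\textbf{Comparison with the paper.} With these repairs your proof works, and it takes a different route from the paper's. The paper never uses the Cantor set. It writes $f^k=f^{k-1}f$ via least common expansions $E_{k-1}$ of $B_{k-1}$ and $A$, shows the trees $A_k,B_k$ grow monotonically, and uses $f^n=1$ (so $A_n=B_n$) plus a leaf count to get $B_n=E_{n-1}$, hence $f=(E_{n-1},\tilde s,E_{n-1})$. Your route identifies the invariant tree explicitly as a common refinement. The paper's is purely combinatorial and needs no compatibility between $\overline{\pi}$ and expansions.
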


\begin{proof}
  Suppose that $f$ is a torsion element of $CV$.
  Let $(A, s, B)$ be a reduced cactus-tree pair representing $f$.
  We construct a cactus-tree pair $(A_k, s_k, B_k)$ representing $f^k$ using the equation $f^k = f^{k-1} f$ for $k \geq 2$.
  Namely, set $(A_1, s_1, B_1) \coloneq (A, s, B)$.
  Suppose that $(A_{k-1}, s_{k-1}, B_{k-1})$ is constructed.
  Then we have
  \begin{align*}
    (A_k, s_k, B_k) & = (A_{k-1}, s_{k-1}, B_{k-1}) \cdot (A_1, s_1, B_1)                                                                     \\
                    & = (\widetilde{A}_{k-1}, \widetilde{s}_{k-1}, E_{k-1}) \cdot (E_{k-1}, \widetilde{s}_1^{(k-1)}, \widetilde{B}_1^{(k-1)}) \\
                    & = (\widetilde{A}_{k-1}, \widetilde{s}_{k-1}\widetilde{s}_1^{(k-1)}, \widetilde{B}_1^{(k-1)}),
  \end{align*}
  where $E_{k-1}$ is the least common expansion of $B_{k-1}$ and $A_1 = A$.
  By construction, $A_k = \widetilde{A}_{k-1}$ is an expansion of $A_{k-1}$.
  Moreover, we see that $B_k = \widetilde{B}_1^{(k-1)}$ is an expansion of $B_{k-1}$ inductively.
  Indeed, $B_2$ is trivially an expansion of $B_1$.
  Suppose that $B_{k-1}$ is an expansion of $B_{k-2}$.
  Recall that $E_{k-1}$ is the least common expansion of $B_{k-1}$ and $A_1$, and thus $E_{k-1}$ is a common expansion of $B_{k-2}$ and $A_1$.
  Since $E_{k-2}$ is the least common expansion of $B_{k-2}$ and $A_1$, $E_{k-1}$ is an expansion of $E_{k-2}$.
  Therefore $B_k$ is an expansion of $B_{k-1}$.

  Suppose that $f^n = 1$ for a positive integer $n$.
  Namely, a cactus-tree pair $(A_n, s_n, B_n)$ represents the identity element, and thus $s_n = 1$ and $A_n = B_n$ hold.
  The observation above implies that $A_n = B_n$ is an expansion of $A_1$.
  Since $E_{n-1}$ is the least common expansion of $B_{n-1}$ and $A_1$ and $B_n$ is a common expansion of $B_{n-1}$ and $A_1$, $B_n$ is an expansion of $E_{n-1}$.
  On the other hand, $B_n = \widetilde{B}_1^{(n-1)}$ and $E_{n-1}$ have the same number of leaves.
  Thus $B_n = E_{n-1}$ holds.
  Consequently, $(E_{n-1}, \widetilde{s}_1^{(n-1)}, \widetilde{B}_1^{(n-1)} (= E_{n-1}))$ is the desired cactus-tree pair representing $f$.
\end{proof}

From Propositions \ref{CV_torsion} and \ref{cactus_property}, we have the following:

\begin{corollary} \label{cor_CV_odd_torsion}
  The following hold$:$
  \begin{enumerate}
    \item The group $CV$ has no odd torsion.
    \item The group $CV$ has torsion of order $2^k$ for every integer $k \geq 1$.
    \item Any finite subgroup of $CV$ is a $2$-group.
    \item The group $PCV$ is torsion-free.
  \end{enumerate}
\end{corollary}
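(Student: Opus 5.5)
The plan is to read each item off Proposition \ref{CV_torsion}, together with Proposition \ref{cactus_property} and the embedding of Remark \ref{remark_CV_embedding}.

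For (1), let $f \in CV$ be a torsion element. By Proposition \ref{CV_torsion}, $f$ is represented by $(T, s, T)$ with $s$ a torsion element of some $J_n$, where $n$ is the number of leaves of $T$. By Remark \ref{remark_CV_embedding} the map $J_n \to CV$, $t \mapsto (T, t, T)$, is an injective homomorphism. Hence the order of $f$ equals the order of $s$ in $J_n$, and by Proposition \ref{cactus_property}(1) this order is not odd unless it is $1$. So $CV$ has no nontrivial element of odd order.

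For (2), Proposition \ref{cactus_property}(2) gives an element of order $2^k$ in $J_{2^k}$. The embedding $J_{2^k} \hookrightarrow CV$ preserves orders, so it produces an element of order $2^k$ in $CV$.

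For (3), let $H$ be a finite subgroup of $CV$. If $|H|$ had an odd prime divisor $p$, Cauchy's theorem would give an element of order $p$ in $H$, contradicting (1). Therefore $|H|$ is a power of $2$. This avoids needing to place all of $H$ inside a single $J_n$, which would be the main obstacle otherwise: Proposition \ref{CV_torsion} treats only individual elements, and simultaneous conjugation of a whole finite subgroup into some $(T, J_n, T)$ is not available from what precedes.

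For (4), let $f \in PCV$ be torsion. Write $f = (T, s, T)$ with $s \in J_n$ as in Proposition \ref{CV_torsion}. Since $\overline{\pi}(f) = (T, \pi_n(s), T)$ is trivial in $V$, we get $\pi_n(s) = 1$, because the symmetric group $S_n$ embeds in $V$ via $(T, \cdot, T)$. Thus $s \in PJ_n$ is a torsion element, so $s = 1$ by Proposition \ref{cactus_property}(4), and therefore $f = 1$.

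The only point needing care is the injectivity used in (1), (2) and (4), which Remark \ref{remark_CV_embedding} supplies (together with its analogue for $V$ and $S_n$). For $V$ it is standard: $(T, \sigma, T)$ is reduced only if $\sigma$ preserves caret pairs, but in any case the map is the germ action on the leaves, so it is faithful.
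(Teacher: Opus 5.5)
Your proposal is correct and follows the paper's route: the paper simply derives all four items from Proposition \ref{CV_torsion} and Proposition \ref{cactus_property}, with the order-preserving injective homomorphisms $t \mapsto (T,t,T)$ of Remark \ref{remark_CV_embedding} doing exactly the work you describe. Your derivation of (3) from (1) via Cauchy's theorem is the right way to make the paper's one-line citation rigorous, because Proposition \ref{CV_torsion} is elementwise and does not place a whole finite subgroup inside one copy of $J_n$. Your side comment on when $(T,\sigma,T)$ is reduced is stated backwards, but it plays no role; the faithfulness of $S_n \hookrightarrow V$ that you actually use in (4) is fine.
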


\begin{corollary} \label{cor_CV_nonsplit}
  The short exact sequence
  \begin{equation*}
    \begin{tikzcd}
      1 \arrow[r] & PCV \arrow[r] & CV \arrow[r, "\overline{\pi}"] & V \arrow[r] & 1
    \end{tikzcd}
  \end{equation*}
  does not split.
\end{corollary}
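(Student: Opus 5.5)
To prove Corollary~\ref{cor_CV_nonsplit}, I argue by contradiction: suppose there is a section, i.e.\ a homomorphism $\varphi\colon V \to CV$ with $\overline{\pi}\circ\varphi = \operatorname{id}_V$. The plan is to find an element of odd order in $V$. Its image under $\varphi$ would then be a torsion element of $CV$ of odd order, which is impossible.

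The element I will use is $\tau_1\sigma_1 \in V$. Concretely, it is represented by a tree pair $(R_3, c, R_3)$, where $c=(1\,2)(2\,3)$ is a $3$-cycle in $S_3$. Since $R_3$ has $3$ leaves, the map $S_3 \to V$ given by $\sigma \mapsto (R_3,\sigma,R_3)$ is an injective homomorphism. This is the same argument as Remark~\ref{remark_CV_embedding} with $J_n$ replaced by $S_n$: expanding a pair $(T,\sigma,T)$ and taking products is compatible with multiplication in $S_n$. Hence $g\coloneq(R_3,c,R_3)$ has order exactly $3$ in $V$.

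Now set $h \coloneq \varphi(g)$. Since $\varphi$ is a homomorphism, $h^3=\varphi(g^3)=1$. Also $h\neq 1$, because $\overline{\pi}(h)=g\neq 1$. So $h$ is a torsion element of $CV$ of order exactly $3$. This contradicts Corollary~\ref{cor_CV_odd_torsion}(1), which says that $CV$ has no odd torsion. Therefore no section exists, and the sequence does not split.

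The only point needing care is that $g$ really has order $3$ and not smaller. This follows from the injectivity of $S_n \hookrightarrow V$ via $(T,\sigma,T)$, which is standard. If preferred, one can also check it directly through the action of $V$ on the Cantor set, where $g$ permutes the three dyadic cylinders cyclically. Apart from this, the argument only uses Corollary~\ref{cor_CV_odd_torsion}, which itself rests on Proposition~\ref{CV_torsion} and Proposition~\ref{cactus_property}.
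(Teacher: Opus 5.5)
Your argument is correct and follows the reasoning the paper intends: the corollary is stated right after Corollary~\ref{cor_CV_odd_torsion}, in parallel with the earlier remark that $1\to PJ_n\to J_n\to S_n\to 1$ does not split for $n\geq 3$, and a section would carry an order-$3$ element of the copy of $S_3$ in $V$ to odd torsion in $CV$. One slip to fix: the element $(R_3,c,R_3)$ with $c=s_1s_2$ is $\sigma_1\tau_2$, not $\tau_1\sigma_1$, because $\tau_1=(R_2,s_1,R_2)$ is not of the form $(R_3,\sigma,R_3)$ (in fact $\tau_1\sigma_1$ fixes $0$ with slope $2^{\pm1}$ and has infinite order); since you only use $g=(R_3,c,R_3)$, the proof is unaffected.
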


\begin{proposition} \label{prop_CV_center}
  For $G = CV$ or $PCV$, its center $Z(G)$ is trivial.
\end{proposition}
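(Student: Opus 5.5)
Let $z$ be a central element of $G$, where $G = CV$ or $PCV$. I will show $z = 1$ in two steps: first that $z$ is represented by a single cactus group element, and then that this element is trivial, using the triviality of the centers in Proposition~\ref{cactus_property}(5).

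\textbf{Step 1: $z$ maps to the identity in $V$.} Apply $\overline{\pi}$. For $G=CV$, $\overline{\pi}(z)$ is central in $\overline{\pi}(CV)=V$. The center of $V$ is trivial, since $V$ acts faithfully on the Cantor set and the only homeomorphism commuting with all of $V$ (or with $F$) is the identity. Hence $z\in PCV$. For $G=PCV$ we have $z\in PCV$ by definition. Either way, $z$ is represented by $(T, s, T)$ with $s\in PJ_n$, where $T$ has $n$ leaves.

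\textbf{Step 2: $s$ is central in a large cactus group.} Expanding $T$ and replacing $s$ by its iterated clone, I may take $T$ to be any expansion of the original tree; in particular $n$ can be made as large as needed. By Remark~\ref{remark_CV_embedding}, the map $J_n\to CV$, $t\mapsto (T,t,T)$, is an injective homomorphism. Let $H=J_n$ if $G=CV$, and $H=PJ_n$ if $G=PCV$. Its image lies in $G$, and $z$ commutes with that image. By injectivity, $s$ commutes with every element of $H$, so $s\in Z(H)$. Choosing $n>3$, Proposition~\ref{cactus_property}(5) gives $Z(H)=1$. Thus $s=1$, and so $z=(T,1,T)=1$.

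\textbf{Main obstacle.} The delicate point is Step 1 for $G=CV$: it requires $Z(V)=1$. I would argue this directly. If $v\neq 1$, choose a dyadic interval $I$ with $v(I)\cap I=\emptyset$. Then take $g\in F$ supported in $I$, which $v$ cannot commute with, since $vgv^{-1}$ is supported in $v(I)$ and so differs from $g$. A secondary point is checking that $(T,s,T)$ with $s\in PJ_n$ does lie in $PCV$, which holds by definition, so the embedding step is legitimate.
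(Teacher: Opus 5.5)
Your proof is correct and follows the paper's argument: push $z$ to $V$, use $Z(V)=1$ to land in $PCV$, expand so that $n>3$, and apply the injective embedding $J_n\hookrightarrow CV$ together with the triviality of the centers of cactus groups. The differences are cosmetic. The paper gets $Z(V)=1$ from simplicity of $V$ rather than from your dynamical argument, which is valid once $g$ is taken nontrivial. The paper also uses $Z(PJ_n)=1$ in both cases, whereas you use $Z(J_n)=1$ when $G=CV$.
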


\begin{proof}
  Let $(T_+, s, T_-)$ be a cactus-tree pair in $Z(CV)$.
  Since $\overline{\pi}$ is surjective, $\overline{\pi}((T_+, s, T_-))$ is in $Z(V)$.
  However simplicity of $V$ implies that $Z(V)$ is trivial.
  Therefore $(T_+, s, T_-)$ is in $PCV$, that is, $s \in PJ_n$ for some $n$ and $T_+ = T_- \eqcolon T$.
  By Remark \ref{remark_CV_embedding}, the set $\{ (T, s', T) \mid s' \in PJ_n \}$ forms a subgroup of $CV$ isomorphic to $PJ_n$.
  In particular $(T, s, T)$ is in the center of this subgroup.
  By expansion if necessary, $s$ can be in $PJ_n$ with $n > 3$.
  Since such $PJ_n$ has trivial center, we obtain $s = 1 \in PJ_n$ and $(T_+, s, T_-)$ represents the identity element in $CV$.
  Therefore $Z(CV)$ is trivial.
  Similarly, $Z(PCV)$ is trivial.
\end{proof}

\subsubsection{The cactus Thompson group $CF$}
Define a subgroup $CF \coloneq \overline{\pi}^{-1}(F)$ of $CV$, also called the cactus Thompson group.
Since every element of $F$ is represented by a tree pair with trivial permutation, a cactus-tree pair $(T_+, s, T_-)$ with $n$ leaves is in $CF$ if and only if $\pi_n(s) = 1$ (i.e. $s \in PJ_n$) holds.
Therefore $PCV$ is a subgroup of $CF$, and moreover is the kernel of $\overline{\pi}|_{CF}$.
Therefore the diagram below is commutative:
\begin{equation*}
  \begin{tikzcd}
    1 \arrow[r] & PCV \arrow[r] \arrow[d, equal] & CF \arrow[r, "\overline{\pi}|_{CF}"] \arrow[d, hook] & F \arrow[r] \arrow[d, hook] & 1\\
    1 \arrow[r] & PCV \arrow[r] & CV \arrow[r, "\overline{\pi}"] & V \arrow[r] & 1
  \end{tikzcd}
\end{equation*}
Define a map $\iota \colon F \to CF$ by $\iota((T_+, T_-)) \coloneq (T_+, 1, T_-)$, then we have $\overline{\pi}|_{CF} \circ \iota = \operatorname{id}_F$.
Namely, the first line of the above exact sequences splits.

\section{An infinite presentation of $CV$ and its abelianization} \label{section_infinitely_presented}
\subsection{An infinite presentation}
In order to compute an infinite presentation of $CV$, we find its generating set using the argument in Subsubsection \ref{sec_V_presentation}.
The main theorem of this section is Theorem \ref{theorem_presentation}, and its proof is based on \cite{brady2008bv}.
Namely, we decompose a cactus-tree pair $(T_+, s, T_-)$ in $CV$ into a product of three elements: the positive part, negative part, and the cactus part, that is,
\begin{align*}
  (T_+, s, T_-) = (T_+, 1, R_n) \cdot (R_n, s, R_n) \cdot (R_n, 1, T_-),
\end{align*}
where $n$ is the number of leaves of $T_+$.
The positive part $(T_+, 1, R_n)$ (resp.~negative part $(R_n, 1, T_-)$) is written as a positive word (resp.~negative word) on the set $X = \{ x_0, x_1, x_2, \ldots \}$ which corresponds to the generating set of $F$:
\begin{align*}
  x_i \coloneq \scalebox{2}{\cinput{24}{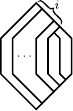}}\quad (i \geq 0).
\end{align*}
In order to represent a cactus part $(R_n, s, R_n)$ by a word on some set, define elements $\sigma_{i,j}$, and $\tau_{i,j}$ of $CV$ as follows:
\begin{align*}
   & \sigma_{i,j} = (R_{j+1}, s_{i,j}, R_{j+1}) = \scalebox{2}{\cinput{33}{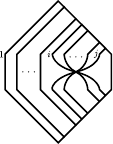}}\quad (1 \leq i < j), \\
   & \tau_{i,j} = (R_j, s_{i,j}, R_j) = \scalebox{2}{\cinput{30}{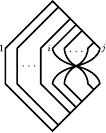}}\quad (1 \leq i < j),
\end{align*}
Then the set $\{\sigma_{i,j}\ (1 \leq i < j \leq n-1), \tau_{i,n}\ (1 \leq i \leq n-1)\}$ generates the copy of $J_n$ in $CV$.
The elements of this set are called the \textbf{$J_n$ generators}.
Any element of $CV$ represented by a cactus-tree pair of the form $(R_n, s, R_n)$ is written as a word in the $J_n$ generators.
From the argument above, we have the following:

\begin{proposition} \label{proposition_generator}
  The cactus Thompson group $CV$ is generated by
  \begin{itemize}
    \item $x_i \quad (i \geq 0);$
    \item $\sigma_{i,j} \quad (1 \leq i < j);$
    \item $\tau_{i,j} \quad (1 \leq i < j)$.
  \end{itemize}
\end{proposition}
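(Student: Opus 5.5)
We prove Proposition \ref{proposition_generator} by decomposing an arbitrary element into three factors and handling each separately. Let $f\in CV$ be represented by a cactus-tree pair $(T_+, s, T_-)$ with $n$ leaves. Since $(T_+,1,R_n)$, $(R_n,s,R_n)$ and $(R_n,1,T_-)$ share the tree $R_n$ at the interfaces, the product rule needs no expansions. Hence
\begin{align*}
  (T_+, s, T_-) = (T_+, 1, R_n) \cdot (R_n, s, R_n) \cdot (R_n, 1, T_-).
\end{align*}
It therefore suffices to write each of the three factors as a word in the proposed generators.

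\textbf{Outer factors.} The element $(T_+,1,R_n)$ is the image of the tree pair $(T_+,R_n)\in F$ under the map $\iota\colon F\to CF$ from Subsection \ref{subsection_subgroups_CV}. This map is a homomorphism: expansions of a cactus-tree pair with trivial cactus part again have trivial cactus part, since $(1)\kappa_k^n=1$ by the cloning axiom (3). The $x_i\in CV$ are exactly $\iota$ of the generators $x_i$ of $F$. By the normal form discussion in Subsubsection \ref{sec_V_presentation}, $(T_+,R_n)$ is a positive word in $X$, so $(T_+,1,R_n)$ is a positive word in the $x_i$. Likewise $(R_n,1,T_-)=(T_-,1,R_n)^{-1}$ is a negative word.

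\textbf{Middle factor.} By Remark \ref{remark_CV_embedding}, $s\mapsto (R_n,s,R_n)$ is an injective homomorphism $J_n\to CV$. Since $J_n$ is generated by the $s_{i,j}$, it suffices to show that each $(R_n,s_{i,j},R_n)$ is a product of $\sigma$'s and $\tau$'s. If $j=n$, this element is $\tau_{i,n}$ by definition. If $j\le n-1$, we check that $(R_{j+1},s_{i,j},R_{j+1})$ expands to $(R_n,s_{i,j},R_n)$. Expand repeatedly at the rightmost leaf $k=j+1,\dots,n-1$. Since $k>j$, the cloning map gives $(s_{i,j})\kappa_k=s_{i,j}$, and $\pi(s_{i,j})$ fixes $k$, so the caret is attached to the rightmost leaf of both trees. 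Each step turns $R_k$ into $R_{k+1}$ on both sides, so after these expansions the pair becomes $(R_n,s_{i,j},R_n)$, which equals $\sigma_{i,j}$.

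The only point needing care is this check that expanding at the last leaf leaves $s_{i,j}$ unchanged and keeps both trees all-right. It follows directly from the first case of the cloning-map formula, so no real obstacle is expected.
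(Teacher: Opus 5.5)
Your proposal is correct and follows the paper's argument: you use the same three-factor decomposition $(T_+,1,R_n)\cdot(R_n,s,R_n)\cdot(R_n,1,T_-)$, with the outer factors written as positive and negative words in the $x_i$, and the cactus part written in the $J_n$ generators. Your explicit check that $(R_{j+1},s_{i,j},R_{j+1})$ expands at rightmost leaves to $(R_n,s_{i,j},R_n)$ supplies a verification that the paper only asserts.
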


The relations between the generators $x_i$'s and the $J_n$ generators are listed below:
\begin{itemize}
  \item $\sigma_{i,j} x_k = \begin{cases}
            x_k \sigma_{i,j}                            & (j \leq k),                    \\
            x_{i+j-k-2} \sigma_{i,j+1} \sigma_{k+1,k+2} & (i-1 \leq k \leq j-1),         \\
            x_k \sigma_{i+1,j+1}                        & (2 \leq i, 0 \leq k \leq i-2),
          \end{cases}$
  \item $\tau_{i,j} x_k = \begin{cases}
            x_{i-1} x_{i} \cdots x_{k-j+i-1} x_{k-j+i}^2 \tau_{i,k+3} \tau_{j,k+3} & (j-1 < k),                     \\
            x_{i-1}^2 \tau_{i,j+2} \tau_{j,j+2}                                    & (k = j-1),                     \\
            x_{i+j-k-2} \tau_{i,j+1} \sigma_{k+1,k+2}                              & (i \leq k \leq j-2),           \\
            \tau_{i,j+1} \sigma_{i,i+1}                                            & (k=i-1),                       \\
            x_k \tau_{i+1,j+1}                                                     & (2 \leq i, 0 \leq k \leq i-2).
          \end{cases}$.
\end{itemize}

In order to compute an infinite presentation of $CV$, we introduce the notion of blocks.
These words arise from the decomposition of a cactus-tree pair into three parts: the positive part, the negative part, and the cactus part.
For a word $w$ over $X$, let $N(w)$ denote the number of carets in either tree of its reduced cactus-tree pair.

\begin{definition} \label{definition_block}
  A word in the generators $x_i^{\pm1}, \sigma_{i,j}^{\pm1}$ and $\tau_{i,j}^{\pm1}$ is called a \textbf{block} if it is of the form $w_1 w_2 w_3^{-1}$ where
  \begin{enumerate}
    \item $w_1 = x_{i_1}^{a_1} x_{i_2}^{a_2} \cdots x_{i_{l}}^{a_{l}}$ with $0 \leq i_1 < i_2 < \cdots < i_{l}$ and $a_k \geq 1$ for all $1 \leq k \leq l$,
    \item $w_3 = x_{j_1}^{b_1} x_{j_2}^{b_2} \cdots x_{j_m}^{b_m}$ with $0 \leq j_1 < j_2 < \cdots < j_m$ and $b_k \geq 1$ for all $1 \leq k \leq m$,
    \item Let $N = \max(N(w_1), N(w_3))$.
          Then there exists an integer $n \geq N+1$ such that $w_2$ is a word in $J_n$ generators.
  \end{enumerate}
\end{definition}

From the definition of $CV$ and representatives of its identity element, we have the following:

\begin{lemma} \label{lemma_identity}
  A block $w_1 w_2 w_3^{-1}$ is the identity in $CV$ if and only if the following two conditions hold$:$
  \begin{itemize}
    \item $w_1$ and $w_3$ are the same word, and
    \item $w_2$ is the identity in the copy of the cactus group $J_n \subset CV$, generated by the $J_n$ generators.
  \end{itemize}
\end{lemma}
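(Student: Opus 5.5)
The plan is to translate the block into an explicit cactus-tree pair and then use the fact that a cactus-tree pair represents the identity exactly when its two trees agree and its cactus element is trivial. First I compute the element represented by each of the three factors. Write $T_1$ for the tree with $N(w_1)+1$ leaves such that $(T_1,1,R_{N(w_1)+1})$ is the reduced pair of $w_1$. By the correspondence between positive words and trees recalled in Subsubsection~\ref{sec_V_presentation} (the exponent of $x_i$ counts left edges on the path from leaf $i$), the positive word $w_1$ in seminormal form is determined by $T_1$, and conversely. Expanding by attaching carets along the right-most path gives $(T_1', 1, R_n)$, where $T_1'$ is $T_1$ with carets added on its right-most leaf. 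This works because $n\ge N+1$ and expansion at the last leaf of $R_k$ gives $R_{k+1}$. Treating $w_3$ in the same way gives $(T_3',1,R_n)$. The middle factor $w_2$ is $(R_n,s,R_n)$ for the element $s\in J_n$ that it spells in the $J_n$ generators. We may need to enlarge $n$ so that every $\sigma_{i,j}$ and $\tau_{i,j}$ appearing in $w_2$ lives on $R_n$, and condition (3) guarantees that this can be done. Multiplying the three pairs, which already share the tree $R_n$, shows that the block represents $(T_1', s, T_3')$.

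Next I prove the ``if'' direction. If $w_1=w_3$ and $s=1$, then the block is $w_1\cdot 1\cdot w_1^{-1}=1$, so nothing more is needed.

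For the ``only if'' direction, suppose $(T_1',s,T_3')\sim(T,1,T)$. The key step is to show that a cactus-tree pair represents the identity only if its two trees coincide and its cactus element is trivial. To do this I apply the surjection $\overline{\pi}$ to $V$ and use the known fact for $V$: a tree pair represents the identity in $V$ exactly when, after reduction, it is $(R_1,1,R_1)$. Equivalently, if $(A,\sigma,B)$ is trivial with $A$ and $B$ having the same number of leaves, then $A=B$ and $\sigma=1$. I would justify this by viewing the tree pair as a map of the Cantor set: leaf $i$ of $A$ is sent affinely onto leaf $\sigma(i)$ of $B$, and the identity map forces these dyadic subdivisions to agree. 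It follows that $T_1'=T_3'$, and removing the common right-most carets gives $T_1=T_3$. By the bijection between trees and seminormal positive words, $w_1$ and $w_3$ are then the same word.

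The remaining task is to show that $s=1$. Since the two trees are now equal, the element $(T_1',s,T_1')$ is trivial, and by Remark~\ref{remark_CV_embedding} the map $J_n\to CV$, $s\mapsto(T_1',s,T_1')$, is injective, so $s=1$. The pairs with tree $R_n$ are the copy of $J_n$ generated by the $J_n$ generators, which gives the second condition.

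The main obstacle is the bookkeeping needed to confirm that $w_1$ really corresponds to $(T_1',1,R_n)$ after expansion. In particular, one has to check that the seminormal form is the one read off from the tree, which means treating the hypothesis that $w_1$ is in the ordered form of Definition~\ref{definition_block} as the unique positive form. The identification $T_1=T_3\Rightarrow w_1=w_3$ also depends on this uniqueness of positive seminormal words in $F$, and I would cite the normal form theory recalled earlier for it.
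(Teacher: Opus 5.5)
Your proposal is correct and follows essentially the paper's route. The paper simply asserts the lemma from the definition of $CV$ and the fact that the identity is represented by pairs $(T,1,T)$. You make this explicit by reading the block off as the cactus-tree pair $(T_1',s,T_3')$, and you justify, via $\overline{\pi}$ to $V$ and Remark~\ref{remark_CV_embedding}, that such a pair is trivial only when $T_1'=T_3'$ and $s=1$. (Your remark about possibly enlarging $n$ is unnecessary, since condition (3) already places every $J_n$ generator on $R_n$.)
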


\begin{theorem} \label{theorem_presentation}
  The cactus Thompson group $CV$ admits the following presentation$:$
  \begin{description}
    \item[Generators]\
          \begin{itemize}
            \item $x_i \quad (i \geq 0);$
            \item $\sigma_{i,j} \quad (1 \leq i < j);$
            \item $\tau_{i,j} \quad (1 \leq i < j)$.
          \end{itemize}
    \item[Relations]\
          \begin{enumerate}
            \relationitem{A}\label{A} $x_j x_i = x_i x_{j+1} \quad (0 \leq i < j);$
            \relationitem{B1}\label{B1} $\sigma_{i,j}^2 = \tau_{i,j}^2 = 1 \quad (1 \leq i < j);$
            \relationitem{B2}\label{B2} $\sigma_{i,j} \sigma_{k,l} = \sigma_{k,l} \sigma_{i,j} \quad (1 \leq i < j < k < l);$
            \relationitem{B3}\label{B3} $\sigma_{i,j} \sigma_{k,l} = \sigma_{i+j-l,i+j-k} \sigma_{i,j} \quad (1 \leq i \leq k < l \leq j);$
            \relationitem{B4}\label{B4} $\tau_{i,j} \tau_{k,j} = \sigma_{i,i+j-k} \tau_{i,j} \quad (1 \leq i < k < j);$
            \relationitem{B5}\label{B5} $\sigma_{i,j} \tau_{k,l} = \tau_{k,l} \sigma_{i,j} \quad (1 \leq i < j < k < l);$
            \relationitem{B6}\label{B6} $\tau_{i,j} \sigma_{i,l} = \tau_{i+j-l,j} \tau_{i,j} \quad (1 \leq i < l < j);$
            \relationitem{B7}\label{B7} $\tau_{i,j} \sigma_{k,l} = \sigma_{i+j-l,i+j-k} \tau_{i,j} \quad (1 \leq i < k < l < j);$
            \relationitem{C1}\label{C1} $\sigma_{i,j} x_k = x_k \sigma_{i,j} \quad (j \leq k);$
            \relationitem{C2}\label{C2} $\sigma_{i,j} x_k = x_{i+j-k-2} \sigma_{i,j+1} \sigma_{k+1,k+2} \quad (i-1 \leq k \leq j-1);$
            \relationitem{C3}\label{C3} $\sigma_{i,j} x_k = x_k \sigma_{i+1,j+1} \quad (2 \leq i, 0 \leq k \leq i-2);$
            \relationitem{D1}\label{D1} $\tau_{i,j} x_k = x_{i-1} x_{i} \cdots x_{k-j+i-1} x_{k-j+i}^2 \tau_{i,k+3} \tau_{j,k+3} \quad (j-1 < k);$
            \relationitem{D2}\label{D2} $\tau_{i,j} x_k = x_{i-1}^2 \tau_{i,j+2} \tau_{j,j+2} \quad (k = j-1);$
            \relationitem{D3}\label{D3} $\tau_{i,j} x_k = x_{i+j-k-2} \tau_{i,j+1} \sigma_{k+1,k+2} \quad (i \leq k \leq j-2);$
            \relationitem{D4}\label{D4} $\tau_{i,j} x_k = \tau_{i,j+1} \sigma_{i,i+1} \quad (k=i-1);$
            \relationitem{D5}\label{D5} $\tau_{i,j} x_k = x_k \tau_{i+1,j+1} \quad (2 \leq i, 0 \leq k \leq i-2)$.
          \end{enumerate}
  \end{description}
\end{theorem}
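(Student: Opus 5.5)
The plan is to follow the strategy of Brady--Burillo--Cleary--Stein \cite{brady2008bv} for $BV$. Let $G$ be the group defined by the presentation in the statement, and let $\Phi\colon G\to CV$ send each generator to the element of the same name.

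\textbf{Step 1: $\Phi$ is a well-defined epimorphism.} Surjectivity is Proposition \ref{proposition_generator}. For well-definedness, each relation must hold in $CV$.
\begin{itemize}
\item Relation \relref{A} holds because it already holds in $F\subset CV$.
\item Relations \relref{B1}--\relref{B7} are cactus relations inside the copy of $J_n$ on $R_n$. Here one uses that $\sigma_{i,j}=(R_{j+1},s_{i,j},R_{j+1})$ expands to $(R_n,s_{i,j},R_n)$ for $n\ge j+1$. One checks the cloning formula $(s_{i,j})\kappa_k^n$ with $k>j$ to see that cloning the last leaf leaves $s_{i,j}$ unchanged.
\item Relation \relref{B4}, for example, is the third cactus relation $s_{i,j}s_{k,j}=s_{i,i+j-k}s_{i,j}$ read in $J_j$.
\item Relations \relref{C1}--\relref{D5} are the commutation formulas listed before the theorem. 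Each is verified by one expansion: expand the cactus-tree pair at the leaf carrying the caret of $x_k$, then apply the recursive formula for $\kappa_k^n$. The squared $x$'s and the $\tau_{j,k+3}$ factors in \relref{D1} and \relref{D2} arise because $\tau_{i,j}$ sends leaf $j$, the last leaf of $R_j$, to leaf $i$. A caret hanging beyond leaf $j-1$ is therefore moved into the interior.
\end{itemize}

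\textbf{Step 2: normal form into blocks.} I will show that every word in $G$ equals, using only the given relations, a block $w_1w_2w_3^{-1}$ in the sense of Definition \ref{definition_block}. Call a word whose $J$-letters are all $J_n$ generators, for a single $n$, a $J_n$-word.
\begin{itemize}
\item Relations \relref{C3}, \relref{D5}, \relref{C1} and \relref{D2}, together with their consequences, let us raise the level $n$. Concretely, $\sigma_{i,j}$ and $\tau_{i,j}$ can be rewritten as $J_n$-words for any large $n$. For instance, \relref{D4} with $k=i-1$ and \relref{D2} express $\tau_{i,j}$ through $x$'s and $\tau_{\cdot,m}$ with $m>j$. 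This needs a short lemma that every generator equals a $J_n$-word times $x$-words for all $n$ sufficiently large.
\item Relations \relref{C1}--\relref{D5} push positive letters $x_k$ leftward past $J$-letters. Each such move produces $x$-letters on the left and a $J_n$-word at a new level.
\item Inverting \relref{C1}--\relref{D5}, and using that each $J$-generator is an involution by \relref{B1}, we can likewise push $x_k^{-1}$ rightward.
\item Relation \relref{A} then sorts the positive and negative $x$-parts into the seminormal forms required of $w_1$ and $w_3$.
\item The level condition $n\ge N+1$ is arranged by raising the level at the end.
\end{itemize}

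\textbf{Step 3: injectivity.} Let $w$ be a word in $\ker\Phi$. By Step 2 we may write $w=w_1w_2w_3^{-1}$ in $G$. Lemma \ref{lemma_identity} then gives two facts: $w_1$ and $w_3$ are identical words, and $w_2$ is trivial in $J_n$. Using the cactus presentation of $J_n$ on the $J_n$ generators, it remains to show that $w_2$ is trivial in $G$.

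Inside $J_n$ the $J_n$ generators correspond to $s_{i,j}$ with $j\le n-1$, via $\sigma_{i,j}$, and to $s_{i,n}$, via $\tau_{i,n}$. Every defining relation of $J_n$ among the $s_{i,j}$ becomes one of \relref{B1}--\relref{B7} in these generators. The cases are split according to whether the indices involve $n$:
\begin{itemize}
\item relations with neither index equal to $n$ give \relref{B1}--\relref{B3};
\item relations with one index equal to $n$ give \relref{B5}--\relref{B7}, the nested case $k<l<n$;
\item relations with both indices equal to $n$ give \relref{B4}.
\end{itemize}
Hence the subgroup of $G$ generated by the $J_n$ generators is a quotient of $J_n$, so $w_2=1$ in $G$ and $w=w_1w_1^{-1}=1$.

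\textbf{Main obstacle.} I expect Step 2 to be the hard part. One must show that pushing $x$'s through cactus letters terminates. One must also show that the level of the cactus word can always be adjusted consistently, since the relations change the second index of $\sigma_{i,j}$ and $\tau_{i,j}$ and produce several letters. My approach is to induct on the number of $x^{\pm1}$ letters lying on the wrong side. The key point is that each relation in \relref{C1}--\relref{D5} moves one $x_k$ across one $J$-letter, leaves the other $x$'s untouched, and makes only finitely many $x$'s appear on the left. A secondary check is that \relref{B6} and \relref{B7} cover every orientation of the third cactus relation involving $s_{\cdot,n}$; the case $i=k$ with $l<j$ is exactly \relref{B6}.
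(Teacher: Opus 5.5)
Your architecture matches the paper's: an epimorphism $G\to CV$, rewriting every word as a block, then Lemma~\ref{lemma_identity}. Your explicit check that each defining relation of $J_n$, written in the $J_n$ generators, is an instance of (B1)--(B7) is a welcome detail the paper leaves implicit. The gap is in Step~2, which carries the real content of the proof. There your termination argument does not work.

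\textbf{Why the proposed induction fails.} The number of misplaced $x^{\pm1}$ letters is not monotone under the rewriting.
\begin{itemize}
\item By (D1), $\sigma_{1,2}\tau_{1,2}x_5=\sigma_{1,2}\,x_0x_1x_2x_3x_4^2\,\tau_{1,8}\tau_{2,8}$. One misplaced letter becomes six, all still to the right of $\sigma_{1,2}$.
\item Relations (C2), (D3), (D4) replace one cactus letter by two, so later $x$'s have more letters to cross.
\item The level-raising substitution $\tau_{i,n}=x_{i-1}\sigma_{i,i+1}\tau_{i,n+1}$ plants new $x$'s inside the word.
\end{itemize}
``Only finitely many new letters per move'' is not a well-founded measure. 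Worse, once pieces of different levels coexist, a letter $x_{k'}$ leaving a level-$(n+1)$ piece can meet some $\tau_{i,n}$ with $k'=n-1$ and trigger (D2) again.

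\textbf{The block condition.} The condition $n\ge N+1$ also cannot simply be arranged at the end. Raising the level of $w_2$ may emit an $x_{k'}$ with $k'\le n-2$ on the left. Since $N(w_1x_{k'})=\max(N(w_1)+1,k'+2)$, when $N(w_1)\ge n$ the gap $N(w_1)+1-n$ need not shrink.

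\textbf{The missing idea.} You need level control that keeps (D1) and (D2) out of the rewriting entirely. Keep the cactus part as a single $J_n$-word, and push $x_k$ through it only when $k\le n-2$; if $k$ is larger, first raise $n$ by Lemma~\ref{lemma_indexup}. Then only (C1)--(C3) and (D3)--(D5) occur. Each crossing leaves at most one $x$-letter, of index $\le n-2$, in front of a $J_{n+1}$-word; this is Lemma~\ref{lemma_commute}.

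The paper organizes this as merging two blocks at a time, with the interface length $\ell(w_3)+\ell(v_1)$ as the decreasing measure (Lemma~\ref{lemma_twoblock}). It absorbs the single emitted letter into $w_1$ using (A). It uses the formula for $N$ to maintain $n\ge N+1$ at every step, rather than repairing it afterwards (Lemmas~\ref{lemma_twoblock} and~\ref{lemma_oneblock}).
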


Let $G$ be a group given by the presentation above.
Define a homomorphism $\varphi \colon G \to CV$ by sending each generator of $G$ to an element of CV with the same letter.
Since the cactus Thompson group $CV$ satisfies all the relations in $G$, the map $\varphi$ is well-defined.
Proposition \ref{proposition_generator} implies that $\varphi$ is surjective.
Hence we have to show that $\varphi$ is injective.
By Lemma \ref{lemma_identity}, it is true for the case that a word of $G$ is a block.
Thus we need to show that any word in $G$ can be written as a single block.
Note that any generator of $G$ is a block, and thus any word in $G$ is already a product of finitely many blocks.
Therefore, it is enough to show that a product of two blocks can be rewritten as a single block, which is proved in Lemma \ref{lemma_oneblock}.

As a preparation, we show the following three lemmas: Lemma \ref{lemma_commute} means that a generator corresponding to that of $F$ is commutative with cactus generators in a sense.
Lemma \ref{lemma_indexup} means that a word of $J_n$ generators is written as a word of $J_{n+1}$
generators possibly with an extra generator $x_i$.
Using these two lemmas, we show Lemma \ref{lemma_twoblock} which claims that a product of two blocks is written as a new product of two blocks where the sum of the length of the two middle words among the six subwords is reduced.
Finally, we show Lemma \ref{lemma_oneblock} and complete the proof of Theorem \ref{theorem_presentation}.

\begin{lemma} \label{lemma_commute}
  Let $w$ be a word in the $J_n$ generators and $k \leq n-2$.
  Then $w x_k$ is equal to either $x_{k'} \overline{w}$ or $\overline{w}$ in $G$, where $k' \leq n-2$ and $\overline{w}$ is a word in $J_{n+1}$ generators.
  Similarly, $x_k^{-1} w$ is equal to either $\overline{w} x_{k'}^{-1}$ or $\overline{w}$ in $G$ under the same conditions above.
\end{lemma}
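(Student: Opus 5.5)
We argue by induction on the length $\ell$ of $w$ as a word in the $J_n$ generators, pushing $x_k$ leftward through $w$ one letter at a time using only the relations \ref{C1}--\ref{C3} and \ref{D1}--\ref{D5}. Since every generator is an involution by \ref{B1}, we may assume $w$ contains no inverse letters. The inductive statement is slightly stronger than the lemma:
\begin{itemize}
  \item[(a)] if $g$ is a single $J_n$ generator and $k\le n-2$, then $g x_k$ equals $x_{k'}\overline{g}$ or $\overline{g}$ with $k'\le n-2$;
  \item[(b)] $\overline{g}$ is a word of length at most two in the $J_{n+1}$ generators.
\end{itemize}

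\textbf{One letter.} For $g=\sigma_{i,j}$ we have $j\le n-1$. Cases \ref{C1}, \ref{C2}, \ref{C3} give $x_k\sigma_{i,j}$, $x_{i+j-k-2}\sigma_{i,j+1}\sigma_{k+1,k+2}$, and $x_k\sigma_{i+1,j+1}$ respectively.
\begin{itemize}
  \item In every case the new $\sigma$'s have second index at most $\max(j+1,k+2)\le n$, so they are $J_{n+1}$ generators.
  \item The new $x$-index is $k$ or $i+j-k-2\le j-1\le n-2$, since $k\ge i-1$.
\end{itemize}
For $g=\tau_{i,n}$ (only second index $n$ occurs) and $k\le n-2$, relations \ref{D1} and \ref{D2} never apply. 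We are in \ref{D3}, \ref{D4}, or \ref{D5}:
\begin{itemize}
  \item \ref{D3} gives $x_{i+n-k-2}\tau_{i,n+1}\sigma_{k+1,k+2}$ with $i+n-k-2\le n-2$ because $k\ge i$, and $k+2\le n$.
  \item \ref{D4} gives $\tau_{i,n+1}\sigma_{i,i+1}$ with no $x$ at all.
  \item \ref{D5} gives $x_k\tau_{i+1,n+1}$.
\end{itemize}
All resulting letters are $J_{n+1}$ generators, because $\tau_{\cdot,n+1}$ and $\sigma_{a,b}$ with $b\le n$ are in that list. This establishes (a) and (b).

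\textbf{Induction.} Write $w=w'g$, so that $wx_k=w'(gx_k)$. If $gx_k=\overline{g}$, we are done with $\overline{w}=w'\overline{g}$. But $w'$ must be a word in the $J_{n+1}$ generators, and this is the main obstacle: a $\tau_{i,n}$ in $w'$ is not literally a $J_{n+1}$ generator. We handle it via \ref{D3} with $k=n-1$? No — instead we note that $\tau_{i,n}=\tau_{i,n}x_{n-1}x_{n-1}^{-1}$, which is circular, so we would rather use relation \ref{D4}/\ref{D3}-type identities in reverse. The cleaner route, which we adopt, is to weaken what ``$\overline w$ is a word in $J_{n+1}$ generators'' demands in the $\overline{g}$ case: when no $x$ is produced, every remaining letter of $w'$ can be rewritten in the $J_{n+1}$ generators with an accompanying $x_{n-1}$ pushed left. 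We will check this carefully.

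Otherwise $gx_k=x_{k'}\overline{g}$ with $k'\le n-2$, and we apply the induction hypothesis to $w'x_{k'}$. This yields $x_{k''}\overline{w'}\,\overline{g}$ or $\overline{w'}\,\overline{g}$. In the latter case, all subsequent letters have already been converted, so no unconverted $\tau_{\cdot,n}$ remains to the left, and the obstacle above disappears.

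So the real difficulty is the case where the $x$ gets absorbed, which happens only by \ref{D4} at a $\tau_{i,n}$. There the letters of $w'$ to the left are still $J_n$ generators, and we need the fact that the copy of $J_n$ sits inside the copy of $J_{n+1}$ up to the conjugation $x_{n-1}$. We plan to use a consequence of \ref{D3}/\ref{D4}/\ref{C1}, namely $u=x_{n-1}\hat u\,x_{n-1}^{-1}$ for words $u$ in $J_n$ generators, to express $w'$ as a word in $J_{n+1}$ generators once we multiply by the tree-expansion, and we expect this to be the step requiring the most care.

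\textbf{Inverse statement.} The claim for $x_k^{-1}w$ follows by inverting the equality obtained for $w^{-1}x_k$, using that the inverse of a word in $J_m$ generators is again such a word by \ref{B1}.
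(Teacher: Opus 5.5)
Your one-letter analysis is correct and coincides with the paper's entire proof. The paper checks (C1)--(C3) and (D3)--(D5) for a single $J_n$ generator, notes that (D1) and (D2) cannot occur for $k\le n-2$, and then simply asserts that this suffices. Your treatment of the inverse statement via $w^{-1}x_k$ and (B1) is also fine.

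You are right that passing from letters to words is not automatic. If the $x$ is absorbed by (D4) at a final letter $\tau_{i,n}$ with $k=i-1$, the prefix $w'$ may still contain letters $\tau_{j,n}$, which are not $J_{n+1}$ generators. For example, pushing letter by letter gives $\tau_{1,n}\tau_{2,n}x_1=\tau_{1,n}\tau_{2,n+1}\sigma_{2,3}$. On the other hand, (B4), (D3) and (C2) give $\tau_{1,n}\tau_{2,n}x_1=\sigma_{1,n-1}\tau_{1,n}x_1=x_0\,\sigma_{1,n}\sigma_{n-1,n}\tau_{1,n+1}\sigma_{2,3}$. So in the absorbed case a new $x$ must be extracted from the prefix.

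However, your proposal stops exactly at this point, and the tool you suggest does not work. The identity $u=x_{n-1}\hat u\,x_{n-1}^{-1}$, with $\hat u$ a word in $J_{n+1}$ generators, fails as soon as $u$ contains a letter $\tau_{j,n}$. Take $u=\tau_{1,n}$. The element $x_{n-1}$ is the identity on $[0,1-2^{-(n-1)}]$ and has breakpoints inside the last leaf $[1-2^{-(n-1)},1]$ of $R_n$. The element $\tau_{1,n}$ maps the first leaf $[0,1/2]$ affinely onto that last leaf. Hence the image of $x_{n-1}^{-1}\tau_{1,n}x_{n-1}$ in $V$ is not affine on $[0,1/2]$, so it is not of the form $(R_m,s,R_m)$ for any $m$. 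This fails already in $CV$, hence in $G$.

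For words in the $\sigma$'s alone the identity does hold by (C1), but it is useless there, since such words are already $J_{n+1}$ words. In any case it would leave a trailing $x_{n-1}^{-1}$ rather than the required shape $x_{k'}\overline{w}$. Note also that the correct extra letter is $x_{j-1}$, not $x_{n-1}$, and it must itself be pushed further left.

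The missing idea is to invert (D4) using (B1), which gives $\tau_{j,n}=x_{j-1}\sigma_{j,j+1}\tau_{j,n+1}$ with $x_{j-1}$ (where $j-1\le n-2$) on the \emph{left}. You then prove the lemma together with Lemma~\ref{lemma_indexup} by induction on $\ell(w)$:
\begin{enumerate}
\item In the absorbed case, rewrite the rightmost $\tau_{j,n}$ of $w'$ in this way. The letters to its right are $\sigma_{a,b}$ with $b\le n-1$, which are already $J_{n+1}$ generators.
\item Push $x_{j-1}$ leftward through the strictly shorter prefix using the induction hypothesis. This may again produce or absorb an $x$.
\item If $w'$ contains no $\tau_{\cdot,n}$, nothing remains to be done.
\end{enumerate}
This is exactly the argument of Lemma~\ref{lemma_indexup}, which itself invokes the present lemma on a shorter word. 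So the two statements have to be proved simultaneously.
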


\begin{proof}
  It is sufficient to consider the case where $w$ is one of $J_n$ generators:
  \begin{enumerate}
    \relationitem{C1} $\sigma_{i,j} x_k = x_k \sigma_{i,j}\ (j \leq k \leq n-2)$ holds, and thus $k' = k \leq n-2$ and $\overline{w} = \sigma_{i,j} \in J_n$;
    \relationitem{C2} $\sigma_{i,j} x_k = x_{i+j-k-2} \sigma_{i,j+1} \sigma_{k+1,k+2}\ (i-1 \leq k \leq j-1 \leq n-2)$ holds, and thus $k' = i+j-k-2 \leq j-1 \leq n-2$ and $\overline{w} = \sigma_{i, j+1} \sigma_{k+1,k+2} \in J_{n+1}$;
    \relationitem{C3} $\sigma_{i,j} x_k = x_k \sigma_{i+1,j+1}\ (2 \leq i, 0 \leq k \leq i-2 < j-2 \leq n-3)$ holds, and thus, $k' = k \leq n-2$ and $\overline{w} = \sigma_{i+1,j+1} \in J_{n+1}$;
    \relationitem{D3} $\tau_{i,n} x_k = x_{i+n-k-2} \tau_{i,n+1} \sigma_{k+1,k+2}\ (i \leq k \leq n-2)$ holds, and thus
    $k' = i+n-k-2 \leq n-2$ and $\overline{w} = \tau_{i,n+1} \sigma_{k+1,k+2} \in J_{n+1}$;
    \relationitem{D4} $\tau_{i,n} x_k = \tau_{i,n+1} \sigma_{i,i+1}\ (k=i-1 \leq n-2)$ holds, and thus $\overline{w} = \tau_{i,n+1} \sigma_{i,i+1} \in J_{n+1}$;
    \relationitem{D5} $\tau_{i,n} x_k = x_k \tau_{i+1,n+1}\ (2 \leq i, 0 \leq k \leq i-2 \leq n-3)$ holds, and thus $k' = k \leq n-2$ and $\overline{w} = \tau_{i+1,n+1} \in J_{n+1}$.
  \end{enumerate}
  Therefore $w x_k$ is rewritten to either $x_{k'} \overline{w}$ or $\overline{w}$, where $k' \leq n-2$ and $\overline{w}$ is a word in $J_{n+1}$ generators.
  The second case follows from a similar argument.
\end{proof}

\begin{lemma} \label{lemma_indexup}
  Let $w$ be a word in the $J_n$ generators.
  Then $w$ is equal to either $x_{k'} \overline{w}$ or $\overline{w}$ in $G$, where $k' \leq n-2$ and $\overline{w}$ is a word in $J_{n+1}$ generators.
  Similarly, $w$ is equal to either $\overline{w} x_{k'}^{-1}$ or $\overline{w}$ in $G$, where $k'$ and $\overline{w}$ satisfy the conditions above.
\end{lemma}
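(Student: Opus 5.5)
Our plan is to reduce to single generators and then handle products using Lemma \ref{lemma_commute}. Since a word in the $J_n$ generators is a product of $\sigma_{i,j}$ ($1\le i<j\le n-1$) and $\tau_{i,n}$ ($1\le i\le n-1$), we first treat one generator. For $\sigma_{i,j}$ with $j\le n-1$, the element already lies in the copy of $J_{n+1}$, so we take $\overline{w}=\sigma_{i,j}$ with no extra $x$. For $\tau_{i,n}$ we use \relref{D3}, \relref{D4}, \relref{D5} read in reverse. The most convenient is \relref{D4}: with $k=i-1$ it reads $\tau_{i,n}x_{i-1}=\tau_{i,n+1}\sigma_{i,i+1}$, so
\[
  \tau_{i,n} = \tau_{i,n+1}\sigma_{i,i+1}x_{i-1}^{-1}.
\]
This has the form $\overline{w}x_{k'}^{-1}$ with $k'=i-1\le n-2$, which is the second form in the statement. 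To get the first form $x_{k'}\overline{w}$ we instead use the \relref{D4} relation for $\tau_{i,n-1}$ (when $i\le n-2$), or else \relref{D3}, rearranged as $x_{k'}^{-1}\tau_{i,n}$ equal to a $J_{n}$ word. The cleanest route is probably to express $\tau_{i,n}$ as $x_{k'}\overline{w}$ directly by taking inverses: the inverse of $\tau_{i,n}=\overline{u}x_{k'}^{-1}$ gives $\tau_{i,n}=\tau_{i,n}^{-1}=x_{k'}\overline{u}^{-1}$ by \relref{B1}. Thus both forms hold for every single generator.

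Next we pass to a general word $w=g_1g_2\cdots g_m$, and we prove the first form by induction on $m$. Write $g_1\cdots g_{m-1}=x_{k}\overline{u}$ or $\overline{u}$, with $\overline{u}$ in the $J_{n+1}$ generators, and $g_m=x_{k''}\overline{v}$ or $\overline{v}$. The problematic term is a middle $x_{k''}$ sitting between $\overline{u}$ and $\overline{v}$. We must push it left through $\overline{u}$, which is a word in the $J_{n+1}$ generators, with $k''\le n-2\le (n+1)-2$. Lemma \ref{lemma_commute} applies with $n+1$ in place of $n$. It yields $\overline{u}x_{k''}=x_{k'''}\overline{u}'$ or $\overline{u}'$, but now $\overline{u}'$ lies in $J_{n+2}$, which overshoots.

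The main obstacle is exactly this index overshoot. Our first fix is to notice that a $J_{n+1}$ word obtained from a $J_n$ word via the single-generator step only introduces $\tau_{\cdot,n+1}$ and $\sigma$'s with second index at most $n$. We would track this more refined structure so that the commutation stays within $J_{n+1}$. A cleaner alternative is to choose, for each $\tau_{i,n}$ in $w$, the rewriting that produces the extra letter uniformly. One way is to rewrite every $\tau_{i,n}$ with the \emph{same} leaf $x_{n-2}$, using \relref{D3}/\relref{D4} with $k=n-2$. This is the diagrammatic fact that $(R_n,s,R_n)$ expands at the last leaf of the top tree to $(R_n\text{ with caret at leaf }n-1, s', \cdot)$.

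Concretely, we would prove the first form as follows. Expanding $(R_n,s,R_n)$ at the appropriate leaf gives the cactus-tree pair whose top tree is $R_{n+1}$ with one caret moved, that is, $x_{k'}$ applied to $R_{n+1}$. The bottom tree can then be matched to $R_{n+1}$ after a further $x$. Algebraically, we would use the identity $w x_{n-2}=x_{k'}\overline{w}$ from Lemma \ref{lemma_commute} with $k=n-2$, where the letter $x_{n-2}$ arises uniformly. We would then move the trailing $x_{n-2}$ away via $x_{n-2}=\tau_{n-1,n}\,\tau_{n-1,n+1}\sigma_{n-1,n}$, which is \relref{D4} with $i=n-1$, so that it becomes a $J_{n+1}$ word. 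This gives
\[
  w = x_{k'}\,\overline{w}\,\sigma_{n-1,n}\tau_{n-1,n+1}\tau_{n-1,n}.
\]
It remains to absorb the final $\tau_{n-1,n}$, and this is the step we would verify most carefully. The second form then follows by applying the first form to $w^{-1}$ and inverting.
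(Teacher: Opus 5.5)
There is a genuine gap: the proposal never reaches the required form. Your induction stalls exactly where you say it does. Pushing $x_{k''}$ through the already rewritten $J_{n+1}$ word $\overline{u}$ overshoots to $J_{n+2}$, and it would also leave two $x$-letters in front, while the statement allows only one. The proposed repair is circular. From $wx_{n-2}=x_{k'}\overline{w}$ you get
\[
  w=x_{k'}\,\overline{w}\,x_{n-2}^{-1}=x_{k'}\,\overline{w}\,\sigma_{n-1,n}\tau_{n-1,n+1}\tau_{n-1,n},
\]
and the trailing $\tau_{n-1,n}$ is not a $J_{n+1}$ generator. Absorbing it is just the lemma for the one-letter word $\tau_{n-1,n}$. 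Doing so via $\tau_{n-1,n}=x_{n-2}\sigma_{n-1,n}\tau_{n-1,n+1}$ puts $x_{n-2}$ back to the right of a $J_{n+1}$ word, so you are back at the overshoot. No rewriting of the tail alone can help. Indeed, $x_{n-2}^{-1}$ is not a product of $J_{n+1}$ generators even in $CV$: its image in $V$ is a nontrivial element of the torsion-free group $F$, whereas the copy of $J_{n+1}$ maps into a finite copy of $S_{n+1}$. Also, the diagrammatic expansion sketch cannot replace the algebra. The lemma is an identity in the presented group $G$ and is used to prove that $G\to CV$ is injective, so only the defining relations of $G$ are available.

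The missing idea is to feed the $x$-letter into the \emph{original} prefix, while it is still a word in the $J_n$ generators; that is exactly the situation Lemma \ref{lemma_commute} handles. The paper takes the rightmost $\tau_{i,n}$ and writes $w=w_1\tau_{i,n}w_2$, where $w_2$ consists only of $\sigma$'s, which are already $J_{n+1}$ generators. It then uses your single-generator identity $\tau_{i,n}=x_{i-1}\sigma_{i,i+1}\tau_{i,n+1}$ to get $w=(w_1x_{i-1})\,\sigma_{i,i+1}\tau_{i,n+1}w_2$. Lemma \ref{lemma_commute} applied to $w_1x_{i-1}$ (note $i-1\le n-2$) moves the single $x$-letter to the front or absorbs it. At the same time it converts the other $\tau_{\cdot,n}$'s of $w_1$ into $J_{n+1}$ generators. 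So only one $x$ is ever created, and it never has to pass through a $J_{n+1}$ word.

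In your inductive language: when $g_m=\tau_{i,n}$, apply Lemma \ref{lemma_commute} to $g_1\cdots g_{m-1}x_{i-1}$ instead of first rewriting $g_1\cdots g_{m-1}$ by the induction hypothesis; when $g_m$ is a $\sigma$, just append it. If the $x$-letter is absorbed by \relref{D4} partway through $w_1$, the untouched left part is still a shorter $J_n$ word and is handled by induction on length. Your derivation of the second form from the first, via $w^{-1}$ and \relref{B1}, is fine.
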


\begin{proof}
  Consider the rightmost occurrence of a $J_n$ generator $\tau_{i,n}$ in $w$ for some $i$, that is, $w = w_1 \tau_{i,n} w_2$ where $w_2$ consists of only $\sigma$ generators.
  We have
  \begin{align}
    w  = w_1 x_{i-1} \sigma_{i,i+1} \tau_{i,n+1} w_2 \tag{\relref{D4}},
  \end{align}
  and by Lemma \ref{lemma_commute}, $w_1 x_{i-1} = x_{k'} \overline{w_1}$ or $\overline{w_1}$ holds, where $k' \leq n-2$ and $\overline{w_1}$ is a word in the $J_{n+1}$ generators.
  Since $\sigma_{i,i+1}$ and $\tau_{i,n+1}$ are $J_{n+1}$ generators, we may set $\overline{w} = \overline{w_1} \sigma_{i,i+1} \tau_{i,n+1} w_2$.
\end{proof}

For a word $w$ in $G$, let $\ell(w)$ be the number of generators in $w$.

\begin{lemma} \label{lemma_twoblock}
  Let $w_1 w_2 w_3^{-1}, v_1 v_2 v_3^{-1}$ be blocks.
  Then their product $w_1 w_2 w_3^{-1} v_1 v_2 v_3^{-1}$ is equal to the word of the form $w_1' w_2' (w_3')^{-1} v_1' v_2' (v_3')^{-1}$ in $G$ such that $w_1' w_2' (w_3')^{-1}$ and $v_1' v_2' (v_3')^{-1}$ are blocks and $\ell(w_3') + \ell(v_1') < \ell(w_3) + \ell(v_1)$.
\end{lemma}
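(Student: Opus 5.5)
We follow the strategy of \cite{brady2008bv} for $BV$. The aim is to shorten the "middle" subword $w_3^{-1} v_1$, which consists of a negative word followed by a positive word in $X$. If $\ell(w_3)+\ell(v_1)=0$ there is nothing to prove, so we assume it is positive. We implicitly assume the statement is meant for this case.

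\textbf{Case 1: $v_1$ is nonempty.} Write $v_1 = x_k u$, where $x_k$ is its first letter. We move $x_k$ leftward through $w_3^{-1}$ and then through $w_2$.

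First, in $F$ (relation \relref{A}), $w_3^{-1} x_k$ equals either $u_3^{-1}$, obtained by cancelling $x_k$ against a letter of $w_3$ after reindexing, or $x_{k''} \widetilde{w_3}^{-1}$ with $\ell(\widetilde{w_3}) = \ell(w_3)$. This is the standard normal-form manipulation: conjugating $x_k$ by negative generators only shifts indices. In the first subcase, $\ell$ of the middle drops by two and we are done, provided $w_1 w_2 u_3^{-1}$ is still a block. If it is not, we enlarge $n$ by applying Lemma~\ref{lemma_indexup} to $w_2$ and absorb the extra $x_{k'}$ into $w_1$ via \relref{A}, renormalizing $w_1$ into seminormal form.

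In the second subcase we must push $x_{k''}$ past $w_2$. Since $w_2$ lives in $J_n$ with $n \geq N(w_3)+1$, the index $k''$ satisfies $k'' \leq n-2$. Lemma~\ref{lemma_commute} then gives $w_2 x_{k''} = x_{k'} \overline{w_2}$ or $\overline{w_2}$, with $\overline{w_2}$ in the $J_{n+1}$ generators. We absorb $x_{k'}$ into $w_1$, renormalize with \relref{A}, and set $w_3' = \widetilde{w_3}$ and $v_1' = u$. The new middle length is then $\ell(w_3)+\ell(v_1)-1$.

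\textbf{Case 2: $v_1$ is empty.} Then $w_3 \neq 1$, and we argue symmetrically. We move the last letter $x_k^{-1}$ of $w_3^{-1}$ rightward through $v_2$, using the second halves of Lemmas~\ref{lemma_commute} and \ref{lemma_indexup}, and absorb the result into $v_3^{-1}$.

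The main obstacle is checking that the new words are blocks, in particular condition (3) of Definition~\ref{definition_block}. Absorbing $x_{k'}$ into $w_1$ may increase $N(w_1')$ by one caret, while $\overline{w_2}$ only lies in $J_{n+1}$; so we must verify $n+1 \geq N(w_1')+1$. We also need the index bound $k'' \leq n-2$ before invoking Lemma~\ref{lemma_commute}. The inequality $n \geq N+1$ is exactly what guarantees that every leaf index touched by the $F$-part lies among the first $n-1$ leaves, so the bound should follow. When an estimate fails, we first apply Lemma~\ref{lemma_indexup} to raise $w_2$ to $J_{n+1}$, pushing its extra $x$-letter into $w_1$, so that we can use a larger $n$. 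Because these extra letters appear only in the outer words $w_1$ and $v_3$, they never lengthen the middle, and the strict decrease of $\ell(w_3')+\ell(v_1')$ is preserved.
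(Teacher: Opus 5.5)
Your mechanism is the paper's: push one $x$-letter across the middle, get it past the cactus word with Lemma~\ref{lemma_commute}, and absorb the output into an outer word via \relref{A}. However, the step that makes your second subcase work is asserted incorrectly.

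\textbf{The index bound is false.} The claim that $k''\le n-2$ follows from $n\ge N(w_3)+1$ does not hold. The quantity $N=\max(N(w_1),N(w_3))$ constrains only the first block, whereas $k''\ge k$ is governed by $v_1$, which belongs to the second block. For example, take $w_3=x_0$ (so $n=3$ is allowed), $w_2=\tau_{1,3}$ and $v_1=x_5$. Then $x_0^{-1}x_5=x_6x_0^{-1}$, so $k''=6>n-2=1$. Moreover $\tau_{1,3}x_6$ falls under \relref{D1}, which outputs $x_0x_1x_2x_3x_4^2\tau_{1,9}\tau_{3,9}$, so Lemma~\ref{lemma_commute} is unavailable.

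What you describe as a fallback ``when an estimate fails'' is therefore the generic situation. It must be carried out as the paper's Case~2:
\begin{enumerate}
\item Apply Lemma~\ref{lemma_indexup} $k''-(n-2)$ times; a single raise to $J_{n+1}$ is not enough.
\item Absorb each produced $x_{k'}$, whose index is at most the current level minus $2$, into $w_1$ by \relref{A}.
\item Check inductively, via $N(w_1x_{k'})=\max(N(w_1)+1,k'+2)$, that $N$ of the left word stays at most the current level minus one.
\end{enumerate}
Only at level $k''+2$ can Lemma~\ref{lemma_commute} be applied, landing in $J_{k''+3}$. The same issue arises in your Case~2: the index of the last letter of $w_3^{-1}$ is controlled by the first block's level, not by the level of $v_2$.

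\textbf{An unverified block condition.} You push the first letter of $v_1$ through all of $w_3^{-1}$ regardless of indices. As a result, $\widetilde{w_3}$ is only partially shifted: letters below the running index stay put, and the others go up by one. You therefore still owe the bound $N(\widetilde{w_3})\le N(w_3)+1$ needed for $w_1'w_2'\widetilde{w_3}^{-1}$ to be a block. It is true, by the caret-count formula $\max_m(i_m+a_m+\cdots+a_l)+1$ for positive normal forms, but you only say it ``should follow''.

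The paper avoids this by comparing the first letters $x_{k_w}$ of $w_3$ and $x_{k_v}$ of $v_1$. Equal letters cancel; otherwise the letter of smaller index is pushed outward. Its index then never changes, and the word it crosses is shifted uniformly, which gives $N(w_3')=N(w_3)+1$ directly from \cite{burillo2001metrics}.

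Your remark that the statement needs $\ell(w_3)+\ell(v_1)>0$ is correct; the paper only applies the lemma in that situation.
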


\begin{proof}
  Denote the first letters of $w_3$ and $v_1$ by $x_{k_{w}}$ and $x_{k_v}$, respectively.
  Namely, $w_3 = x_{k_w} s, v_1 = x_{k_v} t$, where $s, t$ are words on $X$.
  First, if $k_w = k_v$, then
  \begin{align*}
  w_3^{-1} v_1 = s^{-1} x_{k_w}^{-1} x_{k_v} t = s^{-1}t.
  \end{align*}
  Moreover $w_1 w_2 s^{-1}$ and $t v_2 v_3^{-1}$ are both blocks, and $\ell(s) + \ell(t) < \ell(w_3) + \ell(v_1)$ holds.

  Next we suppose $k_w > k_v$.
  Set $v_1' = t, v_2' = v_2, v_3' = v_3$.
  Since the indices of the generators of $X$ in $w_3$ are greater than $k_v$, \relref{A} implies that $w_3^{-1} x_{k_v} = x_{k_v} (w_{3}')^{-1}$, where $w_3'$ is the word on $X$ obtained by increasing the index of each generator in $w_3$ by 1.
  Note that $\ell(w_3') = \ell(w_3)$ and $N(w_3') = N(w_3) + 1$ by Proposition 2 in \cite{burillo2001metrics}.
  Then we need to rewrite $w_1 w_2 x_{k_v} = w_1' w_2'$ so that $w_1' w_2' (w_3')^{-1}$ is a block.
  Consider the following two cases:

  Case 1: Suppose $k_v \leq n-2$.
  By Lemma \ref{lemma_commute}, we have $w_2 x_{k_v} = x_{k} w_2'$, where $k \leq n-2$ and $w_2'$ is a word in the $J_{n+1}$ generators.
  Then by \relref{A}, we have $w_1 x_{k} = w_1'$, where $w_1'$ is a word on $X$ satisfying condition (1) of Definition \ref{definition_block}.
  From Proposition 2 in \cite{burillo2001metrics} again,
  \begin{align*}
  N(w_1') = \max(N(w_1)+1, k+2)
  \end{align*}
  holds, and the condition of $w_1 w_2 w_3^{-1}$ being a block implies $N(w_1') \leq n+1$.
  Therefore $w_1' w_2' (w_3')^{-1}$ is a block and $\ell(w_3') + \ell(v_1') < \ell(w_3) + \ell(v_1)$.

  Case 2: Suppose $k_v > n-2$.
  Lemma \ref{lemma_indexup} implies $w_2 = \overline{w_2}$ or $x_k \overline{w_2}$, where $k \leq n-2$ and $\overline{w_2}$ is a word in the $J_{n+1}$ generators.
  If $w_2 = x_k \overline{w_2}$, we use \relref{A} to rewrite $w_1 x_k = \overline{w_1}$ so that it satisfies condition (1) of Definition \ref{definition_block}.
  Repeating this operation $k_v - (n-2)$ times, we obtain $w_1 w_2 = \overline{w_1}\ \overline{w_2}$, where $\overline{w_1}$ is a word on $X$ satisfying condition (1) of Definition \ref{definition_block}, and $\overline{w_2}$ is a word in the $J_{k_v+2}$ generators.
  Note that
  \begin{align*}
  N(\overline{w_1}) \leq n + (k_v - (n-2)) = k_v+2.
  \end{align*}
  Moreover, from Lemma \ref{lemma_commute}, we have $\overline{w_2} x_{k_v} = x_{k'} w_2'$, where $k' \leq k_v$ and $w_2'$ is a word in the $J_{k_v+3}$ generators.
  Then by \relref{A}, $\overline{w_1} x_{k'} = w_1'$ holds, where $w_1'$ is a word on $X$ satisfying condition (1) of Definition \ref{definition_block}.
  By the same argument in Case 1, we have
  \begin{align*}
  N(w_1') = \max(N(\overline{w_1})+1, k'+2) \leq k_v+3.
  \end{align*}
  Therefore $w_1' w_2' (w_3')^{-1}$ is a block and $\ell(w_3') + \ell(v_1') < \ell(w_3) + \ell(v_1)$.

  The case $k_w < k_v$ also follows from a similar argument.
\end{proof}

\begin{lemma} \label{lemma_oneblock}
  Let $w_1 w_2 w_3^{-1}, v_1 v_2 v_3^{-1}$ be blocks.
  Then their product $w_1 w_2 w_3^{-1} v_1 v_2 v_3^{-1}$ is equal to a single block $u_1 u_2 u_3^{-1}$ in $G$.
\end{lemma}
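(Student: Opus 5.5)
We argue by induction on the quantity $\ell(w_3)+\ell(v_1)$, using Lemma \ref{lemma_twoblock} as the inductive step. Given blocks $w_1 w_2 w_3^{-1}$ and $v_1 v_2 v_3^{-1}$, as long as both $w_3$ and $v_1$ are nonempty, Lemma \ref{lemma_twoblock} rewrites the product in $G$ as $w_1' w_2' (w_3')^{-1} v_1' v_2' (v_3')^{-1}$, again a product of two blocks, with $\ell(w_3')+\ell(v_1')$ strictly smaller. Since this quantity is a nonnegative integer, after finitely many applications we reach a product of two blocks in which $w_3$ or $v_1$ is the empty word. It therefore remains to treat these two base cases, which are symmetric; we describe the case where $v_1$ is empty, so the product is $w_1 w_2 w_3^{-1} v_2 v_3^{-1}$.

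In this case the task is to move $w_3^{-1}$ past $v_2$, which is a word in the $J_m$ generators for some $m \geq N(v_3)+1$. First we equalize the indices of the two cactus words. Using Lemma \ref{lemma_indexup} repeatedly, we rewrite $w_2$ (a word in $J_n$ generators) as a word in $J_{n'}$ generators for any $n'\geq n$, at the cost of producing letters $x_k$ on the left. These letters are absorbed into $w_1$ via \relref{A} exactly as in Case 2 of Lemma \ref{lemma_twoblock}, keeping condition (1) of Definition \ref{definition_block} and the bound $N(w_1)\leq n'-1$. Symmetrically, the second form of Lemma \ref{lemma_indexup} rewrites $v_2$ as a word in $J_{n'}$ generators, producing letters $x_{k'}^{-1}$ on the right, which are absorbed into $v_3^{-1}$. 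Taking $n'$ large, say $n' \geq \max(n,m,N(w_3)+2)$, we may assume $w_2$ and $v_2$ are both words in the $J_{n'}$ generators and that every index in $w_3$ is at most $n'-2$.

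Next we push $w_3^{-1}$ through $v_2$ one letter at a time, using the second statement of Lemma \ref{lemma_commute}. Each step turns $x_k^{-1}u$, with $u$ a $J_{n''}$ word and $k\leq n''-2$, into $\overline{u}\,x_{k'}^{-1}$ or $\overline{u}$, where $\overline{u}$ is a $J_{n''+1}$ word and $k'\leq n''-2$. This raises the cactus index by one per letter of $w_3$. Because $k'\leq n''-2 \leq (n''+1)-2$, the hypothesis of Lemma \ref{lemma_commute} stays valid at the next letter. Meanwhile $w_2$ must be lifted to the same index via Lemma \ref{lemma_indexup}, with the extra $x$'s again absorbed into $w_1$. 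At the end we obtain $u_1\, (w_2'' v_2'')\, x\text{-letters}^{-1} v_3^{-1}$. We then reorder the trailing negative letters into the normal form $u_3^{-1}$ required by condition (2), using \relref{A}.

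The main obstacle is the bookkeeping needed to check condition (3): the final cactus index must be at least $\max(N(u_1),N(u_3))+1$. To control $N(u_3)$ we plan to use Proposition 2 of \cite{burillo2001metrics}, as in Lemma \ref{lemma_twoblock}. Each new letter $x_{k'}^{-1}$ satisfies $k'\leq n''-2$, and each normal-form reordering raises $N$ by at most one. So we expect to show that $N$ grows no faster than the cactus index, and if it does fall behind, we will apply Lemma \ref{lemma_indexup} once more to raise the index further.
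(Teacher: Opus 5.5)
Your proof is correct and follows the paper's route. You shrink $\ell(w_3)+\ell(v_1)$ with Lemma~\ref{lemma_twoblock}, equalize the cactus indices with Lemma~\ref{lemma_indexup} (absorbing the emitted letters into the outer $x$-words), and control condition~(3) with the formula $N(wx_k)=\max(N(w)+1,k+2)$. Your explicit treatment of the case where only one of $w_3, v_1$ is exhausted, pushing the leftover letters through the other cactus word via Lemma~\ref{lemma_commute}, just spells out what the paper does implicitly: it applies Lemma~\ref{lemma_twoblock} until both words are empty, even though that lemma as stated needs a first letter in each.

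For the bookkeeping you need no fallback. Each application of Lemma~\ref{lemma_commute} or Lemma~\ref{lemma_indexup} raises the index by one and emits at most one letter $x_k^{\pm1}$ with $k$ at most the old index minus~$2$, so the invariant $N\le(\text{index})-1$ is preserved on both sides. Also, the hypothesis of Lemma~\ref{lemma_commute} holds at each later step because the next letter pushed comes from $w_3$, whose indices are at most $n'-2$; the bound on $k'$ is not what makes it work.
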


\begin{proof}
  Applying Lemma \ref{lemma_twoblock} at most $\ell(w_3) + \ell(v_1)$ times, $w_1 w_2 w_3^{-1} v_1 v_2 v_3^{-1} = w_1' w_2' v_2' (v_3')^{-1}$ holds, where $w_2'$ is a word in the $J_n$ generators, $v_2'$ is a word in the $J_{n'}$ generators, $w_1'$ is a word on $X$ satisfying condition (1) of Definition \ref{definition_block} with $N(w_1') \leq n$, and $v_3'$ is a word on $X$ satisfying condition (2) of Definition \ref{definition_block} with $N(v_3') \leq n'$.
  If $n=n'$, then we may set $u_1 = w_1', u_2 = w_2' v_2'$ and $u_3 = v_3'$.
  Suppose $n < n'$.
  Applying Lemma \ref{lemma_indexup} $n'-n$ times, we have
  \begin{align*}
  w_2' = x_{i_{1}}^{\varepsilon_1} x_{i_2}^{\varepsilon_2} \cdots x_{i_{n'-n}}^{\varepsilon_{n'-n}} \overline{w_2},
  \end{align*} 
  where $\overline{w_2}$ is a word in the $J_{n'}$ generators, $i_j \leq n+j-3$, and $\varepsilon_j \in \{ 0, 1\}$ for $1\leq j \leq n'-n$.
  Since $N(w_1' x_{i_{1}}^{\varepsilon_1}) \leq \max(N(w_1')+1, i_1+2)) \leq n+1$, we inductively see that $N(w_1' x_{i_{1}}^{\varepsilon_1} x_{i_2}^{\varepsilon_2} \cdots x_{i_{n'-n}}^{\varepsilon_{n'-n}} ) \leq n'$.
  Then we may set $u_1 = w_1' x_{i_{1}}^{\varepsilon_1} x_{i_2}^{\varepsilon_2} \cdots x_{i_{n'-n}}^{\varepsilon_{n'-n}} , u_2 = \overline{w_2} v_2'$, and $u_3 = v_3'$.
  The case of $n > n'$ is also shown by a similar argument.
\end{proof}
This proves that $\varphi$ is injective and completes the proof of Theorem~\ref{theorem_presentation}.

As a corollary of Theorem \ref{theorem_presentation}, we observe that $CV$ is a finitely generated group.
\begin{corollary}\label{corollary_four_generators}
  The group $CV$ is generated by $\{x_0, x_1, \sigma_{1, 2}, \tau_{1, 2}\}$.
\end{corollary}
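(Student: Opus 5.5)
The plan is to show, by induction on indices, that every generator in Proposition~\ref{proposition_generator} lies in the subgroup $H \coloneq \langle x_0, x_1, \sigma_{1,2}, \tau_{1,2} \rangle$. Each step solves one relation of Theorem~\ref{theorem_presentation} for a new generator in terms of generators already known to lie in $H$; the relations \relref{B1} make $\sigma_{i,j}$ and $\tau_{i,j}$ involutions, which is what allows this. I would proceed in four steps, in the order below.

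\textbf{Step 1: the $x_i$.} Relation \relref{A} with $i$ and $j=i+1$ gives $x_{i+2} = x_i^{-1} x_{i+1} x_i$. By induction every $x_i$ lies in $\langle x_0, x_1\rangle$.

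\textbf{Step 2: the adjacent $\sigma$'s.} First, \relref{C2} with $(i,j,k)=(1,2,0)$ reads $\sigma_{1,2}x_0 = x_1\sigma_{1,3}\sigma_{1,2}$, so
\[
\sigma_{1,3} = x_1^{-1}\sigma_{1,2}x_0\sigma_{1,2}.
\]
Next, \relref{C2} with $(i,j,k)=(1,2,1)$ reads $\sigma_{1,2}x_1 = x_0\sigma_{1,3}\sigma_{2,3}$, so
\[
\sigma_{2,3} = \sigma_{1,3}x_0^{-1}\sigma_{1,2}x_1 \in H.
\]
For $i\ge 2$, \relref{C3} with $k=0$ gives $\sigma_{i+1,j+1} = x_0^{-1}\sigma_{i,j}x_0$. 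Starting from $\sigma_{2,3}$, this puts every $\sigma_{i,i+1}$ in $H$.

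\textbf{Step 3: all the $\sigma$'s.} Relation \relref{C2} with $k=i-1$ reads $\sigma_{i,j}x_{i-1} = x_{j-1}\sigma_{i,j+1}\sigma_{i,i+1}$. Hence
\[
\sigma_{i,j+1} = x_{j-1}^{-1}\sigma_{i,j}x_{i-1}\sigma_{i,i+1}.
\]
Induction on $j-i$ then gives every $\sigma_{i,j}\in H$.

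\textbf{Step 4: the $\tau$'s.} Relation \relref{D4} gives $\tau_{1,3} = \tau_{1,2}x_0\sigma_{1,2}$. Relation \relref{B4} with $(i,k,j)=(1,2,3)$ reads $\tau_{1,3}\tau_{2,3} = \sigma_{1,2}\tau_{1,3}$, so $\tau_{2,3} = \tau_{1,3}\sigma_{1,2}\tau_{1,3}\in H$. For $i\ge2$, relation \relref{D5} with $k=0$ gives $\tau_{i+1,j+1}=x_0^{-1}\tau_{i,j}x_0$, so every $\tau_{i,i+1}$ lies in $H$. Finally, \relref{D4} in general gives $\tau_{i,j+1} = \tau_{i,j}x_{i-1}\sigma_{i,i+1}$, and induction on $j$ yields all $\tau_{i,j}\in H$. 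Together with Proposition~\ref{proposition_generator}, this shows $H = CV$.

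The main obstacle is only bookkeeping. One must check that each cited relation is applied within its stated index range, for instance $i\ge 2$ in \relref{C3} and \relref{D5}, and $i-1\le k\le j-1$ in \relref{C2}. One must also ensure that the seeds $\sigma_{2,3}$ and $\tau_{2,3}$ are obtained without circularity, since the shifting relations do not apply when $i=1$. This is why those two seeds are handled separately via \relref{C2} and \relref{B4}.
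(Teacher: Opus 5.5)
Your proposal is correct: every cited relation of Theorem~\ref{theorem_presentation} is used within its stated index range, and this is the same bootstrapping argument as the paper's, in which relations are solved for new generators. The only difference is the order. You obtain all $\sigma_{i,j}$ first, directly from (C2) and (C3), taking the seed $\sigma_{2,3}$ from (C2) with $k=1$ rather than from (B3), and only then the $\tau_{i,j}$. The paper instead first produces all $\tau_{i,j}$ via (D4), (B4) and (D5), and then recovers every $\sigma_{i,j}=\tau_{i,j+1}\tau_{i+1,j+1}\tau_{i,j+1}$ from (B4) and (B1).
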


\begin{proof}
  Let
  \begin{align*}
    H
    \coloneqq
    \langle x_0, x_1, \sigma_{1,2}, \tau_{1,2} \rangle \leq CV.
  \end{align*}
  Since $x_0$ and $x_1$ generate Thompson's group $F$, it remains to show that every $\tau_{i, j}$ and $\sigma_{i, j}$ belongs to $H$.

  By \relref{D4}, \relref{B4} and \relref{B1}, we have
  \begin{align*}
    \tau_{1, 3}
     & =\tau_{1, 2} x_0 \sigma_{1, 2} \in H,       \\
    \tau_{2, 3}
     & =\tau_{1, 3}\sigma_{1, 2}\tau_{1, 3} \in H.
  \end{align*}
  On the other hand, by \relref{B3}, \relref{C2}, and \relref{B1}, we have
  \begin{align*}
    \sigma_{2, 3}= \sigma_{1, 3}\sigma_{1, 2}\sigma_{1, 3}=(x_1^{-1}\sigma_{1, 2}x_0\sigma_{1, 2})\sigma_{1, 2}(x_1^{-1}\sigma_{1, 2}x_0\sigma_{1, 2}) \in H.
  \end{align*}

  By repeatedly applying \relref{D4} and using \relref{B1}, we obtain
  \begin{align*}
    \tau_{1, j+1}=\tau_{1, j}x_0\sigma_{1, 2}\ (j \geq 2), \\
    \tau_{2, j+1}=\tau_{2, j}x_1\sigma_{2, 3}\ (j \geq 3).
  \end{align*}
  Thus, all elements $\tau_{1, j}$ and $\tau_{2, j}$ belong to $H$.
  Moreover, \relref{D5} gives $\tau_{i+1, j+1}=x_0^{-1}\tau_{i, j}x_0$.
  It follows that every $\tau_{i, j}$ belongs to $H$.
  Finally, by \relref{B4} and \relref{B1},
  \begin{align*}
    \sigma_{i, j}
    =
    \tau_{i,j+1}\tau_{i+1,j+1}\tau_{i,j+1},
  \end{align*}
  and hence every $\sigma_{i,j}$ belongs to $H$.
  Therefore $H=CV$.
\end{proof}

\subsection{The abelianization of $CV$}
In this subsection, we compute the abelianization of $CV$.
\begin{theorem} \label{thm:abelianization}
  The abelianization of $CV$ is isomorphic to $\bZ/2\bZ \oplus \bZ/2\bZ$.
\end{theorem}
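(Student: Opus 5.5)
We abelianize the infinite presentation of Theorem~\ref{theorem_presentation}. By Corollary~\ref{corollary_four_generators}, $CV^{\mathrm{ab}}$ is generated by the images $\bar x_0,\bar x_1,\bar\sigma_{1,2},\bar\tau_{1,2}$. We then compute in additive notation, writing $X_k,S_{i,j},T_{i,j}$ for the images of $x_k,\sigma_{i,j},\tau_{i,j}$, and show the following.
\begin{itemize}
\item All $X_k$ vanish.
\item All $S_{i,j}$ are equal to a single element $S$ of order dividing $2$.
\item All $T_{i,j}$ are equal to a single element $T$ of order dividing $2$.
\end{itemize}
A surjection onto $\bZ/2\bZ\oplus\bZ/2\bZ$ then gives the upper bound as an equality.

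\textbf{Upper bound.} Relation (A) gives $X_j=X_{j+1}$ for $j\ge 1$, so all $X_k$ with $k\ge1$ equal $X_1$. Relation (C1) is trivial. Relation (C3) with $k=0$ gives $S_{i,j}=S_{i+1,j+1}$ for $i\ge2$. Relation (C2) gives $X_{i+j-k-2}+S_{i,j+1}+S_{k+1,k+2}=X_k+S_{i,j}$, and (B1) forces every $S$ and $T$ to have order at most $2$.

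From (B3) with $i=k$, $l=j$ we get nothing. Taking $(i,j)=(1,3)$ in (C2) with $k=0,1,2$ compares the three equations:
\begin{itemize}
\item $k=1$ gives $X_1+S_{1,4}+S_{2,3}=X_1+S_{1,3}$, so $S_{1,3}=S_{1,4}+S_{2,3}$;
\item $k=0$ gives $X_2+S_{1,4}+S_{1,2}=X_0+S_{1,3}$.
\end{itemize}
Combining these with (B4) and the expression $\sigma_{i,j}=\tau_{i,j+1}\tau_{i+1,j+1}\tau_{i,j+1}$ from the proof of Corollary~\ref{corollary_four_generators} gives $S_{i,j}=T_{i+1,j+1}$. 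Relation (D5) gives $T_{i+1,j+1}=T_{i,j}$ for $i\ge2$, and (D4) gives $T_{i,j+1}+S_{i,i+1}=T_{i,j}$.

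The plan is to use (D1) and (D2) to kill $X_0$. For instance, (D2) with $i=1$, $j=2$, $k=1$ reads $T_{1,2}+X_1=2X_0+T_{1,4}+T_{2,4}$. From these relations we derive $X_0=X_1=0$ and the identification of all $S$'s and $T$'s. The main obstacle is this bookkeeping: it requires careful choice of a small set of instances of (C2), (D1)--(D4) whose combination shows that $X_0$ and $X_1$ are themselves zero (they might a priori survive as elements of order $2$ or infinite order), and that $S$ and $T$ do not merge further. I would first compute the abelianization of the finite subpresentation on indices $\le 5$ by hand (Smith normal form), and then propagate using (C3) and (D5).

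\textbf{Lower bound.} I would construct explicit homomorphisms to $\bZ/2\bZ$. The first is $\overline\pi$ composed with the abelianization of $V$. But $V$ is simple, so this map is trivial; instead we need maps defined directly on the presentation.

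Define $\phi\colon G\to\bZ/2\bZ\oplus\bZ/2\bZ$ by $x_k\mapsto0$, $\sigma_{i,j}\mapsto(1,0)$ and $\tau_{i,j}\mapsto(0,1)$, possibly modified according to what the upper-bound computation forces (for example, $\sigma_{i,j}\mapsto(1,1)$ if $S$ turns out to be a multiple of $T$). We then check every relation (A)--(D5) by counting the $\sigma$'s and $\tau$'s on each side modulo $2$. For example, (D1) and (D2) have two $\tau$'s on the right against one on the left, which shows the naive assignment fails, so the correct assignment must be read off from the upper-bound computation. This checking is routine once the right assignment is found.

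Since Theorem~\ref{theorem_presentation} gives $G\cong CV$, surjectivity of $\phi$ together with the upper bound yields $CV^{\mathrm{ab}}\cong\bZ/2\bZ\oplus\bZ/2\bZ$.
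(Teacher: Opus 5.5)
There is a genuine gap, and it lies in the part you deferred. Your three bulleted claims are false, and they are in fact incompatible with the theorem. Suppose $X_k=0$, $S_{i,j}=S$ and $T_{i,j}=T$ for all indices. Then (B4), which you correctly abelianize to $S_{i,j}=T_{i+1,j+1}$, gives $S=T$. Relation (D4), i.e.\ $T_{i,j}+X_{i-1}=T_{i,j+1}+S_{i,i+1}$, gives $S=0$. So $CV^{\mathrm{ab}}$ would be trivial. Your own instance $S_{1,3}=S_{1,4}+S_{2,3}$ of (C2) already forces $S=0$.

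By the same computation, any homomorphism to an abelian group that kills the $x_k$ and is constant on all $\sigma_{i,j}$ and on all $\tau_{i,j}$ is trivial. So neither your $(1,0)/(0,1)$ assignment nor its $(1,1)$ variant can be repaired, and the map cannot be ``read off'' from the structure you predicted. The lower bound, which is the actual content of the theorem, is therefore not established.

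The missing idea is that the images depend on the parity of the length of the reversed interval, with the $\tau_{1,j}$ carrying the second factor. The paper uses
\[
x_k\mapsto(0,0),\qquad \sigma_{i,j}\mapsto\bigl((j-i)\bmod 2,\,0\bigr),\qquad \tau_{i,j}\mapsto\bigl((j-i)\bmod 2,\,0\bigr)\ (i\ge 2),\qquad \tau_{1,j}\mapsto\bigl(j\bmod 2,\,1\bigr).
\]
This respects every relation:
\begin{itemize}
\item After abelianizing, (B3), (B4), (B6), (B7), (C3), (D5) each equate two generators of the same length, and any $\tau$ among them has first index $\ge 2$.
\item (C2), (D3), (D4) raise the second index by one, which flips the first coordinate, and add a factor $\sigma_{m,m+1}\mapsto(1,0)$.
\item In (D1), (D2), the extra factor $\tau_{j,k+3}$ has $j\ge 2$, so it contributes nothing to the second coordinate and compensates the parity change of $\tau_{i,k+3}$.
\item The remaining relations are immediate.
\end{itemize}
Accordingly, in $CV^{\mathrm{ab}}$ one has $\bar\sigma_{i,j}=0$ for $j-i$ even and $\bar\sigma_{i,j}=\bar\sigma_{1,2}$ for $j-i$ odd. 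Also $\bar\tau_{1,j}$ alternates between $\bar\tau_{1,2}$ and $\bar\tau_{1,2}+\bar\sigma_{1,2}$.

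Your use of Corollary~\ref{corollary_four_generators} is a genuinely shorter route to the upper bound than the paper's full bookkeeping. By (B1), you only need $\bar x_0=\bar x_1=0$:
\begin{itemize}
\item The instances $(i,j,k)=(1,3,1)$ and $(1,3,2)$ of (C2), together with (A) and $\bar\sigma_{2,3}=\bar\sigma_{3,4}=\bar\sigma_{1,2}$ from (B3), give $\bar x_1=\bar x_0$.
\item Comparing (D3) at $(i,j,k)=(1,3,1)$ with (D4) at $(i,j)=(1,3)$ then gives $\bar x_0=0$.
\end{itemize}
So drop the attempt to identify all $S$'s and all $T$'s with single elements, and supply the parity-dependent surjection instead.
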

\begin{proof}
  We use the presentation defined in Theorem~\ref{theorem_presentation} to calculate the abelianization of $CV$.
  We can observe that the map $\phi\colon CV \to \bZ/2\bZ \oplus \bZ/2\bZ$ is a surjective homomorphism, which is defined by
  \begin{align*}
    x_i           & \mapsto (0, 0) \quad (i \geq 0), \\
    \sigma_{i, j} & \mapsto
    \begin{cases}
      (0, 0) & \text{if } j-i \text{ is even}, \\
      (1, 0) & \text{if } j-i \text{ is odd},
    \end{cases}       \\
    \tau_{1, j}   & \mapsto
    \begin{cases}
      (0, 1) & \text{if } j \text{ is even}, \\
      (1, 1) & \text{if } j \text{ is odd},
    \end{cases}         \\
    \tau_{i, j}   & \mapsto
    \begin{cases}
      (0, 0) & \text{if } j-i \text{ is even and } i \geq 2, \\
      (1, 0) & \text{if } j-i \text{ is odd and } i \geq 2.
    \end{cases}
  \end{align*}
  Since the commutator subgroup $[CV, CV]$ is a subgroup of $\Ker \phi$, this map induces a surjective homomorphism $\hat{\phi}\colon (CV)^{\text{ab}} \to \bZ/2\bZ \oplus \bZ/2\bZ$, where $(CV)^{\text{ab}}$ is the abelianization of $CV$.
  Since $\hat{\phi}$ is surjective, we have $|(CV)^{\text{ab}}| \geq 4$.
  Therefore, since $|\bZ/2\bZ \oplus \bZ/2\bZ|=4$, to show that $(CV)^{\text{ab}}$ and $\bZ/2\bZ \oplus \bZ/2\bZ$ are isomorphic, it suffices to show that $|(CV)^{\text{ab}}| \leq 4$.

  For each $x \in CV$, we denote by $\ov{x}$ the corresponding element in $(CV)^{\text{ab}}$.
  We first observe that $\ov{x_i}=\ov{x_j}$ holds for any $i, j \geq 1$ by \relref{A} and $2\ov{\sigma_{i, j}}=2\ov{\tau_{i, j}}=0$ for any $1 \leq i<j$ by \relref{B1}.
  Next, from \relref{B3}, for any $1 \leq k<l$ (with $i=1$ and $j=l$), we see that $\ov{\sigma_{k, l}}=\ov{\sigma_{1, 1+l-k}}$ holds.
  Hence we write $\ov{\sigma_{k, l}}$ as $a_{l-k}$.
  Moreover, from \relref{B4} with $i=1$ and $k \geq 2$, we see that $\ov{\tau_{k, j}}=\ov{\sigma_{1, 1+j-k}}=a_{j-k}$ holds.
  To simplify the notation, we write $b_j$ instead of $\ov{\tau_{1, j}}$ with $j \geq 2$, and $x$ instead of $\ov{x_1}=\ov{x_2}= \cdots$.
  Under this notation, we see that $(CV)^{\text{ab}}$ is generated by $\ov{x_0}, x, a_1, a_2, \cdots$ and $b_2, b_3, \cdots$.

  From \relref{C2}, for $i, j, k$ with $1 \leq i<j$ and $i-1 \leq k \leq j-1$, we have
  \begin{align*}
    a_{j-i}+\ov{x_k}=\ov{x_{i+j-k-2}}+a_{j+1-i}+a_1.
  \end{align*}
  First, if $i=1$, $k=1$, and $j=2$, then we have
  \begin{align*}
    a_1+x=\ov{x_0}+a_2+a_1,
  \end{align*}
  and hence we obtain
  \begin{align*}
    x=\ov{x_0}+a_2. 
  \end{align*}
  Second, $i=1$ and $k=j-2$ with $j \geq 3$, then we have
  \begin{align*}
    a_{j-1}+x=x+a_j+a_1,
  \end{align*}
  and hence we obtain
  \begin{align}
    a_{j-1}=a_j+a_1. \label{eq_i1kj-2jgt2}
  \end{align}
  Finally, if $i=1$ and $k=j-1$ with $j \geq 3$, we have
  \begin{align*}
    a_{j-1}+x=\ov{x_0}+a_j+a_1.
  \end{align*}
  By equation \eqref{eq_i1kj-2jgt2}, this equation implies that $x=\ov{x_0}$ holds and hence we also have $a_2=0$.
  Since equation \eqref{eq_i1kj-2jgt2} is equivalent to $a_j=a_{j-1}+a_1$, for $d \geq 1$, we now obtain
  \begin{align*}
    a_d=
    \begin{cases}
      0   & \text{if } d \text{ is even}, \\
      a_1 & \text{if } d \text{ is odd}.
    \end{cases}
  \end{align*}

  On the other hand, from \relref{D3}, for $i, j, k$ with $1\leq i<j$ and $i \leq k \leq j-2$, we have
  \begin{align*}
    \ov{\tau_{i, j}}+x=x+\ov{\tau_{i, j+1}}+a_1,
  \end{align*}
  and hence we have $\ov{\tau_{i, j}}=\ov{\tau_{i, j+1}}+a_1$.
  Note that by \relref{D4}, we also have
  \begin{align*}
    \ov{\tau_{i, j}}+x=\ov{\tau_{i, j+1}}+a_1,
  \end{align*}
  and therefore we get $x=\ov{x_0}=0$.

  By \relref{D4} with $i=1$, $k=0$, and $j \geq 2$, we have
  \begin{align*}
    b_j=b_{j+1}+a_1.
  \end{align*}
  This equation is equivalent to $b_{j+1}=b_j+a_1$.
  Therefore, for $d \geq 2$, we now obtain
  \begin{align*}
    b_d=
    \begin{cases}
      b_2     & \text{if } d \text{ is even}, \\
      b_2+a_1 & \text{if } d \text{ is odd}.
    \end{cases}
  \end{align*}

  Consequently, the group $(CV)^{\text{ab}}$ is generated by $a_1$ and $b_2$.
  Since $2a_1=2b_2=0$ and $(CV)^{\text{ab}}$ is abelian, it follows that $|(CV)^{\text{ab}}| \leq 4$.
  This completes the proof.
\end{proof}
The following result follows immediately from Theorem~\ref{thm:abelianization}.
\begin{corollary}\label{cor:notperfect}
  The group $CV$ is not perfect.
\end{corollary}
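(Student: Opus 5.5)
The corollary follows directly from Theorem~\ref{thm:abelianization}. A group $G$ is perfect exactly when $G = [G,G]$, which is the same as saying its abelianization $G/[G,G]$ is trivial. So the plan is to show that $(CV)^{\text{ab}}$ is nontrivial.

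By Theorem~\ref{thm:abelianization}, $(CV)^{\text{ab}} \cong \bZ/2\bZ \oplus \bZ/2\bZ$, a group of order $4$. Hence $[CV,CV] \neq CV$, and $CV$ is not perfect.

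The full isomorphism is not even needed; the surjection is enough. The proof of Theorem~\ref{thm:abelianization} gives a surjective homomorphism $\phi\colon CV \to \bZ/2\bZ \oplus \bZ/2\bZ$. For example, $\phi(\sigma_{1,2}) = (1,0)$ and $\phi(\tau_{1,2}) = (0,1)$. Any homomorphism onto an abelian group contains $[CV,CV]$ in its kernel. Since $\phi$ is not the trivial map, this again shows $[CV,CV] \neq CV$.

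There is no real obstacle here. The only substantive input is that $\phi$ is well defined, that is, that it respects every relation of Theorem~\ref{theorem_presentation}. That check belongs to the proof of Theorem~\ref{thm:abelianization}, which we are allowed to assume.
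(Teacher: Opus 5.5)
Your proposal is correct and matches the paper, which deduces the corollary immediately from Theorem~\ref{thm:abelianization}: $(CV)^{\text{ab}} \cong \bZ/2\bZ \oplus \bZ/2\bZ$ is nontrivial, so $[CV,CV] \neq CV$. Your remark that the nontrivial surjection $\phi$ already suffices is also right, since $\phi(\sigma_{1,2})=(1,0)$ and $\phi(\tau_{1,2})=(0,1)$; this uses only the easy half of the paper's proof of that theorem.
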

This result contrasts with the fact that $BV$ is perfect (cf.~\cite{zaremsky2018normal}).
Finally, we state a result on the abelianization of cactus groups.
This should also be compared with Theorem \ref{thm:abelianization}.
\begin{proposition}
  $J_n^{\text{ab}}$ is isomorphic to ${(\mathbb{Z}/2\mathbb{Z})}^{n-1}$ for every $n \geq 2$.
\end{proposition}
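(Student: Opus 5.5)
The plan is to compute $J_n^{\text{ab}}$ directly from the presentation of $J_n$ in Subsection~\ref{sec_cactus}, mirroring the computation in the proof of Theorem~\ref{thm:abelianization}. Write $\ov{s_{i,j}}$ for the image of $s_{i,j}$ in $J_n^{\text{ab}}$. Abelianizing, the relation $s_{i,j}^2=1$ gives $2\ov{s_{i,j}}=0$. The commutation relation becomes trivial. The third relation $s_{i,j}s_{k,l}=s_{i+j-l,i+j-k}s_{i,j}$ becomes $\ov{s_{k,l}}=\ov{s_{i+j-l,\,i+j-k}}$ for $i\le k<l\le j$.

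\textbf{Upper bound.} The only identifications among generators in the abelianization are those coming from the third relation. So $J_n^{\text{ab}}$ is the $\mathbb{F}_2$-vector space on the set of intervals $[k,l]$, $1\le k<l\le n$, modulo the equivalence relation generated by reflecting $[k,l]$ inside a containing interval $[i,j]$. Reflection inside an interval preserves the length $l-k$. Conversely, any two intervals of the same length $d$ inside $[1,n]$ are related by such a move. To see this, take $[1,l]$ with $k=1$, say, and reflect inside $[1,m]$: this sends $[1,1+d]$ to $[m-d,m]$. Every interval of length $d$ has the form $[m-d,m]$ with $m\ge d+1$, so it is related to $[1,1+d]$. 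Hence $\ov{s_{k,l}}$ depends only on $d=l-k\in\{1,\dots,n-1\}$. This gives at most $n-1$ generators $a_d$, each of order dividing $2$, so $|J_n^{\text{ab}}|\le 2^{n-1}$.

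\textbf{Lower bound.} For each $d$, define $\phi_d\colon J_n\to\bZ/2\bZ$ by sending $s_{k,l}$ to $1$ if $l-k=d$ and to $0$ otherwise. All defining relations are respected:
\begin{itemize}
  \item squares map to $0$;
  \item commutators map to $0$ in an abelian target;
  \item the third relation compares $s_{k,l}$ with $s_{i+j-l,i+j-k}$, which have equal length $l-k$, and $s_{i,j}$ appears on both sides.
\end{itemize}
The product map $J_n\to(\bZ/2\bZ)^{n-1}$ sends $s_{1,1+d}$ to the $d$-th basis vector, so it is surjective.

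Combining the two bounds gives $J_n^{\text{ab}}\cong(\bZ/2\bZ)^{n-1}$. No step is a real obstacle. The only point requiring care is the claim that the reflection moves connect all intervals of a fixed length, which is handled by reflecting inside $[1,m]$ as above.
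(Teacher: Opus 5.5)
Your proof is correct and complete. Reflecting $[1,1+d]$ inside $[1,m]$ shows that $\ov{s_{k,l}}$ depends only on $l-k$, which gives $|J_n^{\text{ab}}|\le 2^{n-1}$. The length-indicator maps $\phi_d$ then give a matching surjection onto $(\bZ/2\bZ)^{n-1}$.

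The paper states this proposition without proof. Your argument is the same two-sided strategy the paper uses for Theorem~\ref{thm:abelianization}: an explicit surjection onto an elementary abelian $2$-group for the lower bound, and relation (B3) with $i=1$ to show that the image of $\sigma_{k,l}$ depends only on $l-k$ for the upper bound.
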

\section{A finite presentation of $CV$} \label{section_finitely_presented}
In this section, we show the following theorem algebraically.
\begin{theorem} \label{theorem_finitely_presented}
  The group $CV$ is finitely presented.
\end{theorem}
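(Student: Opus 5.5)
We will follow the strategy of Brady--Burillo--Cleary--Stein for $BV$. We start from the infinite presentation of Theorem~\ref{theorem_presentation} and take the finite generating set $\{x_0, x_1, \sigma_{1,2}, \tau_{1,2}\}$ of Corollary~\ref{corollary_four_generators}. In the resulting presentation, the other generators are \emph{defined} as words in these four, by exactly the formulas in the proof of Corollary~\ref{corollary_four_generators}:
\[
x_{i+2} = x_i^{-1} x_{i+1} x_i,\qquad
\tau_{1,j+1}=\tau_{1,j}x_0\sigma_{1,2},\qquad
\tau_{i+1,j+1}=x_0^{-1}\tau_{i,j}x_0,
\]
\[
\sigma_{i,j}=\tau_{i,j+1}\tau_{i+1,j+1}\tau_{i,j+1}.
\]
Here we also need $\tau_{2,3}$ and $\sigma_{2,3}$, which come from $\tau_{1,3}$ and $\sigma_{1,3}$ as in that proof. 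This makes the infinitely many definitional relations of the form \relref{D4}, \relref{D5}, \relref{B4} and \relref{A} (for $i=0$) hold by fiat.

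The finite presentation will consist of these four generators together with all instances of \relref{A}--\relref{D5} whose indices are bounded by some constant. We expect indices up to about $5$ or $6$ to suffice, since each relation involves at most three ``interval endpoints'' plus an $x_k$.

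The main work is to show that every instance of each relation family follows from the bounded ones. The tool is conjugation by $x_0$ and $x_1$, which shifts indices. Relation \relref{C3} gives $x_0^{-1}\sigma_{i,j}x_0=\sigma_{i+1,j+1}$ for $i\ge2$, and \relref{D5} gives the analogue for $\tau$. Likewise \relref{A} gives $x_0^{-1}x_kx_0=x_{k+1}$ for $k\geq 1$.

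So, first, we derive from finitely many relations the general shift rules: \relref{A} for all indices (as for $F$), and \relref{C1}, \relref{C3}, \relref{D5} for all indices. For this we induct on the index, conjugating a small instance by $x_0$ or $x_1$ repeatedly; at each step we only need to know that the conjugating letter commutes past, or shifts, the letters involved.

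Once these shift rules are available, any instance of a relation whose indices all lie at least $2$ above $k=0$ is the $x_0$-conjugate of an instance with smaller indices. This reduces every family to instances with $i$ (the left endpoint) bounded, say $i\le 2$.

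The remaining unbounded parameter is the length $j-i$ of the reversed interval, together with the gaps $k-i$ and $l-k$ in \relref{B2}, \relref{B3}, \relref{B6}, \relref{B7}. This is the step we expect to be the main obstacle, since cactus generators reverse arbitrarily long intervals and cannot simply be shifted in length. To handle it, we use \relref{D4} and \relref{C2} as ``lengthening'' rules. Rearranged, \relref{D4} reads $\tau_{i,j+1}=\tau_{i,j}x_{i-1}\sigma_{i,i+1}$, and \relref{C2} with $k=i-1$ reads $\sigma_{i,j+1}=x_{j-1}^{-1}\sigma_{i,j}x_{i-1}\sigma_{i,i+1}$. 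These express a generator of length $d+1$ through one of length $d$, an $x$, and a short generator. We would therefore prove each relation by induction on the total length. We substitute these expressions into an instance of length $d+1$. We then push the $x$'s through using the already established \relref{C}/\relref{D} instances of smaller length and the commutation rules. This should reduce the instance to an instance of length $d$ together with short relations between $\sigma_{i,i+1}$ and neighbouring generators, which are finitely many up to shift.

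The bookkeeping is heavy because \relref{B3} and \relref{B7} involve nested intervals, whose images under lengthening move both endpoints. We would organize the induction in the following order:
\begin{enumerate}
\item \relref{A} and \relref{B1};
\item the commuting relations \relref{B2}, \relref{B5}, \relref{C1}, \relref{C3}, \relref{D5};
\item the remaining \relref{C}- and \relref{D}-relations;
\item finally \relref{B3}, \relref{B4}, \relref{B6}, \relref{B7}.
\end{enumerate}
At each stage we allow only previously established families. The case \relref{D1}, with its long product of $x$'s, we would reduce by induction on $k$ using \relref{D2} and \relref{A}.
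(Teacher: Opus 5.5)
Your overall architecture matches the paper's: bounded-index instances, higher generators defined as words, conjugation by $x_0$ to reduce to $i\le 2$, then induction on length in a carefully chosen order. The gap is in the length propagation. You flag it as the main obstacle, it is the whole content of the theorem, and the mechanism you describe for it does not work as stated.

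After substituting the lengthening rules, what remains is not ``finitely many short relations up to shift''. Families such as $\tau_{i,j}\sigma_{k,k+1}=\sigma_{i+j-k-1,i+j-k}\tau_{i,j}$ and $\tau_{i,j}\sigma_{i,i+1}=\tau_{j-1,j}\tau_{i,j}$ pair a short reversal with an arbitrarily long one. They are therefore infinite even modulo conjugation by $x_0$ and need their own inductions, and with the rules you name these inductions do not close:
\begin{itemize}
\item For $k=i+1$, \relref{D4} turns $\tau_{i,j+1}\sigma_{i+1,i+2}$ into $\tau_{i,j}x_{i-1}\sigma_{i,i+1}\sigma_{i+1,i+2}$, and nothing in the presentation moves $\sigma_{i,i+1}$ past $\sigma_{i+1,i+2}$.
\item Likewise $\tau_{i,j+1}\sigma_{i,i+1}$ becomes $\tau_{i,j}x_{i-1}$, which does not reduce to the length-$j$ instance.
\end{itemize}
The paper closes these (Propositions \ref{proposition_adjacent_TB7} and \ref{proposition_adjacent_TB6}) by lengthening by two via \relref{D2}, in the form $\tau_{i,j+2}=x_{i-1}^{-2}\tau_{i,j}x_{j-1}\tau_{j,j+2}$, with separate base cases for each parity. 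The full \relref{B6} and \relref{B7} also rely on specific further identities: the general \relref{B4} for $r-i$ even (Proposition \ref{proposition_auxiliary_TB6}) and $\sigma_{i,j}\sigma_{i,j-1}\sigma_{i,j}=\sigma_{i+1,j}$ (Proposition \ref{proposition_TB7_induction2}).

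Your order of induction is also circular. Since $\sigma_{i,j}$ is defined as $\tau_{i,j+1}\tau_{i+1,j+1}\tau_{i,j+1}$, your only tool for shortening a long $\sigma_{i,j}$ is \relref{C2}. You postpone that to step (3), yet step (2) asks for \relref{B2}, \relref{B5} and \relref{C1} in all lengths. The paper proves only the $\sigma_{i,i+1}$ cases of these early and gets the general ones after \relref{C2}. Moreover, \relref{C2} itself needs \relref{D3} and the adjacent cases of \relref{B6} and \relref{B7} first. So part of what you place in step (4) must be done before step (3).

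Finally, some of your ``by fiat'' claims are too strong:
\begin{itemize}
\item \relref{D5} is definitional only for $k=0$. The case $k=1$ needs a genuine argument: the paper proves $[x_0x_1^{-1},\tau_{i,j}]=1$ for $i\ge 3$ by induction on $j$ via \relref{D4} (Proposition \ref{proposition_commutator_general_ij}).
\item \relref{B4} is definitional only for $k=i+1$.
\item $\tau_{i+1,j+1}=x_0^{-1}\tau_{i,j}x_0$ holds only for $i\ge 2$, so the $\tau_{2,j}$ must be defined separately, e.g.\ by $\tau_{2,j+1}=\tau_{2,j}x_1\sigma_{2,3}$.
\end{itemize}
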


The proof follows a standard strategy.
We first introduce another infinite presentation of $CV$ with fewer
generators and prove that it defines a group isomorphic to $CV$.
We then introduce a finitely presented group and construct an
isomorphism between these two groups.

In Subsection
\ref{subsection_finitely_presented_group_and_map},
we introduce these two groups, prove the first isomorphism, and define
the map to the finitely presented group.
Subsections
\ref{subsection_firstrelation}--\ref{subsection_lastrelation}
are devoted to proving that this map is a homomorphism.
To prove that the map is well-defined, we verify that every defining
relation in the reduced infinite presentation holds in the finitely
presented group.
Since these verifications are closely interdependent, the order in
which the relations are established is essential.
We therefore provide detailed proofs, avoiding omissions as much as
possible and citing previously propagated relations only through the
corresponding theorems.

In the following, we often use the standard fact that \relref{A} (= \relref{TA}) is obtained from two relators $[x_0x_1^{-1}, x_0^{-1}x_1x_0]=1$ and $[x_0x_1^{-1}, x_0^{-2}x_1x_0^2]=1$, where $[g, h]$ denotes $ghg^{-1}h^{-1}$.
See \cite{cannon1996introductory} for details.
\subsection{Another infinite presentation of $CV$ and a finitely presented group} \label{subsection_finitely_presented_group_and_map}
We begin by reducing the number of generators in order to simplify the proof.
\begin{proposition} \label{Proposition_CV_CVT}
  The group $CV$ is isomorphic to the group $CV_T$ defined by the following presentation:
  \begin{description}
    \item[Generators]\
          \begin{itemize}
            \item $x_i \quad (i \geq 0);$
            \item $\tau_{i,j} \quad (1 \leq i < j)$.
          \end{itemize}
    \item[Relations]\
          \begin{enumerate}
            \relationitem{TA} \label{TA}
            $
              x_jx_i=x_ix_{j+1}
              \quad
              (0\le i<j)
            ;$

            \relationitem{TB1} \label{TB1}
            $
              \tau_{i,j}^2=1
              \quad
              (1\le i<j)
            ;$

            \relationitem{TB2} \label{TB2}
            $
              \widehat\sigma_{i,j}\widehat\sigma_{k,l}
              =
              \widehat\sigma_{k,l}\widehat\sigma_{i,j}
              \quad
              (1\le i<j<k<l)
            ;$

            \relationitem{TB3} \label{TB3}
            $
              \widehat\sigma_{i,j}\widehat\sigma_{k,l}
              =
              \widehat\sigma_{i+j-l,i+j-k}\widehat\sigma_{i,j}
              \quad
              (1\le i\le k<l\le j)
            ;$

            \relationitem{TB5} \label{TB5}
            $
              \widehat\sigma_{i,j}\tau_{k,l}
              =
              \tau_{k,l}\widehat\sigma_{i,j}
              \quad
              (1\le i<j<k<l)
            ;$

            \relationitem{TB6} \label{TB6}
            $
              \tau_{i,j}\widehat\sigma_{i,l}
              =
              \tau_{i+j-l,j}\tau_{i,j}
              \quad
              (1\le i<l<j)
            ;$

            \relationitem{TB7} \label{TB7}
            $
              \tau_{i,j}\widehat\sigma_{k,l}
              =
              \widehat\sigma_{i+j-l,i+j-k}\tau_{i,j}
              \quad
              (1\le i < k<l<j)
            ;$

            \relationitem{TC1} \label{TC1}
            $
              \widehat\sigma_{i,j}x_k
              =
              x_k\widehat\sigma_{i,j}
              \quad
              (j\le k)
            ;$

            \relationitem{TC2} \label{TC2}
            $
              \widehat\sigma_{i,j}x_k
              =
              x_{i+j-k-2}\widehat\sigma_{i,j+1}\widehat\sigma_{k+1,k+2}
              \quad
              (i-1\le k\le j-1)
            ;$

            \relationitem{TC3} \label{TC3}
            $
              \widehat\sigma_{i,j}x_k
              =
              x_k\widehat\sigma_{i+1,j+1}
              \quad
              (2\le i,\ 0\le k\le i-2)
            ;$

            \relationitem{TD1} \label{TD1}
            $
              \tau_{i,j}x_k
              =
              x_{i-1}x_i\cdots x_{k-j+i-1}x_{k-j+i}^2
              \tau_{i,k+3}\tau_{j,k+3}
              \quad
              (k>j-1)
            ;$

            \relationitem{TD2} \label{TD2}
            $
              \tau_{i,j}x_{j-1}
              =
              x_{i-1}^2\tau_{i,j+2}\tau_{j,j+2}
              \quad
              (1\le i<j)
            ;$

            \relationitem{TD3} \label{TD3}
            $
              \tau_{i,j}x_k
              =
              x_{i+j-k-2}\tau_{i,j+1}\widehat\sigma_{k+1,k+2}
              \quad
              (i\le k\le j-2)
            ;$

            \relationitem{TD4} \label{TD4}
            $
              \tau_{i,j}x_{i-1}
              =
              \tau_{i,j+1}\widehat\sigma_{i,i+1}
              \quad
              (1\le i<j)
            ;$

            \relationitem{TD5} \label{TD5}
            $
              \tau_{i,j}x_k
              =
              x_k\tau_{i+1,j+1}
              \quad
              (2\le i,\ 0\le k\le i-2)
            $,
          \end{enumerate}
          where $\widehat\sigma_{i,j} \coloneqq \tau_{i, j+1} \tau_{i+1, j+1} \tau_{i, j+1}$ for $1 \leq i <j$.
  \end{description}
\end{proposition}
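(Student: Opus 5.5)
This is a Tietze transformation argument: I will eliminate the generators $\sigma_{i,j}$ from the presentation $G$ of Theorem~\ref{theorem_presentation}. The key observation is that \relref{B4} with $k=i+1$, combined with \relref{B1}, gives $\tau_{i,j+1}\tau_{i+1,j+1}=\sigma_{i,j}\tau_{i,j+1}$. Hence in $G$ we have
\[
\sigma_{i,j}=\tau_{i,j+1}\tau_{i+1,j+1}\tau_{i,j+1}=\widehat\sigma_{i,j},
\]
which is the same identity already used in the proof of Corollary~\ref{corollary_four_generators}.

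The plan is as follows. First, adjoin to the presentation of $G$ the relations $\sigma_{i,j}=\widehat\sigma_{i,j}$, which hold in $G$, and then delete the generators $\sigma_{i,j}$ by substitution. The result is a presentation on $x_i,\tau_{i,j}$. Its relations are:
\begin{itemize}
\item (A) and the $\tau$-part of (B1), which are \relref{TA} and \relref{TB1};
\item \relref{B2}, \relref{B3}, \relref{B5}--\relref{B7}, \relref{C1}--\relref{C3}, \relref{D1}--\relref{D5} with every $\sigma$ replaced by $\widehat\sigma$, which are exactly \relref{TB2}--\relref{TD5};
\item the two leftover relations, $\widehat\sigma_{i,j}^2=1$ and the substituted form of \relref{B4}, namely $\tau_{i,j}\tau_{k,j}=\widehat\sigma_{i,i+j-k}\tau_{i,j}$ for $i<k<j$.
\end{itemize}
So it suffices to show that these two leftover families follow from the relations of $CV_T$. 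Once that is done, the identity map on $x_i,\tau_{i,j}$ together with $\sigma_{i,j}\mapsto\widehat\sigma_{i,j}$ gives mutually inverse homomorphisms $G\leftrightarrow CV_T$, and Theorem~\ref{theorem_presentation} gives $G\cong CV$.

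The relation $\widehat\sigma_{i,j}^2=1$ is immediate, because $\widehat\sigma_{i,j}$ is a conjugate of $\tau_{i+1,j+1}$ and \relref{TB1} gives $\tau_{i+1,j+1}^2=1$.

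The main obstacle is the substituted \relref{B4}, $\tau_{i,j}\tau_{k,j}=\widehat\sigma_{i,i+j-k}\tau_{i,j}$. Using \relref{TB1}, it is equivalent to
\[
\tau_{k,j}=\tau_{i,j}\,\widehat\sigma_{i,i+j-k}\,\tau_{i,j}.
\]
When $k=i+1$, this holds by the definition of $\widehat\sigma$ together with \relref{TB1}. For $k>i+1$, I would first try \relref{TB6} with $l=i+j-k$, which satisfies $i<l<j$. It gives $\tau_{i,j}\widehat\sigma_{i,i+j-k}=\tau_{k,j}\tau_{i,j}$, since $i+j-l=k$. Right-multiplying by $\tau_{i,j}$ and applying \relref{TB1} yields exactly the required identity. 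So \relref{TB6} should absorb \relref{B4} entirely, and this must be checked carefully for the index ranges, including the boundary case $l=i+1$.

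If some edge case is not covered, the fallback is to combine \relref{TB6} with \relref{TB7} and \relref{TB3}, inducting on $k-i$.
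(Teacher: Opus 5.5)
Your proposal is correct and is essentially the paper's proof. The paper also eliminates $\sigma_{i,j}$ via $\sigma_{i,j}=\tau_{i,j+1}\tau_{i+1,j+1}\tau_{i,j+1}$ (from (B4) and (B1) with $k=i+1$), checks $\widehat\sigma_{i,j}^2=1$ using (TB1), and recovers (B4) by inverting (TB6) with $l=i+j-k$; it phrases this as mutually inverse homomorphisms rather than Tietze moves, and your hedging is unnecessary because $l=i+j-k$ satisfies $i<l<j$ for every $i<k<j$ (including $k=i+1$ and $k=j-1$), so (TB6) covers all instances of (B4) and no fallback induction is needed.
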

\begin{proof}
  Define a map $\phi \colon CV \to CV_T$ by
  \begin{align*}
    \phi(x_i)=x_i, \phi(\tau_{i, j})=\tau_{i, j}, \phi(\sigma_{i, j})=\widehat\sigma_{i, j}=\tau_{i, j+1} \tau_{i+1, j+1} \tau_{i, j+1}.
  \end{align*}
  We first claim that this map is a homomorphism.
  It suffices to show that $\phi$ respects \relref{B1} and \relref{B4}.
  For \relref{B1}, we observe that
  \begin{align*}
    \widehat\sigma_{i, j}^2=(\tau_{i, j+1} \tau_{i+1, j+1} \tau_{i, j+1})( \tau_{i, j+1} \tau_{i+1, j+1} \tau_{i, j+1})=1 \in CV_T
  \end{align*}
  by \relref{TB1}.
  For \relref{B4}, we have
  \begin{align*}
    \widehat\sigma_{i, i+j-k} \tau_{i, j}=\tau_{i, j}\tau_{i+j-(i+j-k), j}=\tau_{i, j} \tau_{k, j} \in CV_T
  \end{align*}
  by inverting both sides of \relref{TB6} with $l=i+j-k$.

  By \relref{B4} and \relref{B1} with $(i, k, j) = (i, i+1, j+1)$, we have
  \begin{align*}
    \sigma_{i, j}=\tau_{i, j+1} \tau_{i+1, j+1} \tau_{i, j+1}
  \end{align*}
  in $CV$.
  Therefore the map $\psi \colon CV_T \to CV$ defined by $\psi(x_i)=x_i$ and $\psi(\tau_{i, j})=\tau_{i, j}$ is the inverse of $\phi$, and the conclusion follows.
\end{proof}
Next, we define a finitely presented group which is isomorphic to $CV_T$ (and hence to $CV$).
In fact, one can reduce the number of relations further; however, for readability, we include some redundant relations in $\cR$.
For the relations that are actually used in the proof, we refer the reader to Subsection \ref{subsection_minimal_relations}.
\begin{definition}
  Let $\cR$ be the set of all instances of \relref{TB1}--\relref{TD5} for which, after replacing each $\widehat\sigma_{i,j}$ by $\tau_{i,j+1}\tau_{i+1,j+1}\tau_{i,j+1}$, all indices appearing in the relation are at most $7$.

  We define $CV_7$ to be the group given by the following presentation:
  \begin{description}
    \item[Generators]\
          \begin{itemize}
            \item $x_0, x_1$;
            \item $\tau_{i,j} \quad (1 \leq i < j \leq 7)$.
          \end{itemize}
    \item[Relations]\
          \begin{itemize}
            \item $[x_0x_1^{-1}, x_0^{-1}x_1x_0]=1$;
            \item $[x_0x_1^{-1}, x_0^{-2}x_1x_0^2]=1$;
            \item $\cR$,
          \end{itemize}
          where every occurrence of $\widehat \sigma_{i, j}$ in $\cR$ denotes the word $\tau_{i, j+1} \tau_{i+1, j+1} \tau_{i, j+1}$.
  \end{description}
\end{definition}
In the rest of the paper, to distinguish this group clearly from $CV_T$, we denote $x_i$, $\tau_{i, j}$, and $\widehat \sigma_{i, j}$ by $X_i$, $T_{i, j}$, and $\Sigma_{i, j}$, respectively.
We now proceed to prove that the map $\varphi \colon CV_T \to CV_7$, given by
\begin{align*}
  x_i         & \mapsto
  \begin{cases}
    X_0                                      & \text{if } i=0,      \\
    X_i \coloneqq X_0^{-(i-1)}X_1X_0^{(i-1)} & \text{if } i \geq 1,
  \end{cases} \\
  \tau_{i, j} & \mapsto
  \begin{cases}
    T_{1, j} \coloneqq T_{1, 2}(X_0 T_{1,3}T_{2, 3}T_{1,3})^{j-2}                          & \text{if } i=1, j \geq 2, \\
    T_{i, j} \coloneqq X_0^{-(i-2)}T_{2, 3}(X_1 T_{2, 4}T_{3, 4}T_{2,4})^{j-i-1} X_0^{i-2} & \text{if } 2 \leq i <j.
  \end{cases}
\end{align*}
is well-defined.
The relations in $\cR$ corresponding to \relref{TB1}, \relref{TD4}, and \relref{TD5} ensure that, for all $1\leq i<j\leq 7$, the generators $T_{i,j} \in CV_7$ coincide with the corresponding words appearing in the above assignment.
Thus the notation is compatible with the generators of $CV_7$ in low indices.
For notational convenience, we define
\begin{align*}
  \Sigma_{i,j} \coloneqq T_{i,j+1} T_{i+1,j+1} T_{i,j+1}
\end{align*}
for $1 \leq i <j$.
With this notation, these words can be written as
\begin{align*}
  T_{1,j}=T_{1,2}(X_0\Sigma_{1,2})^{j-2}
  \quad (j\geq 2),
\end{align*}
and
\begin{align*}
  T_{i,j}
  =
  X_0^{-(i-2)}T_{2,3}(X_1\Sigma_{2,3})^{j-i-1}X_0^{i-2}
  \quad (2\leq i<j).
\end{align*}
\subsection{Propagation of \relref{TB1}} \label{subsection_firstrelation}
In this subsection, we show that
\begin{align*}
  T_{i, j}^2=1 \quad (1 \leq i <j)
\end{align*}
holds.
From the definition of $T_{i, j}$, it suffices to consider the cases $i=1$ and $i=2$.
\subsubsection{The case $i=1$}
We first note that $T_{1,2}(X_0\Sigma_{1,2})T_{1,2}=(X_0\Sigma_{1,2})^{-1}$.
Indeed, by the compatibility established above and the relations
$T_{1,2}^2=T_{1,3}^2=1$ in $\cR$, we have
\begin{align*}
  T_{1,2}(X_0\Sigma_{1,2})T_{1,2}(X_0\Sigma_{1,2})
  =
  T_{1,3}^2
  =
  1.
\end{align*}
Therefore, we have
\begin{align}
  T_{1,j}^2 & =T_{1,2}(X_0\Sigma_{1,2})^{j-2}T_{1,2}(X_0\Sigma_{1,2})^{j-2}   \notag \\
            & =(T_{1,2}(X_0\Sigma_{1,2})T_{1,2})^{j-2}(X_0\Sigma_{1,2})^{j-2} \notag \\
            & = (X_0\Sigma_{1,2})^{-(j-2)}(X_0\Sigma_{1,2})^{j-2}           \notag   \\
            & = 1 \label{eq:T_1j^2=1}
\end{align}
\subsubsection{The case $i=2$}
Similarly, by the compatibility established above and the relations
$T_{2,3}^2=T_{2,4}^2=1$ in $\cR$, we have $T_{2,3}(X_1\Sigma_{2,3})T_{2,3}=(X_1\Sigma_{2,3})^{-1}$.
Hence we have
\begin{align}
  T_{2, j}^2 & = T_{2,3}((X_1 \Sigma_{2,3}))^{j-3} T_{2,3} (X_1 \Sigma_{2,3})^{j-3} \notag \\
             & = (X_1 \Sigma_{2,3})^{-(j-3)}(X_1 \Sigma_{2,3})^{j-3} \notag                \\
             & =1, \label{eq:T_ij^2=1}
\end{align}
which is desired.
By combining equations \eqref{eq:T_1j^2=1} and \eqref{eq:T_ij^2=1}, we obtain:
\begin{theorem}\label{theorem_TB1}
  For any $1 \leq i < j$, we have
  \begin{align*}
    T_{i,j}^2=1. \tag{\relref{TB1}}
  \end{align*}
\end{theorem}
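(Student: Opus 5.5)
The statement is Theorem~\ref{theorem_TB1}: $T_{i,j}^2=1$ in $CV_7$ for all $1\le i<j$. The two computations just carried out already give this for $i=1$ and $i=2$, so the plan is to reduce everything to these two cases using the definition of $T_{i,j}$ as a word in $CV_7$.

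For $i=1$ and $j\ge 2$, equation~\eqref{eq:T_1j^2=1} gives $T_{1,j}^2=1$. The one input is the conjugation identity
\[
T_{1,2}(X_0\Sigma_{1,2})T_{1,2}=(X_0\Sigma_{1,2})^{-1}.
\]
To obtain it, I will use the instance of \relref{TD4} in $\cR$ with $(i,j)=(1,2)$, which says $T_{1,3}=T_{1,2}X_0\Sigma_{1,2}$. Squaring this and using $T_{1,3}^2=1$ from $\cR$ gives the identity. Then $T_{1,2}$ conjugates $(X_0\Sigma_{1,2})^{j-2}$ to its inverse, and the square collapses to $1$.

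For $i=2$ and $j\ge 3$, the argument of \eqref{eq:T_ij^2=1} is the same. Here the identity
\[
T_{2,3}(X_1\Sigma_{2,3})T_{2,3}=(X_1\Sigma_{2,3})^{-1}
\]
comes from the $\cR$-instance of \relref{TD4} with $(i,j)=(2,3)$, namely $T_{2,4}=T_{2,3}X_1\Sigma_{2,3}$, together with $T_{2,4}^2=1$.

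For $i\ge 3$, the defining word is $T_{i,j}=X_0^{-(i-2)}\,W\,X_0^{i-2}$, where
\[
W=T_{2,3}(X_1\Sigma_{2,3})^{j-i-1}
\]
is exactly the defining word of $T_{2,j-i+2}$. Hence
\[
T_{i,j}^2=X_0^{-(i-2)}\,T_{2,j-i+2}^2\,X_0^{i-2}=1
\]
by the case $i=2$. Since $j-i+2\ge 3$ whenever $j>i$, the $i=2$ case covers every index that arises.

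The only point needing care is that the symbols $T_{1,3}$ and $T_{2,4}$ in the relations of $\cR$ must be the same elements as the defined words. This is the compatibility remark made after the definition of $\varphi$, which relies on the $\cR$-instances of \relref{TD4} and \relref{TD5}. With that in place, the proof is just the two telescoping computations plus the conjugation step, and I expect no real obstacle.
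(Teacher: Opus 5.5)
Your proposal is correct and matches the paper's proof essentially step for step. Both derive $T_{1,2}(X_0\Sigma_{1,2})T_{1,2}=(X_0\Sigma_{1,2})^{-1}$ (and its $i=2$ analogue) from the compatibility $T_{1,3}=T_{1,2}X_0\Sigma_{1,2}$ together with $T_{1,3}^2=1$, then telescope $T_{1,j}^2$ and $T_{2,j}^2$, and reduce $i\ge 3$ to $i=2$ by conjugating with $X_0^{i-2}$. Two small steps are left implicit: rewriting the $\cR$-instance $T_{1,2}X_0=T_{1,3}\Sigma_{1,2}$ as $T_{1,3}=T_{1,2}X_0\Sigma_{1,2}$ uses $\Sigma_{1,2}^2=1$, which follows from $T_{1,3}^2=T_{2,3}^2=1$; and the telescoping uses $T_{1,2}^2=1$ (resp.\ $T_{2,3}^2=1$). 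All of these relations are in $\cR$.
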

The following is an immediate consequence of this fact.
\begin{corollary}\label{cor_TB1}
  For any $1 \leq i < j$, we have
  \begin{align*}
    \Sigma_{i,j}^2=1.
  \end{align*}
\end{corollary}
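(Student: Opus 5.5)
The claim is that $\Sigma_{i,j}^2=1$ for all $1\le i<j$. My plan is to reduce it directly to Theorem~\ref{theorem_TB1}. By definition, $\Sigma_{i,j}=T_{i,j+1}T_{i+1,j+1}T_{i,j+1}$ in $CV_7$ for every $1\le i<j$. This holds both for low indices, by the notational convention together with the compatibility of the defining words, and for general indices, where $T_{i,j+1}$ and $T_{i+1,j+1}$ are the words assigned in the definition of $\varphi$. Both indices satisfy $1\le i<j+1$ and $1\le i+1<j+1$, so Theorem~\ref{theorem_TB1} applies to each factor.

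The computation then goes as follows. First, apply Theorem~\ref{theorem_TB1} to the factor $T_{i,j+1}$ to get $T_{i,j+1}^2=1$, so $T_{i,j+1}=T_{i,j+1}^{-1}$. Next, use this to write
\begin{align*}
  \Sigma_{i,j}^2 = T_{i,j+1}T_{i+1,j+1}T_{i,j+1}\,T_{i,j+1}T_{i+1,j+1}T_{i,j+1} = T_{i,j+1}T_{i+1,j+1}^2T_{i,j+1}.
\end{align*}
Finally, apply Theorem~\ref{theorem_TB1} again, now to $T_{i+1,j+1}$, which collapses the expression to $T_{i,j+1}^2=1$.

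Equivalently, $\Sigma_{i,j}$ is a conjugate of the involution $T_{i+1,j+1}$ by the involution $T_{i,j+1}$, and any conjugate of an element of order dividing $2$ again squares to $1$.

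There is essentially no obstacle here. The only point needing care is that the identity $\Sigma_{i,j}=T_{i,j+1}T_{i+1,j+1}T_{i,j+1}$ holds for all indices, not just those bounded by $7$. This is true by definition, since $\Sigma_{i,j}$ was defined as this word in the $T$'s in general, and Theorem~\ref{theorem_TB1} was proved for arbitrary $1\le i<j$.
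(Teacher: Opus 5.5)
Your proposal is correct and is essentially the paper's own proof. The paper likewise applies Theorem~\ref{theorem_TB1} to $T_{i,j+1}$ and $T_{i+1,j+1}$. It then collapses $(T_{i,j+1}T_{i+1,j+1}T_{i,j+1})^2$ to $T_{i,j+1}T_{i+1,j+1}^2T_{i,j+1}=T_{i,j+1}^2=1$, exactly as you do.
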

\begin{proof}
  By Theorem \ref{theorem_TB1}, we have
  $T_{i,j+1}^2=T_{i+1,j+1}^2=1$. Hence
  \begin{align*}
    \Sigma_{i,j}^2
     & =
    (T_{i,j+1}T_{i+1,j+1}T_{i,j+1})
    (T_{i,j+1}T_{i+1,j+1}T_{i,j+1})          \\
     & =
    T_{i,j+1}T_{i+1,j+1}T_{i+1,j+1}T_{i,j+1} \\
     & =
    T_{i,j+1}^2                              \\
     & =
    1,
  \end{align*}
  which completes the proof.
\end{proof}
\begin{remark}
  For simplicity, we shall use the relation $T_{i, j}^2=\Sigma_{i, j}^2=1$ in what follows, without explicitly mentioning Theorem \ref{theorem_TB1} and Corollary \ref{cor_TB1}.
  Similarly, we shall also omit references for $T_{i, j}^{-1}=T_{i, j}$ and $\Sigma_{i, j}^{-1}=\Sigma_{i, j}$, which follow from these relations.
\end{remark}
\subsection[Propagation of relation (TD4)]{Propagation of \relref{TD4}}
In this subsection, we show that
\begin{align*}
  T_{i, j}X_{i-1}=T_{i, j+1}\Sigma_{i, i+1}
\end{align*}
holds.
From the definition of $T_{i, j}$, it suffices to show the cases $i=1$ and $i=2$.
To see this, we first observe how the indices of $T_{i, j}$ and $\Sigma_{i, j}$ change under conjugation by $X_0$.
\begin{lemma}\label{lemma_TS_conj_x0}
  For $2 \leq i <j$, we have
  \begin{align*}
    X_0^{-1}T_{i,j}X_0=T_{i+1, j+1}, \quad X_0^{-1}\Sigma_{i,j}X_0=\Sigma_{i+1, j+1}.
  \end{align*}
\end{lemma}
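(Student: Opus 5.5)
The first identity should follow directly from the way $T_{i,j}$ is defined for $i \geq 2$. The second should then follow by conjugating each factor of $\Sigma_{i,j}$ separately. I would not invoke any relation of $\cR$ beyond the compatibility already established, namely that for $1\le i<j\le 7$ the generators $T_{i,j}$ of $CV_7$ agree with the words in the assignment defining $\varphi$. So throughout I would treat $T_{i,j}$, for every $2\le i<j$, as the word
\begin{align*}
  T_{i,j}=X_0^{-(i-2)}T_{2,3}(X_1\Sigma_{2,3})^{j-i-1}X_0^{i-2}.
\end{align*}

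For the first identity, conjugate this expression by $X_0$. This gives
\begin{align*}
  X_0^{-1}T_{i,j}X_0
  = X_0^{-(i-1)}T_{2,3}(X_1\Sigma_{2,3})^{j-i-1}X_0^{i-1}.
\end{align*}
Since $i+1\ge 3\ge 2$ and $(j+1)-(i+1)-1=j-i-1$, the right-hand side is, by definition, the word $T_{i+1,j+1}$. This is the point where $i\ge 2$ is needed. For $i=1$ the defining word of $T_{1,j}$ has a different shape, and no such cancellation of $X_0$'s occurs.

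For the second identity, write $\Sigma_{i,j}=T_{i,j+1}T_{i+1,j+1}T_{i,j+1}$ and insert $X_0X_0^{-1}$ between consecutive factors. This yields
\begin{align*}
  X_0^{-1}\Sigma_{i,j}X_0
  =(X_0^{-1}T_{i,j+1}X_0)(X_0^{-1}T_{i+1,j+1}X_0)(X_0^{-1}T_{i,j+1}X_0).
\end{align*}
Both $T_{i,j+1}$ and $T_{i+1,j+1}$ have first index at least $2$, so the first identity applies to each factor. The product therefore becomes $T_{i+1,j+2}T_{i+2,j+2}T_{i+1,j+2}$, which is $\Sigma_{i+1,j+1}$ by definition.

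I do not expect a genuine obstacle here. The only delicate point is to be explicit that, for small indices, $T_{i,j}$ means the generator of $CV_7$, which equals the defining word by the compatibility remark, so the word definition may be used uniformly for all $2\le i<j$.
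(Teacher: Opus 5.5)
Your proposal is correct and follows the paper's proof essentially verbatim: conjugating the defining word of $T_{i,j}$ by $X_0$ yields the defining word of $T_{i+1,j+1}$, and the identity for $\Sigma_{i,j}$ follows by conjugating each of its three $T$-factors. Your explicit remark that the low-index generators of $CV_7$ coincide with their defining words (via the relations in $\cR$) is the same compatibility the paper relies on implicitly.
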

\begin{proof}
  By the definition of $T_{i, j}$, we have
  \begin{align*}
    X_0^{-1}T_{i,j}X_0
    =X_0^{-(i-1)}T_{2,3}(X_1\Sigma_{2,3})^{j-i-1}X_0^{i-1}
  \end{align*}
  and
  \begin{align*}
    T_{i+1, j+1}=X_0^{-((i+1)-2)}T_{2,3}(X_1 \Sigma_{2,3})^{(j+1)-(i+1)-1}X_0^{i+1-2}=X_0^{-(i-1)}T_{2,3}(X_1\Sigma_{2,3})^{j-i-1}X_0^{i-1},
  \end{align*}
  and hence $X_0^{-1}T_{i,j}X_0=T_{i+1,j+1}$ follows.
  Using this fact, we can also see that
  \begin{align*}
    X_0^{-1}\Sigma_{i, j}X_0
     & =(X_0^{-1}T_{i, j+1}X_0)(X_0^{-1}T_{i+1, j+1}X_0)(X_0^{-1}T_{i, j+1}X_0) \\
     & =T_{i+1, j+2}T_{i+2,j+2} T_{i+1,j+2}                                     \\
     & =\Sigma_{i+1, j+1},
  \end{align*}
  which completes the proof.
\end{proof}
By Lemma \ref{lemma_TS_conj_x0}, we can lower the indices by conjugation.
\begin{theorem}\label{theorem_TD4}
  For any $1 \leq i <j$, we have
  \begin{align*}
    T_{i, j}X_{i-1}=T_{i, j+1}\Sigma_{i, i+1}. \tag{\relref{TD4}}
  \end{align*}
\end{theorem}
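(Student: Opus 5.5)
The plan is to verify the identity directly from the defining words for $T_{i,j}$, $X_i$ and $\Sigma_{i,j}$ in $CV_7$. There are three cases, $i=1$, $i=2$ and $i\geq 3$. In the first two cases the relation is a telescoping identity coming from the shape of the defining words. The third case reduces to the case $i=2$ by conjugating by a power of $X_0$, using Lemma \ref{lemma_TS_conj_x0}.

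\emph{Case $i=1$.} I will use $T_{1,j}=T_{1,2}(X_0\Sigma_{1,2})^{j-2}$ and $\Sigma_{1,2}^2=1$ from Corollary \ref{cor_TB1}. This gives
\begin{align*}
T_{1,j+1}\Sigma_{1,2}
=T_{1,2}(X_0\Sigma_{1,2})^{j-2}X_0\Sigma_{1,2}\Sigma_{1,2}
=T_{1,2}(X_0\Sigma_{1,2})^{j-2}X_0
=T_{1,j}X_0 .
\end{align*}

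\emph{Case $i=2$.} The same computation works with $T_{2,j}=T_{2,3}(X_1\Sigma_{2,3})^{j-3}$ and $\Sigma_{2,3}^2=1$. We get $T_{2,j+1}\Sigma_{2,3}=T_{2,3}(X_1\Sigma_{2,3})^{j-3}X_1=T_{2,j}X_1$. For $j=3$ this is still valid, with the empty power read as the identity. It is consistent with the low-index instances of \relref{TD4} in $\cR$, since those are exactly what make the defining words agree with the generators $T_{i,j}$ for indices at most $7$.

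\emph{Case $i\geq 3$.} Set $j'=j-i+2\geq 3$. Iterating Lemma \ref{lemma_TS_conj_x0} gives $T_{i,j}=X_0^{-(i-2)}T_{2,j'}X_0^{i-2}$. By the definition of $X_{i-1}$ with $i-1\geq 1$, we have $X_{i-1}=X_0^{-(i-2)}X_1X_0^{i-2}$. Therefore
\begin{align*}
T_{i,j}X_{i-1}=X_0^{-(i-2)}\,T_{2,j'}X_1\,X_0^{i-2}
=X_0^{-(i-2)}\,T_{2,j'+1}\Sigma_{2,3}\,X_0^{i-2},
\end{align*}
where the second equality is the case $i=2$. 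Applying Lemma \ref{lemma_TS_conj_x0} again, this time to both $T$ and $\Sigma$, turns the right-hand side into $T_{i,j+1}\Sigma_{i,i+1}$.

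I do not expect a real obstacle here. The only point that needs care is bookkeeping: the defining words must be used uniformly for all indices, relying on the compatibility statement for indices at most $7$, and the exponents in the conjugation step must match, namely $(j+1)-i+2=j'+1$.
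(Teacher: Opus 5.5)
Your proof is correct and is essentially the paper's own argument: the cases $i=1,2$ are the telescoping identities $T_{1,j+1}=T_{1,j}X_0\Sigma_{1,2}$ and $T_{2,j+1}=T_{2,j}X_1\Sigma_{2,3}$, read off from the defining words and combined with $\Sigma_{i,i+1}^2=1$, and the case $i\geq 3$ is reduced to $i=2$ by conjugating with $X_0^{i-2}$ via Lemma~\ref{lemma_TS_conj_x0}. One side remark is slightly imprecise: the compatibility of the defining words with the generators of $CV_7$ also uses the instances of \relref{TB1} and \relref{TD5} in $\cR$, not only those of \relref{TD4}, but this does not affect your proof.
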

\begin{proof}
  If $i \geq 3$, by Lemma \ref{lemma_TS_conj_x0}, the desired equality is equivalent to the following equality:
  \begin{align*}
    T_{2, j-(i-2)}X_1=T_{2, (j+1)-(i-2)} \Sigma_{2, 3}.
  \end{align*}
  Hence it suffices to show the cases $i=1$ and $i=2$.

  If  $i=1$, then from the definition of $T_{1, j}$, we have
  \begin{align*}
    T_{1, j+1}=T_{1, 2} (X_0 \Sigma_{1, 2})^{j-1}=T_{1, 2} (X_0 \Sigma_{1, 2})^{j-2}  (X_0 \Sigma_{1, 2})=T_{1, j} X_0 \Sigma_{1, 2}.
  \end{align*}
  Since $\Sigma_{1, 2}^2=1$, this completes the proof in the case $i=1$.

  If $i=2$, then for any $j>2$, the definition of $T_{2, j}$ gives
  \begin{align*}
    T_{2, j+1}=T_{2, 3}(X_1 \Sigma_{2, 3})^{j-2}=T_{2, 3} (X_1 \Sigma_{2, 3})^{j-3}(X_1 \Sigma_{2, 3})=T_{2, j} X_1 \Sigma_{2, 3}.
  \end{align*}
  Since $\Sigma_{2, 3}^2=1$, this completes the proof in the case $i=2$.
\end{proof}

\subsection[Propagation of relation (TD5)]{Propagation of \relref{TD5}}
In this subsection, we show that
\begin{align*}
  T_{i, j}X_k=X_k T_{i+1, j+1}
\end{align*}
holds for $2 \leq i <j$ with $0 \leq k \leq i-2$.
If $k=0$, the equality follows directly from Lemma \ref{lemma_TS_conj_x0}.
If $k \geq 2$, again by Lemma \ref{lemma_TS_conj_x0}, the equality is equivalent to the following:
\begin{align*}
  T_{i-k+1, j-k+1} X_1=X_1 T_{i-k+2, j-k+2}.
\end{align*}
Hence we can assume $k=1$ without loss of generality.

We prepare a few lemmas.
\begin{lemma}\label{lemma_commutator_imply_D5}
  Let $3 \leq i <j$.
  If $[X_0 X_1^{-1}, T_{i, j}]=1$ holds, then we have
  \begin{align*}
    T_{i, j}X_1 = X_1 T_{i+1, j+1}.
  \end{align*}
\end{lemma}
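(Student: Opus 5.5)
We use the commutator hypothesis to move $X_1$ past $T_{i,j}$ at the cost of two conjugations by $X_0$, which Lemma~\ref{lemma_TS_conj_x0} already controls.

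Since $i \geq 3 \geq 2$, Lemma~\ref{lemma_TS_conj_x0} gives $X_0^{-1}T_{i,j}X_0=T_{i+1,j+1}$, or equivalently
\[
T_{i,j}X_0=X_0T_{i+1,j+1}.
\]
The hypothesis $[X_0X_1^{-1},T_{i,j}]=1$ says that $X_0X_1^{-1}$ commutes with $T_{i,j}$, so $X_1X_0^{-1}=(X_0X_1^{-1})^{-1}$ commutes with $T_{i,j}$ as well.

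Write $X_1=(X_1X_0^{-1})X_0$. Then
\begin{align*}
T_{i,j}X_1 &= T_{i,j}(X_1X_0^{-1})X_0 = (X_1X_0^{-1})\,T_{i,j}X_0 \\
&= (X_1X_0^{-1})X_0\,T_{i+1,j+1} = X_1T_{i+1,j+1}.
\end{align*}
The second equality uses the commutation from the hypothesis, and the third is the conjugation identity from Lemma~\ref{lemma_TS_conj_x0}.

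The only point to check is that Lemma~\ref{lemma_TS_conj_x0} applies to $T_{i,j}$. It requires $2\le i<j$, which holds since $i\ge 3$. The identity holds by the definition of $T_{i,j}$ for $i\ge2$, so no compatibility issue arises at low indices. There is therefore no real obstacle; the substantive work lies in verifying the commutator hypothesis in later lemmas.
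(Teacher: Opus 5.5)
Your proof is correct and is essentially the paper's argument: factor $X_1 = (X_1X_0^{-1})X_0$, commute $X_1X_0^{-1}$ past $T_{i,j}$ using the hypothesis, and then apply Lemma~\ref{lemma_TS_conj_x0} to rewrite $T_{i,j}X_0$ as $X_0T_{i+1,j+1}$. Your check that Lemma~\ref{lemma_TS_conj_x0} applies, since $i\ge 3 \ge 2$, is the only condition needed.
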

\begin{proof}
  We note that $[X_1 X_0^{-1}, T_{i, j}]=1$ also holds.
  Then by Lemma \ref{lemma_TS_conj_x0}, we have
  \begin{align*}
    T_{i, j} X_1= T_{i, j} X_1 X_0^{-1} X_0 =X_1 X_0^{-1} T_{i, j} X_0=X_1 X_0^{-1} X_0 T_{i+1, j+1}=X_1 T_{i+1, j+1},
  \end{align*}
  which completes the proof.
\end{proof}

We first show the sufficient condition for the case $j=i+1, i+2$:

\begin{lemma}\label{lemma_sufficient_condition_basestep}
  For any $i \geq 3$, we have
  \begin{align*}
    [X_0 X_1^{-1}, T_{i, i+1}]=[X_0 X_1^{-1}, T_{i, i+2}]=1.
  \end{align*}
\end{lemma}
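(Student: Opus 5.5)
Conjugating by powers of $X_0$ reduces the general statement to a small base case.

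For $i \geq 3$, Lemma \ref{lemma_TS_conj_x0} gives $T_{i,j} = X_0^{-(i-3)} T_{3,j-i+3} X_0^{i-3}$. We want to transport the commutator condition along this conjugation. The element $X_0X_1^{-1}$ is not invariant under conjugation by $X_0$, but \relref{TA} (available from the two $F$-relators) gives
\[
X_0^{-1}(X_0X_1^{-1})X_0 = X_1^{-1}X_0 = X_0 X_2^{-1}.
\]
So it is enough to know that $T_{3,4}$ and $T_{3,5}$ commute with every $X_0X_m^{-1}$, $m\ge 1$, that appears after conjugating back. I will instead use the following cleaner formulation. Let $H\le\langle X_0,X_1\rangle\cong F$ be the subgroup generated by $X_0X_1^{-1}$. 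Then $X_0^{-n}(X_0X_1^{-1})X_0^{n} = X_0 X_{n+1}^{-1}$ by \relref{TA}, and
\[
X_0X_{n+1}^{-1} = (X_0X_1^{-1})(X_1X_2^{-1})\cdots(X_nX_{n+1}^{-1}),
\]
where the factor $X_mX_{m+1}^{-1}$ for $m\geq1$ is $X_0^{-(m-1)}(X_1X_2^{-1})X_0^{m-1}$. Hence the claim for $T_{i,i+1}$ and $T_{i,i+2}$ with arbitrary $i\ge3$ reduces to showing that $T_{3,4}$ and $T_{3,5}$ commute with $X_0X_1^{-1}$ and with $X_1X_2^{-1}$, and that these commutations survive conjugation.

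It is more direct to use the relators in $\mathcal R$. Instances of \relref{TD5} with all indices at most $7$ give $T_{3,4}X_0=X_0T_{4,5}$ and $T_{3,4}X_1=X_1T_{4,5}$, and likewise for $T_{3,5}$ with $T_{4,6}$. Dividing these, $T_{3,4}X_0X_1^{-1}=X_0T_{4,5}X_1^{-1}=X_0X_1^{-1}T_{3,4}$. This settles the base case $i=3$, and $i=4$ is handled the same way using $T_{4,5}$, $T_{4,6}$, $T_{5,6}$, $T_{5,7}$, which lie in $\mathcal R$. For general $i$, I write $T_{i,j}=X_0^{-(i-3)}T_{3,j'}X_0^{i-3}$ and note that
\[
X_0^{i-3}(X_0X_1^{-1})X_0^{-(i-3)}
\]
is a word in $X_0$ and $X_1$. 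The plan is to show by induction on $i$ that $T_{i,j}$ commutes with both $X_0X_1^{-1}$ and $X_1X_2^{-1}$. For the inductive step, conjugating by $X_0$ sends $X_1X_2^{-1}$ to $X_2X_3^{-1}$, and sends $X_0X_1^{-1}$ to $X_0X_2^{-1}=(X_0X_1^{-1})(X_1X_2^{-1})$. So I need a third element in the base: $T_{3,\cdot}$ must also commute with $X_2X_3^{-1}$, which is $\mathcal R$-checkable as an instance of \relref{TD5} with $k=2$ at low index.

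The main obstacle is that conjugation by $X_0$ keeps producing new elements $X_mX_{m+1}^{-1}$. The standard fix is Brown/Brin-style: the commuting family is generated by $X_0X_1^{-1}$ and $X_1X_2^{-1}$ together with their $X_0$-shifts, and shifts of $X_1X_2^{-1}$ lie in $\langle X_1X_2^{-1},\,X_0^{-1}X_1X_2^{-1}X_0\rangle$-type words via \relref{TA}. Concretely, I expect to verify from low-index \relref{TD5} instances in $\mathcal R$ that $T_{3,4}$ and $T_{3,5}$ commute with $X_0X_1^{-1}$, $X_1X_2^{-1}$ and $X_2X_3^{-1}$. Then I induct, using $X_{m}X_{m+1}^{-1} = X_1^{-(m-2)}(X_2X_3^{-1})X_1^{m-2}$, where \relref{TA} holds with conjugator $X_1$ for indices $>1$, together with $T_{3,\cdot}X_1=X_1T_{4,\cdot}$. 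If this bookkeeping fails, the fallback is a direct induction on $i$ through the identity $X_0X_1^{-1}\cdot X_0 = X_0\cdot X_1^{-1}X_0$, which yields $X_0X_2^{-1}$.
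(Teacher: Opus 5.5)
Your base cases $i=3,4$, obtained from instances of \relref{TD5} in $\cR$, are correct and coincide with the paper's. The gap is the passage to general $i$. No valid inductive step is given, and the route you sketch cannot work.

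\begin{itemize}
\item The extra commutations you want in the base, $[T_{3,4},X_1X_2^{-1}]=1$ and $[T_{3,4},X_2X_3^{-1}]=1$, are false already in $V$. In the standard interval model, $\overline{\pi}(T_{3,4})$ exchanges $[3/4,7/8]$ and $[7/8,1]$. Meanwhile $x_1x_2^{-1}$ and $x_2x_3^{-1}$ have supports $[1/2,7/8]$ and $[3/4,15/16]$, and this exchange preserves neither support.
\item Every relation of $\cR$ holds in $CV$, under $X_m\mapsto x_m$ and $T_{i,j}\mapsto \tau_{i,j}$. So nothing false in $CV$ can be derived from $\cR$.
\item There is no instance of \relref{TD5} of the form $T_{3,j}X_2=X_2T_{4,j+1}$. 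Relation \relref{TD5} requires $k\le i-2$, and $T_{3,j}X_2$ is governed by \relref{TD4}.
\item Your first reduction conjugates in the wrong direction. From $T_{i,j}=X_0^{-(i-3)}T_{3,j'}X_0^{i-3}$, one needs $T_{3,j'}$ to commute with $X_0^{i-3}(X_0X_1^{-1})X_0^{-(i-3)}$. It is not enough to know it commutes with $X_0^{-n}(X_0X_1^{-1})X_0^{n}=X_0X_{n+1}^{-1}$.
\end{itemize}

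The missing idea is to realize the shift $T_{i,\cdot}\mapsto T_{i+1,\cdot}$ by conjugating with an element that commutes with $X_0X_1^{-1}$.
\begin{itemize}
\item Conjugate the $\cR$-relations $T_{4,5}=X_1^{-1}T_{3,4}X_1$ and $T_{4,6}=X_1^{-1}T_{3,5}X_1$ by $X_0^{i-3}$, using Lemma \ref{lemma_TS_conj_x0}. This gives $T_{i+1,i+2}=X_{i-2}^{-1}T_{i,i+1}X_{i-2}$ and $T_{i+1,i+3}=X_{i-2}^{-1}T_{i,i+2}X_{i-2}$.
\item For $i\ge 4$ we have $i-2\ge 2$. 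Then $X_{i-2}$ commutes with $X_0X_1^{-1}$ by the two $F$-relators and \relref{TA}, so the statement passes from $i$ to $i+1$.
\end{itemize}
This is also why both $i=3$ and $i=4$ are needed as base cases. At $i=3$ the conjugator would be $X_1$, which does not commute with $X_0X_1^{-1}$. For the same reason, your observation $T_{3,\cdot}X_1=X_1T_{4,\cdot}$ cannot by itself drive the induction.
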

\begin{proof}
  We show this equality by induction.
  If $i = 3, 4$, these two equalities follow from \relref{TD5} in $\cR$.
  Assume that both equalities hold for some $i \geq 4$.
  From \relref{TD5} in $\cR$, we have
  \begin{align*}
    T_{4, 5} = X_1^{-1} T_{3, 4} X_1,\quad T_{4, 6} = X_1^{-1} T_{3, 5} X_1.
  \end{align*}
  Applying Lemma \ref{lemma_TS_conj_x0}  $i-3$ times yields the formulas
  \begin{align*}
    T_{i+1, i+2}=X_{i-2}^{-1}T_{i, i+1}X_{i-2}, \quad T_{i+1, i+3}=X_{i-2}^{-1}T_{i, i+2}X_{i-2}.
  \end{align*}
  Since $[X_0 X_1^{-1}, X_{i-2}]=1$ holds for $i-2 \geq 2$, by combining inductive hypotheses, we obtain the desired result.
\end{proof}

Now we show that the sufficient condition holds for any $3 \leq i<j$.

\begin{proposition}\label{proposition_commutator_general_ij}
  For any $3 \leq i<j$, we have
  \begin{align*}
    [X_0 X_1^{-1}, T_{i, j}]=1.
  \end{align*}
\end{proposition}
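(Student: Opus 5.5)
Fix $i \geq 3$ and argue by induction on $j-i$. The base cases $j=i+1$ and $j=i+2$ are exactly Lemma \ref{lemma_sufficient_condition_basestep}, so assume $j \geq i+3$ and that the commutator vanishes for every $T_{i,j'}$ with $i<j'<j$.

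The inductive step will come from Theorem \ref{theorem_TD4} applied at level $i$:
\begin{align*}
  T_{i,j} = T_{i,j-1} X_{i-1} \Sigma_{i,i+1}, \qquad \Sigma_{i,i+1} = T_{i,i+2} T_{i+1,i+2} T_{i,i+2}.
\end{align*}
Each factor will be shown to commute with $X_0X_1^{-1}$:
\begin{itemize}
  \item[(i)] $T_{i,j-1}$ commutes by the inductive hypothesis.
  \item[(ii)] $X_{i-1}$ with $i-1 \geq 2$ commutes with $X_0X_1^{-1}$, which is the standard consequence of the two defining commutator relators of $F$ (equivalently \relref{TA}). This fact was already used in the proof of Lemma \ref{lemma_TS_conj_x0}'s application.
  \item[(iii)] $T_{i,i+2}$ and $T_{i+1,i+2}$ commute by Lemma \ref{lemma_sufficient_condition_basestep}. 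Here $i+1 \geq 4 \geq 3$, so the lemma applies.
\end{itemize}
Hence $\Sigma_{i,i+1}$ commutes with $X_0X_1^{-1}$, and so does the product $T_{i,j}$.

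This makes the induction essentially formal. The one point to check carefully is that Theorem \ref{theorem_TD4} was established for all indices in $CV_7$ using only the definitions and $\cR$. It was, via Lemma \ref{lemma_TS_conj_x0} and the defining words for $T_{1,j}$ and $T_{2,j}$, so it may be used here without circularity.

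The other point to verify is that the base lemma covers $T_{i+1,i+2}$ for every $i \geq 3$, which it does since it is stated for all $i \geq 3$. I therefore expect no genuine obstacle; the main care is bookkeeping of which earlier statements are valid for arbitrary indices. As an alternative, one could induct on $j$ directly using $T_{i,j+1} = T_{i,j}X_{i-1}\Sigma_{i,i+1}$ (from \relref{TD4} and $\Sigma^2=1$), which is the same argument.
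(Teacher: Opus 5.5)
Your proof is correct and follows the paper's argument: induction on $j$ via Theorem~\ref{theorem_TD4}, showing that $T_{i,j-1}$, $X_{i-1}$ and $\Sigma_{i,i+1}$ each commute with $X_0X_1^{-1}$. The only inessential difference is in the $\Sigma_{i,i+1}$ factor: you expand $\Sigma_{i,i+1}=T_{i,i+2}T_{i+1,i+2}T_{i,i+2}$ and apply Lemma~\ref{lemma_sufficient_condition_basestep} at index $i+1$, whereas the paper uses Theorem~\ref{theorem_TD4} to write $\Sigma_{i,i+1}=T_{i,i+2}T_{i,i+1}X_{i-1}$ and stays at index $i$.
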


\begin{proof}
  Fix some $i \geq 3$.
  We show this by induction on $j$.
  The base case $j=i+1$ follows from Lemma \ref{lemma_sufficient_condition_basestep}.
  Assume that $[X_0 X_1^{-1}, T_{i, j}]=1$ for some $j \geq i+1$.

  By Theorem \ref{theorem_TD4}, we have $T_{i, j+1}=T_{i, j}X_{i-1}\Sigma_{i, i+1}$.
  Hence it suffices to show that $X_0X_1^{-1}$ commutes with $T_{i, j}$, $X_{i-1}$, and $\Sigma_{i, i+1}$.
  As for $T_{i, j}$, it is clear from the inductive hypothesis.
  Also, for $X_{i-1}$, it is clear from the assumption that $i \geq 3$.
  For $\Sigma_{i, i+1}$, by Theorem \ref{theorem_TD4} with $j=i+1$, we have $\Sigma_{i, i+1}=T_{i, i+2}T_{i, i+1}X_{i-1}$.
  In this case, the commutativity follows from Lemma \ref{lemma_sufficient_condition_basestep} again.
  Since all elements commute with $X_0X_1^{-1}$, we obtain the conclusion.
\end{proof}
From these preparations, we immediately obtain the following:
\begin{theorem}\label{theorem_TD5}
  For any $2 \leq i<j$ with $0 \leq k \leq i-2$, we have
  \begin{align*}
    T_{i, j}X_k=X_k T_{i+1, j+1} \tag{\relref{TD5}}
  \end{align*}
\end{theorem}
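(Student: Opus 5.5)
The plan is to assemble the reductions made just before the statement into a case analysis on $k$.

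\emph{Case $k=0$.} The identity $T_{i,j}X_0 = X_0 T_{i+1,j+1}$ is a rewriting of $X_0^{-1}T_{i,j}X_0 = T_{i+1,j+1}$. This is exactly Lemma \ref{lemma_TS_conj_x0}, which applies because $i \geq 2$.

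\emph{Case $k=1$.} Here $i \geq 3$. By Proposition \ref{proposition_commutator_general_ij}, $[X_0X_1^{-1}, T_{i,j}] = 1$. Lemma \ref{lemma_commutator_imply_D5} then gives $T_{i,j}X_1 = X_1 T_{i+1,j+1}$ directly.

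\emph{Case $k \geq 2$.} Then $i \geq k+2 \geq 4$, and we reduce to the case $k=1$ by conjugating with $X_0^{k-1}$.
\begin{itemize}
\item By definition, $X_k = X_0^{-(k-1)}X_1X_0^{k-1}$.
\item Applying Lemma \ref{lemma_TS_conj_x0} $k-1$ times gives $T_{i,j} = X_0^{-(k-1)} T_{i-k+1,\,j-k+1} X_0^{k-1}$. The same holds with $(i+1,j+1)$ in place of $(i,j)$. Each step is legitimate because every intermediate first index stays at least $2$; in fact it stays at least $i-k+1 \geq 3$.
\item Substituting, the target identity becomes the $X_0^{k-1}$-conjugate of $T_{i',j'}X_1 = X_1T_{i'+1,j'+1}$, where $i' = i-k+1 \geq 3$ and $j' = j-k+1$.
\item That identity is the $k=1$ case already proved.
\end{itemize}

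Essentially all the work was done beforehand. The one delicate point is the case $k=1$. It needs the commutation of $X_0X_1^{-1}$ with every $T_{i,j}$ for $i \geq 3$, and this was established by induction on $j$ in Proposition \ref{proposition_commutator_general_ij}. The base cases for that induction come from Lemma \ref{lemma_sufficient_condition_basestep}, which in turn uses the finitely many instances of \relref{TD5} in $\cR$. Since all of that is already available, the theorem follows with only index bookkeeping in the conjugation step to check.
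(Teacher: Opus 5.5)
Your proposal is correct and follows the paper's proof. The paper handles $k=0$ by Lemma~\ref{lemma_TS_conj_x0}, reduces $k\ge 2$ to $T_{i-k+1,j-k+1}X_1=X_1T_{i-k+2,j-k+2}$ by the same lemma, and settles $k=1$ via Lemma~\ref{lemma_commutator_imply_D5} together with Proposition~\ref{proposition_commutator_general_ij}; your explicit conjugation by $X_0^{k-1}$ and the index check ($i-k+1\ge 3$) just spell out that reduction in more detail.
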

\begin{proof}
  As mentioned at the beginning of this subsection, it is sufficient to show the case where $k=1$.
  That is, we only need to show that $T_{i, j}X_1=X_1 T_{i+1, j+1}$ holds for $3 \leq i <j$.
  By Lemma \ref{lemma_commutator_imply_D5}, it remains to verify that $[X_0 X_1^{-1}, T_{i, j}]=1$.
  Then, it follows from Proposition \ref{proposition_commutator_general_ij} that the equality holds.
\end{proof}

\subsection[Propagation of relation (TC3)]{Propagation of \relref{TC3}}
From the definition of $\Sigma_{i, j}$, \relref{TC3} follows immediately.
\begin{theorem} \label{theorem_TC3}
  For any $2 \leq i <j$ with $0 \leq k \leq i-2$, we have
  \begin{align*}
    \Sigma_{i, j}X_k=X_k \Sigma_{i+1, j+1}. \tag{\relref{TC3}}
  \end{align*}
\end{theorem}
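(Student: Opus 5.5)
The plan is to expand $\Sigma_{i,j}$ by its definition and apply Theorem~\ref{theorem_TD5} to each of the three $T$-factors. By definition,
\begin{align*}
  \Sigma_{i,j} X_k = T_{i,j+1} T_{i+1,j+1} T_{i,j+1} X_k .
\end{align*}
The hypotheses $2 \leq i < j$ and $0 \leq k \leq i-2$ imply the conditions needed for Theorem~\ref{theorem_TD5} for every factor. For $T_{i,j+1}$ we have $2 \leq i < j+1$ and $k \leq i-2$. For $T_{i+1,j+1}$ we have $2 \leq i+1 < j+1$ and $k \leq i-2 \leq (i+1)-2$. So Theorem~\ref{theorem_TD5} applies to both index pairs with the same $k$.

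Next, push $X_k$ from right to left through the three factors:
\begin{align*}
  T_{i,j+1} X_k &= X_k T_{i+1,j+2}, \\
  T_{i+1,j+1} X_k &= X_k T_{i+2,j+2}.
\end{align*}
Doing this once for each factor gives
\begin{align*}
  \Sigma_{i,j} X_k = X_k T_{i+1,j+2} T_{i+2,j+2} T_{i+1,j+2} = X_k \Sigma_{i+1,j+1},
\end{align*}
and the last equality is again just the definition of $\Sigma_{i+1,j+1}$.

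No step is a genuine obstacle, since all the work was done in Theorem~\ref{theorem_TD5}. The only point to check is that the index condition $k \leq i'-2$ holds for each $T_{i',j'}$ that occurs, and the inequalities above confirm this for both $i' = i$ and $i' = i+1$.
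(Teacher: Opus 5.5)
Your proposal is correct and is essentially the paper's own proof: expand $\Sigma_{i,j}=T_{i,j+1}T_{i+1,j+1}T_{i,j+1}$, move $X_k$ leftward through each factor using Theorem~\ref{theorem_TD5}, and recognize $T_{i+1,j+2}T_{i+2,j+2}T_{i+1,j+2}$ as $\Sigma_{i+1,j+1}$. Your check that $k \leq i-2 \leq (i+1)-2$ makes Theorem~\ref{theorem_TD5} applicable to both index pairs is exactly what is needed.
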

\begin{proof}
  Recall that $\Sigma_{i, j}=T_{i, j+1} T_{i+1, j+1} T_{i, j+1}$ holds by the definition.
  Then by Theorem \ref{theorem_TD5}, we have
  \begin{align*}
    T_{i, j+1}X_k=X_k T_{i+1, j+2}, \quad T_{i+1, j+1}X_k=X_k T_{i+2, j+2}.
  \end{align*}
  Therefore we have
  \begin{align*}
    \Sigma_{i, j}X_k=T_{i, j+1} T_{i+1, j+1} T_{i, j+1} X_k=X_k T_{i+1, j+2}T_{i+2, j+2} T_{i+1, j+2}=X_k \Sigma_{i+1, j+1},
  \end{align*}
  which completes the proof.
\end{proof}
\subsection[Propagation of relation (TD2)]{Propagation of \relref{TD2}}
In this subsection, we will show that for any $1 \leq i <j$, we have
\begin{align*}
  T_{i, j}X_{j-1}=X_{i-1}^2 T_{i, j+2} T_{j, j+2}.
\end{align*}
We begin by establishing a few lemmas on the commutativity with $\Sigma_{2, 3}$ and $\Sigma_{1, 2}$.
\begin{lemma} \label{lemma_commute_Sigma23Xm}
  For any $m \geq 3$, we have $[\Sigma_{2, 3}, X_m]=1$.
\end{lemma}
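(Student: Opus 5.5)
Since $\Sigma_{2,3}=T_{2,4}T_{3,4}T_{2,4}$, it suffices to show that $X_m$ commutes with each of $T_{2,4}$ and $T_{3,4}$ for all $m\geq 3$. I would first reduce to finitely many values of $m$. In Thompson's group, $X_{m+1}=X_0^{-1}X_mX_0$ for $m\geq 1$, so
\begin{align*}
  X_m=X_1^{-(m-3)}X_3X_1^{m-3} \quad (m\geq 3),
\end{align*}
using \relref{TA}, i.e.\ $X_{m+1}=X_1^{-1}X_mX_1$ for $m\geq 2$. Hence it is enough to prove that $\Sigma_{2,3}$ commutes with $X_3$, and that $X_1$ normalizes the centralizer in a suitable sense. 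A cleaner route is to show that $\Sigma_{2,3}$ commutes with $X_1X_2^{-1}$, or with some element conjugating $X_m$ to $X_{m+1}$, but that element should involve only generators already known to commute.

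Concretely, I would proceed by induction on $m$. The base cases $m=3,4$ are instances of \relref{TC1} lying in $\cR$: the indices are $\Sigma_{2,3}=T_{2,4}T_{3,4}T_{2,4}$ and $X_3,X_4$, all at most $7$. So $[\Sigma_{2,3},X_3]=[\Sigma_{2,3},X_4]=1$ hold in $CV_7$. For the inductive step, assume $[\Sigma_{2,3},X_m]=1$ and $[\Sigma_{2,3},X_{m+1}]=1$ for some $m\geq 3$. By \relref{TA} we have $X_{m+2}=X_m^{-1}X_{m+1}X_m$, since $m<m+1$ gives $X_{m+1}X_m=X_mX_{m+2}$. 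Then $\Sigma_{2,3}$ commutes with $X_m$ and $X_{m+1}$, hence with $X_{m+2}$. This closes the induction and gives the lemma for all $m\geq 3$.

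The only point needing care is the base case. I must verify that the instances $\Sigma_{2,3}X_3=X_3\Sigma_{2,3}$ and $\Sigma_{2,3}X_4=X_4\Sigma_{2,3}$ of \relref{TC1} (with $(i,j,k)=(2,3,3),(2,3,4)$, which satisfy $j\leq k$) are genuinely in $\cR$. I also need the words $X_3,X_4$ and $T_{2,4},T_{3,4}$ appearing there to agree with the images under $\varphi$. The $T$'s are generators of $CV_7$, and compatibility was already noted. For $X_3,X_4$ the relation is written with $X_k=X_0^{-(k-1)}X_1X_0^{k-1}$, which is how $\cR$ interprets them. So the base case is immediate, and no earlier theorem beyond \relref{TA} is needed.
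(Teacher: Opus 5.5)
Your argument is correct and is exactly the paper's proof: the base cases $m=3,4$ are the \relref{TC1} instances $\Sigma_{2,3}X_3=X_3\Sigma_{2,3}$ and $\Sigma_{2,3}X_4=X_4\Sigma_{2,3}$ in $\cR$, and the inductive step uses $X_{m+2}=X_m^{-1}X_{m+1}X_m$ from \relref{TA}. You should drop the opening suggestion to reduce to $X_m$ commuting with $T_{2,4}$ and $T_{3,4}$ separately, since that commutation is false (e.g.\ \relref{TD2} gives $T_{2,4}X_3=X_1^2T_{2,6}T_{4,6}$); your actual argument never uses it.
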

\begin{proof}
  For $m=3, 4$, the assertion follows from \relref{TC1} in $\cR$.
  Assume that $[\Sigma_{2, 3}, X_{m-1}]=[\Sigma_{2,3}, X_m]=1$ holds for some $m \geq 4$.
  Then we have
  \begin{align*}
    \Sigma_{2, 3} X_{m+1}=\Sigma_{2, 3} X_{m-1}^{-1} X_m X_{m-1}= X_{m-1}^{-1} X_m X_{m-1}\Sigma_{2, 3}=X_{m+1} \Sigma_{2, 3},
  \end{align*}
  which completes the proof.
\end{proof}
\begin{lemma}\label{lemma_commute_Sigma12Xm}
  For any $m \geq 2$, we have $[\Sigma_{1,2}, X_m]=1$.
\end{lemma}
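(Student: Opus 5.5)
We argue by induction on $m$, following the pattern of Lemma~\ref{lemma_commute_Sigma23Xm}.

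\textbf{Base cases.} We need $m=2$ and $m=3$. Here $\Sigma_{1,2}=T_{1,3}T_{2,3}T_{1,3}$ and $X_2,X_3$ involve only indices at most $7$. The relation $\Sigma_{1,2}X_k=X_k\Sigma_{1,2}$ for $k=2,3$ is an instance of \relref{TC1} with $(i,j)=(1,2)$ and $k\ge j=2$. After replacing $\Sigma_{1,2}$ by $T_{1,3}T_{2,3}T_{1,3}$, all indices in it are at most $7$. So both instances lie in $\cR$, with $X_2,X_3$ read as the words $X_0^{-1}X_1X_0$ and $X_0^{-2}X_1X_0^2$ in $CV_7$.

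\textbf{Inductive step.} Assume $[\Sigma_{1,2},X_{m-1}]=[\Sigma_{1,2},X_m]=1$ for some $m\ge 3$. By the two defining relators of $F$ we have \relref{TA} among the $X_i$, in particular $X_{m+1}=X_{m-1}^{-1}X_mX_{m-1}$. Then
\begin{align*}
\Sigma_{1,2}X_{m+1}=\Sigma_{1,2}X_{m-1}^{-1}X_mX_{m-1}=X_{m-1}^{-1}X_mX_{m-1}\Sigma_{1,2}=X_{m+1}\Sigma_{1,2}.
\end{align*}
This completes the induction.

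\textbf{Main point to check.} The only real issue is whether the base instances are actually included in $\cR$, i.e.\ whether the index bound in the definition of $\cR$ refers to the subscripts of the generators $x_k$ and $\tau_{i,j}$ appearing in the relation. On that reading the instances with $k=2,3$ (indeed all $k\le 7$) are included. If one preferred to use only $x_0,x_1$, one could instead derive the base cases from \relref{TC3}-type conjugation, but this is unnecessary given the definition of $\cR$.
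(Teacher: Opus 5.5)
Your proof is correct and is essentially the paper's argument. The base cases $m=2,3$ are the instances $\Sigma_{1,2}X_2=X_2\Sigma_{1,2}$ and $\Sigma_{1,2}X_3=X_3\Sigma_{1,2}$ of \relref{TC1} in $\cR$, with $X_2,X_3$ read as the words $X_0^{-(k-1)}X_1X_0^{k-1}$ as you say, and both appear in Table~\ref{table_used_finite_relations}. The inductive step conjugates by $X_{m-1}$ using $X_{m+1}=X_{m-1}^{-1}X_mX_{m-1}$, exactly as in Lemma~\ref{lemma_commute_Sigma23Xm}.
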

\begin{proof}
  As in the proof of Lemma \ref{lemma_commute_Sigma23Xm}, the base cases $m=2, 3$ follow from \relref{TC1} in $\cR$, and the inductive step can be proved by using $X_{m+1}=X_{m-1}^{-1} X_m X_{m-1}$.
\end{proof}

\begin{lemma} \label{lemma_commute_Sigma23Tm+1m+3}
  For any $m \geq 3$, we have $[\Sigma_{2, 3}, T_{m+1, m+3}]=1$.
\end{lemma}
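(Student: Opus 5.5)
Prove $[\Sigma_{2,3},T_{m+1,m+3}]=1$ by induction on $m$, using conjugation by an $X$-generator that commutes with $\Sigma_{2,3}$ (Lemma~\ref{lemma_commute_Sigma23Xm}) and shifts the indices of $T$ up by one (Theorem~\ref{theorem_TD5}), exactly as in Lemma~\ref{lemma_commute_Sigma23Xm}.

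\textbf{Base cases.} For $m=3$ and $m=4$ the relation reads $[\Sigma_{2,3},T_{4,6}]=1$ and $[\Sigma_{2,3},T_{5,7}]=1$. After expanding $\Sigma_{2,3}=T_{2,4}T_{3,4}T_{2,4}$, all indices are at most $7$. Each relation is an instance of \relref{TB5} with $(i,j,k,l)=(2,3,4,6)$ or $(2,3,5,7)$, so it lies in $\cR$.

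\textbf{Inductive step.} Assume $[\Sigma_{2,3},T_{m+1,m+3}]=1$ for some $m\ge 3$. Apply Theorem~\ref{theorem_TD5} with $(i,j,k)=(m+1,m+3,k)$ for any $0\le k\le m-1$; this gives
\begin{align*}
T_{m+2,m+4}=X_k^{-1}T_{m+1,m+3}X_k .
\end{align*}
Choose $k=3$, which is allowed since $m-1\ge 2$ fails only when $m=3$, and then we still need $k\le m-1=2$.

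\textbf{Main obstacle: the choice of $k$.} The step needs $k\ge 3$, so that $X_k$ commutes with $\Sigma_{2,3}$ by Lemma~\ref{lemma_commute_Sigma23Xm}, and also $k\le m-1$, so that Theorem~\ref{theorem_TD5} applies. Both hold when $m\ge 4$. This is why two base cases are needed: run the induction from $m=4$ onward, taking $m=3,4$ as base cases.

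For $m\ge 4$ take $k=3$. Then
\begin{align*}
\Sigma_{2,3}T_{m+2,m+4}=\Sigma_{2,3}X_3^{-1}T_{m+1,m+3}X_3=X_3^{-1}\Sigma_{2,3}T_{m+1,m+3}X_3 ,
\end{align*}
using Lemma~\ref{lemma_commute_Sigma23Xm}. By the inductive hypothesis this equals $X_3^{-1}T_{m+1,m+3}\Sigma_{2,3}X_3$. Applying Lemma~\ref{lemma_commute_Sigma23Xm} again gives $X_3^{-1}T_{m+1,m+3}X_3\,\Sigma_{2,3}=T_{m+2,m+4}\Sigma_{2,3}$. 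This completes the induction.

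If one prefers to avoid the second base case, one could instead conjugate by $X_0$ via Lemma~\ref{lemma_TS_conj_x0}. That route does not work directly, because $X_0$ does not commute with $\Sigma_{2,3}$ (it shifts it to $\Sigma_{3,4}$). So the $X_3$ approach is the one to use.
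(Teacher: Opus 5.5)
Your proof is correct and matches the paper's argument. Both use the same base cases $m=3,4$ from \textup{(TB5)} in $\cR$, and the same inductive step that conjugates by an $X_k$ with $3\le k\le m-1$ via Theorem~\ref{theorem_TD5} and Lemma~\ref{lemma_commute_Sigma23Xm}; the only difference is that the paper conjugates by $X_{m-1}$ where you use the fixed $X_3$, and both are admissible once $m\ge 4$. (Your first sentence justifying $k=3$ is garbled, but the following paragraph states the correct constraint.)
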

\begin{proof}
  For $m=3, 4$, the assertion follows from \relref{TB5} in $\cR$.
  Assume that $[\Sigma_{2, 3}, T_{m+1, m+3}]=1$ holds for some $m \geq 4$.
  Note that by Theorem \ref{theorem_TD5}, we have $T_{m+2, m+4}=X_{m-1}^{-1}T_{m+1, m+3}X_{m-1}$.
  Then since $m-1 \geq 3$, by Lemma \ref{lemma_commute_Sigma23Xm} and inductive hypothesis, we have
  \begin{align*}
    \Sigma_{2, 3} T_{m+2, m+4}= \Sigma_{2, 3} X_{m-1}^{-1}T_{m+1, m+3}X_{m-1} =  X_{m-1}^{-1}T_{m+1, m+3}X_{m-1}\Sigma_{2, 3}= T_{m+2, m+4}\Sigma_{2, 3},
  \end{align*}
  which completes the proof.
\end{proof}
\begin{lemma} \label{lemma_commute_Sigma12Tmm+2}
  For any $m \geq 3$, we have $[\Sigma_{1, 2}, T_{m, m+2}]=1$.
\end{lemma}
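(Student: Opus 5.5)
We argue by induction on $m$, mirroring the proof of Lemma \ref{lemma_commute_Sigma23Tm+1m+3}, with $\Sigma_{1,2}$ playing the role of $\Sigma_{2,3}$.

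\textbf{Base cases.} For $m=3,4$ the relations $[\Sigma_{1,2},T_{3,5}]=1$ and $[\Sigma_{1,2},T_{4,6}]=1$ are instances of \relref{TB5} with $(i,j,k,l)=(1,2,m,m+2)$. After expanding $\Sigma_{1,2}=T_{1,3}T_{2,3}T_{1,3}$, all indices are at most $6\le 7$. Hence both relations lie in $\cR$ and hold in $CV_7$.

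\textbf{Inductive step.} Assume $[\Sigma_{1,2},T_{m,m+2}]=1$ for some $m\ge 4$. Apply Theorem \ref{theorem_TD5} with $(i,j)=(m,m+2)$ and $k=m-2$; this is allowed since $2\le m$ and $0\le m-2\le m-2$. It gives $T_{m,m+2}X_{m-2}=X_{m-2}T_{m+1,m+3}$, that is,
\begin{align*}
T_{m+1,m+3}=X_{m-2}^{-1}T_{m,m+2}X_{m-2}.
\end{align*}
Since $m\ge4$, we have $m-2\ge2$, so Lemma \ref{lemma_commute_Sigma12Xm} shows that $\Sigma_{1,2}$ commutes with $X_{m-2}$. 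Combining this with the inductive hypothesis, $\Sigma_{1,2}$ commutes with every factor of $X_{m-2}^{-1}T_{m,m+2}X_{m-2}$, and therefore $[\Sigma_{1,2},T_{m+1,m+3}]=1$.

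\textbf{Main check.} The only point needing care is the index bookkeeping. We must confirm that the conjugating generator $X_{m-2}$ has index at least $2$, which is exactly the range covered by Lemma \ref{lemma_commute_Sigma12Xm}. This forces the induction to start at $m=4$, and is why both base cases $m=3$ and $m=4$ are taken from $\cR$. Alternatively, the single base case $m=3$ together with the step from $m=3$ to $m=4$ would need $X_1$ to commute with $\Sigma_{1,2}$, which is false. So both base cases really are needed.
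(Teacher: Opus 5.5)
Your proof is correct and is essentially the paper's own argument. The base cases $m=3,4$ come from \relref{TB5} in $\cR$. The inductive step writes $T_{m+1,m+3}=X_{m-2}^{-1}T_{m,m+2}X_{m-2}$ via Theorem~\ref{theorem_TD5} and then applies Lemma~\ref{lemma_commute_Sigma12Xm}, which requires exactly $m-2\geq 2$, so your closing remark about needing two base cases is a correct aside that plays no role in the proof.
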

\begin{proof}
  As in the proof of Lemma \ref{lemma_commute_Sigma23Tm+1m+3}, for $m=3, 4$, the assertion follows from \relref{TB5} in $\cR$.
  Assume that $[\Sigma_{1, 2}, T_{m, m+2}]=1$ holds for some $m \geq 4$.
  By Theorem \ref{theorem_TD5}, Lemma \ref{lemma_commute_Sigma12Xm}, and the inductive hypothesis, we have
  \begin{align*}
    \Sigma_{1, 2}T_{m+1, m+3}=\Sigma_{1, 2} X_{m-2}^{-1} T_{m, m+2} X_{m-2}=  X_{m-2}^{-1} T_{m, m+2} X_{m-2}\Sigma_{1, 2}=T_{m+1, m+3} \Sigma_{1, 2},
  \end{align*}
  which completes the proof.
\end{proof}
We next prove that \relref{TD2} holds for the cases $i=1$ and $i=2$.
\begin{proposition}\label{proposition_TD2_i2}
  For any $m \geq 3$, we have
  \begin{align*}
    T_{2, m}X_{m-1}=X_1^2 T_{2, m+2}T_{m, m+2}.
  \end{align*}
\end{proposition}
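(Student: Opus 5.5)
We argue by induction on $m$, reducing the relation for $m+1$ to the one for $m$ by peeling off one factor $X_1\Sigma_{2,3}$ with \relref{TD4}.

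\textbf{Base case.} For $m=3$ the relation reads $T_{2,3}X_2=X_1^2T_{2,5}T_{3,5}$. This is an instance of \relref{TD2} with all indices at most $7$, so it lies in $\cR$. Here $X_2$ is the word $X_0^{-1}X_1X_0$, and $T_{2,5},T_{3,5}$ agree with the generators of $CV_7$ by the compatibility remark after the definition of $\varphi$. The instances $m=4,5$ also lie in $\cR$, but the inductive step below needs only $m=3$.

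\textbf{Inductive step.} Suppose $T_{2,m}X_{m-1}=X_1^2T_{2,m+2}T_{m,m+2}$ for some $m\ge 3$.

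1. By Theorem~\ref{theorem_TD4} with $i=2$, we have $T_{2,m+1}=T_{2,m}X_1\Sigma_{2,3}$, so
\begin{align*}
T_{2,m+1}X_m=T_{2,m}X_1\Sigma_{2,3}X_m .
\end{align*}

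2. Since $m\ge 3$, Lemma~\ref{lemma_commute_Sigma23Xm} lets us move $\Sigma_{2,3}$ past $X_m$.

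3. \relref{TA} with $(i,j)=(1,m-1)$ gives $X_1X_m=X_{m-1}X_1$. Hence
\begin{align*}
T_{2,m+1}X_m=T_{2,m}X_{m-1}X_1\Sigma_{2,3}.
\end{align*}
This uses that \relref{TA} holds for the words $X_i$, which is the standard fact about $F$ quoted at the start of the section.

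4. Apply the inductive hypothesis to obtain $X_1^2T_{2,m+2}T_{m,m+2}X_1\Sigma_{2,3}$.

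5. Since $m\ge 3$, Theorem~\ref{theorem_TD5} with $k=1$ gives $T_{m,m+2}X_1=X_1T_{m+1,m+3}$.

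6. Lemma~\ref{lemma_commute_Sigma23Tm+1m+3} lets $T_{m+1,m+3}$ commute with $\Sigma_{2,3}$. We arrive at
\begin{align*}
T_{2,m+1}X_m=X_1^2\,(T_{2,m+2}X_1\Sigma_{2,3})\,T_{m+1,m+3}.
\end{align*}

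7. Theorem~\ref{theorem_TD4} again identifies $T_{2,m+2}X_1\Sigma_{2,3}=T_{2,m+3}$. This yields $X_1^2T_{2,m+3}T_{m+1,m+3}$, which is the statement for $m+1$.

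\textbf{Main points to check.} The step I expect to need the most care is not the manipulation but the bookkeeping:
\begin{itemize}
\item confirming that the $m=3$ instance of \relref{TD2} in $\cR$ literally matches the words $X_2$, $T_{2,5}$, $T_{3,5}$ used in $CV_7$;
\item confirming that each auxiliary result is applied inside its hypotheses, namely $m\ge3$ for Lemma~\ref{lemma_commute_Sigma23Xm}, Lemma~\ref{lemma_commute_Sigma23Tm+1m+3} and Theorem~\ref{theorem_TD5} (which requires $k=1\le m-2$).
\end{itemize}
If any of these hypotheses failed for the smallest value, I would simply take more base cases from $\cR$ ($m=4,5$ still have indices $\le 7$).
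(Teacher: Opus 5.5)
Your proof is correct and is essentially the paper's argument. It uses the same induction from the $m=3$ instance of \relref{TD2} in $\cR$, and the inductive step invokes Theorem~\ref{theorem_TD4}, Lemma~\ref{lemma_commute_Sigma23Xm}, \relref{TA}, Theorem~\ref{theorem_TD5} and Lemma~\ref{lemma_commute_Sigma23Tm+1m+3} in the same order. The only difference is cosmetic: you commute $T_{m+1,m+3}$ past $\Sigma_{2,3}$ before recombining $T_{2,m+2}X_1\Sigma_{2,3}=T_{2,m+3}$, whereas the paper first rewrites $T_{2,m+2}X_1=T_{2,m+3}\Sigma_{2,3}$ and then cancels $\Sigma_{2,3}^2=1$.
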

\begin{proof}
  We show this by induction on $m$.
  For $m=3$, the assertion follows directly from \relref{TD2} in $\cR$.
  Assume that the equality holds for some $m \geq 3$.
  By Theorem \ref{theorem_TD4}, $T_{2, m+1}=T_{2, m}X_1\Sigma_{2, 3}$ holds.
  Then by Lemmas \ref{lemma_commute_Sigma23Xm}, \ref{lemma_commute_Sigma23Tm+1m+3}, inductive hypothesis, Theorems \ref{theorem_TD5} and \ref{theorem_TD4}, we have
  \begin{align*}
    T_{2, m+1} X_m & = T_{2, m}X_1\Sigma_{2, 3} X_m                               \\
                   & =T_{2, m} X_1 X_m \Sigma_{2, 3}                              \\
                   & = T_{2, m}X_{m-1} X_1 \Sigma_{2, 3}                          \\
                   & =X_1^2 T_{2, m+2}T_{m, m+2} X_1 \Sigma_{2, 3}                \\
                   & = X_1^2 T_{2, m+2} X_1 T_{m+1, m+3} \Sigma_{2, 3}            \\
                   & = X_1^2 T_{2, m+3} \Sigma_{2, 3}  T_{m+1, m+3} \Sigma_{2, 3} \\
                   & = X_1^2 T_{2, m+3} T_{m+1, m+3}\Sigma_{2, 3}\Sigma_{2, 3}    \\
                   & = X_1^2 T_{2, m+3} T_{m+1, m+3},
  \end{align*}
  which is the desired result.
\end{proof}
\begin{proposition}\label{proposition_TD2_i1}
  For any $m \geq 2$, we have
  \begin{align*}
    T_{1, m}X_{m-1}=X_0^2 T_{1, m+2}T_{m, m+2}.
  \end{align*}
\end{proposition}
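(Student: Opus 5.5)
We argue by induction on $m$, in parallel with the proof of Proposition~\ref{proposition_TD2_i2}. The role of $T_{2,m+1}=T_{2,m}X_1\Sigma_{2,3}$ is now played by the recursion $T_{1,m+1}=T_{1,m}X_0\Sigma_{1,2}$, which is the case $i=1$ of Theorem~\ref{theorem_TD4}. The base case $m=2$ is the instance $T_{1,2}X_1=X_0^2T_{1,4}T_{2,4}$ of \relref{TD2}, which lies in $\cR$ because all its indices are at most $7$. If convenient, one can also check $m=3$ directly from $\cR$.

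For the inductive step, assume $T_{1,m}X_{m-1}=X_0^2T_{1,m+2}T_{m,m+2}$ for some $m\geq 2$. We then compute
\begin{align*}
T_{1,m+1}X_m &= T_{1,m}X_0\Sigma_{1,2}X_m = T_{1,m}X_0X_m\Sigma_{1,2} = T_{1,m}X_{m-1}X_0\Sigma_{1,2}\\
&= X_0^2T_{1,m+2}T_{m,m+2}X_0\Sigma_{1,2}.
\end{align*}
The steps are justified as follows.
\begin{itemize}
\item[(i)] Commuting $\Sigma_{1,2}$ past $X_m$ uses Lemma~\ref{lemma_commute_Sigma12Xm}, which needs $m\geq2$.
\item[(ii)] Rewriting $X_0X_m=X_{m-1}X_0$ uses \relref{TA}, which holds in $CV_7$ by the standard fact about the two $F$-relators.
\item[(iii)] The last equality is the inductive hypothesis.
\end{itemize}

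Next we move $X_0$ to the left past $T_{m,m+2}$. For $m\geq 2$, Lemma~\ref{lemma_TS_conj_x0} gives $T_{m,m+2}X_0=X_0T_{m+1,m+3}$. We then use $T_{1,m+2}X_0=T_{1,m+3}\Sigma_{1,2}$, which is Theorem~\ref{theorem_TD4} with $i=1$. This yields
\[
T_{1,m+1}X_m = X_0^2\,T_{1,m+3}\,\Sigma_{1,2}\,T_{m+1,m+3}\,\Sigma_{1,2}.
\]

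It remains to commute $\Sigma_{1,2}$ past $T_{m+1,m+3}$, after which $\Sigma_{1,2}^2=1$ finishes the proof. For $m+1\geq3$, that is, $m\geq 2$, this commutation is Lemma~\ref{lemma_commute_Sigma12Tmm+2} applied with index $m+1$. In every case the only hypotheses needed are $m\geq2$, so the inductive step goes through uniformly.

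The main obstacle is bookkeeping at the lowest values of $m$. One has to check that each cited lemma actually covers the required indices: Lemma~\ref{lemma_TS_conj_x0} requires its first index to be at least $2$, Lemma~\ref{lemma_commute_Sigma12Xm} requires $X_m$ with $m\geq2$, and Lemma~\ref{lemma_commute_Sigma12Tmm+2} requires its index to be at least $3$. If any of these fails at $m=2$, we would instead take $m=2$ and $m=3$ as base cases, both of which lie in $\cR$, and start the induction at $m=3$.
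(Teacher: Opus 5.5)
Your proof is correct and is essentially identical to the paper's. Both argue by induction from the $\cR$ instance $T_{1,2}X_1=X_0^2T_{1,4}T_{2,4}$, and the inductive step uses the same chain in the same order: $T_{1,m+1}=T_{1,m}X_0\Sigma_{1,2}$, Lemma~\ref{lemma_commute_Sigma12Xm}, $X_0X_m=X_{m-1}X_0$, the hypothesis, conjugation by $X_0$, \relref{TD4}, Lemma~\ref{lemma_commute_Sigma12Tmm+2}, and $\Sigma_{1,2}^2=1$. Your index checks all go through for $m\geq 2$, so the fallback base case $m=3$ is not needed.
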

\begin{proof}
  As in the proof of Proposition \ref{proposition_TD2_i2}, we show this by induction on $m$.
  For $m=2$, the assertion follows from \relref{TD2} in $\cR$.
  Assume that the equality holds for some $m \geq 2$.
  Then by Theorems \ref{theorem_TD4}, \ref{theorem_TD5} (or Lemma \ref{lemma_TS_conj_x0}), Lemmas \ref{lemma_commute_Sigma12Xm}, \ref{lemma_commute_Sigma12Tmm+2}, the inductive hypothesis,
  \begin{align*}
    T_{1, m+1}X_m & = T_{1, m} X_0 \Sigma_{1, 2} X_m                            \\
                  & = T_{1, m} X_0 X_m \Sigma_{1, 2}                            \\
                  & = T_{1, m} X_{m-1} X_0 \Sigma_{1, 2}                        \\
                  & = X_0^2 T_{1, m+2} T_{m, m+2} X_0 \Sigma_{1, 2}             \\
                  & = X_0^2 T_{1, m+2} X_0 T_{m+1, m+3} \Sigma_{1, 2}           \\
                  & = X_0^2 T_{1, m+3} \Sigma_{1, 2} T_{m+1, m+3} \Sigma_{1, 2} \\
                  & = X_0^2 T_{1, m+3} T_{m+1, m+3} \Sigma_{1, 2} \Sigma_{1, 2} \\
                  & = X_0^2 T_{1, m+3} T_{m+1, m+3},
  \end{align*}
  which is the desired result.
\end{proof}
We are now ready to prove \relref{TD2} for all cases.
\begin{theorem}\label{theorem_TD2}
  For any $1 \leq i <j$, we have
  \begin{align*}
    T_{i, j}X_{j-1}=X_{i-1}^2 T_{i, j+2} T_{j, j+2}. \tag{\relref{TD2}}
  \end{align*}
\end{theorem}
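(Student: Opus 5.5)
The plan is to reduce the general case to the two cases already handled. If $i=1$ or $i=2$, the statement is exactly Proposition \ref{proposition_TD2_i1} (with $m=j\geq 2$) or Proposition \ref{proposition_TD2_i2} (with $m=j\geq 3$). So it remains to treat $i\geq 3$. For this we conjugate by powers of $X_0$ and use Lemma \ref{lemma_TS_conj_x0} to shift all indices down by $i-2$, which lands us in the case $i=2$.

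Concretely, fix $3\le i<j$ and set $d=i-2\geq 1$. By Lemma \ref{lemma_TS_conj_x0} applied $d$ times, we have
\begin{align*}
T_{i,j}=X_0^{-d}T_{2,j-d}X_0^{d},\quad T_{i,j+2}=X_0^{-d}T_{2,j+2-d}X_0^{d},\quad T_{j,j+2}=X_0^{-d}T_{j-d,j+2-d}X_0^{d}.
\end{align*}
All the lower indices here are at least $2$, so the lemma applies; note $j-d\geq 3$. For the $X$'s we use the defining formula $X_k=X_0^{-(k-1)}X_1X_0^{k-1}$ for $k\ge1$, which gives $X_{k+d}=X_0^{-d}X_kX_0^{d}$ for $k\geq 1$. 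Hence $X_{j-1}=X_0^{-d}X_{j-1-d}X_0^{d}$ and $X_{i-1}=X_0^{-d}X_1X_0^d$, where $j-1-d\geq 2$.

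Substituting these into the desired equality and cancelling the outer conjugation reduces it to
\begin{align*}
T_{2,m}X_{m-1}=X_1^2T_{2,m+2}T_{m,m+2},\qquad m=j-d=j-i+2\geq 3.
\end{align*}
This is exactly Proposition \ref{proposition_TD2_i2}.

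The only point needing care is the index bookkeeping. One must check that every index $i'\ge 2$ on the $T$'s, as required by Lemma \ref{lemma_TS_conj_x0}, and that the $X$-conjugation identity is only used for $k\ge1$. Both hold since $d=i-2$ and $i\ge 3$. There is no substantive obstacle, because the real work was done in Propositions \ref{proposition_TD2_i2} and \ref{proposition_TD2_i1}.
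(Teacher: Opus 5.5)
Your proposal is correct and follows essentially the same route as the paper: the case $i=1$ is Proposition~\ref{proposition_TD2_i1}, and for $i\geq 2$ you conjugate by $X_0^{i-2}$ via Lemma~\ref{lemma_TS_conj_x0} (together with $X_0^{-d}X_kX_0^{d}=X_{k+d}$ for $k\geq 1$) to reduce to Proposition~\ref{proposition_TD2_i2} with $m=j-i+2\geq 3$. Your index bookkeeping is exactly what is needed.
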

\begin{proof}
  If $i \geq 2$, then by Theorem \ref{theorem_TD5} (or Lemma \ref{lemma_TS_conj_x0}), we have
  \begin{align*}
    X_0^{i-2} T_{i, j} X_0^{-(i-2)}   & =T_{2, j-(i-2)},         \\
    X_0^{i-2} T_{i, j+2} X_0^{-(i-2)} & =T_{2, j+2-(i-2)},       \\
    X_0^{i-2} T_{j, j+2} X_0^{-(i-2)} & =T_{j-(i-2), j+2-(i-2)}.
  \end{align*}
  Therefore, the desired relation is equivalent to
  \begin{align*}
    T_{2, j-(i-2)} X_{j-1-(i-2)}=X_{i-1-(i-2)}^2 T_{2, j+2-(i-2)} T_{j-(i-2), j+2-(i-2)}.
  \end{align*}
  By setting $m \coloneqq j-(i-2) \geq 3$, this can be further rewritten as
  \begin{align*}
    T_{2, m} X_{m-1}=X_{1}^2 T_{2, m+2} T_{m, m+2}.
  \end{align*}
  Hence by Proposition \ref{proposition_TD2_i2}, the assertion holds.
  If $i=1$, the equality also holds by Proposition \ref{proposition_TD2_i1}.
\end{proof}
\subsection[Propagation of relation (TD3)]{Propagation of \relref{TD3}}
In this subsection, we show that for any $1 \leq i \leq k \leq j-2$, we have
\begin{align*}
  T_{i, j}X_k=X_{i+j-k-2}T_{i, j+1} \Sigma_{k+1, k+2}.
\end{align*}
\subsubsection{Commutativity relations}
Before proceeding to the proof, we first perform a partial propagation of several other relations.
\begin{lemma}\label{lemma_commute_Sigmaii+1Xm}
  For any $i \geq 1$ and $m \geq i+1$, we have $[\Sigma_{i, i+1}, X_m]=1$.
\end{lemma}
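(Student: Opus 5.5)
We need $[\Sigma_{i,i+1},X_m]=1$ for $m\ge i+1$. The plan is to reduce to small $i$ by conjugation with $X_0$, and then to handle the two base families by an induction on $m$ that uses only $X_{m+1}=X_{m-1}^{-1}X_mX_{m-1}$.

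\emph{Reduction to $i\le 2$.} For $i\ge 2$, Lemma \ref{lemma_TS_conj_x0} gives $\Sigma_{i,i+1}=X_0^{-(i-2)}\Sigma_{2,3}X_0^{i-2}$. For $m\ge i+1$, relation \relref{TA} gives $X_0^{i-2}X_mX_0^{-(i-2)}=X_{m-(i-2)}$. Indeed, $X_0^{-1}X_nX_0=X_{n+1}$ for $n\ge 1$, and this is available in $CV_7$ because \relref{TA} follows from the two commutator relators. Set $m'=m-(i-2)$. Then $m'\ge 3$, and the claim $[\Sigma_{i,i+1},X_m]=1$ is equivalent to $[\Sigma_{2,3},X_{m'}]=1$, which is exactly Lemma \ref{lemma_commute_Sigma23Xm}. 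So the only new case is $i=1$, $m\ge 2$, and this is exactly Lemma \ref{lemma_commute_Sigma12Xm}.

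In fact, the whole statement for all $i\ge 1$ is already covered by Lemmas \ref{lemma_commute_Sigma12Xm} and \ref{lemma_commute_Sigma23Xm} combined with the conjugation. If those lemmas were unavailable, I would prove each base family the same way: check the two smallest values of $m$ directly from \relref{TC1} in $\cR$, which is legitimate since all indices stay at most $7$. Then induct, writing $X_{m+1}=X_{m-1}^{-1}X_mX_{m-1}$ and using the hypotheses for $m-1$ and $m$.

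\emph{Main obstacle.} The only step needing care is the index bookkeeping in the conjugation. First, Lemma \ref{lemma_TS_conj_x0} applies only for $2\le i$, so $i=1$ must be treated separately. Second, the shifted index must satisfy $m-(i-2)\ge 3$ so that it lands in the range of Lemma \ref{lemma_commute_Sigma23Xm}, and this holds precisely because $m\ge i+1$. Third, each conjugation step $X_0^{-1}X_nX_0=X_{n+1}$ requires $n\ge 1$, and this holds throughout since every index along the chain is at least $m'\ge 3$.
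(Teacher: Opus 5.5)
Your proposal is correct and follows the paper's proof. The case $i=1$ is Lemma~\ref{lemma_commute_Sigma12Xm}; for $i\geq 2$, conjugating by $X_0^{i-2}$ via Lemma~\ref{lemma_TS_conj_x0} reduces the claim to $[\Sigma_{2,3},X_{m-(i-2)}]=1$ with $m-(i-2)\geq 3$, which is Lemma~\ref{lemma_commute_Sigma23Xm}. Your index bookkeeping is also right, and the shift $X_0^{-1}X_nX_0=X_{n+1}$ for $n\geq 1$ holds in $CV_7$ directly from the definition $X_n=X_0^{-(n-1)}X_1X_0^{n-1}$, without needing the Thompson relators.
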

\begin{proof}
  If $i=1$, it follows directly from Lemma \ref{lemma_commute_Sigma12Xm}.
  If $i \geq 2$, then the equality is equivalent to $[\Sigma_{2, 3}, X_{m-(i-2)}]=1$ by Lemma \ref{lemma_TS_conj_x0}.
  Hence by Lemma \ref{lemma_commute_Sigma23Xm}, we obtain the desired result.
\end{proof}
\begin{proposition}\label{proposition_commute_adjacent_SigmaSigma}
  For any $i, k \geq 1$ with $|i-k| \geq 2$, we have $[\Sigma_{i, i+1}, \Sigma_{k, k+1}]=1$.
\end{proposition}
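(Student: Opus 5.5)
Without loss of generality $i<k$, so we may assume $k\ge i+2$. The plan is to reduce to finitely many base cases lying in $\cR$ and then propagate by conjugation. Lemma \ref{lemma_TS_conj_x0} gives $X_0^{-1}\Sigma_{p,p+1}X_0=\Sigma_{p+1,p+2}$ for $p\ge 2$. So if $i\ge 2$, conjugating by $X_0^{i-2}$ shows that the claim is equivalent to $[\Sigma_{2,3},\Sigma_{k-i+2,k-i+3}]=1$. Thus it suffices to prove
\begin{align*}
  [\Sigma_{1,2},\Sigma_{m,m+1}]=1 \quad (m\ge 3), \qquad [\Sigma_{2,3},\Sigma_{m,m+1}]=1 \quad (m\ge 4).
\end{align*}

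I will treat each family by induction on $m$, in the same way as Lemmas \ref{lemma_commute_Sigma23Tm+1m+3} and \ref{lemma_commute_Sigma12Tmm+2}. The base cases $m=3,4$ for $\Sigma_{1,2}$ and $m=4,5$ for $\Sigma_{2,3}$ are instances of \relref{TB2} in $\cR$. After substituting $\Sigma_{a,b}=T_{a,b+1}T_{a+1,b+1}T_{a,b+1}$, all their indices are at most $7$, since $\Sigma_{5,6}$ involves only $T_{5,7}$ and $T_{6,7}$.

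For the inductive step, Theorem \ref{theorem_TC3} gives $\Sigma_{m+1,m+2}=X_{m-1}^{-1}\Sigma_{m,m+1}X_{m-1}$, valid since $m\ge 2$ and $0\le m-1\le m-2$ fails. The usable instance is instead $k=m-2\le m-2$, giving $\Sigma_{m+1,m+2}=X_{m-2}^{-1}\Sigma_{m,m+1}X_{m-2}$. Lemma \ref{lemma_commute_Sigma12Xm} shows $\Sigma_{1,2}$ commutes with $X_{m-2}$ once $m-2\ge 2$, i.e.\ $m\ge 4$. Lemma \ref{lemma_commute_Sigma23Xm} shows $\Sigma_{2,3}$ commutes with $X_{m-2}$ once $m-2\ge 3$, i.e.\ $m\ge 5$. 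Hence, by the inductive hypothesis,
\begin{align*}
  \Sigma_{1,2}\Sigma_{m+1,m+2}
  = X_{m-2}^{-1}\Sigma_{1,2}\Sigma_{m,m+1}X_{m-2}
  = X_{m-2}^{-1}\Sigma_{m,m+1}\Sigma_{1,2}X_{m-2}
  = \Sigma_{m+1,m+2}\Sigma_{1,2},
\end{align*}
and the same computation works for $\Sigma_{2,3}$.

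The induction for $\Sigma_{1,2}$ can start at $m=4$ and for $\Sigma_{2,3}$ at $m=5$, which is why two base cases are needed for each family; both pairs lie in $\cR$. The main point to check is bookkeeping. The conjugating index must stay in the range where both Theorem \ref{theorem_TC3} and the lemmas on commuting with $X_m$ apply, and the base instances must genuinely have all indices at most $7$. I expect no deeper obstacle.
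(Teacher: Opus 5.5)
Your proof is correct and is essentially the paper's argument. You reduce to $i\in\{1,2\}$ by conjugating with powers of $X_0$ (Lemma~\ref{lemma_TS_conj_x0}), take the base cases from the (TB2) instances in $\cR$, and propagate via $\Sigma_{m+1,m+2}=X_{m-2}^{-1}\Sigma_{m,m+1}X_{m-2}$ (Theorem~\ref{theorem_TC3}), using that $\Sigma_{1,2}$ (resp.\ $\Sigma_{2,3}$) commutes with $X_{m-2}$ for $m\ge 4$ (resp.\ $m\ge 5$). Delete the stray first claim that conjugation by $X_{m-1}$ yields $\Sigma_{m+1,m+2}$: $k=m-1$ lies outside the range of (TC3), and that instance falls under (TC2), which gives a different word.
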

\begin{proof}
  We can assume that $k \geq i+2$ without loss of generality.
  Similar to previous arguments, we show for the cases $i=1$ and $i=2$.

  Assume that $i=1$.
  By \relref{TB2} in $\cR$, we have
  \begin{align*}
    \Sigma_{1, 2}\Sigma_{3, 4}=\Sigma_{3, 4}\Sigma_{1, 2},\quad \Sigma_{1, 2}\Sigma_{4, 5}=\Sigma_{4, 5}\Sigma_{1, 2}.
  \end{align*}
  Assume that $[\Sigma_{1, 2}, \Sigma_{k, k+1}]=1$ holds for some $k \geq 4$.
  Then by Theorem \ref{theorem_TC3}, Lemma \ref{lemma_commute_Sigma12Xm} (or Lemma \ref{lemma_commute_Sigmaii+1Xm}) and the inductive hypothesis, we have
  \begin{align*}
    \Sigma_{1, 2} \Sigma_{k+1, k+2}
    =\Sigma_{1, 2} X_{k-2}^{-1} \Sigma_{k, k+1} X_{k-2}
    =X_{k-2}^{-1} \Sigma_{k, k+1} X_{k-2}\Sigma_{1, 2}
    =\Sigma_{k+1, k+2}\Sigma_{1, 2},
  \end{align*}
  which is the desired equality.

  Next, assume that $i=2$.
  By \relref{TB2} in $\cR$, we have
  \begin{align*}
    \Sigma_{2, 3}\Sigma_{4, 5}
    =\Sigma_{4, 5}\Sigma_{2, 3}, \quad
    \Sigma_{2, 3}\Sigma_{5, 6}
    =\Sigma_{5, 6}\Sigma_{2, 3}.
  \end{align*}
  Assume that $[\Sigma_{2, 3}, \Sigma_{k, k+1}]=1$ holds for some $k \geq 5$.
  Similar to the case $i=1$, by Theorem \ref{theorem_TC3}, Lemma \ref{lemma_commute_Sigma23Xm} (or Lemma \ref{lemma_commute_Sigmaii+1Xm}) and the inductive hypothesis, we have
  \begin{align*}
    \Sigma_{2,3} \Sigma_{k+1, k+2}
    = \Sigma_{2,3}X_{k-2}^{-1} \Sigma_{k, k+1} X_{k-2}
    = X_{k-2}^{-1} \Sigma_{k, k+1} X_{k-2}\Sigma_{2,3}
    =\Sigma_{k+1, k+2}\Sigma_{2,3},
  \end{align*}
  which completes the proof of $i=2$.

  Finally, if $i \geq 3$, then $[\Sigma_{i, i+1}, \Sigma_{k, k+1}]=1$ is equivalent to $[\Sigma_{2, 3}, \Sigma_{k-(i-2), k+1-(i-2)}]=1$ by Lemma \ref{lemma_TS_conj_x0}.
  This allows us to conclude the proof for all cases.
\end{proof}
\begin{lemma}\label{lemma_commute_Sigmaii+1Tkk+2}
  For any $i \geq 1$ and $k \geq i+2$, we have $[\Sigma_{i, i+1}, T_{k, k+2}]=1$.
\end{lemma}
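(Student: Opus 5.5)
Following the pattern of Proposition \ref{proposition_commute_adjacent_SigmaSigma}, I would first reduce to the cases $i=1$ and $i=2$. For $i \geq 3$, conjugate by $X_0^{i-2}$. By Lemma \ref{lemma_TS_conj_x0}, $X_0^{i-2}\Sigma_{i,i+1}X_0^{-(i-2)}=\Sigma_{2,3}$ and $X_0^{i-2}T_{k,k+2}X_0^{-(i-2)}=T_{k-(i-2),k+2-(i-2)}$. These apply because $k \geq i+2 \geq 2$, so all indices stay at least $2$. The relation $[\Sigma_{i,i+1},T_{k,k+2}]=1$ is therefore equivalent to $[\Sigma_{2,3},T_{k',k'+2}]=1$ with $k'=k-(i-2)\geq 4$.

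For $i=2$, this is exactly Lemma \ref{lemma_commute_Sigma23Tm+1m+3} with $m=k'-1\geq 3$.

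For $i=1$ and $k\geq 3$, this is exactly Lemma \ref{lemma_commute_Sigma12Tmm+2} with $m=k$.

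So no new induction is needed. The only point to check is that the index ranges match: for $i=2$ the lemma covers $T_{m+1,m+3}$ with $m\ge3$, i.e.\ $k\ge4=i+2$; for $i=1$ it covers $T_{m,m+2}$ with $m\ge3=i+2$.

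If one prefers not to rely on the shifted forms, the same argument can be written directly. Take as base cases the instances of \relref{TB5} in $\cR$, namely $[\Sigma_{1,2},T_{3,5}]$, $[\Sigma_{1,2},T_{4,6}]$, $[\Sigma_{2,3},T_{4,6}]$ and $[\Sigma_{2,3},T_{5,7}]$. All their indices are at most $7$ after expanding $\Sigma$. Then induct using Theorem \ref{theorem_TD5} in the form $T_{k+1,k+3}=X_{k-2}^{-1}T_{k,k+2}X_{k-2}$, together with Lemma \ref{lemma_commute_Sigmaii+1Xm} for the commutation $[\Sigma_{i,i+1},X_{k-2}]=1$, which requires $k-2\geq i+1$.

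The main obstacle is only this index bookkeeping: ensuring that $k-2\ge i+1$ holds in the inductive step, and that the conjugation by $X_0^{i-2}$ never pushes a first index below $2$, where Lemma \ref{lemma_TS_conj_x0} would fail. Both are guaranteed by $k\geq i+2$, with the base cases supplying the two smallest values of $k$.
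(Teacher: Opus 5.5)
Your proposal is correct and is essentially the paper's proof: the cases $i=1$ and $i=2$ are exactly Lemmas~\ref{lemma_commute_Sigma12Tmm+2} and~\ref{lemma_commute_Sigma23Tm+1m+3}, with the index ranges you checked, and the case $i\geq 3$ is reduced to $i=2$ by conjugating with $X_0^{i-2}$ via Lemma~\ref{lemma_TS_conj_x0}. Your optional direct induction just reproduces the paper's proofs of those two lemmas, and for $i\geq 3$ it still relies on the same conjugation reduction.
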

\begin{proof}
  For the cases $i=1$ and $i=2$, the assertion follows from Lemmas \ref{lemma_commute_Sigma12Tmm+2} and \ref{lemma_commute_Sigma23Tm+1m+3}, respectively.
  For $i \geq 3$, by Lemma \ref{lemma_TS_conj_x0}, the desired equality is equivalent to $[\Sigma_{2, 3}, T_{k-(i-2), k+2-(i-2)}]=1$.
  Hence, the proof is now complete.
\end{proof}
\begin{proposition} \label{proposition_commute_Sigmaii+1Tkl}
  For any $i \geq 1$ and $i+1<k<l$, we have $[\Sigma_{i, i+1}, T_{k, l}]=1$.
\end{proposition}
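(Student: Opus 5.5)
We fix $i$ and $k\geq i+2$ and argue by induction on $l\geq k+1$. The induction step uses Theorem \ref{theorem_TD4}, in the form
\begin{align*}
  T_{k,l+1}=T_{k,l}X_{k-1}\Sigma_{k,k+1}.
\end{align*}
It therefore suffices to show that $\Sigma_{i,i+1}$ commutes with $T_{k,l}$, with $X_{k-1}$, and with $\Sigma_{k,k+1}$. Each of these is handled by an earlier result:
\begin{itemize}
  \item $T_{k,l}$ is covered by the inductive hypothesis;
  \item $X_{k-1}$ is covered by Lemma \ref{lemma_commute_Sigmaii+1Xm}, since $k-1\geq i+1$;
  \item $\Sigma_{k,k+1}$ is covered by Proposition \ref{proposition_commute_adjacent_SigmaSigma}, since $|k-i|\geq 2$.
\end{itemize}
This gives $[\Sigma_{i,i+1},T_{k,l+1}]=1$ as soon as the base cases are available.

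The base cases are $l=k+1$ and, for safety, $l=k+2$. The case $l=k+2$ is exactly Lemma \ref{lemma_commute_Sigmaii+1Tkk+2}. For $l=k+1$ we do not need a separate induction: Theorem \ref{theorem_TD4} with $j=k+1$ gives
\begin{align*}
  T_{k,k+1}=T_{k,k+2}\Sigma_{k,k+1}X_{k-1}^{-1},
\end{align*}
using $\Sigma_{k,k+1}^2=1$. Each of the three factors $T_{k,k+2}$, $\Sigma_{k,k+1}$, and $X_{k-1}^{-1}$ commutes with $\Sigma_{i,i+1}$ by the results already cited: Lemma \ref{lemma_commute_Sigmaii+1Tkk+2}, Proposition \ref{proposition_commute_adjacent_SigmaSigma}, and Lemma \ref{lemma_commute_Sigmaii+1Xm}. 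Hence $[\Sigma_{i,i+1},T_{k,k+1}]=1$.

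Starting the induction at $l=k+1$, the step above then covers all $l>k$. This also subsumes the case $l=k+2$, so that case serves only as a consistency check.

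The only point I expect to need care is index bookkeeping at the boundary $k=i+2$. There we must check that $k-1=i+1$ still lies in the range $m\geq i+1$ of Lemma \ref{lemma_commute_Sigmaii+1Xm}, and that Proposition \ref{proposition_commute_adjacent_SigmaSigma} applies with $k-i=2$. Both hold, so no new relations from $\cR$ are needed and the argument is uniform in $i$. No reduction to $i=1,2$ via Lemma \ref{lemma_TS_conj_x0} is required, because all the cited results are already stated for general $i$.
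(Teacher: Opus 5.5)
Your proof is correct and is essentially the paper's own argument. The paper proves the base case $l=k+1$ by rewriting $T_{k,k+1}=T_{k,k+2}\Sigma_{k,k+1}X_{k-1}^{-1}$ via Theorem~\ref{theorem_TD4}, and the inductive step by rewriting $T_{k,l+1}=T_{k,l}X_{k-1}\Sigma_{k,k+1}$, citing exactly Lemmas~\ref{lemma_commute_Sigmaii+1Tkk+2}, \ref{lemma_commute_Sigmaii+1Xm} and Proposition~\ref{proposition_commute_adjacent_SigmaSigma}, just as you do. (As you note, the extra base case $l=k+2$ is redundant; also, $\Sigma_{k,k+1}^2=1$ is not actually needed for the base-case rewriting, only for the inductive step.)
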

\begin{proof}
  We fix $i, k$ and show this by induction on $l$.
  Note that $i+2 \leq k$ holds.
  First, we assume that $l=k+1$.
  Then by Theorem \ref{theorem_TD4}, Lemmas \ref{lemma_commute_Sigmaii+1Tkk+2}, \ref{lemma_commute_Sigmaii+1Xm} and Proposition \ref{proposition_commute_adjacent_SigmaSigma} we have
  \begin{align*}
    \Sigma_{i, i+1}T_{k, k+1}
    =\Sigma_{i, i+1} T_{k, k+2} \Sigma_{k, k+1} X_{k-1}^{-1}
    = T_{k, k+2} \Sigma_{k, k+1} X_{k-1}^{-1}\Sigma_{i, i+1}
    = T_{k, k+1} \Sigma_{i, i+1}.
  \end{align*}

  Next, assume that $[\Sigma_{i, i+1}, T_{k, l}]=1$ holds for some $l>k$.
  Then by the inductive hypothesis, Theorem \ref{theorem_TD4}, Lemma \ref{lemma_commute_Sigmaii+1Xm}, and Proposition \ref{proposition_commute_adjacent_SigmaSigma}, we have
  \begin{align*}
    \Sigma_{i, i+1} T_{k, l+1}
    = \Sigma_{i, i+1} T_{k, l}X_{k-1} \Sigma_{k, k+1}
    =T_{k, l}X_{k-1} \Sigma_{k, k+1} \Sigma_{i, i+1}
    =T_{k, l+1} \Sigma_{i, i+1},
  \end{align*}
  which completes the proof.
\end{proof}
\subsubsection{Propagation of \relref{TD3}}
We now proceed to the proof of \relref{TD3}.
Analogous to the previous arguments, we split the proof into the cases of $i=1$ and $i \geq 2$.
For fixed $i$ and $k$, the relation is propagated by increasing $j$, except when $k=j-3$ or $k=j-2$.

We begin by examining the exceptional cases.
\begin{proposition}\label{proposition_TD3_exceptional_case}
  For every $i \geq 2$, the following hold$:$
  \begin{enumerate}
    \item For every $j \geq i+3$,
          \begin{align*}
            T_{i, j}X_{j-3}=X_{i+1}T_{i, j+1}\Sigma_{j-2, j-1}.
          \end{align*}
    \item For every $j \geq i+2$,
          \begin{align*}
            T_{i, j}X_{j-2}=X_{i}T_{i, j+1}\Sigma_{j-1, j}.
          \end{align*}
  \end{enumerate}
\end{proposition}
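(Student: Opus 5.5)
We first reduce to the case $i=2$ and then prove both identities for $i=2$ by induction on $j$, in the same way as Proposition~\ref{proposition_TD2_i2}.

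\emph{Reduction to $i=2$.} Let $i\ge 3$. Conjugate both sides by $X_0^{i-2}$. By Lemma~\ref{lemma_TS_conj_x0} we have $X_0^{i-2}T_{i,j}X_0^{-(i-2)}=T_{2,j-(i-2)}$, and the same shift holds for $T_{i,j+1}$ and for $\Sigma_{j-2,j-1}$, $\Sigma_{j-1,j}$. Since the relators of $F$ give \relref{TA}, we also have $X_0^{i-2}X_kX_0^{-(i-2)}=X_{k-(i-2)}$ whenever $k-(i-2)\ge 1$. This applies to every $X$ appearing here: $X_{j-3}$ and $X_{j-2}$ have index $\ge i$, and $X_{i+1}$, $X_i$ become $X_3$, $X_2$. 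So with $m=j-(i-2)$ it suffices to prove
\begin{align*}
  T_{2,m}X_{m-3}&=X_3T_{2,m+1}\Sigma_{m-2,m-1}\quad(m\ge 5),\\
  T_{2,m}X_{m-2}&=X_2T_{2,m+1}\Sigma_{m-1,m}\quad(m\ge 4).
\end{align*}

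\emph{Base cases.} These are $T_{2,5}X_2=X_3T_{2,6}\Sigma_{3,4}$ and $T_{2,4}X_2=X_2T_{2,5}\Sigma_{3,4}$. Both are instances of \relref{TD3} whose indices are all at most $7$ after expanding $\Sigma_{3,4}=T_{3,5}T_{4,5}T_{3,5}$, so they lie in $\cR$.

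\emph{Inductive step for (2).} By Theorem~\ref{theorem_TD4}, $T_{2,m+1}=T_{2,m}X_1\Sigma_{2,3}$. The chain is:
\begin{enumerate}
  \item $\Sigma_{2,3}X_{m-1}=X_{m-1}\Sigma_{2,3}$ by Lemma~\ref{lemma_commute_Sigma23Xm}, since $m-1\ge 3$.
  \item $X_1X_{m-1}=X_{m-2}X_1$ by \relref{TA}, since $m-2>1$.
  \item The inductive hypothesis turns $T_{2,m}X_{m-2}$ into $X_2T_{2,m+1}\Sigma_{m-1,m}$.
  \item $\Sigma_{m-1,m}X_1=X_1\Sigma_{m,m+1}$ by Theorem~\ref{theorem_TC3}, since $m-1\ge 3$.
  \item $T_{2,m+1}X_1=T_{2,m+2}\Sigma_{2,3}$ by Theorem~\ref{theorem_TD4} and $\Sigma_{2,3}^2=1$.
\end{enumerate}
Together these give
\begin{align*}
T_{2,m+1}X_{m-1}=X_2T_{2,m+2}\,\Sigma_{2,3}\Sigma_{m,m+1}\Sigma_{2,3}.
\end{align*}
Finally, Proposition~\ref{proposition_commute_adjacent_SigmaSigma} (applicable since $m-2\ge 2$) and $\Sigma_{2,3}^2=1$ collapse the right-hand side to $X_2T_{2,m+2}\Sigma_{m,m+1}$, which is the claim for $m+1$.

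\emph{Inductive step for (1).} The computation is identical, with $X_{m-2}$, $X_{m-3}$ and $X_3$ in place of $X_{m-1}$, $X_{m-2}$ and $X_2$. The required side conditions are:
\begin{itemize}
  \item $m-2\ge 3$ for Lemma~\ref{lemma_commute_Sigma23Xm};
  \item $m-3>1$ for \relref{TA};
  \item $m-2\ge 3$ for Theorem~\ref{theorem_TC3};
  \item $m-1\ge 4$ for Proposition~\ref{proposition_commute_adjacent_SigmaSigma}.
\end{itemize}
All of these hold once $m\ge 5$.

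The main obstacle is bookkeeping the index ranges so that each cited result applies. This bookkeeping is exactly why $k=j-3$ and $k=j-2$ are exceptional: for them, \relref{TC3} moves $\Sigma_{k+1,k+2}$ past $X_1$ rather than requiring a commutation with a nearby $X$. Beyond the inequalities above, the only other point to confirm is that the two base instances lie in $\cR$, i.e.\ that all their indices are at most $7$.
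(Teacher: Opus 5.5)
Your proof is correct and is essentially the paper's argument. It uses the same two base instances of \relref{TD3} from $\cR$, the same induction on $j$ via Theorem~\ref{theorem_TD4}, Lemma~\ref{lemma_commute_Sigma23Xm}, \relref{TA}, Theorem~\ref{theorem_TC3} and Proposition~\ref{proposition_commute_adjacent_SigmaSigma}, and the same reduction of $i\ge 3$ to $i=2$ via Lemma~\ref{lemma_TS_conj_x0}; doing that reduction first, or applying Theorem~\ref{theorem_TD4} before commuting $\Sigma_{2,3}$, is immaterial.
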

\begin{proof}
  We first show them for the case $i=2$ by induction on $j$.
  By \relref{TD3} in $\cR$, we have
  \begin{align*}
    T_{2, 5}X_2=X_3T_{2, 6}\Sigma_{3, 4},\quad T_{2, 4}X_2=X_2T_{2, 5}\Sigma_{3, 4}.
  \end{align*}

  For (1), assume that $T_{2, j}X_{j-3}=X_{3}T_{2, j+1}\Sigma_{j-2, j-1}$ for some $j \geq 5$.
  Then by Theorems \ref{theorem_TD4}, \ref{theorem_TC3}, Lemma \ref{lemma_commute_Sigma23Xm} (or \ref{lemma_commute_Sigmaii+1Xm}), Proposition \ref{proposition_commute_adjacent_SigmaSigma}, and the inductive hypothesis, we have
  \begin{align*}
    T_{2, j+1}X_{j-2}
     & =T_{2, j}X_1\Sigma_{2, 3} X_{j-2}                   \\
     & =T_{2, j}X_1X_{j-2} \Sigma_{2, 3}                   \\
     & =T_{2, j}X_{j-3}X_1\Sigma_{2, 3}                    \\
     & =X_{3}T_{2, j+1}\Sigma_{j-2, j-1} X_1 \Sigma_{2, 3} \\
     & =X_{3}T_{2, j+1}X_1 \Sigma_{j-1, j} \Sigma_{2, 3}   \\
     & =X_{3}T_{2, j+1}X_1 \Sigma_{2, 3} \Sigma_{j-1, j}   \\
     & =X_3 T_{2, j+2} \Sigma_{j-1, j},
  \end{align*}
  which completes the proof of (1) for the case $i=2$.

  Similarly, for (2), assume that $T_{2, j}X_{j-2}=X_2T_{2, j+1}\Sigma_{j-1, j}$ holds for some $j \geq 4$.
  Then by Theorems \ref{theorem_TD4}, \ref{theorem_TC3}, Lemma \ref{lemma_commute_Sigma23Xm} (or \ref{lemma_commute_Sigmaii+1Xm}), Proposition \ref{proposition_commute_adjacent_SigmaSigma}, and the inductive hypothesis, we have
  \begin{align*}
    T_{2, j+1}X_{j-1}
     & =T_{2, j}X_1\Sigma_{2, 3} X_{j-1}               \\
     & =T_{2, j}X_1 X_{j-1}\Sigma_{2, 3}               \\
     & =T_{2, j}X_{j-2} X_1\Sigma_{2, 3}               \\
     & =X_2T_{2, j+1}\Sigma_{j-1, j}X_1\Sigma_{2, 3}   \\
     & =X_2T_{2, j+1}X_1 \Sigma_{j, j+1}\Sigma_{2, 3}  \\
     & =X_2T_{2, j+1}X_1 \Sigma_{2, 3} \Sigma_{j, j+1} \\
     & = X_2 T_{2, j+2} \Sigma_{j, j+1},
  \end{align*}
  which completes the proof of (2) for the case $i=2$.

  For $i \geq 3$, both cases (1) and (2) reduce to the case of $i=2$ by Lemma \ref{lemma_TS_conj_x0}.
  This completes the proof for all cases.
\end{proof}
We now conclude the proof for the case of $i \geq 2$.
\begin{proposition}\label{proposition_TD3_igeq2}
  For every $2 \leq i \leq k \leq j-2$, we have
  \begin{align*}
    T_{i, j}X_k=X_{i+j-k-2}T_{i, j+1} \Sigma_{k+1, k+2}.
  \end{align*}
\end{proposition}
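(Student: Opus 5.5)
First reduce to $i=2$ via Lemma \ref{lemma_TS_conj_x0}. Conjugating by $X_0^{i-2}$ sends $T_{i,j}$, $T_{i,j+1}$, $\Sigma_{k+1,k+2}$ to $T_{2,j-(i-2)}$, $T_{2,j+1-(i-2)}$, $\Sigma_{k+1-(i-2),k+2-(i-2)}$. Relation \relref{TA} sends $X_k$ to $X_{k-(i-2)}$ and $X_{i+j-k-2}$ to $X_{j-k}$, because all these indices are at least $i-2$ and $k\ge i$. So the statement is equivalent to
\begin{align*}
  T_{2,m}X_p=X_{m-p}T_{2,m+1}\Sigma_{p+1,p+2}\quad(2\le p\le m-2),
\end{align*}
with $m=j-(i-2)$ and $p=k-(i-2)$. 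This is the same reduction used for \relref{TD2} and for Proposition \ref{proposition_TD3_exceptional_case}.

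Next, fix $p\ge 2$ and induct on $m$. The base cases are $m=p+2$ and $m=p+3$, i.e.\ $k=j-2$ and $k=j-3$. They are exactly items (2) and (1) of Proposition \ref{proposition_TD3_exceptional_case} with $i=2$, giving $T_{2,p+2}X_p=X_2T_{2,p+3}\Sigma_{p+1,p+2}$ and $T_{2,p+3}X_p=X_3T_{2,p+4}\Sigma_{p+1,p+2}$. For the inductive step, assume the identity for some $m\ge p+3$ and prove it for $m+1$. Write $T_{2,m+1}=T_{2,m}X_1\Sigma_{2,3}$ by Theorem \ref{theorem_TD4}. Then compute
\begin{align*}
  T_{2,m+1}X_p=T_{2,m}X_1\Sigma_{2,3}X_p .
\end{align*}
Moving $X_p$ left is the delicate point, since $\Sigma_{2,3}$ commutes with $X_p$ only for $p\ge 3$ (Lemma \ref{lemma_commute_Sigma23Xm}).

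So split into $p\ge 3$ and $p=2$. For $p\ge3$ the chain is:
\begin{itemize}
  \item commute $\Sigma_{2,3}$ past $X_p$;
  \item use $X_1X_p=X_{p-1}X_1$ from \relref{TA};
  \item apply the inductive hypothesis in the shifted form $T_{2,m}X_{p-1}=X_{m-p+1}T_{2,m+1}\Sigma_{p,p+1}$.
\end{itemize}
The last step needs the hypothesis at $p-1$, so I would induct on $p$ as an outer induction, or run a double induction on $(p,m)$. After that, push $\Sigma_{p,p+1}$ past $X_1$ using Theorem \ref{theorem_TC3}, which gives $\Sigma_{p,p+1}X_1=X_1\Sigma_{p+1,p+2}$ because $p\ge3$. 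Commute $\Sigma_{p+1,p+2}$ with $\Sigma_{2,3}$ by Proposition \ref{proposition_commute_adjacent_SigmaSigma}, since $p+1\ge4$. Finally recombine $T_{2,m+1}X_1\Sigma_{2,3}=T_{2,m+2}$. The result is $X_{m-p+1}T_{2,m+2}\Sigma_{p+1,p+2}$, as required.

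The main obstacle is the bottom case $p=2$ (i.e.\ $k=i$), where $\Sigma_{2,3}$ does not commute with $X_2$. Here I would first try \relref{TC2} in the propagated form $\Sigma_{2,3}X_2=X_1\Sigma_{2,4}\Sigma_{3,4}$. That relation has not yet been propagated to all indices, but only small indices are needed here, which lie in $\cR$. Alternatively, handle $p=2$ as its own induction on $m$ using the identity $T_{2,m+1}=T_{2,m}X_1\Sigma_{2,3}$ twice, with base cases from $\cR$. If neither works, I would redo the double induction so that the outer variable is $m-p$, with bases $m-p=2,3$ from Proposition \ref{proposition_TD3_exceptional_case}. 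The step would then go from $(m,p)$ to $(m+1,p+1)$ by conjugating with $X_1$ via Theorem \ref{theorem_TD5}-type shifts, using Lemma \ref{lemma_commute_Sigmaii+1Xm} to control the $\Sigma$'s. This approach avoids $p=2$ except through the base cases.
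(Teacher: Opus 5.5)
Your reduction to $i=2$ is fine, and your computation for $p\ge 3$ is correct. But look at what that step actually does. It derives the case $(m+1,p)$ from $(m,p-1)$, so it preserves $m-p=j-k$. Moreover, every rewrite in it holds independently of the hypothesis, so it is an equivalence between the two cases. Consequently each line $j-k=c$ stands or falls with its endpoint $p=2$, that is, $k=i$.

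Proposition \ref{proposition_TD3_exceptional_case} already covers the lines $c=2,3$ completely, so the diagonal step adds nothing there. For $c\ge 4$, everything is pushed down to $T_{2,n}X_2=X_{n-2}T_{2,n+1}\Sigma_{3,4}$ with $n\ge 6$. That is the entire new content of the proposition, and it is never proved.

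Your fallbacks do not supply it.
\begin{enumerate}
\item[(a)] After $\Sigma_{2,3}X_2=X_1\Sigma_{2,4}\Sigma_{3,4}$ you are left with $T_{2,m}X_1^2\Sigma_{2,4}\Sigma_{3,4}$. The natural continuation uses $T_{2,m}X_1=T_{2,m+1}\Sigma_{2,3}$ (Theorem \ref{theorem_TD4}) and then $\Sigma_{2,3}X_1=X_2\Sigma_{2,4}\Sigma_{2,3}$ from $\cR$. This returns $T_{2,m+1}X_2$ times a trivial word, so it is circular.
\item[(b)] No mechanism is given. Repeated use of Theorem \ref{theorem_TD4} keeps producing $\Sigma_{2,3}$ next to $X_1$ or $X_2$.
\item[(c)] The step again preserves $m-p$. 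Its ``base cases'' at $p=2$ for $m-p\ge 4$ are exactly the missing cases, not consequences of Proposition \ref{proposition_TD3_exceptional_case}. Also, conjugation by $X_1$ does not shift $T_{2,\ast}$, since Theorem \ref{theorem_TD5} with $k=1$ needs $i\ge 3$.
\end{enumerate}

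What is missing is a move that increases $j-k$. Theorem \ref{theorem_TD2}, already available, gives $T_{i,j}=X_{i-1}^{-2}T_{i,j-2}X_{j-3}T_{j-2,j}$. For $k\le j-4$ this yields
\begin{align*}
  T_{i,j}X_k
  &= X_{i-1}^{-2}T_{i,j-2}X_{j-3}T_{j-2,j}X_k
   = X_{i-1}^{-2}T_{i,j-2}X_kX_{j-2}T_{j-1,j+1}\\
  &= X_{i+j-k-2}X_{i-1}^{-2}T_{i,j-1}\Sigma_{k+1,k+2}X_{j-2}T_{j-1,j+1}
   = X_{i+j-k-2}T_{i,j+1}\Sigma_{k+1,k+2}.
\end{align*}
The steps use Theorem \ref{theorem_TD5}, \relref{TA}, the case $(i,j-2,k)$, Lemma \ref{lemma_commute_Sigmaii+1Xm}, Proposition \ref{proposition_commute_Sigmaii+1Tkl}, and Theorem \ref{theorem_TD2} again.

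This is the paper's proof. It inducts on $j-i$ in steps of $2$, taking $j-i\in\{2,3\}$ and the cases $k\in\{j-3,j-2\}$ from Proposition \ref{proposition_TD3_exceptional_case}. It works uniformly for every $i\ge 2$, including $k=i$, so neither your reduction to $i=2$ nor your diagonal step is needed.
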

\begin{proof}
  We show this by induction on $d \coloneqq j-i$.
  If $d=2$, then it follows from Proposition \ref{proposition_TD3_exceptional_case} (2) since $k=i=j-2$ holds.
  If $d=3$, then $k=i=j-3$ or $k=i+1=j-2$ holds.
  Hence, again by Proposition \ref{proposition_TD3_exceptional_case} (1) and (2), we have the desired equality.

  Assume that $T_{i, j}X_k=X_{i+j-k-2}T_{i, j+1} \Sigma_{k+1, k+2}$ holds for any $2 \leq i \leq k \leq j - 2$ satisfying $j - i = d$ for some $d \geq 2$.
  We then show that $T_{i, j}X_k=X_{i+j-k-2}T_{i, j+1} \Sigma_{k+1, k+2}$ holds for any $2 \leq i \leq k \leq j - 2$ satisfying $j - i = d+2$.
  Let $i, j$, and $k$ be arbitrary integers satisfying $2 \leq i \leq k \leq j-2$ and $j-i=d+2$.
  First, we consider the case $k \leq j-4$.
  In this case, by Theorems \ref{theorem_TD2},  \ref{theorem_TD5}, Lemma \ref{lemma_commute_Sigmaii+1Xm}, Proposition \ref{proposition_commute_Sigmaii+1Tkl}, and the inductive hypothesis, we have
  \begin{align*}
    T_{i, j}X_k
     & = X_{i-1}^{-2}T_{i, j-2} X_{j-3} T_{j-2, j} X_k                                 \\
     & = X_{i-1}^{-2}T_{i, j-2} X_{j-3}  X_k T_{j-1, j+1}                              \\
     & = X_{i-1}^{-2} T_{i, j-2} X_k X_{j-2} T_{j-1, j+1}                              \\
     & = X_{i-1}^{-2} X_{i+(j-2)-k-2}T_{i, j-1} \Sigma_{k+1, k+2} X_{j-2} T_{j-1, j+1} \\
     & = X_{i+j-k-2} X_{i-1}^{-2} T_{i, j-1} \Sigma_{k+1, k+2} X_{j-2} T_{j-1, j+1}    \\
     & = X_{i+j-k-2} X_{i-1}^{-2} T_{i, j-1}  X_{j-2}\Sigma_{k+1, k+2} T_{j-1, j+1}    \\
     & =  X_{i+j-k-2} X_{i-1}^{-2} T_{i, j-1}  X_{j-2}T_{j-1, j+1}\Sigma_{k+1, k+2}    \\
     & = X_{i+j-k-2} T_{i, j+1} \Sigma_{k+1, k+2},
  \end{align*}
  which is the desired equality.
  For the cases of $k=j-3, j-2$, by Proposition \ref{proposition_TD3_exceptional_case}, we also obtain the desired result.
\end{proof}
It remains to consider the case where $i = 1$.
\begin{proposition} \label{proposition_TD3_i=1}
  For any $1 \leq k \leq j-2$, we have
  \begin{align*}
    T_{1, j}X_k=X_{1+j-k-2} T_{1, j+1} \Sigma_{k+1, k+2}.
  \end{align*}
\end{proposition}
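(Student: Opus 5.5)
We argue by strong induction on $j$, in the same way as Proposition~\ref{proposition_TD3_igeq2}, but without the option of conjugating by $X_0$ down to $i=2$: Lemma~\ref{lemma_TS_conj_x0} does not apply to $T_{1,j}$. The relation $T_{1,j}X_k=X_{j-k-1}T_{1,j+1}\Sigma_{k+1,k+2}$ will be propagated by increasing $j$ from the small instances of \relref{TD3} in $\cR$.

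\emph{Generic step.} Take $k\le j-4$, so $j\ge 5$. By Theorem~\ref{theorem_TD2} with $i=1$,
\begin{align*}
T_{1,j}=X_0^{-2}T_{1,j-2}X_{j-3}T_{j-2,j}.
\end{align*}
Theorem~\ref{theorem_TD5} gives $T_{j-2,j}X_k=X_kT_{j-1,j+1}$, valid since $k\le j-4$. Relation \relref{TA} gives $X_{j-3}X_k=X_kX_{j-2}$. The inductive hypothesis for $(j-2,k)$ then gives
\begin{align*}
T_{1,j-2}X_k=X_{j-k-3}T_{1,j-1}\Sigma_{k+1,k+2}.
\end{align*}
Next we move $X_{j-k-3}$ to the left past $X_0^{-2}$. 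Relation \relref{TA} gives $X_0^{-2}X_{j-k-3}=X_{j-k-1}X_0^{-2}$, which requires $j-k-3\ge 1$; this holds because $k\le j-4$.

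After that we need two commutations. First, $\Sigma_{k+1,k+2}$ commutes with $X_{j-2}$ by Lemma~\ref{lemma_commute_Sigmaii+1Xm}, since $j-2\ge k+2$. Second, $\Sigma_{k+1,k+2}$ commutes with $T_{j-1,j+1}$ by Proposition~\ref{proposition_commute_Sigmaii+1Tkl}, since $j-1\ge k+3>k+2$. Finally, Theorem~\ref{theorem_TD2} in reverse regroups $X_0^{-2}T_{1,j-1}X_{j-2}T_{j-1,j+1}$ as $T_{1,j+1}$. The index bookkeeping is identical to the $i\ge2$ computation with $i-1$ replaced by $0$.

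\emph{Exceptional cases $k=j-3$ and $k=j-2$.} These are handled as in Proposition~\ref{proposition_TD3_exceptional_case}, by induction on $j$ using Theorem~\ref{theorem_TD4} in the form $T_{1,j+1}=T_{1,j}X_0\Sigma_{1,2}$. For $k=j-2$ the computation runs as follows.
\begin{enumerate}
\item Lemma~\ref{lemma_commute_Sigma12Xm} commutes $\Sigma_{1,2}$ with $X_{j-1}$, since $j-1\ge2$.
\item Relation \relref{TA} rewrites $X_0X_{j-1}$ as $X_{j-2}X_0$.
\item The induction hypothesis replaces $T_{1,j}X_{j-2}$ by $X_1T_{1,j+1}\Sigma_{j-1,j}$.
\item Theorem~\ref{theorem_TC3} moves $\Sigma_{j-1,j}$ past $X_0$, turning it into $\Sigma_{j,j+1}$.
\item Proposition~\ref{proposition_commute_adjacent_SigmaSigma} commutes $\Sigma_{j,j+1}$ with $\Sigma_{1,2}$.
\item Theorem~\ref{theorem_TD4} regroups the result as $T_{1,j+2}$.
\end{enumerate}
This gives $T_{1,j+1}X_{j-1}=X_1T_{1,j+2}\Sigma_{j,j+1}$. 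The case $k=j-3$ is identical, producing the constant index $X_2$.

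\emph{Main obstacle.} The hard part is making sure every commutation is legal in the low-index cases, which determines the base cases. In the exceptional chain, step 4 uses Theorem~\ref{theorem_TC3}, which needs $j-1\ge2$. Step 5 uses Proposition~\ref{proposition_commute_adjacent_SigmaSigma}, which needs $j\ge3$. Under these constraints the base cases come from $\cR$:
\begin{itemize}
\item $k=j-2$: the instances with $j=3,4$;
\item $k=j-3$: the instances with $j=4,5$;
\item generic step: since it reaches back from $j$ to $j-2$, instances with $j\le 6$.
\end{itemize}
All of these have indices at most $7$, so they lie in $\cR$. I also need to confirm that the $\Sigma$ appearing in the base relations expands to words with indices at most $7$. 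If some small instance is not directly available, I would first derive it from the $i=2$ case via Theorem~\ref{theorem_TD4} with $i=1$, writing $T_{1,j}$ through $T_{1,2}$ and $X_0\Sigma_{1,2}$.
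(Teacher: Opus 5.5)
Your argument is correct. It uses the same two moves as the paper and the same two base instances from $\cR$, namely $T_{1,3}X_1=X_1T_{1,4}\Sigma_{2,3}$ and $T_{1,4}X_1=X_2T_{1,5}\Sigma_{2,3}$, but arranges the induction differently. The two moves are the step $(j-2,k)\to(j,k)$ based on \relref{TD2}, and the diagonal step $(j,k)\to(j+1,k+1)$ based on $T_{1,j+1}=T_{1,j}X_0\Sigma_{1,2}$.

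The paper inducts on $k$. It uses the \relref{TD2} step only in the column $k=1$, starting from $j=3,4$, and then raises $k$ and $j$ together by the diagonal step for all $j$ at once. You instead use the diagonal step only along the two lines $k=j-2$ and $k=j-3$, and fill in everything else by the \relref{TD2} step. This makes the $i=1$ case a direct transcription of Propositions~\ref{proposition_TD3_exceptional_case} and~\ref{proposition_TD3_igeq2} with $X_{i-1}$ replaced by $X_0$. The paper's arrangement is marginally shorter; yours is more uniform with the $i\geq 2$ case.

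One correction to your bookkeeping. The only base cases you actually need are $(j,k)=(3,1)$ and $(4,1)$. The generic step needs no base cases of its own: for small $j$ it reduces $(j,k)$ to $(j-2,k)$ with $k=(j-2)-2$ or $k=(j-2)-3$, which the diagonal chains already cover. So the extra instances you list are redundant, as is the fallback in your last sentence. They are harmless, since all of them do lie in $\cR$.
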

\begin{proof}
  We show this by induction on $k$.
  We first assume that $k=1$ and we prove this base case by induction on $j \geq 3$.
  For the case $j=3, 4$, it follows from \relref{TD3} in $\cR$.
  Assume that $T_{1, j}X_1=X_{j-2} T_{1, j+1} \Sigma_{2, 3}$ holds for some $j \geq 3$.
  Then, completely analogous to the proof of Proposition \ref{proposition_TD3_igeq2} (which we reproduce here for the reader's convenience as we are dealing with $i=1$), Theorems \ref{theorem_TD2},  \ref{theorem_TD5}, Lemma \ref{lemma_commute_Sigmaii+1Xm} (or \ref{lemma_commute_Sigma23Xm}), Proposition \ref{proposition_commute_Sigmaii+1Tkl} (or Lemma \ref{lemma_commute_Sigmaii+1Tkk+2} or \ref{lemma_commute_Sigma23Tm+1m+3}), and the inductive hypothesis yield the following:
  \begin{align*}
    T_{1, j+2}X_1
     & = X_{0}^{-2}T_{1, j} X_{j-1}T_{j, j+2}X_1                    \\
     & = X_{0}^{-2}T_{1, j} X_{j-1}X_1 T_{j+1, j+3}                 \\
     & = X_0^{-2} T_{1, j} X_1 X_j T_{j+1, j+3}                     \\
     & = X_0^{-2} X_{j-2} T_{1, j+1} \Sigma_{2, 3} X_j T_{j+1, j+3} \\
     & = X_j X_0^{-2} T_{1, j+1} \Sigma_{2, 3} X_j T_{j+1, j+3}     \\
     & = X_j X_0^{-2} T_{1, j+1}  X_j \Sigma_{2, 3} T_{j+1, j+3}    \\
     & = X_j X_0^{-2} T_{1, j+1}  X_j  T_{j+1, j+3}\Sigma_{2, 3}    \\
     & = X_j T_{1, j+3} \Sigma_{2, 3},
  \end{align*}
  which completes the proof of the case $k=1$.

  Next, we assume that for some $k \geq 1$, we have $T_{1, j}X_k=X_{1+j-k-2}T_{1, j+1}\Sigma_{k+1, k+2}$ for all $j \geq k+2$.
  We will show that $T_{1, j}X_{k+1}=X_{1+j-(k+1)-2}T_{1, j+1}\Sigma_{(k+1)+1, (k+1)+2}$ also holds for any $j \geq (k+1)+2$.

  By Theorems \ref{theorem_TD4}, \ref{theorem_TC3} (or Lemma \ref{lemma_TS_conj_x0}), Lemma \ref{lemma_commute_Sigma12Xm}, Proposition \ref{proposition_commute_adjacent_SigmaSigma}, and inductive hypothesis, we have
  \begin{align*}
    T_{1, j}X_{k+1}
     & =T_{1, j-1}X_0\Sigma_{1, 2} X_{k+1}                               \\
     & =T_{1, j-1}X_0 X_{k+1}\Sigma_{1, 2}                               \\
     & =T_{1, j-1} X_k X_0 \Sigma_{1, 2}                                 \\
     & =X_{1+(j-1)-k-2}T_{1, (j-1)+1}\Sigma_{k+1, k+2} X_0 \Sigma_{1, 2} \\
     & =X_{1+j-(k+1)-2}T_{1, j}X_0 \Sigma_{k+2, k+3} \Sigma_{1, 2}       \\
     & =X_{1+j-(k+1)-2}T_{1, j}X_0 \Sigma_{1, 2} \Sigma_{k+2, k+3}       \\
     & =X_{1+j-(k+1)-2} T_{1, j+1} \Sigma_{k+2, k+3},
  \end{align*}
  which is the desired result.
\end{proof}
From Propositions \ref{proposition_TD3_igeq2} and \ref{proposition_TD3_i=1}, we obtain:
\begin{theorem} \label{theorem_TD3}
  For any $1 \leq i \leq k \leq j-2$, we have
  \begin{align*}
    T_{i, j}X_k=X_{i+j-k-2}T_{i, j+1} \Sigma_{k+1, k+2}. \tag{\relref{TD3}}
  \end{align*}
\end{theorem}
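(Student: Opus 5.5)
The statement is Theorem~\ref{theorem_TD3}, which is immediate once the two propositions preceding it are in hand. Split according to the value of $i$. If $i=1$, the required identity
\begin{align*}
  T_{1,j}X_k=X_{1+j-k-2}T_{1,j+1}\Sigma_{k+1,k+2} \quad (1\le k\le j-2)
\end{align*}
is exactly Proposition~\ref{proposition_TD3_i=1}. If $i\ge 2$, the range $i\le k\le j-2$ is precisely the hypothesis of Proposition~\ref{proposition_TD3_igeq2}, which gives
\begin{align*}
  T_{i,j}X_k=X_{i+j-k-2}T_{i,j+1}\Sigma_{k+1,k+2}.
\end{align*}
Together the two cases cover every triple with $1\le i\le k\le j-2$, which proves \relref{TD3} in $CV_7$.

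The only point that needs checking is that the induction in Proposition~\ref{proposition_TD3_igeq2} reaches every value of $d=j-i$. The inductive step goes from $d$ to $d+2$, and there are two base cases, $d=2$ and $d=3$, both supplied by Proposition~\ref{proposition_TD3_exceptional_case}. Starting from these, the step reaches every even $d\ge 2$ and every odd $d\ge 3$, so all $d\ge 2$ are covered. This matches the constraint $k\le j-2$, which forces $d\ge 2$.

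Within the inductive step, the generic case $k\le j-4$ is handled by rewriting $T_{i,j}$ through Theorem~\ref{theorem_TD2}. The remaining cases $k=j-3$ and $k=j-2$ are exactly the exceptional identities of Proposition~\ref{proposition_TD3_exceptional_case}. Since all the genuine work lies in the earlier propositions, there is no real obstacle here beyond confirming this case coverage.
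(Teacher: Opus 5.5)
Your proposal is correct and matches the paper, which likewise obtains this theorem directly by combining Proposition~\ref{proposition_TD3_i=1} for $i=1$ with Proposition~\ref{proposition_TD3_igeq2} for $i\geq 2$. Your check that the $d\to d+2$ induction, with base cases $d=2,3$ supplied by Proposition~\ref{proposition_TD3_exceptional_case}, reaches every $d\geq 2$ is also accurate.
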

\subsection[Propagation of relation (TC2)]{Propagation of \relref{TC2}}
In this subsection, we show that for any $i-1 \leq k \leq j-1$ with $1 \leq i<j$,  we have
\begin{align*}
  \Sigma_{i, j}X_k=X_{i+j-k-2} \Sigma_{i, j+1}\Sigma_{k+1, k+2}.
\end{align*}
Before proving this, we first provide partial propagation for several related relations.
\subsubsection[Propagation of relation (TB7)]{Partial propagation of \relref{TB7}}
In this subsubsection, we show that for any $1 \leq i <k<k+1<j$, we have
\begin{align*}
  T_{i, j}\Sigma_{k, k+1}=\Sigma_{i+j-k-1, i+j-k}T_{i, j}.
\end{align*}
For this proof, we prepare two lemmas about the induction step.
\begin{lemma}\label{lemma_adjacent_TB7_inductive_step1}
  Assume that
  \begin{align*}
    T_{i, j}\Sigma_{k, k+1}=\Sigma_{i+j-k-1, i+j-k}T_{i, j}.
  \end{align*}
  holds for some $1 \leq i <k<k+1<j$.
  Then, we have
  \begin{align*}
    T_{i, j+2}\Sigma_{k, k+1}=\Sigma_{i+(j+2)-k-1, i+(j+2)-k}T_{i, j+2}.
  \end{align*}
\end{lemma}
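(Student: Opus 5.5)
The plan is to expand $T_{i,j+2}$ in terms of $T_{i,j}$ using the propagated \relref{TD2}, and then move $\Sigma_{k,k+1}$ from right to left through each factor. The index $i+j-k-1$ shifts by exactly $2$ in the process, which is what the statement requires.

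\emph{Step 1 (expanding $T_{i,j+2}$).} By Theorem~\ref{theorem_TD2}, $T_{i,j}X_{j-1}=X_{i-1}^2T_{i,j+2}T_{j,j+2}$. Since $T_{j,j+2}^2=1$, this gives
\begin{align*}
  T_{i,j+2}=X_{i-1}^{-2}\,T_{i,j}\,X_{j-1}\,T_{j,j+2}.
\end{align*}
Hence
\begin{align*}
  T_{i,j+2}\Sigma_{k,k+1}=X_{i-1}^{-2}\,T_{i,j}\,X_{j-1}\,T_{j,j+2}\,\Sigma_{k,k+1}.
\end{align*}

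\emph{Step 2 (passing $\Sigma_{k,k+1}$ through $T_{j,j+2}$ and $X_{j-1}$).} The hypothesis $k+1<j$ gives $k+1<j<j+2$. So Proposition~\ref{proposition_commute_Sigmaii+1Tkl} (with $(i,k,l)$ replaced by $(k,j,j+2)$) yields $[\Sigma_{k,k+1},T_{j,j+2}]=1$. Since $j-1\geq k+1$, Lemma~\ref{lemma_commute_Sigmaii+1Xm} yields $[\Sigma_{k,k+1},X_{j-1}]=1$. Therefore
\begin{align*}
  T_{i,j+2}\Sigma_{k,k+1}=X_{i-1}^{-2}\,T_{i,j}\,\Sigma_{k,k+1}\,X_{j-1}\,T_{j,j+2}.
\end{align*}

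\emph{Step 3 (inductive hypothesis).} Set $a\coloneqq i+j-k-1$. The assumption replaces $T_{i,j}\Sigma_{k,k+1}$ by $\Sigma_{a,a+1}T_{i,j}$, so
\begin{align*}
  T_{i,j+2}\Sigma_{k,k+1}=X_{i-1}^{-2}\,\Sigma_{a,a+1}\,T_{i,j}\,X_{j-1}\,T_{j,j+2}.
\end{align*}

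\emph{Step 4 (passing through $X_{i-1}^{-2}$).} This is the only step with a real constraint to check. Theorem~\ref{theorem_TC3} gives $\Sigma_{b,b+1}X_{i-1}=X_{i-1}\Sigma_{b+1,b+2}$ whenever $b\geq 2$ and $i-1\leq b-2$. Rewriting, $X_{i-1}^{-1}\Sigma_{b,b+1}=\Sigma_{b+1,b+2}X_{i-1}^{-1}$. We apply this with $b=a$ and then with $b=a+1$.

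From $k\leq j-2$ we get $a\geq i+1$, so both $b\geq 2$ and $b\geq i+1$ hold for $b=a$ and $b=a+1$, including the case $i=1$. Thus
\begin{align*}
  X_{i-1}^{-2}\Sigma_{a,a+1}=\Sigma_{a+2,a+3}X_{i-1}^{-2}.
\end{align*}

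\emph{Step 5 (reassembling).} Substituting and using Step 1 again gives
\begin{align*}
  T_{i,j+2}\Sigma_{k,k+1}=\Sigma_{a+2,a+3}\,X_{i-1}^{-2}T_{i,j}X_{j-1}T_{j,j+2}=\Sigma_{a+2,a+3}\,T_{i,j+2}.
\end{align*}
Since $a+2=i+(j+2)-k-1$, this is the claimed relation.

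The main obstacle is bookkeeping rather than substance. One must confirm that the index ranges needed in Steps 2 and 4 follow from $1\leq i<k<k+1<j$, and that every relation invoked (Theorems~\ref{theorem_TD2} and~\ref{theorem_TC3}, Proposition~\ref{proposition_commute_Sigmaii+1Tkl}, Lemma~\ref{lemma_commute_Sigmaii+1Xm}) has already been propagated at this point in the paper, so the argument is not circular.
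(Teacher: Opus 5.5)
Your proposal is correct and is essentially the paper's own proof. You expand $T_{i,j+2}=X_{i-1}^{-2}T_{i,j}X_{j-1}T_{j,j+2}$ via Theorem~\ref{theorem_TD2}, commute $\Sigma_{k,k+1}$ past $T_{j,j+2}$ and $X_{j-1}$ (Proposition~\ref{proposition_commute_Sigmaii+1Tkl}, Lemma~\ref{lemma_commute_Sigmaii+1Xm}), apply the hypothesis, and shift the index by $2$ across $X_{i-1}^{-2}$ with Theorem~\ref{theorem_TC3}; your explicit index checks (notably $i+j-k-1\geq i+1$ for the (TC3) step) supply exactly what the paper leaves implicit.
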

\begin{proof}
  By Theorems \ref{theorem_TD2}, \ref{theorem_TC3}, Proposition \ref{proposition_commute_Sigmaii+1Tkl}, Lemma \ref{lemma_commute_Sigmaii+1Xm} and the assumption, we have
  \begin{align*}
    T_{i, j+2} \Sigma_{k, k+1}
     & =X_{i-1}^{-2}T_{i, j}X_{j-1}T_{j, j+2} \Sigma_{k, k+1}          \\
     & =X_{i-1}^{-2}T_{i, j}X_{j-1} \Sigma_{k, k+1}T_{j, j+2}          \\
     & =X_{i-1}^{-2}T_{i, j} \Sigma_{k, k+1}X_{j-1}T_{j, j+2}          \\
     & =X_{i-1}^{-2}\Sigma_{i+j-k-1, i+j-k}T_{i, j}X_{j-1}T_{j, j+2}   \\
     & =\Sigma_{i+j-k+1, i+j-k+2}X_{i-1}^{-2}T_{i, j}X_{j-1}T_{j, j+2} \\
     & =\Sigma_{i+j-k+1, i+j-k+2}T_{i, j+2},
  \end{align*}
  which completes the proof.
\end{proof}
\begin{lemma}\label{lemma_adjacent_TB7_inductive_step2}
  Assume that
  \begin{align*}
    T_{i, j}\Sigma_{k, k+1}=\Sigma_{i+j-k-1, i+j-k}T_{i, j}.
  \end{align*}
  holds for some $1 \leq i <k<k+1<j$.
  Then, we have
  \begin{align*}
    T_{i, j+1}\Sigma_{k+1, k+2}=\Sigma_{i+j-k-1, i+j-k}T_{i, j+1}.
  \end{align*}
\end{lemma}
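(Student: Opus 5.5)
We plan to mirror the proof of the previous lemma, this time using Theorem \ref{theorem_TD4} to write $T_{i,j+1}$ in terms of $T_{i,j}$. By Theorem \ref{theorem_TD4} together with $\Sigma_{i,i+1}^2=1$, we have
\begin{align*}
  T_{i,j+1}=T_{i,j}X_{i-1}\Sigma_{i,i+1}.
\end{align*}
Hence
\begin{align*}
  T_{i,j+1}\Sigma_{k+1,k+2}=T_{i,j}X_{i-1}\Sigma_{i,i+1}\Sigma_{k+1,k+2}.
\end{align*}

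First we move $\Sigma_{k+1,k+2}$ to the left past $\Sigma_{i,i+1}$. Since $k>i$, we have $k+1\geq i+2$, so Proposition \ref{proposition_commute_adjacent_SigmaSigma} lets us commute the two. Next we move it past $X_{i-1}$. Since $k+1\geq i+1=(i-1)+2$, Theorem \ref{theorem_TC3} (with indices $k+1<k+2$ and $i-1\leq (k+1)-2$) gives
\begin{align*}
  X_{i-1}\Sigma_{k+1,k+2}=\Sigma_{k,k+1}X_{i-1}.
\end{align*}
When $i=1$ this reads $X_0\Sigma_{k+1,k+2}=\Sigma_{k,k+1}X_0$, which is Lemma \ref{lemma_TS_conj_x0} for $k\geq2$; this holds because $k>i=1$.

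We then obtain
\begin{align*}
  T_{i,j+1}\Sigma_{k+1,k+2}=T_{i,j}\Sigma_{k,k+1}X_{i-1}\Sigma_{i,i+1}.
\end{align*}
Applying the hypothesis to $T_{i,j}\Sigma_{k,k+1}$ turns this into
\begin{align*}
  \Sigma_{i+j-k-1,i+j-k}\,T_{i,j}X_{i-1}\Sigma_{i,i+1},
\end{align*}
and reassembling with Theorem \ref{theorem_TD4} gives $\Sigma_{i+j-k-1,i+j-k}T_{i,j+1}$, as desired. Note that the target index $i+(j+1)-(k+1)-1=i+j-k-1$ is unchanged, which matches the shift: $j$ increases by one and $k$ increases by one.

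The only delicate point is checking the index conditions of the cited results. For Theorem \ref{theorem_TC3} we need $i-1\leq k-1$, which holds since $k>i$. For Proposition \ref{proposition_commute_adjacent_SigmaSigma} we need $|k+1-i|\geq 2$, which also holds. Finally, the hypothesis $1\leq i<k<k+1<j$ implies $1\leq i<k+1<k+2<j+1$, so the conclusion is stated within range. We expect no step beyond these index checks to be an obstacle.
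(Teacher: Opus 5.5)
Your proposal is correct and is exactly the paper's argument: rewrite $T_{i,j+1}=T_{i,j}X_{i-1}\Sigma_{i,i+1}$ via Theorem~\ref{theorem_TD4}, move $\Sigma_{k+1,k+2}$ left past $\Sigma_{i,i+1}$ (Proposition~\ref{proposition_commute_adjacent_SigmaSigma}) and past $X_{i-1}$ (Theorem~\ref{theorem_TC3}), apply the hypothesis, and reassemble with Theorem~\ref{theorem_TD4}. One small bookkeeping slip: the instance $\Sigma_{k,k+1}X_{i-1}=X_{i-1}\Sigma_{k+1,k+2}$ of Theorem~\ref{theorem_TC3} requires $k\geq 2$ and $i-1\leq k-2$, not $i-1\leq k-1$ as you wrote, but both conditions still follow from $1\leq i<k$, so the argument stands.
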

\begin{proof}
  By Theorems \ref{theorem_TD4}, \ref{theorem_TC3}, Proposition \ref{proposition_commute_adjacent_SigmaSigma}, and the assumption, we have
  \begin{align*}
    T_{i, j+1} \Sigma_{k+1, k+2}
     & = T_{i, j}X_{i-1}\Sigma_{i, i+1} \Sigma_{k+1, k+2}      \\
     & = T_{i, j}X_{i-1} \Sigma_{k+1, k+2}\Sigma_{i, i+1}      \\
     & = T_{i, j} \Sigma_{k, k+1}X_{i-1}\Sigma_{i, i+1}        \\
     & = \Sigma_{i+j-k-1, i+j-k}T_{i, j}X_{i-1}\Sigma_{i, i+1} \\
     & = \Sigma_{i+j-k-1, i+j-k}T_{i, j+1},
  \end{align*}
  which completes the proof.
\end{proof}
With these two lemmas, we can now show the desired equation and finish this subsubsection.
\begin{proposition}\label{proposition_adjacent_TB7}
  For any $1 \leq i <k<k+1<j$, we have
  \begin{align*}
    T_{i, j}\Sigma_{k, k+1}=\Sigma_{i+j-k-1, i+j-k}T_{i, j}.
  \end{align*}
\end{proposition}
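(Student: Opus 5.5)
We prove Proposition~\ref{proposition_adjacent_TB7} by induction on the pair $(j-i,\,k)$, using the two inductive steps of Lemmas~\ref{lemma_adjacent_TB7_inductive_step1} and \ref{lemma_adjacent_TB7_inductive_step2}. We first reduce to $i\in\{1,2\}$. For $i\ge 3$, Lemma~\ref{lemma_TS_conj_x0} says conjugation by $X_0^{i-2}$ lowers every index by $i-2$. Hence the relation for $(i,j,k)$ is equivalent to the relation for $(2,\,j-i+2,\,k-i+2)$. So it suffices to treat $i=1$ and $i=2$ with arbitrary admissible $j,k$.

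Fix $i\in\{1,2\}$ and put $d=j-i$ and $e=k-i\geq 1$, so the constraint $i<k<k+1<j$ reads $1\le e\le d-2$.
\begin{itemize}
\item Lemma~\ref{lemma_adjacent_TB7_inductive_step1} sends $(d,e)\mapsto(d+2,e)$.
\item Lemma~\ref{lemma_adjacent_TB7_inductive_step2} sends $(d,e)\mapsto(d+1,e+1)$.
\end{itemize}
Starting from a pair $(d,e)$, we can apply step~2 backwards while $e\geq 2$, and step~1 backwards while $d-e\geq 4$; note $d-e$ is preserved by step~2. Every admissible pair therefore descends to a base pair with $e=1$ and $d-e\in\{2,3\}$. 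These are $(d,e)=(3,1)$ and $(4,1)$, i.e.\ $(j,k)=(i+3,i+1)$ and $(i+4,i+1)$. Concretely:
\begin{itemize}
\item for $i=1$: $T_{1,4}\Sigma_{2,3}=\Sigma_{2,3}T_{1,4}$ and $T_{1,5}\Sigma_{2,3}=\Sigma_{3,4}T_{1,5}$;
\item for $i=2$: $T_{2,5}\Sigma_{3,4}=\Sigma_{3,4}T_{2,5}$ and $T_{2,6}\Sigma_{3,4}=\Sigma_{4,5}T_{2,6}$.
\end{itemize}
After substituting $\Sigma_{a,a+1}=T_{a,a+2}T_{a+1,a+2}T_{a,a+2}$, all indices are at most $7$. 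These are therefore instances of \relref{TB7} lying in $\cR$, and they hold in $CV_7$ directly.

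To organize the argument, we do an outer induction on $e$, starting at $e=1$, with an inner induction on $d$.
\begin{itemize}
\item \emph{Case $e=1$.} The two base cases $d=3,4$ hold, and Lemma~\ref{lemma_adjacent_TB7_inductive_step1} climbs in $d$ by steps of $2$. Together these give all $d\geq 3$.
\item \emph{Case $e\geq 2$.} For $d\geq e+2$, the pair $(d-1,e-1)$ is admissible, since $e-1\le d-3$. Lemma~\ref{lemma_adjacent_TB7_inductive_step2} then carries it to $(d,e)$.
\end{itemize}
The admissibility check in the second case is the only bookkeeping point to verify: the hypothesis $i<k-1<k<j-1$ needed for the smaller instance must hold, and it does.

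The main obstacle I anticipate is confirming that the inductive lemmas use only previously propagated relations for \emph{all} values of $i,j,k$, not just the base ones. In particular, Lemma~\ref{lemma_adjacent_TB7_inductive_step1} uses Theorem~\ref{theorem_TD2} and Proposition~\ref{proposition_commute_Sigmaii+1Tkl} with indices $(j,j+2)$ versus $k$; this needs $k+1<j$, which is exactly our hypothesis. A second point to check is that the reduction to $i\le 2$ is legitimate when $k$ is small. Since $k>i$, after conjugation the lowered index $k-i+2$ is at least $3$, so Lemma~\ref{lemma_TS_conj_x0} applies.
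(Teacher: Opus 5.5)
Your proposal is correct and is essentially the paper's proof. It uses the same four base instances of \relref{TB7} from $\cR$, Lemma~\ref{lemma_adjacent_TB7_inductive_step1} to climb in $j$ at the lowest $k$, Lemma~\ref{lemma_adjacent_TB7_inductive_step2} to shift $k$ and $j$ together, and Lemma~\ref{lemma_TS_conj_x0} to reduce $i\geq 3$ to $i=2$; your outer induction on $e=k-i$ is just a repackaging of the paper's ``apply Lemma~\ref{lemma_adjacent_TB7_inductive_step2} $k-2$ (resp.\ $k-3$) times.''
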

\begin{proof}
  We first note that by \relref{TB7} in $\cR$, we have
  \begin{alignat*}{2}
    T_{1, 4}\Sigma_{2, 3} & =\Sigma_{2, 3}T_{1, 4},
                          &\quad T_{1, 5}\Sigma_{2, 3}   & =\Sigma_{3, 4}T_{1, 5}, \\
    T_{2, 5}\Sigma_{3, 4} & =\Sigma_{3, 4}T_{2, 5},
                          &\quad T_{2, 6}\Sigma_{3, 4}   & =\Sigma_{4, 5}T_{2, 6}.
  \end{alignat*}

  For $i=1$, let $d \coloneqq j-k \geq 2$.
  We first consider the case where $k=2$.
  In this case, we have $j=d+2$, and the desired equation is
  \begin{align*}
    T_{1, d+2}\Sigma_{2, 3}=\Sigma_{d, d+1}T_{1, d+2}.
  \end{align*}
  If $d$ is even, then we can write $d=2+2n$ for some integer $n \geq 0$.
  The case $n=0$ corresponds to the equality
  \begin{align*}
    T_{1, 4}\Sigma_{2, 3}=\Sigma_{2, 3}T_{1, 4}.
  \end{align*}
  By applying Lemma \ref{lemma_adjacent_TB7_inductive_step1} $n$ times starting from this equality, we obtain
  \begin{align*}
    T_{1, 4+2n}\Sigma_{2, 3}=\Sigma_{2+2n, 3+2n}T_{1, 4+2n}.
  \end{align*}
  Since $4+2n=d+2$, $2+2n=d$, and $3+2n=d+1$, we obtain
  \begin{align*}
    T_{1, d+2}\Sigma_{2, 3}=\Sigma_{d, d+1}T_{1, d+2}.
  \end{align*}
  If $d$ is odd, then we can write $d=3+2n$ for some integer $n \geq 0$.
  The case $n=0$ corresponds to the equality
  \begin{align*}
    T_{1, 5}\Sigma_{2, 3}=\Sigma_{3, 4}T_{1, 5}.
  \end{align*}
  By applying Lemma \ref{lemma_adjacent_TB7_inductive_step1} $n$ times starting from this equality, we obtain
  \begin{align*}
    T_{1, 5+2n}\Sigma_{2, 3}=\Sigma_{3+2n, 4+2n}T_{1, 5+2n}.
  \end{align*}
  Since $5+2n=d+2$, $3+2n=d$, and $4+2n=d+1$, we obtain
  \begin{align*}
    T_{1, d+2}\Sigma_{2, 3}=\Sigma_{d, d+1}T_{1, d+2}.
  \end{align*}
  Hence, in the case where $i=1$ and $k=2$, we have
  \begin{align*}
    T_{1, d+2}\Sigma_{2, 3}=\Sigma_{d, d+1}T_{1, d+2}
  \end{align*}
  for every $d \geq 2$.

  Now we return to general $k$.
  By applying Lemma \ref{lemma_adjacent_TB7_inductive_step2} $k-2$ times to the above equality, we have
  \begin{align*}
    T_{1, d+k}\Sigma_{k, k+1}=\Sigma_{d, d+1}T_{1, d+k}.
  \end{align*}
  Since $d=j-k$, we have
  \begin{align*}
    T_{1, j}\Sigma_{k, k+1}=\Sigma_{j-k, j-k+1}T_{1, j}.
  \end{align*}
  Hence, the proof for the case of $i=1$ is completed.

  Next, we prove the case where $i=2$.
  As in the case of $i=1$, let $d \coloneqq j-k \geq 2$.
  We first consider the case where $k=3$.
  In this case, we have $j=d+3$, and the desired equation is
  \begin{align*}
    T_{2, d+3}\Sigma_{3, 4}=\Sigma_{d+1, d+2}T_{2, d+3}.
  \end{align*}
  If $d$ is even, then this follows from
  \begin{align*}
    T_{2, 5}\Sigma_{3, 4}=\Sigma_{3, 4}T_{2, 5}
  \end{align*}
  by applying Lemma \ref{lemma_adjacent_TB7_inductive_step1} repeatedly.
  If $d$ is odd, then the same argument starting from
  \begin{align*}
    T_{2, 6}\Sigma_{3, 4}=\Sigma_{4, 5}T_{2, 6}
  \end{align*}
  gives the same conclusion.
  Hence, in the case where $i=2$ and $k=3$, we have
  \begin{align*}
    T_{2, d+3}\Sigma_{3, 4}=\Sigma_{d+1, d+2}T_{2, d+3}
  \end{align*}
  for every $d \geq 2$.

  Now we return to general $k$.
  By applying Lemma \ref{lemma_adjacent_TB7_inductive_step2} $k-3$ times to the above equality, we have
  \begin{align*}
    T_{2, d+k}\Sigma_{k, k+1}=\Sigma_{d+1, d+2}T_{2, d+k}.
  \end{align*}
  Since $d=j-k$, this equality is equivalent to
  \begin{align*}
    T_{2, j}\Sigma_{k, k+1}=\Sigma_{j-k+1, j-k+2}T_{2, j}.
  \end{align*}
  Hence, the proof for the case of $i=2$ is completed.

  Finally, we prove the case where $i \geq 3$.
  By applying Lemma \ref{lemma_TS_conj_x0}, the desired equality is equivalent to
  \begin{align*}
    T_{2, j-(i-2)}\Sigma_{k-(i-2), k+1-(i-2)}
    =
    \Sigma_{j-k+1, j-k+2}T_{2, j-(i-2)},
  \end{align*}
  which is already proved in the case $i=2$ above.
  Hence we complete the proof.
\end{proof}

\subsubsection[Propagation of relation (TB6)]{Partial propagation of \relref{TB6}}
In this subsubsection, we show the following proposition:
\begin{proposition}\label{proposition_adjacent_TB6}
  For any $2 \leq i+1<j$, we have
  \begin{align*}
    T_{i, j}\Sigma_{i, i+1}=T_{j-1, j}T_{i, j}.
  \end{align*}
\end{proposition}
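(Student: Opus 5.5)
We argue by induction on $j$ in steps of two, using \relref{TD2} to pass from $T_{i,j}$ to $T_{i,j+2}$, exactly as in Lemma \ref{lemma_adjacent_TB7_inductive_step1}. First, by Lemma \ref{lemma_TS_conj_x0}, for $i\geq 3$ the desired relation is equivalent (after conjugating by $X_0^{i-2}$) to $T_{2,j-(i-2)}\Sigma_{2,3}=T_{j-(i-2)-1,j-(i-2)}T_{2,j-(i-2)}$. So it suffices to treat $i=1$ and $i=2$. The base cases are $j=i+2$ and $j=i+3$, namely
\begin{align*}
  T_{1,3}\Sigma_{1,2}=T_{2,3}T_{1,3},\quad T_{1,4}\Sigma_{1,2}=T_{3,4}T_{1,4},\quad T_{2,4}\Sigma_{2,3}=T_{3,4}T_{2,4},\quad T_{2,5}\Sigma_{2,3}=T_{4,5}T_{2,5}.
\end{align*}
These are instances of \relref{TB6} with $l=i+1$ whose indices are all at most $7$, so they lie in $\cR$.

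For the inductive step, fix $i\in\{1,2\}$ and $j\geq i+2$, and assume $T_{i,j}\Sigma_{i,i+1}=T_{j-1,j}T_{i,j}$. By Theorem \ref{theorem_TD2}, $T_{i,j+2}=X_{i-1}^{-2}T_{i,j}X_{j-1}T_{j,j+2}$. Since $j\geq i+2$, Proposition \ref{proposition_commute_Sigmaii+1Tkl} shows that $\Sigma_{i,i+1}$ commutes with $T_{j,j+2}$. Since $j-1\geq i+1$, Lemma \ref{lemma_commute_Sigmaii+1Xm} shows that $\Sigma_{i,i+1}$ commutes with $X_{j-1}$. Moving $\Sigma_{i,i+1}$ leftwards and applying the inductive hypothesis gives
\begin{align*}
  T_{i,j+2}\Sigma_{i,i+1}=X_{i-1}^{-2}T_{i,j}\Sigma_{i,i+1}X_{j-1}T_{j,j+2}=X_{i-1}^{-2}T_{j-1,j}T_{i,j}X_{j-1}T_{j,j+2}.
\end{align*}

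It remains to move $X_{i-1}^{-2}$ past $T_{j-1,j}$. By Theorem \ref{theorem_TD5}, applied twice with $k=i-1$, we get $X_{i-1}^{-2}T_{j-1,j}X_{i-1}^{2}=T_{j+1,j+2}$. Both applications are legitimate: the first needs $j-1\geq 2$ and $0\leq i-1\leq (j-1)-2$, the second needs $0\leq i-1\leq j-2$, and both follow from $j\geq i+2$. Hence
\begin{align*}
  T_{i,j+2}\Sigma_{i,i+1}=T_{j+1,j+2}\,X_{i-1}^{-2}T_{i,j}X_{j-1}T_{j,j+2}=T_{j+1,j+2}T_{i,j+2},
\end{align*}
which is the claim for $j+2$. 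Starting from the two base cases of each parity, this covers all $j\geq i+2$.

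The main point requiring care is the index bookkeeping. We must check that every commutation used (Proposition \ref{proposition_commute_Sigmaii+1Tkl}, Lemma \ref{lemma_commute_Sigmaii+1Xm}, and the two uses of Theorem \ref{theorem_TD5}) is within its stated range already at the smallest case $j=i+2$. For instance, Proposition \ref{proposition_commute_Sigmaii+1Tkl} with $k=j$ requires $i+1<j$, which holds. We must also check that the four base instances are genuinely in $\cR$; they are, since all their indices are at most $5$ before expanding $\Sigma$ and at most $7$ after.
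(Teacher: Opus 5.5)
Your proposal is correct and follows essentially the same route as the paper: the same four base instances of \relref{TB6} from $\cR$, the same two-step induction $T_{i,j+2}=X_{i-1}^{-2}T_{i,j}X_{j-1}T_{j,j+2}$ via Theorem~\ref{theorem_TD2}, commuting $\Sigma_{i,i+1}$ past $T_{j,j+2}$ and $X_{j-1}$ by Proposition~\ref{proposition_commute_Sigmaii+1Tkl} and Lemma~\ref{lemma_commute_Sigmaii+1Xm}, and then moving $X_{i-1}^{-2}$ past $T_{j-1,j}$ with Theorem~\ref{theorem_TD5}. As in the paper, you reduce $i\geq 3$ to $i=2$ via Lemma~\ref{lemma_TS_conj_x0}, and your index-range checks are all correct.
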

\begin{proof}
  By \relref{TB6} in $\cR$, we have
  \begin{alignat*}{2}
    T_{1, 3}\Sigma_{1, 2} & =T_{2, 3}T_{1, 3},
                          &\quad
    T_{1, 4}\Sigma_{1, 2} & =T_{3, 4}T_{1, 4}, \\
    T_{2, 4}\Sigma_{2, 3} & =T_{3, 4}T_{2, 4},
                          &\quad
    T_{2, 5}\Sigma_{2,3}  & =T_{4, 5}T_{2, 5}.
  \end{alignat*}
  We first note that if
  \begin{align*}
    T_{i, j}\Sigma_{i, i+1}=T_{j-1, j}T_{i, j}
  \end{align*}
  holds, then
  \begin{align*}
    T_{i, j+2}\Sigma_{i, i+1}=T_{j+1, j+2}T_{i, j+2}
  \end{align*}
  also holds.
  Indeed, by Theorems \ref{theorem_TD2}, \ref{theorem_TD5}, Proposition \ref{proposition_commute_Sigmaii+1Tkl}, Lemma \ref{lemma_commute_Sigmaii+1Xm}, and the assumption, we have
  \begin{align*}
    T_{i, j+2} \Sigma_{i, i+1}
     & = X_{i-1}^{-2}T_{i, j}X_{j-1}T_{j, j+2} \Sigma_{i, i+1} \\
     & = X_{i-1}^{-2}T_{i, j}X_{j-1}\Sigma_{i, i+1} T_{j, j+2} \\
     & = X_{i-1}^{-2}T_{i, j}\Sigma_{i, i+1}X_{j-1} T_{j, j+2} \\
     & = X_{i-1}^{-2}T_{j-1, j}T_{i, j}X_{j-1} T_{j, j+2}      \\
     & = T_{j+1, j+2}X_{i-1}^{-2} T_{i, j}X_{j-1} T_{j, j+2}   \\
     & = T_{j+1, j+2}T_{i, j+2}.
  \end{align*}
  When $i=1,2$, by applying this implication iteratively to the above relations in $\cR$, we have
  \begin{align*}
    T_{i, j} \Sigma_{i, i+1} = T_{j-1, j}T_{i, j}
  \end{align*}
  for every $j > i+1$.

  Finally, let $i \geq 3$ and $j >i+1$.
  Then by Lemma \ref{lemma_TS_conj_x0}, the desired equality is equivalent to
  \begin{align*}
    T_{2, j-(i-2)}\Sigma_{2, 3}=T_{j-1-(i-2), j-(i-2)}T_{2, j-(i-2)}.
  \end{align*}
  Hence, by the case of $i=2$, we complete the proof.
\end{proof}
\subsubsection[Propagation of relation (TB3)]{Partial propagation of \relref{TB3}}
In this subsubsection, we show that
\begin{align*}
  \Sigma_{i, j}\Sigma_{k, k+1}=\Sigma_{i+j-k-1, i+j-k}\Sigma_{i, j}
\end{align*}
holds for any $1 \leq i \leq k<k+1 \leq j$.
Although this result is not needed in the proof of \relref{TC2}, it is a natural consequence of the two preceding subsubsections, and we record it here for later use.

We divide the proof into several cases according to the indices.
\begin{lemma}\label{lemma_adjacent_TB3_inner}
  For any $1 \leq i<k<k+1<j$, we have
  \begin{align*}
    \Sigma_{i, j}\Sigma_{k, k+1}=\Sigma_{i+j-k-1, i+j-k}\Sigma_{i, j}.
  \end{align*}
\end{lemma}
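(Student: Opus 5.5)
Since $\Sigma_{i,j}=T_{i,j+1}T_{i+1,j+1}T_{i,j+1}$ by definition, I would conjugate $\Sigma_{k,k+1}$ past the three factors one at a time, each time using Proposition~\ref{proposition_adjacent_TB7}. That proposition states $T_{a,b}\Sigma_{k,k+1}=\Sigma_{a+b-k-1,a+b-k}T_{a,b}$ whenever $a<k<k+1<b$, so each step requires the adjacent pair $\{k,k+1\}$ to lie strictly inside $(a,b)$.

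The computation runs as follows.
\begin{enumerate}
\item First factor: with $(a,b)=(i,j+1)$ the hypothesis reads $i<k<k+1<j+1$, which holds because $i<k$ and $k+1<j$. This gives
\begin{align*}
T_{i,j+1}\Sigma_{k,k+1}=\Sigma_{i+j-k,i+j-k+1}T_{i,j+1}.
\end{align*}
Set $k'=i+j-k$. From $i<k<j-1$ we get $i+1<k'<j$.
\item Middle factor: with $(a,b)=(i+1,j+1)$ applied to $\Sigma_{k',k'+1}$ we need $i+1<k'<k'+1<j+1$, which holds by the previous step. The new index is $(i+1)+(j+1)-k'-1=k+1$, so
\begin{align*}
T_{i+1,j+1}\Sigma_{k',k'+1}=\Sigma_{k+1,k+2}T_{i+1,j+1}.
\end{align*}
\item Last factor: with $(a,b)=(i,j+1)$ applied to $\Sigma_{k+1,k+2}$ we need $i<k+1$ and $k+2<j+1$. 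Both hold, and the new index is $i+j-(k+1)=i+j-k-1$, so
\begin{align*}
T_{i,j+1}\Sigma_{k+1,k+2}=\Sigma_{i+j-k-1,i+j-k}T_{i,j+1}.
\end{align*}
\end{enumerate}
Chaining the three identities yields $\Sigma_{i,j}\Sigma_{k,k+1}=\Sigma_{i+j-k-1,i+j-k}\Sigma_{i,j}$, which is the claim.

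The only real obstacle is checking that the strict inequalities required by Proposition~\ref{proposition_adjacent_TB7} hold at every step, since the index range is shifted by the reflections. The hypothesis $i<k<k+1<j$ is exactly what keeps all three applications valid. In particular the middle step needs $i+1<k'$, which is equivalent to $k<j-1$. I expect no further induction to be needed, because Proposition~\ref{proposition_adjacent_TB7} already covers all indices.
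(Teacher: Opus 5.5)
Your proof is correct and is essentially the paper's argument: expand $\Sigma_{i,j}=T_{i,j+1}T_{i+1,j+1}T_{i,j+1}$ and push $\Sigma_{k,k+1}$ leftward through the three factors by Proposition~\ref{proposition_adjacent_TB7}, passing through $\Sigma_{i+j-k,i+j-k+1}$ and then $\Sigma_{k+1,k+2}$. The paper leaves the strict index inequalities needed at each step implicit, and your explicit checks of them are accurate.
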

\begin{proof}
  By the definition of $\Sigma_{i, j}$ and Proposition \ref{proposition_adjacent_TB7}, we have
  \begin{align*}
    \Sigma_{i, j}\Sigma_{k, k+1}
     & = T_{i, j+1}T_{i+1, j+1}T_{i, j+1}\Sigma_{k, k+1}         \\
     & = T_{i, j+1}T_{i+1, j+1}\Sigma_{i+j-k,i+j-k+1}T_{i, j+1}  \\
     & = T_{i, j+1} \Sigma_{k+1, k+2}T_{i+1, j+1}T_{i, j+1}      \\
     & = \Sigma_{i+j-k-1, i+j-k}T_{i, j+1}T_{i+1, j+1}T_{i, j+1} \\
     & = \Sigma_{i+j-k-1, i+j-k}\Sigma_{i, j},
  \end{align*}
  which completes the proof.
\end{proof}
\begin{lemma} \label{lemma_adjacent_TB3_left}
  For any $1 \leq i < j$, we have
  \begin{align*}
    \Sigma_{i, j}\Sigma_{i, i+1}=\Sigma_{j-1, j}\Sigma_{i, j}.
  \end{align*}
\end{lemma}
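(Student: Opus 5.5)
The plan is to expand $\Sigma_{i,j}=T_{i,j+1}T_{i+1,j+1}T_{i,j+1}$ on the right-hand side and move $\Sigma_{j-1,j}$ all the way to the right. Each step uses one of the two partial propagations already established: Proposition~\ref{proposition_adjacent_TB7} (partial \relref{TB7}) and Proposition~\ref{proposition_adjacent_TB6} (partial \relref{TB6}). Along the way I also use that all $T$'s and $\Sigma$'s are involutions (Theorem~\ref{theorem_TB1}, Corollary~\ref{cor_TB1}).

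First I dispose of the degenerate case $j=i+1$. There both sides equal $\Sigma_{i,i+1}^2=1$, so nothing needs proving. From now on I assume $j\geq i+2$.

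The computation runs in three steps.
\begin{enumerate}
\item Since $i<j-1<j<j+1$, Proposition~\ref{proposition_adjacent_TB7} applied to $T_{i,j+1}$ and $\Sigma_{j-1,j}$ gives
\begin{align*}
T_{i,j+1}\Sigma_{j-1,j}=\Sigma_{i+1,i+2}T_{i,j+1}.
\end{align*}
Conjugating by the involution $T_{i,j+1}$, this reads $\Sigma_{j-1,j}T_{i,j+1}=T_{i,j+1}\Sigma_{i+1,i+2}$.
\item Since $i+2<j+1$, Proposition~\ref{proposition_adjacent_TB6} with indices $(i+1,j+1)$ gives $T_{i+1,j+1}\Sigma_{i+1,i+2}=T_{j,j+1}T_{i+1,j+1}$. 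Taking inverses of both sides and using that all factors are involutions, this becomes
\begin{align*}
\Sigma_{i+1,i+2}T_{i+1,j+1}=T_{i+1,j+1}T_{j,j+1}.
\end{align*}
\item Proposition~\ref{proposition_adjacent_TB6} with indices $(i,j+1)$, valid since $i+1<j+1$, gives $T_{j,j+1}T_{i,j+1}=T_{i,j+1}\Sigma_{i,i+1}$.
\end{enumerate}

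Chaining these three identities yields
\begin{align*}
\Sigma_{j-1,j}\Sigma_{i,j}
&=\Sigma_{j-1,j}T_{i,j+1}T_{i+1,j+1}T_{i,j+1}
=T_{i,j+1}\Sigma_{i+1,i+2}T_{i+1,j+1}T_{i,j+1}\\
&=T_{i,j+1}T_{i+1,j+1}T_{j,j+1}T_{i,j+1}
=T_{i,j+1}T_{i+1,j+1}T_{i,j+1}\Sigma_{i,i+1}
=\Sigma_{i,j}\Sigma_{i,i+1},
\end{align*}
which is the claim.

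I do not expect a real obstacle here. The only points that need care are the index hypotheses of the two propositions: step 1 needs $i<j-1$, which is exactly why $j=i+1$ is treated separately. The other point is correctly inverting the \relref{TB6}-type identity in step 2, which relies on the involution relations.
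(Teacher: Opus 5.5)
Your proof is correct and is essentially the paper's argument. The paper uses the same three identities: Proposition~\ref{proposition_adjacent_TB6} at $(i,j+1)$, its inverted form at $(i+1,j+1)$, and Proposition~\ref{proposition_adjacent_TB7}, with the same separate treatment of $j=i+1$; it merely chains them starting from $\Sigma_{i,j}\Sigma_{i,i+1}$ rather than from $\Sigma_{j-1,j}\Sigma_{i,j}$.
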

\begin{proof}
  It is clear when $j=i+1$.
  Assume $j > i+1$.
  By the definition of $\Sigma_{i, j}$ and Propositions \ref{proposition_adjacent_TB6}, \ref{proposition_adjacent_TB7}, we have
  \begin{align*}
    \Sigma_{i, j}\Sigma_{i,i+1}
     & = T_{i, j+1} T_{i+1, j+1} T_{i, j+1} \Sigma_{i, i+1}  \\
     & = T_{i, j+1} T_{i+1, j+1} T_{j, j+1} T_{i, j+1}       \\
     & = T_{i, j+1} \Sigma_{i+1, i+2}T_{i+1, j+1} T_{i, j+1} \\
     & = \Sigma_{j-1, j}T_{i, j+1} T_{i+1, j+1} T_{i, j+1}   \\
     & =\Sigma_{j-1, j} \Sigma_{i, j},
  \end{align*}
  which completes the proof.
\end{proof}
By combining these lemmas, we now show the desired equation:
\begin{proposition} \label{proposition_adjacent_TB3}
  For any  $1 \leq i \leq k<k+1 \leq j$, we have
  \begin{align*}
    \Sigma_{i, j}\Sigma_{k, k+1}=\Sigma_{i+j-k-1, i+j-k}\Sigma_{i, j}.
  \end{align*}
\end{proposition}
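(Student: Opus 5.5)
The plan is a case split on where $k$ sits in the range $i \leq k < k+1 \leq j$, using the two lemmas just proved.

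\emph{Case $i<k$ and $k+1<j$.} The statement is exactly Lemma \ref{lemma_adjacent_TB3_inner}.

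\emph{Case $k=i$.} Here $k+1 \leq j$. The target index $(i+j-k-1,\,i+j-k)$ becomes $(j-1,\,j)$, so the statement is $\Sigma_{i,j}\Sigma_{i,i+1}=\Sigma_{j-1,j}\Sigma_{i,j}$. This is Lemma \ref{lemma_adjacent_TB3_left}, which also covers the degenerate subcase $j=i+1$.

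\emph{Case $k+1=j$ and $k>i$.} This is the mirror case, and the target index is $(i,i+1)$. The claim reads
\begin{align*}
  \Sigma_{i,j}\Sigma_{j-1,j}=\Sigma_{i,i+1}\Sigma_{i,j}.
\end{align*}
I would take the identity of Lemma \ref{lemma_adjacent_TB3_left},
\begin{align*}
  \Sigma_{i,j}\Sigma_{i,i+1}=\Sigma_{j-1,j}\Sigma_{i,j},
\end{align*}
and multiply it on both sides by $\Sigma_{i,j}$. Since $\Sigma_{i,j}^2=1$ by Corollary \ref{cor_TB1}, this yields
\begin{align*}
  \Sigma_{i,i+1}\Sigma_{i,j}=\Sigma_{i,j}\Sigma_{j-1,j},
\end{align*}
which is the desired identity read right to left.

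These three cases exhaust all admissible $(i,j,k)$, so no further propagation is needed. The only thing to watch is the index bookkeeping in the mirror case, checking that $i+j-k-1=i$ when $k=j-1$. There is no genuine obstacle, since all the work was done in Propositions \ref{proposition_adjacent_TB6} and \ref{proposition_adjacent_TB7}.
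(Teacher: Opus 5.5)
Your proof is correct and matches the paper's argument: the same three-way case split, with Lemma \ref{lemma_adjacent_TB3_inner} handling the inner case and Lemma \ref{lemma_adjacent_TB3_left} handling both $k=i$ and the mirror case $k+1=j$. In the mirror case you multiply on both sides by $\Sigma_{i,j}$, while the paper takes inverses of both sides; these are the same step, since every factor is an involution.
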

\begin{proof}
  If $i < k$ and $k+1<j$, then it follows from Lemma \ref{lemma_adjacent_TB3_inner}.
  If $i=k$, it follows from Lemma \ref{lemma_adjacent_TB3_left}.
  If $j=k+1$, again it follows from Lemma \ref{lemma_adjacent_TB3_left} since by taking the inverse of both sides, we have
  \begin{align*}
    \Sigma_{i, j}\Sigma_{j-1, j}=\Sigma_{i, i+1}\Sigma_{i, j},
  \end{align*}
  which is the desired equality.
\end{proof}
\subsubsection[Propagation of relation (TC2)]{Propagation of \relref{TC2}}
Using the obtained results, we now prove the desired equality of this subsection.
We divide the proof into several lemmas according to the indices.

\begin{lemma}\label{lemma_TC2_inner}
  For any $1 \leq i \leq k \leq j-2$, we have
  \begin{align*}
    \Sigma_{i, j}X_k=X_{i+j-k-2}\Sigma_{i, j+1}\Sigma_{k+1, k+2}.
  \end{align*}
\end{lemma}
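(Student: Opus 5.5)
Since $\Sigma_{i,j}=T_{i,j+1}T_{i+1,j+1}T_{i,j+1}$ by definition, the plan is to push $X_k$ from right to left through the three $T$-factors using the fully propagated relations for $T$ (Theorems \ref{theorem_TD3}, \ref{theorem_TD4}, \ref{theorem_TD5}, \ref{theorem_TD2}), and then to recombine the resulting $T$-words into $\Sigma_{i,j+1}\Sigma_{k+1,k+2}$. The partial propagations of \relref{TB7}, \relref{TB6} and \relref{TB3} (Propositions \ref{proposition_adjacent_TB7}, \ref{proposition_adjacent_TB6}, \ref{proposition_adjacent_TB3}) will be used to move $\Sigma_{\ast,\ast+1}$ factors past $T$'s.

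Fix $1\le i\le k\le j-2$.

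\textbf{Step 1.} The rightmost factor is $T_{i,j+1}$, and $i\le k\le (j+1)-2$. By Theorem \ref{theorem_TD3},
\begin{align*}
  T_{i,j+1}X_k=X_{i+j-k-1}T_{i,j+2}\Sigma_{k+1,k+2}.
\end{align*}

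\textbf{Step 2.} Next we move $X_{k'}$, with $k'=i+j-k-1$, through $T_{i+1,j+1}$. Note that $i+1\le k'\le j-1=(j+1)-2$, so Theorem \ref{theorem_TD3} applies again and gives
\begin{align*}
  T_{i+1,j+1}X_{k'}=X_{k}T_{i+1,j+2}\Sigma_{k'+1,k'+2}.
\end{align*}
Here the new subscript on $X$ is $(i+1)+(j+1)-k'-2=k$.

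\textbf{Step 3.} The remaining $X_k$ is moved through the left $T_{i,j+1}$ by Step 1 once more. Altogether,
\begin{align*}
  \Sigma_{i,j}X_k=X_{i+j-k-1}\,T_{i,j+2}\Sigma_{k+1,k+2}\,T_{i+1,j+2}\Sigma_{i+j-k,i+j-k+1}\,T_{i,j+2}\Sigma_{k+1,k+2}.
\end{align*}
This has the wrong shape: the target is $X_{i+j-k-2}\,T_{i,j+2}T_{i+1,j+2}T_{i,j+2}\,\Sigma_{k+1,k+2}$, and the leading $X$ index is off by one. So this naive route overshoots and has to be corrected.

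\textbf{Repair.} Instead, I would first rewrite the left $T_{i,j+1}$ via Theorem \ref{theorem_TD4}, namely $T_{i,j+1}=T_{i,j}X_{i-1}\Sigma_{i,i+1}$. Equivalently, I would use the conjugation identity $T_{i,j+1}T_{i+1,j+1}T_{i,j+1}=\Sigma_{i,j}$, together with \relref{TB6} in the form $\Sigma_{i,j}=T_{i,j+1}T_{j,j+1}\ldots$. In practice I would follow the pattern of the preceding subsections:
\begin{itemize}
  \item reduce to $i\in\{1,2\}$ by Lemma \ref{lemma_TS_conj_x0} and Theorem \ref{theorem_TC3};
  \item for fixed $i$, induct on $j$ using $T_{i,j+1}=T_{i,j}X_{i-1}\Sigma_{i,i+1}$;
  \item in the inductive step, commute $X_k$ leftward past $\Sigma_{i,i+1}$ via Lemma \ref{lemma_commute_Sigmaii+1Xm} when $k\ge i+1$, and past $X_{i-1}$ by \relref{TA};
  \item move $\Sigma_{\ast,\ast+1}$ factors across $T$'s with Propositions \ref{proposition_adjacent_TB7} and \ref{proposition_adjacent_TB6};
  \item recollect terms with the definition of $\Sigma$ and Proposition \ref{proposition_adjacent_TB3}.
\end{itemize}
The boundary cases $k=i$ and $k=j-2$, where these commutations fail, will need separate base cases. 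These are drawn from \relref{TC2} in $\cR$ with indices at most $7$, propagated by increasing $j$ by $2$ via Theorem \ref{theorem_TD2}, as was done for \relref{TB7}.

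\textbf{Main obstacle.} The hardest step is the bookkeeping in the inductive step. After pushing $X_k$ through, one must show that the leftover product of $\Sigma_{\ast,\ast+1}$'s and $T$'s collapses exactly to $\Sigma_{i,j+1}\Sigma_{k+1,k+2}$. This requires the reflected index $i+j-k$ to land in the range where Proposition \ref{proposition_adjacent_TB7} applies. I would try to isolate the edge cases $k=i$ and $k=j-2$ first, and verify them by conjugating base relations from $\cR$.
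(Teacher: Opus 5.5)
Your opening computation in Steps 1--3 is the right proof; it is exactly the paper's argument. An arithmetic slip in Step 2 made you discard it, and what replaces it is only an unexecuted plan, so as written the proposal proves nothing.

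In Step 2 the new subscript is $(i+1)+(j+1)-k'-2=i+j-k'=k+1$, not $k$. Step 3 must therefore apply Theorem~\ref{theorem_TD3} to $T_{i,j+1}X_{k+1}$. This is legitimate because $i\le k+1\le (j+1)-2$. It yields $X_{i+(j+1)-(k+1)-2}T_{i,j+2}\Sigma_{k+2,k+3}=X_{i+j-k-2}T_{i,j+2}\Sigma_{k+2,k+3}$. The leading index is then exactly the required one:
\begin{align*}
  \Sigma_{i,j}X_k
  = X_{i+j-k-2}\,T_{i,j+2}\,\Sigma_{k+2,k+3}\,T_{i+1,j+2}\,\Sigma_{i+j-k,i+j-k+1}\,T_{i,j+2}\,\Sigma_{k+1,k+2}.
\end{align*}
Now apply Proposition~\ref{proposition_adjacent_TB7} to $T_{i+1,j+2}$ and $\Sigma_{i+j-k,i+j-k+1}$. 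Its hypotheses $i+1<i+j-k$ and $i+j-k+1<j+2$ are exactly $k\le j-2$ and $i\le k$. It gives $T_{i+1,j+2}\Sigma_{i+j-k,i+j-k+1}=\Sigma_{k+2,k+3}T_{i+1,j+2}$. The two $\Sigma$'s flanking $T_{i+1,j+2}$ therefore cancel, using $\Sigma_{k+2,k+3}^2=1$. What remains is $X_{i+j-k-2}T_{i,j+2}T_{i+1,j+2}T_{i,j+2}\Sigma_{k+1,k+2}=X_{i+j-k-2}\Sigma_{i,j+1}\Sigma_{k+1,k+2}$. No induction, no reduction to $i\in\{1,2\}$, and no base cases from $\cR$ are needed.

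Your ``Repair'' cannot count as a proof, because none of it is carried out. The inductive step on $j$ is left open, and it is unclear how it would act on the three $T$-factors of $\Sigma_{i,j}$ simultaneously. The boundary cases $k=i$ and $k=j-2$ are deferred to unspecified instances in $\cR$. The claim that the leftover word collapses to $\Sigma_{i,j+1}\Sigma_{k+1,k+2}$ is explicitly named as the unresolved ``main obstacle''. Redo the Step 2 arithmetic and finish the direct computation as above.
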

\begin{proof}
  By the definition of $\Sigma_{i, j}$, Theorem \ref{theorem_TD3}, and Proposition \ref{proposition_adjacent_TB7}, we have
  \begin{align*}
    \Sigma_{i, j}X_k
     & =T_{i, j+1}T_{i+1, j+1}T_{i, j+1}X_k                                                                             \\
     & =T_{i, j+1}T_{i+1, j+1}X_{i+j+1-k-2}T_{i, j+2}\Sigma_{k+1, k+2}                                                  \\
     & =T_{i, j+1}X_{(i+1)+(j+1)-(i+j+1-k-2)-2}T_{i+1, j+2}\Sigma_{i+j+1-k-2+1, i+j+1-k-2+2}T_{i, j+2}\Sigma_{k+1, k+2} \\
     & =T_{i, j+1}X_{k+1}T_{i+1, j+2}\Sigma_{i+j-k, i+j-k+1}T_{i, j+2}\Sigma_{k+1, k+2}                                 \\
     & =X_{i+j+1-(k+1)-2}T_{i, j+2}\Sigma_{k+2, k+3}T_{i+1, j+2}\Sigma_{i+j-k, i+j-k+1}T_{i, j+2}\Sigma_{k+1, k+2}      \\
     & =X_{i+j-k-2}T_{i, j+2}T_{i+1, j+2}T_{i, j+2}\Sigma_{k+1, k+2}                                                    \\
     & = X_{i+j-k-2}\Sigma_{i, j+1}\Sigma_{k+1, k+2},
  \end{align*}
  which completes the proof.
\end{proof}
Next, we consider the case $k=i-1$.
\begin{lemma}\label{lemma_TC2_left}
  For any $1 \leq i <j$, we have
  \begin{align*}
    \Sigma_{i, j}X_{i-1}=X_{j-1}\Sigma_{i, j+1}\Sigma_{i, i+1}.
  \end{align*}
\end{lemma}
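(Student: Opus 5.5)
The plan is to expand $\Sigma_{i,j}$ and $\Sigma_{i,j+1}$ by their definitions and reduce the claim to a single identity among $T$'s and $X$'s. That identity will follow from Theorems \ref{theorem_TD4} and \ref{theorem_TD3}, together with the involution relations $T_{a,b}^2=\Sigma_{a,b}^2=1$ from Theorem \ref{theorem_TB1} and Corollary \ref{cor_TB1}.

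\textbf{Step 1 (left side).} Write $\Sigma_{i,j}=T_{i,j+1}T_{i+1,j+1}T_{i,j+1}$. Theorem \ref{theorem_TD4}, applied with indices $(i,j+1)$, gives $T_{i,j+1}X_{i-1}=T_{i,j+2}\Sigma_{i,i+1}$. Hence
\begin{align*}
  \Sigma_{i,j}X_{i-1}=T_{i,j+1}T_{i+1,j+1}T_{i,j+2}\Sigma_{i,i+1}.
\end{align*}

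\textbf{Step 2 (right side).} Write $\Sigma_{i,j+1}=T_{i,j+2}T_{i+1,j+2}T_{i,j+2}$. After cancelling the common right factor $T_{i,j+2}\Sigma_{i,i+1}$, the claim becomes
\begin{align*}
  T_{i,j+1}T_{i+1,j+1}=X_{j-1}T_{i,j+2}T_{i+1,j+2}.
\end{align*}

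\textbf{Step 3 (rewrite $X_{j-1}T_{i,j+2}$).} Apply Theorem \ref{theorem_TD3} with indices $(i,j+1)$ and $k=i$. The hypothesis $i\le k\le (j+1)-2$ holds because $j\ge i+1$, so this covers the boundary case $j=i+1$ as well. It gives $T_{i,j+1}X_i=X_{j-1}T_{i,j+2}\Sigma_{i+1,i+2}$. Since $\Sigma_{i+1,i+2}^2=1$, this yields
\begin{align*}
  X_{j-1}T_{i,j+2}=T_{i,j+1}X_i\Sigma_{i+1,i+2}.
\end{align*}
After cancelling $T_{i,j+1}$ on the left, the identity from Step 2 reduces to
\begin{align*}
  T_{i+1,j+1}=X_i\Sigma_{i+1,i+2}T_{i+1,j+2}.
\end{align*}

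\textbf{Step 4 (close with Theorem \ref{theorem_TD4}).} Theorem \ref{theorem_TD4} with indices $(i+1,j+1)$ gives $T_{i+1,j+1}X_i=T_{i+1,j+2}\Sigma_{i+1,i+2}$. Multiplying on the right by $\Sigma_{i+1,i+2}X_i^{-1}$ and using $\Sigma_{i+1,i+2}^2=1$, this rearranges to $T_{i+1,j+1}=T_{i+1,j+2}\Sigma_{i+1,i+2}X_i^{-1}$. Inverting both sides and using $T_{a,b}^{-1}=T_{a,b}$ and $\Sigma_{a,b}^{-1}=\Sigma_{a,b}$ gives exactly the identity at the end of Step 3. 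This completes the argument.

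The main point needing care is the index bookkeeping in Step 3. One must check that Theorem \ref{theorem_TD3} is used with the shifted second index $j+1$, and that its subscript $i+(j+1)-i-2$ equals $j-1$. Everything else is cancellation using the involution relations.
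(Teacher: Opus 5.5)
Your proof is correct and is essentially the paper's argument. Both use Theorem~\ref{theorem_TD4} at $(i,j+1)$, Theorem~\ref{theorem_TD4} at $(i+1,j+1)$ in inverted form, and Theorem~\ref{theorem_TD3} at $(i,j+1)$ with $k=i$; you simply arrange them as a reduction by cancellation instead of the paper's forward chain of equalities.

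One cosmetic slip: in Step~4, $T_{i+1,j+1}=T_{i+1,j+2}\Sigma_{i+1,i+2}X_i^{-1}$ comes from multiplying on the right by $X_i^{-1}$ alone, not by $\Sigma_{i+1,i+2}X_i^{-1}$.
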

\begin{proof}
  By the definition of $\Sigma_{i, j}$, Theorems \ref{theorem_TD4} and \ref{theorem_TD3}, we have
  \begin{align*}
    \Sigma_{i, j}X_{i-1}
     & =T_{i, j+1}T_{i+1, j+1}T_{i, j+1} X_{i-1}                                                         \\
     & =T_{i, j+1}T_{i+1, j+1}T_{i, j+2}\Sigma_{i, i+1}                                                  \\
     & =T_{i, j+1} X_i \Sigma_{i+1, i+2}T_{i+1, j+2} T_{i, j+2}\Sigma_{i, i+1}                           \\
     & =X_{i+j+1-i-2}T_{i, j+2} \Sigma_{i+1, i+2}\Sigma_{i+1, i+2}T_{i+1, j+2} T_{i, j+2}\Sigma_{i, i+1} \\
     & =X_{j-1}T_{i, j+2}T_{i+1, j+2} T_{i, j+2}\Sigma_{i, i+1}                                          \\
     & =X_{j-1}\Sigma_{i, j+1}\Sigma_{i, i+1},
  \end{align*}
  which completes the proof.
\end{proof}
Finally, we consider the case $k=j-1$.
\begin{lemma} \label{lemma_TC2_right}
  For any $1 \leq i <j$, we have
  \begin{align*}
    \Sigma_{i, j}X_{j-1}=X_{i-1} \Sigma_{i, j+1}\Sigma_{j, j+1}.
  \end{align*}
\end{lemma}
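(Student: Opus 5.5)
The plan is to expand $\Sigma_{i,j}=T_{i,j+1}T_{i+1,j+1}T_{i,j+1}$ and simplify both sides separately to a common word. No new induction is needed: every relation used has already been propagated to all indices, namely Theorems \ref{theorem_TD4} and \ref{theorem_TD3} and Propositions \ref{proposition_adjacent_TB7} and \ref{proposition_adjacent_TB6}. We also use $T_{a,b}^2=\Sigma_{a,b}^2=1$ freely.

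\emph{Left-hand side.} First move $X_{j-1}$ leftwards through the three $T$'s.
\begin{itemize}
\item Since $i\le j-1=(j+1)-2$, Theorem \ref{theorem_TD3} gives $T_{i,j+1}X_{j-1}=X_iT_{i,j+2}\Sigma_{j,j+1}$.
\item Theorem \ref{theorem_TD4}, applied with first index $i+1$, gives $T_{i+1,j+1}X_i=T_{i+1,j+2}\Sigma_{i+1,i+2}$.
\end{itemize}
This yields
\begin{align*}
\Sigma_{i,j}X_{j-1}=T_{i,j+1}T_{i+1,j+2}\Sigma_{i+1,i+2}T_{i,j+2}\Sigma_{j,j+1}.
\end{align*}
Next apply Proposition \ref{proposition_adjacent_TB7} to $T_{i,j+2}$ with $k=i+1$; this requires $i<i+1$ and $i+2<j+2$, both of which hold since $i<j$. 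It gives $\Sigma_{i+1,i+2}T_{i,j+2}=T_{i,j+2}\Sigma_{j,j+1}$, and the two copies of $\Sigma_{j,j+1}$ then cancel. Hence
\begin{align*}
\Sigma_{i,j}X_{j-1}=T_{i,j+1}T_{i+1,j+2}T_{i,j+2}.
\end{align*}

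\emph{Right-hand side.} Write $X_{i-1}\Sigma_{i,j+1}\Sigma_{j,j+1}=X_{i-1}T_{i,j+2}T_{i+1,j+2}T_{i,j+2}\Sigma_{j,j+1}$ and carry out three rewrites in order.
\begin{itemize}
\item Proposition \ref{proposition_adjacent_TB7} again gives $T_{i,j+2}\Sigma_{j,j+1}=\Sigma_{i+1,i+2}T_{i,j+2}$.
\item Proposition \ref{proposition_adjacent_TB6}, with first index $i+1<j+1$, gives $T_{i+1,j+2}\Sigma_{i+1,i+2}=T_{j+1,j+2}T_{i+1,j+2}$.
\item Proposition \ref{proposition_adjacent_TB6} with first index $i$ gives $T_{j+1,j+2}=T_{i,j+2}\Sigma_{i,i+1}T_{i,j+2}$, hence $T_{i,j+2}T_{j+1,j+2}=\Sigma_{i,i+1}T_{i,j+2}$.
\end{itemize}
After these steps the right-hand side becomes $X_{i-1}\Sigma_{i,i+1}T_{i,j+2}\cdot T_{i+1,j+2}T_{i,j+2}$.

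\emph{Comparison.} Comparing with the left-hand side, it remains to check $X_{i-1}\Sigma_{i,i+1}T_{i,j+2}=T_{i,j+1}$. Inverting both sides, and using that $T_{i,j+1}$, $T_{i,j+2}$ and $\Sigma_{i,i+1}$ are involutions, this becomes $T_{i,j+2}\Sigma_{i,i+1}X_{i-1}^{-1}=T_{i,j+1}$. That is exactly Theorem \ref{theorem_TD4}.

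The main point requiring care is the index bookkeeping in the applications of Proposition \ref{proposition_adjacent_TB7}, namely confirming that $(i+1)+(j+2)-(i+1)-1=j$ and that the hypotheses $i<k<k+1<j+2$ hold. Also important is the boundary case $j=i+1$: there $\Sigma_{i+1,i+2}=\Sigma_{j,j+1}$, and one should double-check that the hypotheses of Propositions \ref{proposition_adjacent_TB7} and \ref{proposition_adjacent_TB6} still hold, which they do since only $i<j$ is needed.
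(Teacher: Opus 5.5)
Your argument is correct and is essentially the paper's computation. Both proofs use Theorems \ref{theorem_TD3} and \ref{theorem_TD4} to reach $T_{i,j+1}T_{i+1,j+2}\Sigma_{i+1,i+2}T_{i,j+2}\Sigma_{j,j+1}$, then Theorem \ref{theorem_TD4} (in the form $T_{i,j+1}=X_{i-1}\Sigma_{i,i+1}T_{i,j+2}$) and two applications of Proposition \ref{proposition_adjacent_TB6}; your two uses of Proposition \ref{proposition_adjacent_TB7}, one on each side and mutually inverse, are an unnecessary detour that the paper avoids by simply carrying $\Sigma_{j,j+1}$ along to the end.
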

\begin{proof}
  By the definition of $\Sigma_{i, j}$, Theorems \ref{theorem_TD3}, \ref{theorem_TD4}, and Proposition \ref{proposition_adjacent_TB6}, we have
  \begin{align*}
    \Sigma_{i, j}X_{j-1}
     & = T_{i, j+1} T_{i+1, j+1} T_{i, j+1} X_{j-1}                                                 \\
     & = T_{i, j+1} T_{i+1, j+1} X_{i+j+1-(j-1)-2} T_{i, j+2} \Sigma_{j, j+1}                       \\
     & = T_{i, j+1} T_{i+1, j+1} X_{i} T_{i, j+2} \Sigma_{j, j+1}                                   \\
     & = T_{i, j+1} T_{i+1, j+2} \Sigma_{i+1, i+2} T_{i, j+2} \Sigma_{j, j+1}                       \\
     & = X_{i-1}\Sigma_{i, i+1}T_{i, j+2} T_{i+1, j+2} \Sigma_{i+1, i+2} T_{i, j+2} \Sigma_{j, j+1} \\
     & = X_{i-1} T_{i, j+2}T_{j+1, j+2} T_{i+1, j+2} \Sigma_{i+1, i+2} T_{i, j+2} \Sigma_{j, j+1}   \\
     & = X_{i-1} T_{i, j+2}T_{j+1, j+2} T_{j+1, j+2}T_{i+1, j+2}T_{i, j+2} \Sigma_{j, j+1}          \\
     & = X_{i-1} T_{i, j+2} T_{i+1, j+2}T_{i, j+2} \Sigma_{j, j+1}                                  \\
     & = X_{i-1} \Sigma_{i, j+1} \Sigma_{j, j+1},
  \end{align*}
  which completes the proof.
\end{proof}

By combining Lemmas \ref{lemma_TC2_inner}, \ref{lemma_TC2_left}, and \ref{lemma_TC2_right}, we obtain:
\begin{theorem}\label{theorem_TC2}
  For any $i-1 \leq k \leq j-1$ with $1 \leq i<j$,  we have
  \begin{align*}
    \Sigma_{i, j}X_k=X_{i+j-k-2} \Sigma_{i, j+1}\Sigma_{k+1, k+2}. \tag{\relref{TC2}}
  \end{align*}
\end{theorem}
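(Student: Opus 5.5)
The statement is Theorem \ref{theorem_TC2}, and it follows by splitting the range $i-1 \leq k \leq j-1$ into three subranges, each already handled by one of the three preceding lemmas. For $i \leq k \leq j-2$, Lemma \ref{lemma_TC2_inner} gives exactly the required identity. For $k = i-1$, Lemma \ref{lemma_TC2_left} gives
\begin{align*}
  \Sigma_{i,j}X_{i-1} = X_{j-1}\Sigma_{i,j+1}\Sigma_{i,i+1}.
\end{align*}
This is the claimed formula, since $i+j-(i-1)-2 = j-1$ and $\Sigma_{k+1,k+2} = \Sigma_{i,i+1}$. For $k = j-1$, Lemma \ref{lemma_TC2_right} gives
\begin{align*}
  \Sigma_{i,j}X_{j-1} = X_{i-1}\Sigma_{i,j+1}\Sigma_{j,j+1}.
\end{align*}
This again matches, since $i+j-(j-1)-2 = i-1$ and $\Sigma_{k+1,k+2} = \Sigma_{j,j+1}$.

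These three cases exhaust the range and never overlap in a way that creates inconsistency. When $j = i+1$, the inner range is empty, and the values $k = i-1$ and $k = j-1 = i$ are distinct, so both are covered by the boundary lemmas. When $j \geq i+2$, the ranges $\{i-1\}$, $[i, j-2]$ and $\{j-1\}$ are disjoint and together give all of $[i-1, j-1]$.

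The only real work is the index bookkeeping: checking that each lemma's hypotheses, such as $1 \leq i < j$ and $i \leq k \leq j-2$, are implied by the theorem's hypotheses in the corresponding subrange. I do not expect any genuine obstacle. The substantive difficulty was already absorbed in the lemmas, through Theorem \ref{theorem_TD3} and Propositions \ref{proposition_adjacent_TB7} and \ref{proposition_adjacent_TB6}.
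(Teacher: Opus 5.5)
Your proposal is correct and matches the paper's proof: the theorem is obtained by combining Lemmas \ref{lemma_TC2_inner}, \ref{lemma_TC2_left}, and \ref{lemma_TC2_right} over the subranges $i \leq k \leq j-2$, $k = i-1$, and $k = j-1$. Your index checks are right, including the case $j = i+1$, where the inner range is empty and the two boundary lemmas cover everything.
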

\subsection[Propagation of relation (TB5)]{Propagation of \relref{TB5}}
In this subsection, we show the following:
\begin{theorem} \label{theorem_TB5}
  For any $1 \leq i<j<k<l$, we have
  \begin{align*}
    \Sigma_{i, j}T_{k, l}=T_{k, l}\Sigma_{i, j}. \tag{\relref{TB5}}
  \end{align*}
\end{theorem}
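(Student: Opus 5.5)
The plan is to reduce the statement to the case where $\Sigma_{i,j}$ is a short (adjacent) generator $\Sigma_{m,m+1}$, which is already handled. The extension in $l$ uses Theorem~\ref{theorem_TD4}, and the extension to long $\Sigma_{i,j}$ uses \relref{TB3}-type conjugation.

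\textbf{Step 1: adjacent case.} For $\Sigma_{m,m+1}$ with $m+1<k<l$, the relation $[\Sigma_{m,m+1},T_{k,l}]=1$ is exactly Proposition~\ref{proposition_commute_Sigmaii+1Tkl}. So the statement holds whenever $j=i+1$.

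\textbf{Step 2: $l=k+1$ and $l=k+2$ for general $\Sigma_{i,j}$ with $j<k$.} Write $\Sigma_{i,j}$ through the $T$'s:
\begin{align*}
  \Sigma_{i,j}=T_{i,j+1}T_{i+1,j+1}T_{i,j+1}.
\end{align*}
All these factors have second index $j+1\le k$, so we cannot simply quote Proposition~\ref{proposition_commute_Sigmaii+1Tkl}. Instead I would first show that $X_m$ commutes with $\Sigma_{i,j}$ for $m\ge j$ (relation \relref{TC1}).
\begin{itemize}
  \item For $i\ge 2$, conjugation by $X_0$ (Lemma~\ref{lemma_TS_conj_x0}) reduces to $i=2$.
  \item For $i=1$ and $i=2$, I would induct on $j$ using \relref{TC2}. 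From Theorem~\ref{theorem_TC2} with $k=j-1$ one obtains
  \begin{align*}
    \Sigma_{i,j+1}=X_{i-1}^{-1}\Sigma_{i,j}X_{j-1}\Sigma_{j,j+1}.
  \end{align*}
  This expresses $\Sigma_{i,j+1}$ through $\Sigma_{i,j}$, adjacent $\Sigma$'s and $X$'s. Each of these commutes with $X_m$ for large $m$, by induction, by Lemma~\ref{lemma_commute_Sigmaii+1Xm} and by the relation $X_m X_a=X_aX_{m+1}$.
\end{itemize}
This recursion for $\Sigma_{i,j+1}$ also drives the main induction: assuming $[\Sigma_{i,j},T_{k,l}]=1$ for all admissible $k,l$, we get $[\Sigma_{i,j+1},T_{k,l}]=1$ for $k>j+1$.
\begin{itemize}
  \item $\Sigma_{j,j+1}$ commutes with $T_{k,l}$ by Step~1, since $j+1<k$.
  \item The remaining factors $X_{i-1}^{-1}$ and $X_{j-1}$ do not commute with $T_{k,l}$. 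They move past it via \relref{TD5} (Theorem~\ref{theorem_TD5}), since $i-1,\,j-1\le k-2$. This shifts $T_{k,l}$ to $T_{k+1,l+1}$ and back.
\end{itemize}
Concretely,
\begin{align*}
  \Sigma_{i,j+1}T_{k,l}=X_{i-1}^{-1}\Sigma_{i,j}X_{j-1}\Sigma_{j,j+1}T_{k,l},
\end{align*}
and pushing $T_{k,l}$ leftward gives:
\begin{itemize}
  \item past $\Sigma_{j,j+1}$ unchanged (Step~1);
  \item past $X_{j-1}$ it becomes $T_{k-1,l-1}$, which requires $j-1\le k-3$, i.e.\ $k\ge j+2$, true since $k>j+1$;
  \item past $\Sigma_{i,j}$ by the induction hypothesis, valid since $j<k-1$;
  \item past $X_{i-1}^{-1}$ it returns to $T_{k,l}$.
\end{itemize}
So the induction on $j-i$, starting from Step~1, handles everything uniformly, and Step~2 is actually absorbed into it.

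\textbf{Main obstacle.} The delicate point is index bookkeeping in the \relref{TD5} moves: one needs $k-1>j$ at the level $\Sigma_{i,j}$, which is exactly the hypothesis for $\Sigma_{i,j+1}$. Also, for $i=1$ the factor $X_{i-1}=X_0$ must be handled by Lemma~\ref{lemma_TS_conj_x0} (valid since $k\ge 2$) rather than Theorem~\ref{theorem_TD5}. If the $i=1$ conjugation fails at small $k$, I would fall back on the instances of \relref{TB5} in $\cR$ together with the identity $T_{k,l+1}=T_{k,l}X_{k-1}\Sigma_{k,k+1}$ from Theorem~\ref{theorem_TD4}, inducting on $l$ instead.
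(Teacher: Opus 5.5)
Your argument is correct and is essentially the paper's proof: you induct on $j-i$, take the base case $j=i+1$ from Proposition~\ref{proposition_commute_Sigmaii+1Tkl}, and in the inductive step solve an instance of \relref{TC2} for the longer $\Sigma$ and slide $T_{k,l}$ past the two $X$-letters with \relref{TD5} (which already covers $X_0$, so $i=1$ needs no separate treatment). The only difference is cosmetic: you use the instance $k=j-1$, giving $\Sigma_{i,j+1}=X_{i-1}^{-1}\Sigma_{i,j}X_{j-1}\Sigma_{j,j+1}$, whereas the paper uses $k=i$, giving $\Sigma_{i,j}=X_{j-3}^{-1}\Sigma_{i,j-1}X_i\Sigma_{i+1,i+2}$; the \relref{TC1} detour in your Step~2 is not needed.
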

\begin{proof}
  Let $d \coloneqq j-i$.
  We show this by induction on $d$.

  If $d=1$, then $j=i+1$ holds.
  By Proposition \ref{proposition_commute_Sigmaii+1Tkl}, we have
  \begin{align*}
    \Sigma_{i, i+1}T_{k, l}=T_{k, l}\Sigma_{i, i+1},
  \end{align*}
  which completes the case $d=1$.

  Assume that $d \geq 2$ and the equality holds for all indices with $j-i=d-1$.
  By Theorems \ref{theorem_TC2}, \ref{theorem_TD5}, Proposition \ref{proposition_commute_Sigmaii+1Tkl}, and the induction hypothesis, we have
  \begin{align*}
    \Sigma_{i, j}T_{k, l}
     & = X_{j-3}^{-1}\Sigma_{i, j-1}X_i \Sigma_{i+1, i+2}T_{k, l}      \\
     & =  X_{j-3}^{-1}\Sigma_{i, j-1}X_i T_{k, l}\Sigma_{i+1, i+2}     \\
     & = X_{j-3}^{-1}\Sigma_{i, j-1} T_{k-1, l-1}X_i \Sigma_{i+1, i+2} \\
     & = X_{j-3}^{-1} T_{k-1, l-1}\Sigma_{i, j-1}X_i \Sigma_{i+1, i+2} \\
     & = T_{k, l}X_{j-3}^{-1}\Sigma_{i, j-1}X_i \Sigma_{i+1, i+2}      \\
     & = T_{k, l}\Sigma_{i, j},
  \end{align*}
  which completes the proof.
\end{proof}
\subsection[Propagation of relation (TB2)]{Propagation of \relref{TB2}}
In this subsection, we show the following:
\begin{theorem} \label{theorem_TB2}
  For any $1 \leq i<j<k<l$, we have
  \begin{align*}
    \Sigma_{i, j}\Sigma_{k, l}=\Sigma_{k, l}\Sigma_{i, j} \tag{\relref{TB2}}
  \end{align*}
\end{theorem}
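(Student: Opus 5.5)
We will mimic the proof of Theorem \ref{theorem_TB5}. Write $\Sigma_{k,l}$ as a product of $T$'s or reduce its width, so that commuting with $\Sigma_{i,j}$ reduces to known commutations. The most direct route uses the definition $\Sigma_{k,l}=T_{k,l+1}T_{k+1,l+1}T_{k,l+1}$. Since $1\le i<j<k<l$, we have $j<k<l+1$ and $j<k+1<l+1$. Theorem \ref{theorem_TB5} therefore applies to each of the three factors:
\begin{align*}
  \Sigma_{i,j}T_{k,l+1}=T_{k,l+1}\Sigma_{i,j},\quad \Sigma_{i,j}T_{k+1,l+1}=T_{k+1,l+1}\Sigma_{i,j}.
\end{align*}
Multiplying these out, $\Sigma_{i,j}$ commutes with $T_{k,l+1}T_{k+1,l+1}T_{k,l+1}=\Sigma_{k,l}$, which is the claim.

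So the proof should be a three-line computation. We first check the index constraints: Theorem \ref{theorem_TB5} needs $i<j<k'<l'$ for the pair $(k',l')$. This holds both for $(k,l+1)$, using $k<l+1$, and for $(k+1,l+1)$, using $k+1<l+1$ since $k<l$. No induction is required, because Theorem \ref{theorem_TB5} was already propagated to all indices.

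The only step needing care is making sure Theorem \ref{theorem_TB5} was genuinely proved for all $1\le i<j<k<l$ without circularly using \relref{TB2}. Its proof used Theorems \ref{theorem_TC2}, \ref{theorem_TD5} and Proposition \ref{proposition_commute_Sigmaii+1Tkl}, and none of these invokes \relref{TB2} beyond the adjacent case Proposition \ref{proposition_commute_adjacent_SigmaSigma}, which was itself derived from $\cR$. So there is no circularity, and the argument above completes the proof.

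If one preferred a route parallel to Theorem \ref{theorem_TB5}, one could instead induct on $j-i$. The base case $j=i+1$ would come from Proposition \ref{proposition_commute_Sigmaii+1Tkl} applied factorwise, and the inductive step from the \relref{TC2} rewriting $\Sigma_{i,j}=X_{j-3}^{-1}\Sigma_{i,j-1}X_i\Sigma_{i+1,i+2}$ together with Theorem \ref{theorem_TC3}. The direct factorization is simpler, however, and is the one we will use.
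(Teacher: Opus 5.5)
Your proposal is correct and matches the paper's proof: expand $\Sigma_{k,l}=T_{k,l+1}T_{k+1,l+1}T_{k,l+1}$ and commute $\Sigma_{i,j}$ past each factor using Theorem~\ref{theorem_TB5}, whose hypotheses hold for both $(k,l+1)$ and $(k+1,l+1)$. Your non-circularity check is also sound, since only the adjacent case Proposition~\ref{proposition_commute_adjacent_SigmaSigma} of \relref{TB2} enters the earlier propagation.
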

\begin{proof}
  By the definition of $\Sigma_{k, l}$ and Theorem \ref{theorem_TB5}, we have
  \begin{align*}
    \Sigma_{i, j}\Sigma_{k, l}
     & = \Sigma_{i, j}T_{k, l+1}T_{k+1, l+1}T_{k, l+1} \\
     & = T_{k, l+1}\Sigma_{i, j}T_{k+1, l+1}T_{k, l+1} \\
     & = T_{k, l+1}T_{k+1, l+1}\Sigma_{i, j}T_{k, l+1} \\
     & = T_{k, l+1}T_{k+1, l+1}T_{k, l+1}\Sigma_{i, j} \\
     & =\Sigma_{k, l}\Sigma_{i, j},
  \end{align*}
  which completes the proof.
\end{proof}
\subsection[Propagation of relation (TC1)]{Propagation of \relref{TC1}}
In this subsection, we show the following:
\begin{theorem} \label{theorem_TC1}
  For any $1 \leq i < j \leq k$, we have
  \begin{align*}
    \Sigma_{i, j}X_k=X_k\Sigma_{i, j}. \tag{\relref{TC1}}
  \end{align*}
\end{theorem}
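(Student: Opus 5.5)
We need $\Sigma_{i,j}X_k=X_k\Sigma_{i,j}$ for $j\le k$. The plan is to reduce to the adjacent case $j=i+1$, settled by Lemma \ref{lemma_commute_Sigmaii+1Xm}, and then climb in $d\coloneqq j-i$ using \relref{TC2} (Theorem \ref{theorem_TC2}) to express $\Sigma_{i,j}$ through $\Sigma_{i,j-1}$ and adjacent generators.

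We argue by induction on $d$. For $d=1$ the statement is exactly Lemma \ref{lemma_commute_Sigmaii+1Xm}, since $k\ge j=i+1$.

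For $d\ge 2$, apply Theorem \ref{theorem_TC2} to $\Sigma_{i,j-1}$ with the index $i$ (allowed, since $i-1\le i\le j-2$). This gives
\begin{align*}
  \Sigma_{i,j-1}X_i=X_{j-3}\Sigma_{i,j}\Sigma_{i+1,i+2},
\end{align*}
so that
\begin{align*}
  \Sigma_{i,j}=X_{j-3}^{-1}\Sigma_{i,j-1}X_i\Sigma_{i+1,i+2}.
\end{align*}
This is the same expansion used in the proof of Theorem \ref{theorem_TB5}. We then commute $X_k$ from the right past each factor, using the following facts.
\begin{itemize}
  \item $\Sigma_{i+1,i+2}X_k=X_k\Sigma_{i+1,i+2}$, by Lemma \ref{lemma_commute_Sigmaii+1Xm}, since $k\ge j\ge i+2$.
  \item $X_iX_k=X_{k-1}X_i$, by \relref{TA}, since $i<k$. (Note that $X_i$ on the left shifts $X_k$ down.)
  \item $\Sigma_{i,j-1}X_{k-1}=X_{k-1}\Sigma_{i,j-1}$, by the induction hypothesis, since $k-1\ge j-1$.
  \item $X_{j-3}^{-1}X_{k-1}=X_kX_{j-3}^{-1}$, by \relref{TA} rewritten as $X_{k-1}X_{j-3}=X_{j-3}X_k$, valid since $j-3<k-1$.
\end{itemize}
Chaining these gives
\begin{align*}
  \Sigma_{i,j}X_k
  &=X_{j-3}^{-1}\Sigma_{i,j-1}X_iX_k\Sigma_{i+1,i+2}
   =X_{j-3}^{-1}\Sigma_{i,j-1}X_{k-1}X_i\Sigma_{i+1,i+2}\\
  &=X_{j-3}^{-1}X_{k-1}\Sigma_{i,j-1}X_i\Sigma_{i+1,i+2}
   =X_kX_{j-3}^{-1}\Sigma_{i,j-1}X_i\Sigma_{i+1,i+2}
   =X_k\Sigma_{i,j}.
\end{align*}

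The main point to check is the index bookkeeping. The shift $X_iX_k=X_{k-1}X_i$ must land exactly on an index $k-1\ge j-1$, which is what the induction hypothesis for $\Sigma_{i,j-1}$ requires. The final \relref{TA} step needs $j-3<k-1$, which holds since $k\ge j$; the edge case $j-3=0$ (that is, $i=1,j=3$) is still covered by \relref{TA} with index $0$.

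The \relref{TA} relations for all indices are available because $X_i$ is defined as $X_0^{-(i-1)}X_1X_0^{i-1}$ and the two commutator relators hold in $CV_7$. The adjacent-case lemma involves no induction of its own, so there is no circularity.
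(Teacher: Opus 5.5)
Your proof is correct and is essentially the paper's own argument. The paper also inducts on $d=j-i$, takes the base case from Lemma~\ref{lemma_commute_Sigmaii+1Xm}, and uses the expansion $\Sigma_{i,j}=X_{j-3}^{-1}\Sigma_{i,j-1}X_i\Sigma_{i+1,i+2}$ from Theorem~\ref{theorem_TC2}, with the same chain of commutations and the same index checks.
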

\begin{proof}
  Let $d \coloneqq j-i$.
  We show this by induction on $d$.
  If $d=1$, then it follows from Lemma \ref{lemma_commute_Sigmaii+1Xm} since $j=i+1$.

  Assume that $d \geq 2$ and the equality holds for all indices with $j-i=d-1$.
  By Theorem \ref{theorem_TC2}, Lemma \ref{lemma_commute_Sigmaii+1Xm}, and the inductive hypothesis, we have
  \begin{align*}
    \Sigma_{i, j}X_k & = X_{j-3}^{-1}\Sigma_{i, j-1}X_i \Sigma_{i+1, i+2} X_k    \\
                     & = X_{j-3}^{-1}\Sigma_{i, j-1}X_i X_k \Sigma_{i+1, i+2}    \\
                     & = X_{j-3}^{-1}\Sigma_{i, j-1}X_{k-1}X_i \Sigma_{i+1, i+2} \\
                     & = X_{j-3}^{-1}X_{k-1}\Sigma_{i, j-1}X_i \Sigma_{i+1, i+2} \\
                     & = X_{k}X_{j-3}^{-1}\Sigma_{i, j-1}X_i \Sigma_{i+1, i+2}   \\
                     & = X_k \Sigma_{i, j},
  \end{align*}
  which completes the proof.
\end{proof}
\subsection[Propagation of relation (TB6)]{Propagation of \relref{TB6}}
In this subsection, we prove \relref{TB6}.
We show it by induction on $j-l$.
The proof splits into two cases, depending on the parity of the gap.
The odd case is direct, while the even case is reduced to an auxiliary equality.

We first prove the step used in the induction.
\begin{lemma} \label{lemma_TB6_inductive_step}
  For any $1 \leq i < l <j$, if
  \begin{align*}
    T_{i, j}\Sigma_{i, l}=T_{i+j-l, j}T_{i, j}
  \end{align*}
  holds, then
  \begin{align*}
    T_{i, j+2}\Sigma_{i, l}=T_{i+j+2-l, j+2}T_{i, j+2}
  \end{align*}
  also holds.
\end{lemma}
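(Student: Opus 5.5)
We will mimic the inductive step already used in the proof of Proposition \ref{proposition_adjacent_TB6} and in Lemma \ref{lemma_adjacent_TB7_inductive_step1}. The plan is to expand $T_{i,j+2}$ using Theorem \ref{theorem_TD2} in the form
\begin{align*}
  T_{i,j+2}=X_{i-1}^{-2}T_{i,j}X_{j-1}T_{j,j+2},
\end{align*}
and then push $\Sigma_{i,l}$ leftward through each factor.

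The first commutation is $[\Sigma_{i,l},T_{j,j+2}]=1$. Since $i<l<j$, this is an instance of Theorem \ref{theorem_TB5}. The second is $[\Sigma_{i,l},X_{j-1}]=1$. Since $l\le j-1$, this is Theorem \ref{theorem_TC1}. After these two steps we reach
\begin{align*}
  T_{i,j+2}\Sigma_{i,l}=X_{i-1}^{-2}T_{i,j}\Sigma_{i,l}X_{j-1}T_{j,j+2},
\end{align*}
and the hypothesis turns $T_{i,j}\Sigma_{i,l}$ into $T_{i+j-l,j}T_{i,j}$.

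It remains to move $T_{i+j-l,j}$ to the left past $X_{i-1}^{-2}$. Set $a\coloneqq i+j-l$, so that $a\ge i+1$.
\begin{itemize}
  \item If $a\ge i+1\ge 2$, Theorem \ref{theorem_TD5} applied twice with $k=i-1\le a-2$ gives $X_{i-1}^{-2}T_{a,j}=T_{a+2,j+2}X_{i-1}^{-2}$. We need $i-1\le a-2$ and $a\ge 2$ at each application, which holds because $a\ge i+1$.
  \item The second application concerns $T_{a+1,j+1}$ and needs $i-1\le a-1$, which is immediate.
\end{itemize}
Since $a+2=i+(j+2)-l$, we get
\begin{align*}
  T_{i,j+2}\Sigma_{i,l}=T_{i+j+2-l,j+2}\,X_{i-1}^{-2}T_{i,j}X_{j-1}T_{j,j+2}=T_{i+j+2-l,j+2}T_{i,j+2},
\end{align*}
where the last equality is Theorem \ref{theorem_TD2} again.

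The main point to check carefully is the index bookkeeping in the $X_{i-1}$ conjugation (Theorem \ref{theorem_TD5} requires $2\le a$ and $k\le a-2$), together with confirming that Theorem \ref{theorem_TB5} applies with its strict inequalities $i<l<j<j+2$. Both are satisfied under the hypothesis $1\le i<l<j$, so no further base cases are needed for this step.
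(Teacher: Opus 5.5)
Your proposal is correct and matches the paper's proof step for step. You expand $T_{i,j+2}=X_{i-1}^{-2}T_{i,j}X_{j-1}T_{j,j+2}$ via Theorem~\ref{theorem_TD2}, and move $\Sigma_{i,l}$ past $T_{j,j+2}$ and $X_{j-1}$ using Theorems~\ref{theorem_TB5} and~\ref{theorem_TC1}. You then apply the hypothesis and move $T_{i+j-l,j}$ past $X_{i-1}^{-2}$ with two applications of Theorem~\ref{theorem_TD5}; your index checks for all four applications are valid.
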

\begin{proof}
  By Theorems \ref{theorem_TD2}, \ref{theorem_TB5}, \ref{theorem_TC1}, and \ref{theorem_TD5}, together with the assumption, we have
  \begin{align*}
    T_{i, j+2}\Sigma_{i, l}
     & = X_{i-1}^{-2}T_{i, j}X_{j-1}T_{j, j+2}\Sigma_{i, l}    \\
     & = X_{i-1}^{-2}T_{i, j}X_{j-1} \Sigma_{i, l}T_{j, j+2}   \\
     & = X_{i-1}^{-2}T_{i, j}\Sigma_{i, l}X_{j-1}T_{j, j+2}    \\
     & = X_{i-1}^{-2}T_{i+j-l, j}T_{i, j}X_{j-1}T_{j, j+2}     \\
     & = T_{i+j-l+2, j+2}X_{i-1}^{-2}T_{i, j}X_{j-1}T_{j, j+2} \\
     & = T_{i+j+2-l, j+2}T_{i, j+2},
  \end{align*}
  which completes the proof.
\end{proof}

Using this lemma, we first prove the case where $j-l$ is odd.
\begin{lemma} \label{lemma_TB6_odd_case}
  For any $1 \leq i <l < j$ with $j-l$ odd, we have
  \begin{align*}
    T_{i, j}\Sigma_{i, l}=T_{i+j-l, j}T_{i, j}.
  \end{align*}
\end{lemma}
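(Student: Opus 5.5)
Since $j-l$ is odd, the base case is $j=l+1$, that is, the relation
\begin{align*}
  T_{i,l+1}\Sigma_{i,l}=T_{i+1,l+1}T_{i,l+1}.
\end{align*}
Once this is established for every $1\leq i<l$, Lemma \ref{lemma_TB6_inductive_step} applied $(j-l-1)/2$ times with $i$ and $l$ fixed gives $T_{i,j}\Sigma_{i,l}=T_{i+j-l,j}T_{i,j}$ for all $j>l$ with $j-l$ odd. The induction is therefore on $j$ in steps of two, starting from $j=l+1$.

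The base case follows from the definition $\Sigma_{i,l}=T_{i,l+1}T_{i+1,l+1}T_{i,l+1}$ together with Theorem \ref{theorem_TB1}. Indeed,
\begin{align*}
  T_{i,l+1}\Sigma_{i,l}=T_{i,l+1}T_{i,l+1}T_{i+1,l+1}T_{i,l+1}=T_{i+1,l+1}T_{i,l+1},
\end{align*}
and the index on the right is $i+(l+1)-l=i+1$, which matches the claimed formula. No relations from $\cR$ beyond the propagated \relref{TB1} are needed here.

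I do not expect a real obstacle. The only point to check is that the hypotheses of Lemma \ref{lemma_TB6_inductive_step} hold at each step. They do: the lemma requires only $1\leq i<l<j$, which is preserved when $j$ increases, and its conclusion has index $i+(j+2)-l$, which is exactly the formula for the new value $j+2$. The parity of $j-l$ never changes along the induction, so the even case genuinely needs a separate base, as the paper indicates.
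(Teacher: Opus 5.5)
Your proposal is correct and matches the paper's proof: the base case $j=l+1$ comes from the definition $\Sigma_{i,l}=T_{i,l+1}T_{i+1,l+1}T_{i,l+1}$ together with $T_{i,l+1}^2=1$. You then apply Lemma \ref{lemma_TB6_inductive_step} $n=(j-l-1)/2$ times to reach the general odd case, exactly as the paper does.
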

\begin{proof}
  We first note that
  \begin{align*}
    T_{i, l+1}\Sigma_{i, l}=T_{i, l+1}T_{i, l+1}T_{i+1, l+1}T_{i, l+1}=T_{i+1, l+1}T_{i, l+1}
  \end{align*}
  holds by the definition of $\Sigma_{i, l}$.
  If $j-l$ is odd, then there exists an integer $n \geq 0$ such that $j=l+1+2n$.
  By applying Lemma \ref{lemma_TB6_inductive_step} $n$ times to the above equality, we obtain the desired result.
\end{proof}

Next, we prove the auxiliary equality needed for the even case.
\begin{proposition} \label{proposition_auxiliary_TB6}
  For any $1 \leq i < r <j$ with $r-i$ even, we have
  \begin{align*}
    T_{i, j}T_{r, j}=\Sigma_{i, i+j-r}T_{i, j}.
  \end{align*}
\end{proposition}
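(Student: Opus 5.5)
The statement is the even-gap companion of Lemma~\ref{lemma_TB6_odd_case}, written in the form of \relref{TB4}. Put $l \coloneqq i+j-r$. Then $i<l<j$ and $j-l=r-i$ is even. Since $T_{i,j}^2=\Sigma_{i,l}^2=1$, the desired equality $T_{i,j}T_{r,j}=\Sigma_{i,l}T_{i,j}$ is equivalent to
\begin{align*}
  T_{i,j}\Sigma_{i,l}=T_{i+j-l,j}T_{i,j},
\end{align*}
which is exactly the hypothesis/conclusion format of Lemma~\ref{lemma_TB6_inductive_step}. That lemma keeps $i$ and $l$ fixed and replaces $j$ by $j+2$, so $r=i+j-l$ also increases by $2$. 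My plan is therefore to prove the base case $r=i+2$ for every $j\geq i+3$, and then climb in steps of two.

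\emph{Base case $r=i+2$.} I would show directly that $T_{i,j}T_{i+2,j}T_{i,j}=\Sigma_{i,j-2}$ for all $j\geq i+3$. By Theorem~\ref{theorem_TD4}, $T_{i,j}=T_{i,j-1}X_{i-1}\Sigma_{i,i+1}$, so
\begin{align*}
  T_{i,j}T_{i+2,j}T_{i,j}
  = T_{i,j-1}X_{i-1}\Sigma_{i,i+1}T_{i+2,j}\Sigma_{i,i+1}X_{i-1}^{-1}T_{i,j-1}.
\end{align*}
Proposition~\ref{proposition_commute_Sigmaii+1Tkl} applies since $i+1<i+2<j$; it lets $\Sigma_{i,i+1}$ pass through $T_{i+2,j}$, after which the two copies of $\Sigma_{i,i+1}$ cancel. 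Next, Theorem~\ref{theorem_TD5} with index $i+1\geq 2$ and $k=i-1\leq (i+1)-2$ gives $X_{i-1}T_{i+2,j}X_{i-1}^{-1}=T_{i+1,j-1}$. This leaves $T_{i,j-1}T_{i+1,j-1}T_{i,j-1}$, which is $\Sigma_{i,j-2}$ by definition. Multiplying on the right by $T_{i,j}$ gives the base case.

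\emph{Inductive step.} Now take general $(i,r,j)$ with $r-i$ even and $r\geq i+4$, and set $n\coloneqq (r-i-2)/2$. The triple $(i,\,i+2,\,j-2n)$ has the same $l$ and satisfies $j-2n>i+2$, because $j>r$. So it is covered by the base case, and in \relref{TB6} form it reads $T_{i,j-2n}\Sigma_{i,l}=T_{i+2,j-2n}T_{i,j-2n}$. Applying Lemma~\ref{lemma_TB6_inductive_step} $n$ times yields $T_{i,j}\Sigma_{i,l}=T_{r,j}T_{i,j}$. Inverting both sides and using the involution relations gives the claim.

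The only step needing care is the base case. Specifically, I need to check that the index hypotheses of Proposition~\ref{proposition_commute_Sigmaii+1Tkl} and Theorem~\ref{theorem_TD5} hold for every $i\geq 1$, including $i=1$, where $X_{i-1}=X_0$ and one could alternatively use Lemma~\ref{lemma_TS_conj_x0}. Everything else is bookkeeping of indices under Lemma~\ref{lemma_TB6_inductive_step}.
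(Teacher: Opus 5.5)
Your proof is correct and follows the paper's argument. The base case $r=i+2$ uses the same chain of results (Theorem~\ref{theorem_TD4}, Proposition~\ref{proposition_commute_Sigmaii+1Tkl}, Theorem~\ref{theorem_TD5}, and the definition of $\Sigma_{i,j-2}$), and the general case applies Lemma~\ref{lemma_TB6_inductive_step} to the triple $(i,i+2,j-2s)$ exactly as the paper does. The only difference is that you carry out the base computation uniformly for all $i\geq 1$, whereas the paper treats $i=1,2$ and then transfers to larger $i$ via Lemma~\ref{lemma_TS_conj_x0}; your version is valid because the cited results hold for all admissible indices, including $i=1$.
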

\begin{proof}
  We first consider the case $i=1$ and $r=3$.
  By Theorems \ref{theorem_TD4}, \ref{theorem_TD5}, Proposition \ref{proposition_commute_Sigmaii+1Tkl}, and the definition of $\Sigma_{1, j-2}$, we have
  \begin{align*}
    T_{1, j}T_{3, j}
     & = T_{1, j-1}X_0\Sigma_{1, 2}T_{3, j}       \\
     & = T_{1, j-1}X_0 T_{3, j}\Sigma_{1, 2}      \\
     & = T_{1, j-1} T_{2, j-1}X_0\Sigma_{1, 2}    \\
     & =\Sigma_{1, j-2}T_{1, j-1}X_0\Sigma_{1, 2} \\
     & = \Sigma_{1, j-2}T_{1, j},
  \end{align*}
  which is the desired equality.

  Next, we consider the case $i=2$ and $r=4$.
  Similar to the previous case, by Theorems \ref{theorem_TD4}, \ref{theorem_TD5}, Proposition \ref{proposition_commute_Sigmaii+1Tkl}, and the definition of $\Sigma_{2, j-2}$, we have
  \begin{align*}
    T_{2, j}T_{4, j}
     & =T_{2, j-1}X_1 \Sigma_{2, 3} T_{4, j}        \\
     & = T_{2, j-1}X_1 T_{4, j}\Sigma_{2, 3}        \\
     & = T_{2, j-1} T_{3, j-1}X_1 \Sigma_{2, 3}     \\
     & = \Sigma_{2, j-2}T_{2, j-1}X_1 \Sigma_{2, 3} \\
     & = \Sigma_{2, j-2}T_{2, j},
  \end{align*}
  which is the desired equality.

  The cases $i=1$ and $i=2$ above, together with Lemma \ref{lemma_TS_conj_x0}, imply that
  \begin{align}
    T_{i, j}T_{i+2, j}=\Sigma_{i, j-2}T_{i, j} \label{eq:secTB6_s=0}
  \end{align}
  holds for any $1 \leq i$ and $j>i+2$.

  Finally, fix $1 \leq i <r<j$ with $r-i$ even.
  We show that
  \begin{align*}
    T_{i, j}T_{r, j}=\Sigma_{i, i+j-r}T_{i, j}
  \end{align*}
  holds.
  Since $r-i$ is even, there exists an integer $s \geq 0$ such that $r=i+2+2s$ holds.
  If $s=0$, then this equality is equation \eqref{eq:secTB6_s=0}.
  Assume $s>0$ and let $j^\prime \coloneqq j-2s$.
  Since $j^\prime > i+2$, again by equation \eqref{eq:secTB6_s=0}, we have
  \begin{align*}
    T_{i, j^\prime}T_{i+2, j^\prime}=\Sigma_{i, j^\prime-2}T_{i, j^\prime}=\Sigma_{i, i+j-r}T_{i, j^\prime}.
  \end{align*}
  By taking the inverse of both sides, we have
  \begin{align*}
    T_{i, j^\prime}\Sigma_{i, i+j-r}=T_{i+2, j^\prime}T_{i, j^\prime}.
  \end{align*}
  By applying Lemma \ref{lemma_TB6_inductive_step} $s$ times to this equality, we have
  \begin{align*}
    T_{i, j}\Sigma_{i, i+j-r}=T_{i+2+2s, j}T_{i, j}=T_{r, j}T_{i, j},
  \end{align*}
  which is equivalent to $T_{i, j}T_{r, j}=\Sigma_{i, i+j-r}T_{i, j}$.
\end{proof}

We now combine the previous results to prove the following:
\begin{theorem} \label{theorem_TB6}
  For any $1 \leq i <l < j$, we have
  \begin{align*}
    T_{i, j}\Sigma_{i, l}=T_{i+j-l, j}T_{i, j}.  \tag{\relref{TB6}}
  \end{align*}
\end{theorem}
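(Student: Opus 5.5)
The proof splits according to the parity of $j-l$, as the paper's preview of this subsection indicates.

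\emph{Odd gap.} If $j-l$ is odd, the statement is exactly Lemma \ref{lemma_TB6_odd_case}, so nothing remains to do.

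\emph{Even gap.} Suppose $j-l$ is even; necessarily $j-l\geq 2$. The plan is to use Proposition \ref{proposition_auxiliary_TB6} with
\begin{align*}
  r\coloneqq i+j-l .
\end{align*}
The hypotheses of that proposition hold:
\begin{itemize}
  \item $r-i=j-l$ is even;
  \item $r>i$, because $l<j$;
  \item $r<j$, because $i<l$.
\end{itemize}
Moreover $i+j-r=l$. Proposition \ref{proposition_auxiliary_TB6} therefore gives
\begin{align*}
  T_{i,j}T_{i+j-l,j}=\Sigma_{i,l}T_{i,j}.
\end{align*}
Inverting both sides, and using $T_{a,b}^{-1}=T_{a,b}$ and $\Sigma_{a,b}^{-1}=\Sigma_{a,b}$, yields
\begin{align*}
  T_{i+j-l,j}T_{i,j}=T_{i,j}\Sigma_{i,l},
\end{align*}
which is the desired relation.

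The only step needing care is the index bookkeeping: checking that $r$ lies strictly between $i$ and $j$ with the correct parity, and that inversion turns the form of Proposition \ref{proposition_auxiliary_TB6} into the form of \relref{TB6}. The substantive difficulty was already handled in Lemma \ref{lemma_TB6_inductive_step} and Proposition \ref{proposition_auxiliary_TB6}, so I expect no real obstacle here.
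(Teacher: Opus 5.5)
Your proposal is correct and follows the paper's proof exactly: the odd case is Lemma~\ref{lemma_TB6_odd_case}, and the even case applies Proposition~\ref{proposition_auxiliary_TB6} with $r=i+j-l$ and then inverts both sides using that $T_{a,b}$ and $\Sigma_{a,b}$ are involutions. Your index checks ($i<r<j$, $r-i$ even, $i+j-r=l$) are the same ones the paper relies on.
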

\begin{proof}
  If $j-l$ is odd, then it follows from Lemma \ref{lemma_TB6_odd_case}.
  If $j-l$ is even, then let $r \coloneqq i+j-l$.
  Since $1 \leq i < r < j$ holds and $r-i=j-l$ is even, by Proposition \ref{proposition_auxiliary_TB6}, we have
  \begin{align*}
    T_{i, j}T_{r, j}=\Sigma_{i, i+j-r}T_{i, j}.
  \end{align*}
  Since $i+j-r=l$, this becomes
  \begin{align*}
    T_{i, j}T_{i+j-l, j}=\Sigma_{i, l}T_{i, j}.
  \end{align*}
  Taking the inverse of both sides, we have
  \begin{align*}
    T_{i, j} \Sigma_{i, l}=T_{i+j-l, j}T_{i, j},
  \end{align*}
  which completes the proof.
\end{proof}
\subsection[Propagation of relation (TB7)]{Propagation of \relref{TB7}}
In this subsection, we show that
\begin{align*}
  T_{i, j}\Sigma_{k, l}=\Sigma_{i+j-l, i+j-k}T_{i, j}
\end{align*}
holds for any $1 \leq i<k<l<j$.
We prove this by first proving another equation.
\begin{lemma} \label{lemma_TB7_equivalent_relations}
  For any $i \geq 1$ and $j \geq i+2$, the following are equivalent$:$
  \begin{enumerate}
    \item $\Sigma_{i, j}\Sigma_{i, j-1}\Sigma_{i, j}=\Sigma_{i+1, j}$ holds$;$
    \item $T_{i, j+1}\Sigma_{i+1, j}=\Sigma_{i+1, j}T_{i, j+1}$ holds.
  \end{enumerate}
\end{lemma}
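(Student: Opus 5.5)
The plan is to reduce both statements to one common identity. The tools are the definition $\Sigma_{i,j}=T_{i,j+1}T_{i+1,j+1}T_{i,j+1}$, the involution relations $T_{i,j}^2=\Sigma_{i,j}^2=1$, and Theorem \ref{theorem_TB6}, which has already been propagated in full generality.

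First I would expand the left-hand side of (1) using the definition of $\Sigma_{i,j}$:
\begin{align*}
  \Sigma_{i,j}\Sigma_{i,j-1}\Sigma_{i,j}
  = T_{i,j+1}T_{i+1,j+1}\,\bigl(T_{i,j+1}\Sigma_{i,j-1}\bigr)\,T_{i,j+1}T_{i+1,j+1}T_{i,j+1}.
\end{align*}
Next I would apply Theorem \ref{theorem_TB6} to the bracketed factor, with the triple $(i,l,j)$ there replaced by $(i,j-1,j+1)$. The required inequalities $1\le i<j-1<j+1$ hold because $j\ge i+2$, and Theorem \ref{theorem_TB6} then gives
\begin{align*}
  T_{i,j+1}\Sigma_{i,j-1}=T_{i+2,j+1}T_{i,j+1}.
\end{align*}
Substituting this, the adjacent pair $T_{i,j+1}T_{i,j+1}$ cancels by $T_{i,j+1}^2=1$, which leaves
\begin{align*}
  \Sigma_{i,j}\Sigma_{i,j-1}\Sigma_{i,j}
  = T_{i,j+1}\bigl(T_{i+1,j+1}T_{i+2,j+1}T_{i+1,j+1}\bigr)T_{i,j+1}
  = T_{i,j+1}\Sigma_{i+1,j}T_{i,j+1}.
\end{align*}
The last step uses the definition of $\Sigma_{i+1,j}$, which is meaningful since $i+1<j$.

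Therefore (1) is equivalent to $T_{i,j+1}\Sigma_{i+1,j}T_{i,j+1}=\Sigma_{i+1,j}$. Multiplying this on the right by $T_{i,j+1}$ and using $T_{i,j+1}^2=1$ turns it into exactly (2). The same manipulation run backwards recovers it from (2), so the two conditions are equivalent.

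I do not expect a genuine obstacle, since the argument is a direct computation. The only point needing care is the index bookkeeping in the single application of Theorem \ref{theorem_TB6}. Its output index $i+(j+1)-(j-1)=i+2$ must match the middle factor of $\Sigma_{i+1,j}$, and all indices must stay within the ranges where $\Sigma_{i,j-1}$ and $\Sigma_{i+1,j}$ are defined. Both requirements are exactly what the hypothesis $j\ge i+2$ guarantees.
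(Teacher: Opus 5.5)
Your proposal is correct and follows the paper's proof. Both expand $\Sigma_{i,j}\Sigma_{i,j-1}\Sigma_{i,j}$ by the definition, apply Theorem \ref{theorem_TB6} with $(i,l,j)$ replaced by $(i,j-1,j+1)$ to rewrite it as $T_{i,j+1}\Sigma_{i+1,j}T_{i,j+1}$, and then use that $T_{i,j+1}$ is an involution to pass between (1) and (2).
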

\begin{proof}
  We first note that
  \begin{align*}
    \Sigma_{i, j}\Sigma_{i, j-1}\Sigma_{i, j}=T_{i, j+1}\Sigma_{i+1, j}T_{i, j+1}
  \end{align*}
  holds.
  Indeed, by Theorem \ref{theorem_TB6}, we have
  \begin{align*}
    T_{i, j+1}\Sigma_{i, j-1}=T_{i+2, j+1}T_{i, j+1}.
  \end{align*}
  Since this equality is equivalent to $T_{i, j+1}\Sigma_{i, j-1}T_{i, j+1}=T_{i+2, j+1}$, by the definition of $\Sigma_{i, j}$ and $\Sigma_{i+1, j}$, we have
  \begin{align*}
    \Sigma_{i, j}\Sigma_{i, j-1}\Sigma_{i, j}
     & =T_{i, j+1}T_{i+1, j+1}T_{i, j+1}\Sigma_{i, j-1}T_{i, j+1}T_{i+1, j+1}T_{i, j+1} \\
     & =T_{i, j+1}T_{i+1, j+1}T_{i+2, j+1}T_{i+1, j+1}T_{i, j+1}                        \\
     & =T_{i, j+1}\Sigma_{i+1, j}T_{i, j+1}.
  \end{align*}

  Assume that (1) holds.
  Then, by the above equality, we have
  \begin{align*}
    T_{i, j+1}\Sigma_{i+1, j}T_{i, j+1}=\Sigma_{i, j}\Sigma_{i, j-1}\Sigma_{i, j}=\Sigma_{i+1, j},
  \end{align*}
  which is equivalent to $T_{i, j+1}\Sigma_{i+1, j}=\Sigma_{i+1, j}T_{i, j+1}$.

  Conversely, assume that (2) holds.
  Then we have
  \begin{align*}
    \Sigma_{i+1, j}=T_{i, j+1}\Sigma_{i+1, j}T_{i, j+1}=\Sigma_{i, j}\Sigma_{i, j-1}\Sigma_{i, j},
  \end{align*}
  which is (1).
\end{proof}

Next, we prove the first condition of Lemma \ref{lemma_TB7_equivalent_relations}.
\begin{proposition} \label{proposition_TB7_induction2}
  For any $i \geq 1$ and $j \geq i+2$, we have
  \begin{align*}
    \Sigma_{i, j}\Sigma_{i, j-1}\Sigma_{i, j}=\Sigma_{i+1, j}.
  \end{align*}
\end{proposition}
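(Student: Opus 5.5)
We will prove the identity by reducing the first index and then inducting on $j$. For $i\geq 3$, Lemma~\ref{lemma_TS_conj_x0} lets us conjugate every factor by $X_0^{i-2}$. This turns the claimed identity into $\Sigma_{2,j'}\Sigma_{2,j'-1}\Sigma_{2,j'}=\Sigma_{3,j'}$ with $j'=j-(i-2)$, so it suffices to treat $i=1$ and $i=2$. For these two values we fix $i$ and induct on $j\geq i+2$.

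The base cases are $j=i+2$ and, if needed, $j=i+3$. In these cases every $\Sigma$ expands into $T_{a,b}$ with $b\leq 6$. Via Lemma~\ref{lemma_TB7_equivalent_relations} the identity is equivalent to
\[
T_{i,j+1}\Sigma_{i+1,j}=\Sigma_{i+1,j}T_{i,j+1},
\]
which is an instance of \relref{TB7} in $\cR$ with all indices at most $7$. Hence the base cases hold in $CV_7$ directly.

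For the inductive step we express $\Sigma_{i,j+1}$ through $\Sigma_{i,j}$. Lemma~\ref{lemma_TC2_right} gives
\[
\Sigma_{i,j+1}=X_{i-1}^{-1}\Sigma_{i,j}X_{j-1}\Sigma_{j,j+1}.
\]
Since $X_{i-1}$ normalizes the relevant letters in a controlled way, it is more convenient to use Lemma~\ref{lemma_TC2_left}. It gives
\[
\Sigma_{i,j+1}=X_{j-1}^{-1}\Sigma_{i,j}X_{i-1}\Sigma_{i,i+1},
\]
and similarly for $\Sigma_{i,j}$ in terms of $\Sigma_{i,j-1}$, and for $\Sigma_{i+1,j+1}$ in terms of $\Sigma_{i+1,j}$ via Theorem~\ref{theorem_TC2}. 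We substitute these expressions into $\Sigma_{i,j+1}\Sigma_{i,j}\Sigma_{i,j+1}$. We then move the $X$-letters to the outside using Theorem~\ref{theorem_TC1}, Theorem~\ref{theorem_TC3} and Lemma~\ref{lemma_TC2_inner}. The adjacent transpositions $\Sigma_{k,k+1}$ are moved through the long $\Sigma$'s using Proposition~\ref{proposition_adjacent_TB3} and Theorem~\ref{theorem_TB2}. The aim is to rewrite the word as a conjugate, by a word in the $X$'s and $\Sigma_{i,i+1}$, of $\Sigma_{i,j}\Sigma_{i,j-1}\Sigma_{i,j}$. The inductive hypothesis then replaces this core by $\Sigma_{i+1,j}$, and undoing the same rewrites in reverse (again by Theorem~\ref{theorem_TC2}) should produce $\Sigma_{i+1,j+1}$.

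An alternative, if the bookkeeping above fails, is to propagate form (2) of Lemma~\ref{lemma_TB7_equivalent_relations} instead of form (1). We would mimic Lemma~\ref{lemma_TB6_inductive_step}, writing
\[
T_{i,j+3}=X_{i-1}^{-2}T_{i,j+1}X_{j}T_{j+1,j+3}
\]
by Theorem~\ref{theorem_TD2}. This requires controlling how $\Sigma_{i+1,j}$ passes through $X_j$ and $T_{j+1,j+3}$. That passage would use Theorem~\ref{theorem_TC2} with $k=j$, and the overlap of $\Sigma_{i+1,j+2}$ with $T_{j+1,j+3}$ would be handled by Theorem~\ref{theorem_TB6}. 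Inducting in steps of two in $j$, as in Lemma~\ref{lemma_TB6_odd_case}, then needs two base cases per parity, which are again supplied by $\cR$.

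The main obstacle is the overlap between $\Sigma_{j,j+1}$ (respectively $X_{j-1}$) and $\Sigma_{i,j}$ at the right endpoint. These letters do not commute, so the inductive step needs the reversal rule of Proposition~\ref{proposition_adjacent_TB3},
\[
\Sigma_{i,j}\Sigma_{j-1,j}=\Sigma_{i,i+1}\Sigma_{i,j},
\]
to carry the adjacent generator across to the left end, where it commutes with the $X$-letters by Lemma~\ref{lemma_commute_Sigmaii+1Xm}. We would first try to arrange the substitutions so that every such overlap occurs in exactly this form.
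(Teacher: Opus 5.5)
\textbf{Verdict: gap in the inductive step.} Your reduction to $i\in\{1,2\}$ via Lemma~\ref{lemma_TS_conj_x0} is fine. Your base case via Lemma~\ref{lemma_TB7_equivalent_relations} and an instance of \relref{TB7} in $\cR$ is also fine; one can instead quote \relref{TB3} in $\cR$ directly, as the paper does.

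The gap is the inductive step. It is only described, never carried out, and the mechanism you describe does not close. Substituting the expression of Lemma~\ref{lemma_TC2_left} into each of the three long factors separately amounts to three unrelated clonings. The separating words $X_{i-1}\Sigma_{i,i+1}X_{j-2}^{-1}$ and $X_{i-1}\Sigma_{i,i+1}X_{j-1}^{-1}$ then have no reason to migrate outward. The natural move, pushing them through a neighbouring long factor by \relref{TC2}, just undoes the substitution.

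Even organised as a single cloning, the left endpoint is the wrong place. Apply Theorem~\ref{theorem_TC2} with $k=i-1$, then Theorem~\ref{theorem_TC1}, then Theorem~\ref{theorem_TC2} with $k=j-1$:
\[
\Sigma_{i,j}\Sigma_{i,j-1}\Sigma_{i,j}X_{i-1}
=X_{i-1}\Sigma_{i,j+1}\Sigma_{j,j+1}\Sigma_{i,j-1}\Sigma_{i,j+1}\Sigma_{i,i+1}.
\]
The middle factor stays $\Sigma_{i,j-1}$, because the cloned strand is at position $j$ when it reaches it. Meanwhile $\Sigma_{i+1,j}X_{i-1}=X_{i-1}\Sigma_{i+2,j+1}$ by Theorem~\ref{theorem_TC3}. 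So the hypothesis is transported to an identity relating $\Sigma_{i,j+1}\Sigma_{i,j-1}\Sigma_{i,j+1}$ and $\Sigma_{i+2,j+1}$, not to the statement at $j+1$.

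Your fallback through \relref{TD2} is likewise incomplete. It never explains how $\Sigma_{i+1,j+2}$ is traded for $\Sigma_{i+1,j}$.

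\textbf{The missing idea.} Clone the whole inductive identity at one \emph{interior} strand, i.e.\ multiply on the right by $X_i$. Apply Theorem~\ref{theorem_TC2} with $k=i$, then with $k=j-2$ for $\Sigma_{i,j-1}$, then with $k=i-1$. Finally use Proposition~\ref{proposition_adjacent_TB3} in the form $\Sigma_{i,j}\Sigma_{j-1,j}=\Sigma_{i,i+1}\Sigma_{i,j}$, which is exactly the overlap you anticipated. One gets
\begin{align*}
\Sigma_{i,j}\Sigma_{i,j-1}\Sigma_{i,j}X_i
&=\Sigma_{i,j}\Sigma_{i,j-1}X_{j-2}\Sigma_{i,j+1}\Sigma_{i+1,i+2}
=\Sigma_{i,j}X_{i-1}\Sigma_{i,j}\Sigma_{j-1,j}\Sigma_{i,j+1}\Sigma_{i+1,i+2}\\
&=X_{j-1}\Sigma_{i,j+1}\Sigma_{i,i+1}\Sigma_{i,i+1}\Sigma_{i,j}\Sigma_{i,j+1}\Sigma_{i+1,i+2}
=X_{j-1}\Sigma_{i,j+1}\Sigma_{i,j}\Sigma_{i,j+1}\Sigma_{i+1,i+2}.
\end{align*}
Theorem~\ref{theorem_TC2} with $k=i$ also gives $\Sigma_{i+1,j}X_i=X_{j-1}\Sigma_{i+1,j+1}\Sigma_{i+1,i+2}$. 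Hence the identity at $(i,j)$ implies it at $(i,j+1)$ for all $i\ge 1$ and $j\ge i+2$. Only the base case $j=i+2$ is needed, with no second base case and no reduction in $i$ for the step. The $\Sigma_{i,i+1}$ produced by the overlap cancels against the one from the left-end clone of the outer factor, so no commutation with $X$-letters is needed.

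\textbf{Comparison with the paper.} The corrected route stays in form (1) throughout and uses only \relref{TC2} and the adjacent case of \relref{TB3}. The paper proceeds differently:
\begin{enumerate}
\item It derives $T_{i,j}\Sigma_{i+1,i+2}=\Sigma_{j-2,j-1}T_{i,j}$ from the base case and Theorem~\ref{theorem_TB6}.
\item It propagates form (2) of Lemma~\ref{lemma_TB7_equivalent_relations} by induction on $j-i$, using Theorems~\ref{theorem_TD3} and~\ref{theorem_TC2} and Proposition~\ref{proposition_adjacent_TB3}.
\item It converts back to form (1) via that lemma.
\end{enumerate}
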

\begin{proof}
  We first show that
  \begin{align*}
    \Sigma_{i, i+2}\Sigma_{i, i+1}\Sigma_{i, i+2}=\Sigma_{i+1, i+2}
  \end{align*}
  holds for any $i \geq 1$.
  If $i=1$ or $2$, then it follows from \relref{TB3} in $\cR$.
  For the case of $i \geq 3$, by applying Lemma \ref{lemma_TS_conj_x0} $i-2$ times to $\Sigma_{2, 4}\Sigma_{2, 3}\Sigma_{2, 4}=\Sigma_{3, 4}$, which is the case of $i=2$, we obtain the desired equality.

  Next, we see that $T_{i, j}\Sigma_{i+1, i+2}=\Sigma_{j-2, j-1}T_{i, j}$ holds for any $i \geq 1$ and $j > i+2$.
  Indeed, by the above equality and Theorem \ref{theorem_TB6}, we have
  \begin{align}
    T_{i, j}\Sigma_{i+1, i+2}T_{i, j}
     & =T_{i, j}\Sigma_{i, i+2} \Sigma_{i, i+1}\Sigma_{i, i+2}T_{i, j} \notag                                      \\
     & =(T_{i, j}\Sigma_{i, i+2}T_{i, j})(T_{i, j}\Sigma_{i, i+1}T_{i, j})(T_{i, j}\Sigma_{i, i+2}T_{i, j}) \notag \\
     & =T_{j-2, j}T_{j-1, j}T_{j-2, j} \notag                                                                      \\
     & =\Sigma_{j-2, j-1}, \label{eq:TB7_induction1}
  \end{align}
  which is equivalent to the desired equality.

  Finally, we show that $T_{i, j+1}\Sigma_{i+1, j}=\Sigma_{i+1, j}T_{i, j+1}$ holds for any $i \geq 1$ and $j \geq i+2$ by induction on $d \coloneqq j-i \geq 2$.
  If $d=2$, then $T_{i, i+3}\Sigma_{i+1, i+2}=\Sigma_{i+1, i+2}T_{i, i+3}$ follows from equation \eqref{eq:TB7_induction1} with $j=i+3$.

  Assume that $d \geq 3$ and
  \begin{align*}
    T_{i, j}\Sigma_{i+1, j-1}=\Sigma_{i+1, j-1}T_{i, j}
  \end{align*}
  holds.
  Then we have $T_{i, j+1}\Sigma_{i+1, j}=\Sigma_{i+1, j}T_{i, j+1}$.
  Indeed, by Proposition \ref{proposition_adjacent_TB3}, Theorems \ref{theorem_TD3}, \ref{theorem_TC2}, the inductive hypothesis, and \eqref{eq:TB7_induction1}, we have
  \begin{align*}
    T_{i, j+1}\Sigma_{i+1, j}
     & = T_{i, j+1}\Sigma_{i+1, j}\Sigma_{i+1, i+2} \Sigma_{i+1, i+2}           \\
     & = T_{i, j+1}\Sigma_{j-1, j}\Sigma_{i+1, j} \Sigma_{i+1, i+2}             \\
     & = X_i^{-1}X_i T_{i, j+1}\Sigma_{j-1, j}\Sigma_{i+1, j} \Sigma_{i+1, i+2} \\
     & = X_i^{-1} T_{i, j}X_{j-2}\Sigma_{i+1, j}\Sigma_{i+1, i+2}               \\
     & = X_i^{-1} T_{i, j}\Sigma_{i+1, j-1}X_i                                  \\
     & = X_i^{-1} \Sigma_{i+1, j-1}T_{i, j} X_i                                 \\
     & = X_i^{-1} \Sigma_{i+1, j-1} X_{j-2}T_{i, j+1}\Sigma_{i+1, i+2}          \\
     & = X_i^{-1} X_i \Sigma_{i+1, j}\Sigma_{j-1, j}T_{i, j+1}\Sigma_{i+1, i+2} \\
     & = \Sigma_{i+1, j}\Sigma_{j-1, j}\Sigma_{j-1, j}T_{i, j+1}                \\
     & = \Sigma_{i+1, j}T_{i, j+1}.
  \end{align*}
  Hence, condition (2) of Lemma \ref{lemma_TB7_equivalent_relations} holds for all $i \geq 1$ and $j \geq i+2$.
  Therefore condition (1) also holds for all such $i$ and $j$.
  This completes the proof.
\end{proof}

We now use the previous proposition to prove \relref{TB7}.
\begin{theorem}\label{theorem_TB7}
  For any $1 \leq i < k<l<j$, we have
  \begin{align*}
    T_{i, j}\Sigma_{k, l}=\Sigma_{i+j-l, i+j-k}T_{i, j}. \tag{\relref{TB7}}
  \end{align*}
\end{theorem}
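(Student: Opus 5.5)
Throughout, write $\Sigma_{k,l}$ for the reversal of the interval $[k,l]$, so $T_{i,j}\Sigma_{k,l}T_{i,j}=\Sigma_{i+j-l,i+j-k}$ means conjugating by $T_{i,j}$ reflects the interval inside $[i,j]$. The plan is to prove \relref{TB7} by induction on $d\coloneqq l-k\geq 1$, keeping $i$ and $j$ fixed, and to reduce the inductive step to adjacent reversals, which are already available.

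For $d=1$, i.e.\ $l=k+1$, the statement is exactly Proposition~\ref{proposition_adjacent_TB7}, so there is nothing to prove.

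For $d\geq 2$, I would write $\Sigma_{k,l}$ in terms of reversals of length $d-1$. The natural choice is Proposition~\ref{proposition_TB7_induction2}, shifted so that its base index is $k$ (legitimate since $l\geq k+2$):
\[
\Sigma_{k+1,l}=\Sigma_{k,l}\Sigma_{k,l-1}\Sigma_{k,l}.
\]
This gives $\Sigma_{k,l-1}$ and $\Sigma_{k+1,l}$ as conjugates of one another, but it does not isolate $\Sigma_{k,l}$ itself, so I would not use it directly.

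Instead I would use \relref{TC2} (Theorem~\ref{theorem_TC2}) with $k$ in the role of $i$ and index $k-1$, rearranged as
\[
\Sigma_{k,l}=X_{l-2}^{-1}\,\Sigma_{k,l-1}\,X_{k-1}\,\Sigma_{k,k+1}.
\]
Conjugating the right-hand side by $T_{i,j}$ then splits into four pieces.
\begin{itemize}
\item $T_{i,j}\Sigma_{k,k+1}T_{i,j}=\Sigma_{i+j-k-1,i+j-k}$ by Proposition~\ref{proposition_adjacent_TB7}.
\item $\Sigma_{k,l-1}$, of length $d-1$, is handled by the induction hypothesis, but that hypothesis is needed for the larger tree index $j+1$.
\item To reach index $j+1$, move $X_{k-1}$ past $T_{i,j}$ by \relref{TD3} (Theorem~\ref{theorem_TD3}): since $i\leq k-1\leq j-2$, we get $T_{i,j}X_{k-1}=X_{i+j-k-1}T_{i,j+1}\Sigma_{k,k+1}$.
\item Handle $X_{l-2}^{-1}$ on the left the same way, using \relref{TD3} with $l-2$, or \relref{TD2} if $l-2=j-1$; the latter cannot occur since $l<j$.
\end{itemize}

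The safest way to organize this is to run the induction on $d$ simultaneously for all $i<j$, so that the hypothesis for $(i,j+1)$ is available. The computation would then go as follows.
\[
T_{i,j}\Sigma_{k,l}
= T_{i,j}X_{l-2}^{-1}\Sigma_{k,l-1}X_{k-1}\Sigma_{k,k+1}
= \Sigma_{l-1,l}\,T_{i,j+1}X_{i+j-l}^{-1}\,\Sigma_{k,l-1}\,X_{k-1}\Sigma_{k,k+1}.
\]
Here the second equality uses the inverse of \relref{TD3} for $X_{l-2}$. Then:
\begin{itemize}
\item commute $\Sigma_{k,l-1}$ and $X_{i+j-l}^{-1}$ using \relref{TC1} or \relref{TC3} (Theorems~\ref{theorem_TC1} and~\ref{theorem_TC3}) as the indices dictate;
\item apply the induction hypothesis to $T_{i,j+1}\Sigma_{\cdot,\cdot}$;
\item recombine $T_{i,j+1}$ with $X_{k-1}\Sigma_{k,k+1}$ back into $X^{\pm1}T_{i,j}$ via \relref{TD3};
\item reassemble the result into $\Sigma_{i+j-l,i+j-k}$ using \relref{TC2} once more.
\end{itemize}
Throughout, adjacent $\Sigma$'s are cancelled or commuted past each other by Proposition~\ref{proposition_commute_adjacent_SigmaSigma} and Proposition~\ref{proposition_adjacent_TB3}.

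The main obstacle is the index bookkeeping in this last step. One must check that every $X_m$ that is moved satisfies the hypotheses of the version of \relref{TC1}, \relref{TC2} or \relref{TC3} being applied. If the \relref{TC2} decomposition turns out to produce awkward boundary cases, my fallback is a different reduction. By Lemma~\ref{lemma_TB7_equivalent_relations} and Proposition~\ref{proposition_TB7_induction2}, $T_{i,j+1}$ commutes with $\Sigma_{i+1,j}$, which is \relref{TB7} for the maximal inner interval. Combined with \relref{TB6} (Theorem~\ref{theorem_TB6}) and Proposition~\ref{proposition_adjacent_TB3}, this shows that conjugation by $T_{i,j}$ acts on the reversals inside $(i,j)$ as the reflection. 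One could then propagate to general $[k,l]$ by conjugating with $\Sigma_{i+1,j-1}$ and with suitable shorter reversals, using \relref{TB3}-type identities.
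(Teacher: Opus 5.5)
Your decomposition $\Sigma_{k,l}=X_{l-2}^{-1}\Sigma_{k,l-1}X_{k-1}\Sigma_{k,k+1}$ is correct: it is Theorem~\ref{theorem_TC2} with $(i,j,k)$ replaced by $(k,l-1,k-1)$. The step after it is not. Inverting (TD3), $T_{i,j}X_{l-2}=X_{i+j-l}T_{i,j+1}\Sigma_{l-1,l}$, gives $X_{l-2}^{-1}T_{i,j}=\Sigma_{l-1,l}T_{i,j+1}X_{i+j-l}^{-1}$, with $X_{l-2}^{-1}$ to the \emph{left} of $T_{i,j}$. Your second equality instead needs $T_{i,j}X_{l-2}^{-1}=\Sigma_{l-1,l}T_{i,j+1}X_{i+j-l}^{-1}$. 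The two together would make $T_{i,j}$ commute with $X_{l-2}$, which is false; for example, $T_{1,5}$ and $X_2$ do not commute even in $V$. So your computation, including the claim that the hypothesis is needed at tree index $j+1$, rests on a false identity. Independently, the step ``commute $\Sigma_{k,l-1}$ and $X_{i+j-l}^{-1}$ by (TC1) or (TC3)'' is only possible when $i+j-l\ge l-1$ or $i+j-l\le k-2$. When the reflected interval overlaps $[k,l-1]$, as for $(i,k,l,j)=(1,4,7,10)$, only (TC2) applies, and it lengthens the reversal, which breaks an induction on $l-k$. Your fallback only restates the conclusion. Moreover, ``(TB3)-type identities'' beyond the adjacent case (Proposition~\ref{proposition_adjacent_TB3}) would be circular, since Theorem~\ref{theorem_TB3} is derived from (TB7).

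The idea can be rescued by moving the tree index \emph{down}. Put $a=i+j-l$ and $b=i+j-k-1$. Relation (TD3) for $T_{i,j-1}$ with index $a-1$ gives $X_{l-2}T_{i,j}=T_{i,j-1}X_{a-1}\Sigma_{a,a+1}$, that is, $T_{i,j}X_{l-2}^{-1}=\Sigma_{a,a+1}X_{a-1}^{-1}T_{i,j-1}$. The same relation with index $k-1$ gives $T_{i,j-1}X_{k-1}\Sigma_{k,k+1}=X_{b-1}T_{i,j}$. Both indices lie in the admissible range $[i,j-3]$ because $i<k$ and $k+2\le l<j$. Using the induction hypothesis for $(i,j-1)$ and $[k,l-1]$,
\[
T_{i,j}\Sigma_{k,l}=\Sigma_{a,a+1}X_{a-1}^{-1}T_{i,j-1}\Sigma_{k,l-1}X_{k-1}\Sigma_{k,k+1}=\Sigma_{a,a+1}X_{a-1}^{-1}\Sigma_{a,b}X_{b-1}T_{i,j}.
\]
Lemma~\ref{lemma_TC2_right} gives $X_{a-1}^{-1}\Sigma_{a,b}X_{b-1}=\Sigma_{a,b+1}\Sigma_{b,b+1}$, and Proposition~\ref{proposition_adjacent_TB3} gives $\Sigma_{a,b+1}\Sigma_{b,b+1}=\Sigma_{a,a+1}\Sigma_{a,b+1}$. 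Together they reduce the prefix to $\Sigma_{a,b+1}=\Sigma_{i+j-l,i+j-k}$. This repaired induction on $l-k$, with base case Proposition~\ref{proposition_adjacent_TB7}, needs neither Theorem~\ref{theorem_TB6} nor Proposition~\ref{proposition_TB7_induction2}.

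The paper instead inducts on $k-i$ and uses exactly the identity you set aside, but with base index $k-1$: $\Sigma_{k,l}=\Sigma_{k-1,l}\Sigma_{k-1,l-1}\Sigma_{k-1,l}$ does isolate $\Sigma_{k,l}$. For $k=i+1$ it writes $\Sigma_{i+1,l}=\Sigma_{i,l}\Sigma_{i,l-1}\Sigma_{i,l}$ and conjugates each factor using Theorem~\ref{theorem_TB6}. In the inductive step it conjugates the three factors by the hypothesis and recombines them with Proposition~\ref{proposition_TB7_induction2}.
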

\begin{proof}
  We show this by induction on $d \coloneqq k-i$.
  First, assume that $d=1$.
  By Proposition \ref{proposition_TB7_induction2}, Theorem \ref{theorem_TB6}, and the definition of $\Sigma_{i+j-l, j-1}$, we have
  \begin{align*}
    T_{i, j}\Sigma_{i+1, l}T_{i, j}
     & = T_{i, j}\Sigma_{i, l}\Sigma_{i, l-1}\Sigma_{i, l}T_{i, j}                                 \\
     & = (T_{i, j}\Sigma_{i, l}T_{i, j})(T_{i, j}\Sigma_{i, l-1}T_{i, j})(T_{i, j}\Sigma_{i, l}T_{i, j}) \\
     & = T_{i+j-l, j}T_{i+j-l+1, j}T_{i+j-l, j}                                                    \\
     & = \Sigma_{i+j-l, j-1},
  \end{align*}
  which gives the desired equality.

  Assume that $d>1$ and the equality holds for all indices with $k-i=d-1$.
  Namely, we assume that
  \begin{align*}
    T_{i, j}\Sigma_{k-1, l}
     & =\Sigma_{i+j-l, i+j-(k-1)}T_{i, j},    \\
    T_{i, j}\Sigma_{k-1, l-1}
     & =\Sigma_{i+j-(l-1), i+j-(k-1)}T_{i, j}
  \end{align*}
  hold.
  Then by Proposition \ref{proposition_TB7_induction2} and the induction hypothesis, we have
  \begin{align*}
    T_{i, j}\Sigma_{k, l}
     & = T_{i, j} \Sigma_{k-1, l}\Sigma_{k-1, l-1}\Sigma_{k-1, l}                                \\
     & = \Sigma_{i+j-l, i+j-(k-1)}T_{i, j}\Sigma_{k-1, l-1}\Sigma_{k-1, l}                       \\
     & = \Sigma_{i+j-l, i+j-(k-1)}\Sigma_{i+j-(l-1), i+j-(k-1)}T_{i, j}\Sigma_{k-1, l}           \\
     & = \Sigma_{i+j-l, i+j-(k-1)}\Sigma_{i+j-(l-1), i+j-(k-1)}\Sigma_{i+j-l, i+j-(k-1)}T_{i, j} \\
     & = \Sigma_{i+j-l, i+j-k}T_{i, j},
  \end{align*}
  which completes the proof.
\end{proof}
\subsection[Propagation of relation (TB3)]{Propagation of \relref{TB3}}
We divide the proof into several lemmas according to the indices.
\begin{lemma}\label{lemma_TB3_inner}
  For any $1 \leq i < k <l <j$, we have
  \begin{align*}
    \Sigma_{i, j}\Sigma_{k, l}=\Sigma_{i+j-l, i+j-k}\Sigma_{i, j}.
  \end{align*}
\end{lemma}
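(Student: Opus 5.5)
The plan is to follow the pattern of Lemma \ref{lemma_adjacent_TB3_inner}, now with Theorem \ref{theorem_TB7} in place of its adjacent special case Proposition \ref{proposition_adjacent_TB7}. Write $\Sigma_{i,j}=T_{i,j+1}T_{i+1,j+1}T_{i,j+1}$ by definition and push $\Sigma_{k,l}$ from right to left through the three $T$-factors, using Theorem \ref{theorem_TB7} at each step.

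First step: since $i<k<l<j<j+1$, Theorem \ref{theorem_TB7} gives
\begin{align*}
T_{i,j+1}\Sigma_{k,l}=\Sigma_{i+j+1-l,\,i+j+1-k}T_{i,j+1}.
\end{align*}
Second step: apply Theorem \ref{theorem_TB7} to $T_{i+1,j+1}$ and $\Sigma_{i+j+1-l,\,i+j+1-k}$. Its hypotheses are $i+1<i+j+1-l$, which is equivalent to $l<j$, and $i+j+1-k<j+1$, which is equivalent to $i<k$. Both hold, and the result is
\begin{align*}
T_{i+1,j+1}\Sigma_{i+j+1-l,\,i+j+1-k}=\Sigma_{(i+1)+(j+1)-(i+j+1-k),\,(i+1)+(j+1)-(i+j+1-l)}T_{i+1,j+1}=\Sigma_{k+1,l+1}T_{i+1,j+1}.
\end{align*}
Third step: apply Theorem \ref{theorem_TB7} to $T_{i,j+1}$ and $\Sigma_{k+1,l+1}$. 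This needs $i<k+1$ and $l+1<j+1$, both of which hold, and it yields
\begin{align*}
T_{i,j+1}\Sigma_{k+1,l+1}=\Sigma_{i+j-l,\,i+j-k}T_{i,j+1}.
\end{align*}
Chaining the three steps and recollecting $T_{i,j+1}T_{i+1,j+1}T_{i,j+1}=\Sigma_{i,j}$ gives $\Sigma_{i,j}\Sigma_{k,l}=\Sigma_{i+j-l,i+j-k}\Sigma_{i,j}$.

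The main obstacle is only the index bookkeeping: the strict inequalities required by Theorem \ref{theorem_TB7} must hold at each of the three applications. Because the statement assumes $i<k$ and $l<j$ strictly, every required inequality follows directly, as checked above. The boundary cases $k=i$ or $l=j$, where these strict inequalities fail, are excluded here and are presumably treated in the subsequent lemmas using Theorem \ref{theorem_TB6}.
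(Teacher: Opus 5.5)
Your proposal is correct and is the paper's proof: it also expands $\Sigma_{i,j}=T_{i,j+1}T_{i+1,j+1}T_{i,j+1}$ and moves $\Sigma_{k,l}$ leftward by three applications of Theorem~\ref{theorem_TB7}, passing through $\Sigma_{i+j+1-l,\,i+j+1-k}$ and $\Sigma_{k+1,l+1}$. Your verification of the strict index inequalities at each step is accurate.
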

\begin{proof}
  By the definition of $\Sigma_{i, j}$ and Theorem \ref{theorem_TB7}, we have
  \begin{align*}
    \Sigma_{i, j}\Sigma_{k, l}
     & = T_{i, j+1}T_{i+1, j+1}T_{i, j+1} \Sigma_{k, l}                                             \\
     & = T_{i, j+1}T_{i+1, j+1}\Sigma_{i+(j+1)-l, i+(j+1)-k}T_{i, j+1}                              \\
     & = T_{i, j+1} \Sigma_{(i+1)+(j+1)-(i+(j+1)-k), (i+1)+(j+1)-(i+(j+1)-l)}T_{i+1, j+1}T_{i, j+1} \\
     & = T_{i, j+1} \Sigma_{k+1, l+1}T_{i+1, j+1}T_{i, j+1}                                         \\
     & = \Sigma_{i+(j+1)-(l+1), i+(j+1)-(k+1)}T_{i, j+1}T_{i+1, j+1}T_{i, j+1}                      \\
     & = \Sigma_{i+j-l, i+j-k}\Sigma_{i, j},
  \end{align*}
  which completes the proof.
\end{proof}
\begin{lemma} \label{lemma_TB3_left}
  For any $1 \leq i < l < j$, we have
  \begin{align*}
    \Sigma_{i, j}\Sigma_{i, l}=\Sigma_{i+j-l, j}\Sigma_{i, j}.
  \end{align*}
\end{lemma}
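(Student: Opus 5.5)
The plan is to mimic the proof of Lemma \ref{lemma_TB3_inner}. We expand $\Sigma_{i,j}=T_{i,j+1}T_{i+1,j+1}T_{i,j+1}$ and push $\Sigma_{i,l}$ leftward through the three $T$-factors one at a time, using the fully propagated relations \relref{TB6} (Theorem \ref{theorem_TB6}) and \relref{TB7} (Theorem \ref{theorem_TB7}).

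First, since $i<l<j+1$, Theorem \ref{theorem_TB6} applied to $T_{i,j+1}$ gives
\begin{align*}
  T_{i,j+1}\Sigma_{i,l}=T_{i+j+1-l,j+1}T_{i,j+1}.
\end{align*}
After this step the word $\Sigma_{i,j}\Sigma_{i,l}$ becomes $T_{i,j+1}T_{i+1,j+1}T_{i+j+1-l,j+1}T_{i,j+1}$.

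Next we must handle the product $T_{i+1,j+1}T_{i+j+1-l,j+1}$ of two $T$'s with the same right index, which is an instance of \relref{B4}. Rather than using Proposition \ref{proposition_auxiliary_TB6}, which needs a parity condition, we take Theorem \ref{theorem_TB6} with indices $(i+1,\,l+1,\,j+1)$. This is valid since $i+1<l+1<j+1$, and it reads $T_{i+1,j+1}\Sigma_{i+1,l+1}=T_{i+j+1-l,j+1}T_{i+1,j+1}$. Inverting both sides, using $T^2=\Sigma^2=1$, gives
\begin{align*}
  T_{i+1,j+1}T_{i+j+1-l,j+1}=\Sigma_{i+1,l+1}T_{i+1,j+1}.
\end{align*}
The word therefore becomes $T_{i,j+1}\Sigma_{i+1,l+1}T_{i+1,j+1}T_{i,j+1}$.

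Finally, since $i<i+1<l+1<j+1$, Theorem \ref{theorem_TB7} gives
\begin{align*}
  T_{i,j+1}\Sigma_{i+1,l+1}=\Sigma_{i+(j+1)-(l+1),\,i+(j+1)-(i+1)}T_{i,j+1}=\Sigma_{i+j-l,j}T_{i,j+1}.
\end{align*}
The remaining factor $T_{i,j+1}T_{i+1,j+1}T_{i,j+1}$ is $\Sigma_{i,j}$ by definition, so the word equals $\Sigma_{i+j-l,j}\Sigma_{i,j}$, as claimed.

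The main point requiring care is the middle step, namely recognising that the $\tau\tau$ relation \relref{B4} can be recovered for arbitrary indices by inverting \relref{TB6}. The rest is index bookkeeping: we must check that each application of Theorems \ref{theorem_TB6} and \ref{theorem_TB7} has its hypotheses satisfied, and the strict inequalities $i<l<j$ are exactly what is needed.
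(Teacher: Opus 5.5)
Your proposal is correct and is essentially the paper's own proof. The paper also expands $\Sigma_{i,j}$, applies Theorem~\ref{theorem_TB6} to $T_{i,j+1}\Sigma_{i,l}$, and rewrites $T_{i+1,j+1}T_{i+j+1-l,j+1}=\Sigma_{i+1,l+1}T_{i+1,j+1}$ by inverting Theorem~\ref{theorem_TB6} with $(i+1,l+1,j+1)$. It then moves $\Sigma_{i+1,l+1}$ past $T_{i,j+1}$ using Theorem~\ref{theorem_TB7}, and your index checks match.
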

\begin{proof}
  By the definition of $\Sigma_{i,j}$, Theorem \ref{theorem_TB6}, and Theorem \ref{theorem_TB7}, we have
  \begin{align}
    \Sigma_{i, j}\Sigma_{i, l}
     & = T_{i,j+1}T_{i+1, j+1}T_{i, j+1}\Sigma_{i, l}  \notag                                  \\
     & = T_{i, j+1}T_{i+1, j+1}T_{i+j+1-l, j+1}T_{i, j+1} \notag                               \\
     & = T_{i, j+1}\Sigma_{i+1, l+1}T_{i+1, j+1}T_{i, j+1}      \label{eq:TB3_usingTB6inverse} \\
     & = \Sigma_{i+j+1-(l+1), i+j+1-(i+1)}T_{i, j+1}T_{i+1, j+1}T_{i, j+1} \notag              \\
     & = \Sigma_{i+j-l, j}\Sigma_{i, j}, \notag
  \end{align}
  where \eqref{eq:TB3_usingTB6inverse} follows from the inverse of both sides of the equation in Theorem \ref{theorem_TB6} with $(i, l, j)=(i+1, l+1, j+1)$.
  This completes the proof.
\end{proof}

\begin{lemma}\label{lemma_TB3_right}
  For any $1 \leq i < k < j$, we have
  \begin{align*}
    \Sigma_{i, j}\Sigma_{k, j}=\Sigma_{i, i+j-k}\Sigma_{i, j}.
  \end{align*}
\end{lemma}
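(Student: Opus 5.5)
The quickest route is to derive this from Lemma \ref{lemma_TB3_left} by inversion, exactly as Proposition \ref{proposition_adjacent_TB3} treated the case $j=k+1$. Every $\Sigma$ is an involution, so take Lemma \ref{lemma_TB3_left} with $l \coloneqq i+j-k$. Since $i<k<j$, we have $i<l<j$, so the lemma applies and gives
\begin{align*}
  \Sigma_{i, j}\Sigma_{i, i+j-k}=\Sigma_{i+j-(i+j-k), j}\Sigma_{i, j}=\Sigma_{k, j}\Sigma_{i, j}.
\end{align*}
Taking inverses of both sides, and using $\Sigma_{a,b}^{-1}=\Sigma_{a,b}$ (Corollary \ref{cor_TB1}), gives
\begin{align*}
  \Sigma_{i, i+j-k}\Sigma_{i, j}=\Sigma_{i, j}\Sigma_{k, j}.
\end{align*}
Reading this right to left is exactly the claimed identity $\Sigma_{i, j}\Sigma_{k, j}=\Sigma_{i, i+j-k}\Sigma_{i, j}$.

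The only step to check is the index bookkeeping. We need $l=i+j-k$ to satisfy $1\le i<l<j$: indeed $l>i$ iff $j>k$, and $l<j$ iff $i<k$. We also need $i+j-l=k$, which is immediate.

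No real obstacle arises here; all the substantive work is contained in Lemma \ref{lemma_TB3_left}, which in turn rests on Theorems \ref{theorem_TB6} and \ref{theorem_TB7}.

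As a fallback, one could instead mimic the computation in Lemma \ref{lemma_TB3_left} directly. Expand $\Sigma_{k,j}=T_{k,j+1}T_{k+1,j+1}T_{k,j+1}$ and move it through $T_{i,j+1}T_{i+1,j+1}T_{i,j+1}$ using Theorem \ref{theorem_TB7} together with the inverted form of Theorem \ref{theorem_TB6}. The inversion argument above is simpler and is the one I would use.
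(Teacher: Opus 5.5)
Your proposal is correct and is essentially the paper's own proof: apply Lemma~\ref{lemma_TB3_left} with $l=i+j-k$ (your index checks $i<l<j$ and $i+j-l=k$ are right) and then invert both sides using that each $\Sigma_{a,b}$ is an involution.
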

\begin{proof}
  By Lemma \ref{lemma_TB3_left} with $l=i+j-k$, we have
  \begin{align*}
    \Sigma_{i, j}\Sigma_{i, i+j-k}=\Sigma_{k, j}\Sigma_{i, j}.
  \end{align*}
  Taking the inverse of both sides gives
  \begin{align*}
    \Sigma_{i, j}\Sigma_{k, j}=\Sigma_{i, i+j-k}\Sigma_{i, j},
  \end{align*}
  as desired.
\end{proof}

\begin{theorem} \label{theorem_TB3}
  For any $1 \leq i \leq k < l \leq j$, we have
  \begin{align*}
    \Sigma_{i, j}\Sigma_{k, l}=\Sigma_{i+j-l, i+j-k}\Sigma_{i, j}. \tag{\relref{TB3}}
  \end{align*}
\end{theorem}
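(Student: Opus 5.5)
The plan is a case split on where the interval $[k,l]$ sits inside $[i,j]$, so that each case is one of the three lemmas just proved, plus two degenerate cases.

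\emph{Case $k=i$ and $l=j$.} Here the relation reads $\Sigma_{i,j}\Sigma_{i,j}=\Sigma_{i,j}\Sigma_{i,j}$, which is trivial.

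\emph{Case $i<k<l<j$.} This is exactly Lemma \ref{lemma_TB3_inner}.

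\emph{Case $k=i$ and $l<j$.} This is Lemma \ref{lemma_TB3_left}. The right-hand side there is $\Sigma_{i+j-l,j}\Sigma_{i,j}$, and this agrees with the required $\Sigma_{i+j-l,i+j-k}\Sigma_{i,j}$ because $i+j-k=j$ when $k=i$.

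\emph{Case $k>i$ and $l=j$.} This is Lemma \ref{lemma_TB3_right}. It gives $\Sigma_{i,i+j-k}\Sigma_{i,j}$, and since $i+j-l=i$ when $l=j$, this is the required right-hand side $\Sigma_{i+j-l,i+j-k}\Sigma_{i,j}$.

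These four cases exhaust $1\le i\le k<l\le j$, so the theorem follows. I expect no real obstacle, since all the substantive work is already in the three lemmas. The only points needing care are checking the index bookkeeping in the two boundary cases and making sure the hypotheses $i<l<j$ in Lemma \ref{lemma_TB3_left} and $i<k<j$ in Lemma \ref{lemma_TB3_right} hold. They do in each case: in the case $k=i$, $l<j$ we have $i=k<l<j$, and in the case $k>i$, $l=j$ we have $i<k<l=j$.
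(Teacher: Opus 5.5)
Your proposal is correct and follows the paper's proof exactly: the same four-way split on whether $k=i$ and whether $l=j$, with the cases handled by Lemmas \ref{lemma_TB3_inner}, \ref{lemma_TB3_left}, \ref{lemma_TB3_right}, plus the trivial case. Your observation that the case $k=i$, $l=j$ is a literal identity is slightly cleaner than the paper's remark that both sides equal $1$.
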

\begin{proof}
  If $i<k<l<j$, then the assertion follows from Lemma \ref{lemma_TB3_inner}.
  If $i=k<l<j$, then it follows from Lemma \ref{lemma_TB3_left}.
  If $i<k<l=j$, then it follows from Lemma \ref{lemma_TB3_right}.
  Finally, if $i=k$ and $l=j$, then it is trivial since both sides are equal to $1$.
\end{proof}
\subsection[Propagation of relation (TD1)]{Propagation of \relref{TD1}} \label{subsection_lastrelation}
In this subsection, we show that
\begin{align*}
  T_{i, j}X_k=X_{i-1}X_i \cdots X_{k-j+i-1}X_{k-j+i}^2T_{i, k+3}T_{j, k+3}
\end{align*}
holds for any $1 \leq i <j$ with $k > j-1$.
We first prove an auxiliary equality, and then use it to obtain the full propagation of \relref{TD1}.

\begin{lemma}\label{lemma_auxiliary_TD1}
  For any $1 \leq i <j$ and $s \geq -1$, we have
  \begin{align*}
    X_{i+s}T_{i, j+s+5}T_{j, j+s+5}=T_{i, j+s+4}T_{j, j+s+4}X_{j+s}.
  \end{align*}
\end{lemma}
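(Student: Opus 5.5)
The plan is to prove the identity by rewriting the right-hand side. Put $m \coloneqq j+s+4$, so the claim reads $X_{i+s}T_{i,m+1}T_{j,m+1}=T_{i,m}T_{j,m}X_{j+s}$. Starting from the right, I will push $X_{j+s}$ leftwards past $T_{j,m}$ and then past $T_{i,m}$, using Theorem~\ref{theorem_TD3} twice. This should produce $X_{i+s}T_{i,m+1}$ on the left, followed by a product of $T$'s and $\Sigma$'s. That product will then collapse to $T_{j,m+1}$ by the already established Theorems~\ref{theorem_TB7} and \ref{theorem_TB6}. I treat $s\ge 0$ and $s=-1$ separately, since the index ranges of \relref{TD3} fail when $s=-1$.

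\emph{Case $s\geq 0$.} First, since $j\le j+s\le m-2$, Theorem~\ref{theorem_TD3} applies to $T_{j,m}X_{j+s}$ and gives
\begin{align*}
  T_{j,m}X_{j+s}=X_{j+2}T_{j,m+1}\Sigma_{j+s+1,j+s+2}.
\end{align*}
Next, since $i\le j+2\le m-2$, Theorem~\ref{theorem_TD3} applies again to $T_{i,m}X_{j+2}$ and gives $T_{i,m}X_{j+2}=X_{i+s}T_{i,m+1}\Sigma_{j+3,j+4}$. Combining these, the right-hand side becomes
\begin{align*}
  X_{i+s}T_{i,m+1}\Sigma_{j+3,j+4}T_{j,m+1}\Sigma_{j+s+1,j+s+2}.
\end{align*}
Finally, we have $j<j+s+1$ and $j+s+2<m+1$, so Theorem~\ref{theorem_TB7} (the adjacent case, Proposition~\ref{proposition_adjacent_TB7}) gives $T_{j,m+1}\Sigma_{j+s+1,j+s+2}=\Sigma_{j+3,j+4}T_{j,m+1}$. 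Together with $\Sigma_{j+3,j+4}^2=1$, the middle factors cancel and we obtain the left-hand side.

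\emph{Case $s=-1$.} Here $m=j+3$, and I use Theorem~\ref{theorem_TD4} in place of \relref{TD3}: $T_{j,j+3}X_{j-1}=T_{j,j+4}\Sigma_{j,j+1}$. Then Theorem~\ref{theorem_TB6} with $l=j+1$ gives $T_{j,j+4}\Sigma_{j,j+1}=T_{j+3,j+4}T_{j,j+4}$, so the right-hand side is $T_{i,j+3}T_{j+3,j+4}T_{j,j+4}$. It therefore remains to show
\begin{align*}
  T_{i,j+3}T_{j+3,j+4}=X_{i-1}T_{i,j+4}.
\end{align*}
This is the step I expect to be the main obstacle. My first attempt will expand $T_{i,j+4}=T_{i,j+3}X_{i-1}\Sigma_{i,i+1}$ via Theorem~\ref{theorem_TD4}. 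This reduces the goal to an identity of the form $T_{j+3,j+4}=T_{i,j+3}X_{i-1}T_{i,j+3}X_{i-1}\Sigma_{i,i+1}$. I will then try to evaluate $T_{i,j+3}X_{i-1}T_{i,j+3}$ by conjugating with \relref{TD5} and Lemma~\ref{lemma_TS_conj_x0}, so as to reduce to $i=1$, and finish with Theorem~\ref{theorem_TB6}.

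If that route stalls, the alternative is to handle $s=-1$ by reducing, via Lemma~\ref{lemma_TS_conj_x0}, to finitely many instances with $i\in\{1,2\}$. Those instances would be checked in $\cR$ and propagated in $j$ with Theorem~\ref{theorem_TD2}, exactly as in Lemma~\ref{lemma_TB6_inductive_step}.
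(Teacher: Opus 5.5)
Your case $s\ge 0$ is correct and coincides with the paper's computation: two applications of Theorem~\ref{theorem_TD3}, then the adjacent case of \relref{TB7} to cancel $\Sigma_{j+3,j+4}$. Your first steps for $s=-1$ are also correct. Theorem~\ref{theorem_TD4} and Theorem~\ref{theorem_TB6} (with $l=j+1$, as you use it) turn the right-hand side into $T_{i,j+3}T_{j+3,j+4}T_{j,j+4}$, so everything reduces to
\begin{align*}
  X_{i-1}T_{i,j+4}=T_{i,j+3}T_{j+3,j+4}.
\end{align*}
This identity is where the gap is. You do not prove it, and the route you sketch would not work as stated. Lemma~\ref{lemma_TS_conj_x0} and \relref{TD5} only concern $T_{i,j}$ with $i\ge 2$. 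So conjugating by powers of $X_0$ can lower the first index to $2$ but never to $1$, and the case $i=1$ would stay open. The fallback is not worked out either. $\cR$ contains instances of the defining relations, not of this identity. It is also unclear how the mechanism of Lemma~\ref{lemma_TB6_inductive_step} (expand via \relref{TD2} and commute a fixed factor past the new letters) would apply, since here $T_{j,\cdot}$ and $X_{j-1}$ themselves move with $j$.

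The missing step is one more use of \relref{TD4}, solved for $X_{i-1}$. From $T_{i,j+3}X_{i-1}=T_{i,j+4}\Sigma_{i,i+1}$ and $T_{i,j+3}^2=1$ you get $X_{i-1}=T_{i,j+3}T_{i,j+4}\Sigma_{i,i+1}$, hence
\begin{align*}
  X_{i-1}T_{i,j+4}=T_{i,j+3}\bigl(T_{i,j+4}\Sigma_{i,i+1}T_{i,j+4}\bigr)=T_{i,j+3}T_{j+3,j+4}.
\end{align*}
The last equality holds because Theorem~\ref{theorem_TB6}, with $(i,l,j)$ replaced by $(i,i+1,j+4)$, gives $T_{i,j+4}\Sigma_{i,i+1}=T_{j+3,j+4}T_{i,j+4}$.

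The same fix works inside your own reduced form. In $T_{i,j+3}X_{i-1}T_{i,j+3}X_{i-1}\Sigma_{i,i+1}$, apply \relref{TD4} to both occurrences of $T_{i,j+3}X_{i-1}$. Together with $\Sigma_{i,i+1}^2=1$ this gives $T_{i,j+4}\Sigma_{i,i+1}T_{i,j+4}$, which is $T_{j+3,j+4}$ by the same instance of \relref{TB6}. This works uniformly in $i$, with no reduction to small indices, and it is exactly the paper's argument for $s=-1$.
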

\begin{proof}
  We first show the case $s=-1$.
  Note that $X_{i-1}=T_{i, j+3}T_{i, j+4}\Sigma_{i, i+1}$ holds by Theorem \ref{theorem_TD4}.
  Then by Theorems \ref{theorem_TB6} and \ref{theorem_TD4}, we have
  \begin{align*}
    X_{i-1}T_{i, j+4}T_{j, j+4}
     & =T_{i, j+3}T_{i, j+4}\Sigma_{i, i+1}T_{i, j+4}T_{j, j+4}      \\
     & =T_{i, j+3}T_{i+j+4-(i+1), j+4}T_{i, j+4}T_{i, j+4}T_{j, j+4} \\
     & =T_{i, j+3}T_{j+3, j+4}T_{j, j+4}                             \\
     & =T_{i, j+3}T_{j, j+4}\Sigma_{j, j+1}                          \\
     & =T_{i, j+3}T_{j, j+3}X_{j-1},
  \end{align*}
  which is the desired equality.

  Next, assume that $s \geq 0$.
  By Theorems \ref{theorem_TD3} and \ref{theorem_TB7}, we have
  \begin{align*}
    T_{i, j+s+4}T_{j, j+s+4}X_{j+s}
     & = T_{i, j+s+4} X_{j+j+s+4-(j+s)-2} T_{j, j+s+5}\Sigma_{j+s+1, j+s+2}                \\
     & = T_{i, j+s+4} X_{j+2}T_{j, j+s+5}\Sigma_{j+s+1, j+s+2}                             \\
     & = X_{i+j+s+4-(j+2)-2}T_{i, j+s+5}\Sigma_{j+3, j+4}T_{j, j+s+5}\Sigma_{j+s+1, j+s+2} \\
     & = X_{i+s}T_{i, j+s+5}T_{j, j+s+5},
  \end{align*}
  which completes the proof.
\end{proof}
Next, for each $s \geq -1$, we define a word $P_s$ as follows:
\begin{align*}
  P_s
  \coloneqq
  \begin{cases}
    X_{i-1}^2                            & \text{if } s=-1,    \\
    X_{i-1}X_i^2                         & \text{if } s=0,     \\
    X_{i-1}X_i \cdots X_{i+s-1}X_{i+s}^2 & \text{if } s\geq 1.
  \end{cases}
\end{align*}
The following fact is easy to check.
\begin{lemma} \label{lemma_TD1_Ps}
  For any $s \geq -1$, we have
  \begin{align*}
    P_sX_{i+s+2}^2=P_{s+1}X_{i+s}.
  \end{align*}
\end{lemma}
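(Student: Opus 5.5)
The identity should follow from \relref{TA} alone, which holds in $CV_7$ among the words $X_m = X_0^{-(m-1)}X_1X_0^{m-1}$ because it is a consequence of the two commutator relators $[X_0X_1^{-1}, X_0^{-1}X_1X_0]=[X_0X_1^{-1}, X_0^{-2}X_1X_0^2]=1$. The plan is to isolate the common prefix of $P_s$ and $P_{s+1}$ and then move one letter past a squared letter using \relref{TA}.

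First I treat all $s \geq -1$ uniformly. Set $Q_s \coloneqq X_{i-1}X_i\cdots X_{i+s-1}$, with the convention that $Q_{-1}$ is the empty word and $Q_0 = X_{i-1}$. Checking the three cases in the definition of $P_s$ shows
\begin{align*}
  P_s = Q_s X_{i+s}^2, \qquad P_{s+1} = Q_{s+1}X_{i+s+1}^2 = Q_s X_{i+s} X_{i+s+1}^2 .
\end{align*}
For $s=-1$ this reads $P_{-1}=X_{i-1}^2$ and $P_0 = X_{i-1}X_i^2$. For $s=0$ it reads $P_0=X_{i-1}X_i^2$ and $P_1=X_{i-1}X_iX_{i+1}^2$. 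For $s \geq 1$ it is immediate.

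Next, with $m \coloneqq i+s \geq i-1 \geq 0$, I apply \relref{TA} in the form $X_{m+1}X_m = X_m X_{m+2}$ twice. In the second step below I use it in the form $X_{m+1}X_{m+2}=X_{m+1}X_m X_{m}^{-1}X_{m+2}$, equivalently the \relref{TA} instance $X_{m+2}X_m=X_mX_{m+3}$ is not needed. This gives
\begin{align*}
  X_{m+1}^2 X_m = X_{m+1}X_mX_{m+2} = X_m X_{m+2}X_{m+2} = X_m X_{m+2}^2 .
\end{align*}
Hence
\begin{align*}
  P_{s+1}X_{i+s} = Q_s X_m X_{m+1}^2 X_m = Q_s X_m X_m X_{m+2}^2 = P_s X_{i+s+2}^2 ,
\end{align*}
which is the claim.

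There is no real obstacle. The only point needing care is the index bookkeeping in the boundary cases $s=-1$ and $s=0$, where the "prefix" $Q_s$ degenerates, so I would check these two cases explicitly. For instance, when $s=-1$ the computation is $X_{i-1}X_i^2X_{i-1} = X_{i-1}X_{i-1}X_{i+1}^2$.
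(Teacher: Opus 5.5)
Your proof is correct and takes the paper's route: the paper only says the identity follows from the standard relations of $F$ and leaves the details to the reader, and your reduction to $X_{m+1}^2X_m = X_mX_{m+2}^2$ with $m=i+s\ge 0$, proved by applying the \relref{TA} instance $X_{m+1}X_m=X_mX_{m+2}$ twice, supplies exactly those details. The aside about $X_{m+1}X_{m+2}=X_{m+1}X_mX_m^{-1}X_{m+2}$ plays no role in your displayed computation and can be deleted.
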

\begin{proof} This follows directly from the standard relations of Thompson's group $F$, and we leave the details to the reader.
\end{proof}

We now turn to the last remaining propagation, namely that of \relref{TD1}.

\begin{theorem} \label{theorem_TD1}
  For any $1 \leq i < j$ with $k > j-1$, we have
  \begin{align*}
    T_{i, j}X_k=X_{i-1}X_i \cdots X_{k-j+i-1}X_{k-j+i}^2T_{i, k+3}T_{j, k+3}.  \tag{\relref{TD1}}
  \end{align*}
\end{theorem}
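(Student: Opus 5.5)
We will prove \relref{TD1} by induction on $s \coloneqq k-j \geq 0$, writing the claimed right-hand side as $P_s T_{i,j+s+3}T_{j,j+s+3}$. Since $k-j+i = i+s$, the prefix $X_{i-1}X_i\cdots X_{k-j+i-1}X_{k-j+i}^2$ is exactly $P_s$. The statement then reads
\begin{align*}
  T_{i,j}X_{j+s}=P_s T_{i,j+s+3}T_{j,j+s+3}\quad(s\ge 0).
\end{align*}
It is convenient to include $s=-1$ as the base case: for $s=-1$ this is \relref{TD2}, i.e.\ Theorem~\ref{theorem_TD2}, since $P_{-1}=X_{i-1}^2$ and $k=j-1$.

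For the inductive step, assume the identity holds for some $s\ge -1$. Since $j+s+1 > j+s$, relation \relref{TA} gives
\begin{align*}
  X_{j+s+1}=X_{j+s}^{-1}X_{j+s+1}X_{j+s}\cdot X_{j+s}^{-1}\cdots,
\end{align*}
but it is cleaner to go the other way. Starting from $T_{i,j}X_{j+s}X_{j+s+2}$, we write
\begin{align*}
  T_{i,j}X_{j+s+1}X_{j+s}=T_{i,j}X_{j+s}X_{j+s+2}=P_sT_{i,j+s+3}T_{j,j+s+3}X_{j+s+2}.
\end{align*}
Now apply Lemma~\ref{lemma_auxiliary_TD1} with parameter $s+2$ read right-to-left (valid since $s+2\ge -1$). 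This converts $T_{i,j+s+3}T_{j,j+s+3}X_{j+s-1}$-type terms; precisely, with parameter $s'$ the lemma says $T_{i,j+s'+4}T_{j,j+s'+4}X_{j+s'}=X_{i+s'}T_{i,j+s'+5}T_{j,j+s'+5}$. With $s'=s-1$ this turns $T_{i,j+s+3}T_{j,j+s+3}X_{j+s-1}$ into $X_{i+s-1}T_{i,j+s+4}T_{j,j+s+4}$, which shifts the pair of $T$'s up by one at the cost of an $X$ on the left. So the plan is to arrange the $X$ to the right of the $T$-pair to have index $j+s-1$ relative to the current pair, apply the lemma, and then collect the $X$'s on the left using Lemma~\ref{lemma_TD1_Ps}, $P_sX_{i+s+2}^2=P_{s+1}X_{i+s}$. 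Finally, we cancel the trailing $X_{j+s}$ (or whichever $X$ was introduced through the \relref{TA}-rewriting) from both sides.

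The main obstacle is matching the indices so that the extra $X$ produced on the right is exactly cancellable. Concretely, we will use \relref{TA} to write $X_{j+s+1}=X_{j+s}X_{j+s+2}X_{j+s}^{-1}$, apply the inductive hypothesis to $T_{i,j}X_{j+s}$, and then apply Lemma~\ref{lemma_auxiliary_TD1} twice to move the $T$-pair past $X_{j+s+2}X_{j+s}^{-1}$. We expect some bookkeeping trouble here, since one of these factors is an inverse. If so, the fallback is to verify the inverse form of Lemma~\ref{lemma_auxiliary_TD1}, $X_{j+s}^{-1}$ past $T_{i,j+s+4}T_{j,j+s+4}$, which is immediate by inverting the lemma. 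The resulting left factor should reduce, via Lemma~\ref{lemma_TD1_Ps} and \relref{TA} in $F$, to $P_{s+1}$. This completes the induction and hence \relref{TD1}.
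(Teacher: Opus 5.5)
Your skeleton matches the paper's. You run induction on $s\ge -1$ for $T_{i,j}X_{j+s}=P_sT_{i,j+s+3}T_{j,j+s+3}$ with base case Theorem~\ref{theorem_TD2}, rewrite $X_{j+s+1}=X_{j+s}X_{j+s+2}X_{j+s}^{-1}$ via \relref{TA}, and finish with Lemma~\ref{lemma_TD1_Ps} and cancellation of $X_{j+s}$. The gap is the central step: moving $X_{j+s+2}$ leftward across $T_{i,j+s+3}T_{j,j+s+3}$. This is not a bookkeeping issue, and Lemma~\ref{lemma_auxiliary_TD1} cannot do it.

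That lemma reads $T_{i,N}T_{j,N}X_{N-4}=X_{i+N-j-4}T_{i,N+1}T_{j,N+1}$ for $N\ge j+3$. Rearranged, it also moves $X_{N-5}^{-1}$ from the right of $T_{i,N}T_{j,N}$. Here $N=j+s+3$, and the letters you actually have are $X_{j+s+2}=X_{N-1}$ and $X_{j+s}^{-1}=X_{N-3}^{-1}$; neither fits. Your own attempts show this. The choice $s'=s+2$ concerns the pair $T_{i,j+s+6}T_{j,j+s+6}$. The choice $s'=s-1$ needs a letter $X_{j+s-1}$ that is not present, and it requires $s\ge 0$, so it would not even cover the first step from $s=-1$. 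As a result, the factor $X_{i+s+2}^2$ on which Lemma~\ref{lemma_TD1_Ps} operates never arises in your argument, and ``apply the lemma twice'' has no valid first application.

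The missing idea is that $X_{j+s+2}$ stands in the position of \relref{TD2} relative to $T_{j,j+s+3}$. The $X_{j-1}^2$ it produces is then pushed through $T_{i,j+s+3}$ by \relref{TD3}, and the cactus letters collapse:
\begin{align*}
T_{i,j+s+3}T_{j,j+s+3}X_{j+s+2}
&= T_{i,j+s+3}X_{j-1}^2T_{j,j+s+5}T_{j+s+3,j+s+5}\\
&= X_{i+s+2}T_{i,j+s+4}\Sigma_{j,j+1}X_{j-1}T_{j,j+s+5}T_{j+s+3,j+s+5}\\
&= X_{i+s+2}T_{i,j+s+4}X_{j}\Sigma_{j,j+2}\Sigma_{j,j+1}T_{j,j+s+5}T_{j+s+3,j+s+5}\\
&= X_{i+s+2}^2T_{i,j+s+5}\Sigma_{j+1,j+2}\Sigma_{j,j+2}\Sigma_{j,j+1}T_{j,j+s+5}T_{j+s+3,j+s+5}\\
&= X_{i+s+2}^2T_{i,j+s+5}T_{j,j+s+5}.
\end{align*}
The steps use, in order:
\begin{itemize}
\item Theorem~\ref{theorem_TD2};
\item Theorem~\ref{theorem_TD3};
\item Theorem~\ref{theorem_TC2}, for $\Sigma_{j,j+1}X_{j-1}$;
\item Theorem~\ref{theorem_TD3} again;
\item Theorem~\ref{theorem_TB3}, giving $\Sigma_{j+1,j+2}\Sigma_{j,j+2}=\Sigma_{j,j+2}\Sigma_{j,j+1}$;
\item the inverted form of Theorem~\ref{theorem_TB6}, giving $\Sigma_{j,j+2}T_{j,j+s+5}=T_{j,j+s+5}T_{j+s+3,j+s+5}$;
\item \relref{TB1}.
\end{itemize}

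Only then does Lemma~\ref{lemma_TD1_Ps} give $P_sX_{i+s+2}^2=P_{s+1}X_{i+s}$. A single application of Lemma~\ref{lemma_auxiliary_TD1} with parameter $s$ then turns $X_{i+s}T_{i,j+s+5}T_{j,j+s+5}$ into $T_{i,j+s+4}T_{j,j+s+4}X_{j+s}$, which cancels the trailing $X_{j+s}^{-1}$. This is the paper's inductive step. Without this \relref{TD2}/\relref{TD3} computation, your induction does not close.
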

\begin{proof}
  We first prove the following slightly stronger claim for every integer $s \geq -1$ by induction on $s$:
  \begin{align*}
    T_{i, j}X_{j+s}=P_sT_{i, j+s+3}T_{j, j+s+3}.
  \end{align*}
  If $s=-1$, then the claim follows directly from Theorem \ref{theorem_TD2}.

  Assume that $T_{i, j}X_{j+s}=P_sT_{i, j+s+3}T_{j, j+s+3}$ holds for some $s \geq -1$.
  By the inductive hypothesis, Theorems \ref{theorem_TD2}, \ref{theorem_TD3}, \ref{theorem_TC2}, \ref{theorem_TB3}, \ref{theorem_TB6}, and Lemmas \ref{lemma_TD1_Ps}, \ref{lemma_auxiliary_TD1}, we have
  \begin{align*}
    T_{i, j}X_{j+s+1}
     & = T_{i, j} X_{j+s}X_{j+s+2}X_{j+s}^{-1}                                                                                                \\
     & = P_sT_{i, j+s+3}T_{j, j+s+3} X_{j+s+2}X_{j+s}^{-1}                                                                                    \\
     & = P_s T_{i, j+s+3}X_{j-1}^2T_{j, j+s+5}T_{j+s+3, j+s+5} X_{j+s}^{-1}                                                                   \\
     & = P_s X_{i+j+s+3-(j-1)-2}T_{i, j+s+4}\Sigma_{j, j+1} X_{j-1}T_{j, j+s+5}T_{j+s+3, j+s+5} X_{j+s}^{-1}                                  \\
     & = P_s X_{i+s+2}T_{i, j+s+4} X_{j+j+1-(j-1)-2} \Sigma_{j, j+2}\Sigma_{j, j+1}T_{j, j+s+5}T_{j+s+3, j+s+5} X_{j+s}^{-1}                  \\
     & = P_s X_{i+s+2}T_{i, j+s+4} X_j \Sigma_{j, j+2}\Sigma_{j, j+1}T_{j, j+s+5}T_{j+s+3, j+s+5} X_{j+s}^{-1}                                \\
     & = P_s X_{i+s+2} X_{i+j+s+4-j-2} T_{i, j+s+5} \Sigma_{j+1, j+2} \Sigma_{j, j+2}\Sigma_{j, j+1}T_{j, j+s+5}T_{j+s+3, j+s+5} X_{j+s}^{-1} \\
     & = P_s X_{i+s+2}^2 T_{i, j+s+5} \Sigma_{j+1, j+2} \Sigma_{j, j+2}\Sigma_{j, j+1}T_{j, j+s+5}T_{j+s+3, j+s+5} X_{j+s}^{-1}               \\
     & = P_s X_{i+s+2}^2 T_{i, j+s+5} \Sigma_{j, j+2} \Sigma_{j, j+1}  \Sigma_{j, j+1}T_{j, j+s+5}T_{j+s+3, j+s+5} X_{j+s}^{-1}               \\
     & = P_s X_{i+s+2}^2 T_{i, j+s+5} \Sigma_{j, j+2} T_{j, j+s+5}T_{j+s+3, j+s+5} X_{j+s}^{-1}                                               \\
     & = P_s X_{i+s+2}^2 T_{i, j+s+5} T_{j, j+s+5} T_{j+s+3, j+s+5} T_{j+s+3, j+s+5} X_{j+s}^{-1}                                               \\
     & = P_s X_{i+s+2}^2 T_{i, j+s+5} T_{j, j+s+5}  X_{j+s}^{-1}                                                                              \\
     & = P_{s+1}X_{i+s} T_{i, j+s+5} T_{j, j+s+5}  X_{j+s}^{-1}                                                                               \\
     & = P_{s+1}T_{i, j+s+4}T_{j, j+s+4}X_{j+s}  X_{j+s}^{-1}                                                                                 \\
     & = P_{s+1}T_{i, j+s+4}T_{j, j+s+4},
  \end{align*}
  which proves the induction step.

  Taking $s=k-j$ and using the definition of $P_s$, we obtain
  \begin{align*}
    T_{i, j} X_k = X_{i-1} X_i \cdots X_{k-j+i-1}X_{k-j+i}^2 T_{i, k+3}T_{j, k+3},
  \end{align*}
  which completes the proof.
\end{proof}
\subsection{Proof of Theorem \ref{theorem_finitely_presented} and the required relations in the proof} \label{subsection_minimal_relations}
By the propagation results proved in the previous subsections, the words $X_i$, $T_{i,j}$, and $\Sigma_{i,j}$ satisfy all the defining relations of $CV_T$ for every admissible choice of indices.
Therefore, the map $\varphi\colon CV_T\to CV_7$ defined in Subsection \ref{subsection_finitely_presented_group_and_map} is a well-defined homomorphism.

\begin{proof}[Proof of Theorem \ref{theorem_finitely_presented}]
  Define a map
  \begin{align*}
    \theta\colon CV_7 & \longrightarrow CV_T
  \end{align*}
  by
  \begin{align*}
    \theta(X_0)= x_0, \quad
    \theta(X_1)= x_1, \quad
    \theta(T_{i,j})= \tau_{i,j}
    \quad
    (1\leq i<j\leq 7).
  \end{align*}
  By \relref{TA}, $CV_T$ satisfies the two relations for $X_0$ and $X_1$.
  Moreover, each relation in $\cR$ is a defining relation of $CV_T$ with indices at most $7$.
  Hence, $\theta$ is a well-defined homomorphism.

  We next observe the images of the words under $\theta$.
  By the definition of $X_i$ and \relref{TA}, we have $\theta(X_i)=x_i$ for every $i\geq 0$.
  We also have $\theta(T_{i,j})=\tau_{i,j}$ for every $1\leq i<j$.
  Indeed, for $i=1$, \relref{TD4} and \relref{TB1} give $\tau_{1,j+1}=\tau_{1,j}x_0\widehat{\sigma}_{1,2}$, and hence the definition of $T_{1,j}$ gives
  \begin{align*}
    \theta(T_{1,j})
    =
    \tau_{1,2}(x_0\widehat{\sigma}_{1,2})^{j-2}
    =
    \tau_{1, j}.
  \end{align*}
  Similarly, \relref{TD4} gives $\tau_{2,m}=\tau_{2,3}(x_1\widehat{\sigma}_{2,3})^{m-3}$ for every $m\geq 3$.
  Since \relref{TD5} with $k=0$ gives $x_0^{-1}\tau_{i,j}x_0=\tau_{i+1,j+1}$, for every $2\leq i<j$, we have
  \begin{align*}
    \theta(T_{i,j})
    =
    x_0^{-(i-2)}\tau_{2,3}(x_1\widehat{\sigma}_{2,3})^{j-i-1}x_0^{i-2}
    =
    x_0^{-(i-2)}\tau_{2,j-i+2}x_0^{i-2}
    =
    \tau_{i,j}.
  \end{align*}
  It follows that
  \begin{align*}
    (\theta\circ\varphi)(x_i)
    =
    \theta(X_i)
    =
    x_i, \quad
    (\theta\circ\varphi)(\tau_{i,j})
    =
    \theta(T_{i,j})
    =
    \tau_{i,j}
  \end{align*}
  holds for all $i\geq0$ and $1\leq i<j$.
  Since the elements $x_i$ and $\tau_{i,j}$ generate $CV_T$, we obtain $\theta\circ\varphi=\mathrm{id}_{CV_T}$.

  Conversely, by the above observation about $\theta$, we have
  \begin{align*}
    (\varphi\circ\theta)(X_0)
    =
    \varphi(x_0)
    =
    X_0, \quad
    (\varphi\circ\theta)(X_1)
    =
    \varphi(x_1)
    =
    X_1.
  \end{align*}
  Moreover, for $1\leq i<j\leq7$, we have
  \begin{align*}
    (\varphi\circ\theta)(T_{i,j})
    =
    \varphi(\tau_{i,j})
    =
    T_{i,j}.
  \end{align*}
  Since $X_0$, $X_1$, and the elements $T_{i,j}$ with
  $1\leq i<j\leq7$ generate $CV_7$, we obtain $\varphi\circ\theta=\mathrm{id}_{CV_7}$.

  Thus, $\varphi$ and $\theta$ are mutually inverse, and $CV_T\cong CV_7$.
  Combining this with Proposition \ref{Proposition_CV_CVT}, we obtain $CV \cong CV_7$.
  Since $CV_7$ is finitely presented, so is $CV$.
\end{proof}

We finish this subsection by recording the finite relations that are actually used in the propagation arguments.
The following list (Table \ref{table_used_finite_relations}) is not intended to be minimal.
Its purpose is only to make explicit which finite relations are needed in the proof above.
Among the relations from $\cR$ listed below, those that are not explicitly cited as elements of $\cR$ in the propagation arguments are used to verify that, for indices at most $7$, the representative words $T_{i,j}$ coincide with the corresponding generators of $CV_7$.
We believe that this list can be further reduced, but we leave this question for future research.

\section*{Acknowledgements}
The first author is partially supported by JSPS KAKENHI Grant numbers 24K22836 and 26K16986.
The second author is partially supported by JSPS KAKENHI Grant number 24KJ0144.

\begin{table}[p]
  \centering
  \renewcommand{\arraystretch}{1.25}
  \begin{tabular}{
      p{0.18\linewidth}
      >{\centering\arraybackslash}p{0.07\linewidth}
      p{0.69\linewidth}
    }
    \hline
    Relation family & Count & Instances used \\
    \hline

    \relref{TA}
                    & 2
                    &
    \(\begin{aligned}[t]
         & [X_0X_1^{-1},X_0^{-1}X_1X_0]=1,\quad
                                          [X_0X_1^{-1},X_0^{-2}X_1X_0^2]=1.
      \end{aligned}\)
    \\

    \relref{TB1}
                    & 5
                    &
    \(\begin{aligned}[t]
         & T_{1,2}^2=1,\quad
        T_{1,3}^2=1,\quad
        T_{2,3}^2=1,\quad
        T_{2,4}^2=1,\quad
        T_{3,4}^2=1.
      \end{aligned}\)
    \\

    \relref{TB2}
                    & 4
                    &
    \(\begin{aligned}[t]
         & \Sigma_{1,2}\Sigma_{3,4}
        =\Sigma_{3,4}\Sigma_{1,2},\quad
        \Sigma_{1,2}\Sigma_{4,5}
        =\Sigma_{4,5}\Sigma_{1,2},  \\
         & \Sigma_{2,3}\Sigma_{4,5}
        =\Sigma_{4,5}\Sigma_{2,3},\quad
        \Sigma_{2,3}\Sigma_{5,6}
        =\Sigma_{5,6}\Sigma_{2,3}.
      \end{aligned}\)
    \\

    \relref{TB3}
                    & 2
                    &
    \(\begin{aligned}[t]
         & \Sigma_{1,3}\Sigma_{1,2}
        =\Sigma_{2,3}\Sigma_{1,3},\quad
        \Sigma_{2,4}\Sigma_{2,3}
        =\Sigma_{3,4}\Sigma_{2,4}.
      \end{aligned}\)
    \\

    \relref{TB5}
                    & 4
                    &
    \(\begin{aligned}[t]
         & \Sigma_{2,3}T_{4,6}
        =T_{4,6}\Sigma_{2,3},\quad
        \Sigma_{2,3}T_{5,7}
        =T_{5,7}\Sigma_{2,3},  \\
         & \Sigma_{1,2}T_{3,5}
        =T_{3,5}\Sigma_{1,2},\quad
        \Sigma_{1,2}T_{4,6}
        =T_{4,6}\Sigma_{1,2}.
      \end{aligned}\)
    \\

    \relref{TB6}
                    & 4
                    &
    \(\begin{aligned}[t]
         & T_{1,3}\Sigma_{1,2}
        =T_{2,3}T_{1,3},\quad
        T_{1,4}\Sigma_{1,2}
        =T_{3,4}T_{1,4},       \\
         & T_{2,4}\Sigma_{2,3}
        =T_{3,4}T_{2,4},\quad
        T_{2,5}\Sigma_{2,3}
        =T_{4,5}T_{2,5}.
      \end{aligned}\)
    \\

    \relref{TB7}
                    & 4
                    &
    \(\begin{aligned}[t]
         & T_{1,4}\Sigma_{2,3}
        =\Sigma_{2,3}T_{1,4},\quad
        T_{1,5}\Sigma_{2,3}
        =\Sigma_{3,4}T_{1,5},  \\
         & T_{2,5}\Sigma_{3,4}
        =\Sigma_{3,4}T_{2,5},\quad
        T_{2,6}\Sigma_{3,4}
        =\Sigma_{4,5}T_{2,6}.
      \end{aligned}\)
    \\

    \relref{TC1}
                    & 4
                    &
    \(\begin{aligned}[t]
         & \Sigma_{2,3}X_3
        =X_3\Sigma_{2,3},\quad
        \Sigma_{2,3}X_4
        =X_4\Sigma_{2,3},  \\
         & \Sigma_{1,2}X_2
        =X_2\Sigma_{1,2},\quad
        \Sigma_{1,2}X_3
        =X_3\Sigma_{1,2}.
      \end{aligned}\)
    \\

    \relref{TC3}
                    & 2
                    &
    \(\begin{aligned}[t]
         & \Sigma_{3,4}X_0
        =X_0\Sigma_{4,5},\quad
        \Sigma_{3,4}X_1
        =X_1\Sigma_{4,5}.
      \end{aligned}\)
    \\

    \relref{TD2}
                    & 2
                    &
    \(\begin{aligned}[t]
         & T_{2,3}X_2
        =X_1^2T_{2,5}T_{3,5},\quad
        T_{1,2}X_1
        =X_0^2T_{1,4}T_{2,4}.
      \end{aligned}\)
    \\

    \relref{TD3}
                    & 4
                    &
    \(\begin{aligned}[t]
         & T_{2,5}X_2
        =X_3T_{2,6}\Sigma_{3,4},\quad
        T_{2,4}X_2
        =X_2T_{2,5}\Sigma_{3,4}, \\
         & T_{1,3}X_1
        =X_1T_{1,4}\Sigma_{2,3},\quad
        T_{1,4}X_1
        =X_2T_{1,5}\Sigma_{2,3}.
      \end{aligned}\)
    \\

    \relref{TD4}
                    & 9
                    &
    \(\begin{aligned}[t]
         & T_{1,2}X_0=T_{1,3}\Sigma_{1,2},\quad
        T_{1,3}X_0=T_{1,4}\Sigma_{1,2},         \\
         & T_{1,4}X_0=T_{1,5}\Sigma_{1,2},\quad
        T_{1,5}X_0=T_{1,6}\Sigma_{1,2},         \\
         & T_{1,6}X_0=T_{1,7}\Sigma_{1,2},      \\
         & T_{2,3}X_1=T_{2,4}\Sigma_{2,3},\quad
        T_{2,4}X_1=T_{2,5}\Sigma_{2,3},         \\
         & T_{2,5}X_1=T_{2,6}\Sigma_{2,3},\quad
        T_{2,6}X_1=T_{2,7}\Sigma_{2,3}.
      \end{aligned}\)
    \\

    \relref{TD5}
                    & 14
                    &
    \(\begin{aligned}[t]
         & T_{2,3}X_0=X_0T_{3,4},\quad
        T_{2,4}X_0=X_0T_{3,5},         \\
         & T_{2,5}X_0=X_0T_{3,6},\quad
        T_{2,6}X_0=X_0T_{3,7},         \\
         & T_{3,4}X_0=X_0T_{4,5},\quad
        T_{3,5}X_0=X_0T_{4,6},         \\
         & T_{3,6}X_0=X_0T_{4,7},\quad
        T_{4,5}X_0=X_0T_{5,6},         \\
         & T_{4,6}X_0=X_0T_{5,7},\quad
        T_{5,6}X_0=X_0T_{6,7},         \\
         & T_{3,4}X_1=X_1T_{4,5},\quad
        T_{3, 5}X_1=X_1T_{4, 6},       \\
         & T_{4,5}X_1=X_1T_{5,6},\quad
        T_{4,6}X_1=X_1T_{5,7}.
      \end{aligned}\)
    \\

    \hline
    Total           & 60    &                \\
    \hline
  \end{tabular}
  \caption{Finite relations actually used in the proof.}
  \label{table_used_finite_relations}
\end{table}
\clearpage

\FloatBarrier
\bibliographystyle{plain}
\bibliography{references}
\bigskip

Yuya KODAMA

\address{
  Graduate School of Science and Engineering
  Kagoshima University
  1-21-35 Korimoto, Kagoshima
  Kagoshima 890-0065, Japan
}

\textit{E-mail address}: \href{mailto:yuya@sci.kagoshima-u.ac.jp}{\texttt{yuya@sci.kagoshima-u.ac.jp}}

\bigskip

Akihiro TAKANO

\address{Department of Mathematics, Graduate School of Science
  Osaka University
  1-1 Machikaneyama, Toyonaka
  Osaka 560-0043, Japan
}

\textit{E-mail address}: \href{mailto:takano.akihiro.sci@osaka-u.ac.jp}{\texttt{takano.akihiro.sci@osaka-u.ac.jp}}
\end{document}